\newcommand{\imod}[1]{\allowbreak\mkern4mu({\operator@font mod}\,\,#1)}
\newtheorem{propo}{Proposition}[section]
\newtheorem{proposition}[propo]{Proposition}
\newtheorem{defi}[propo]{Definition}
\newtheorem{lemma}[propo]{Lemma}
\newtheorem{corol}[propo]{Corollary}
\newtheorem{theor}[propo]{Theorem}
\newtheorem{theorem}[propo]{Theorem}
\numberwithin{equation}{section}
\newcommand{\Ker}{\operatorname{Ker}}
\newcommand{\Aut}{{\mathrm {Aut}}}
\newcommand{\Irr}{{\mathrm {Irr}}}
\newcommand{\Ind}{{\mathrm {Ind}}}
\newcommand{\diag}{{\mathrm {diag}}}
\newcommand{\Hom}{{\mathrm {Hom}}}
\def\rank{\mathop{\mathrm{ rank}}\nolimits}
\newcommand{\Tr}{{\mathrm {Tr}}}
\newcommand{\tr}{{\mathrm {tr}}}
\newcommand{\Spec}{{\mathrm {Spec}}\,}
\newcommand{\CC}{{\mathbb C}}
\newcommand{\ZZ}{{\mathbb Z}}
\newcommand{\FF}{{\mathbb F}}
\newcommand{\FQ}{\mathbb{F}_{q}}
\newcommand{\AC}{\mathcal{A}}
\newcommand{\EC}{\mathcal{E}}
\newcommand{\HC}{{\underline{\boldsymbol{H}}}}
\newcommand{\SC}{\mathcal{S}}
\newcommand{\CL}{\mathcal{C}}
\newcommand{\CLS}{\mathcal{C}^*}
\newcommand{\CLF}{\mathfrak{C}}
\newcommand{\OC}{\mathcal{O}}
\newcommand{\XC}{\mathcal{X}}
\newcommand{\ZB}{\mathbf{Z}}
\newcommand{\CB}{\mathbf{C}}
\newcommand{\OB}{\mathbf{O}}
\newcommand{\varep}{\varepsilon}
\newcommand{\eps}{\epsilon}
\newcommand{\lam}{\lambda}
\newcommand{\al}{\alpha}
\newcommand{\gam}{\gamma}
\newcommand{\Csum}{\boldsymbol{\sigma}}
\newcommand{\Clin}{\boldsymbol{\lambda}}
\newcommand{\om}{\omega}
\newcommand{\oms}{\omega^*}
\newcommand{\Om}{\Omega}
\newcommand{\GL}{\mathrm {GL}}
\newcommand{\SL}{\mathrm {SL}}
\newcommand{\GU}{\mathrm {GU}}
\newcommand{\SU}{\mathrm {SU}}
\newcommand{\Sp}{\mathrm {Sp}}
\newcommand{\CSp}{\mathrm {CSp}}
\newcommand{\PCSp}{\mathrm {PCSp}}
\newcommand{\SO}{\mathrm {SO}}
\newcommand{\GO}{\mathrm {GO}}
\newcommand{\DC}{D^{\circ}}
\newcommand{\Stab}{{\mathrm {Stab}}}
\newcommand{\dl}{{\mathfrak d}}
\newcommand{\blc}{{\mathfrak b}_{\sf C}}
\newcommand{\bld}{{\mathfrak b}_{\sf {BD}}}
\newcommand{\Spin}{{\mathrm{Spin}}}
\newcommand{\reg}{{\mathbf{reg}}}
\newcommand{\cl}{\mathfrak{l}}
\newcommand{\bolds}{{\boldsymbol{s}}}
\newcommand{\tw}[1]{{}^#1\!}
\newcommand{\SR}{\tw* R}
\renewcommand{\mod}{\bmod \,}
\newcommand{\cC}{\mathcal{C}}
\newcommand{\cG}{\mathcal{G}}
\newcommand{\cS}{\mathcal{S}}
\newcommand{\cU}{\mathcal{U}}
\newcommand{\cW}{\mathcal{W}}
\newcommand{\cX}{\mathcal{X}}
\newcommand{\calx}{\bar x}
\newcommand{\uC}{{\underline{\boldsymbol{C}}}}
\newcommand{\uG}{{\underline{\boldsymbol{G}}}}
\newcommand{\uX}{{\underline{\boldsymbol{X}}}}
\newcommand{\uB}{{\underline{\boldsymbol{B}}}} 
\newcommand{\QF}{\mathsf {Q}}
\newcommand{\sfr}{\mathsf {r}}
\begin{document}

\title{Character Levels and Character Bounds. II}

\author{Robert M. Guralnick}
\address{Department of Mathematics, University of Southern California,
Los Angeles, CA 90089-2532, USA}
\email{guralnic@math.usc.edu}

\author{Michael Larsen}
\address{Department of Mathematics\\
    Indiana University \\
    Bloomington, IN 47405\\
    U.S.A.}
\email{larsen@math.indiana.edu}

\author{Pham Huu Tiep}
\address{Department of Mathematics\\ 
Rutgers University\\ Piscataway, NJ 08854\\    
U. S. A.} 
\email{tiep@math.rutgers.edu}

\keywords{Finite classical groups, Character bounds, Character level}
\subjclass[2010]{20C15, 20C33, 20G40}

\thanks{The first author was partially supported by the NSF
grant DMS-1600056. The second author was partially supported by the NSF 
grant DMS-1702152. The third author was partially supported by the NSF grant DMS-1840702.}
\thanks{The paper is partially based upon work 
supported by the NSF under grant DMS-1440140 while the first and the third authors were in residence at 
the Mathematical Sciences Research Institute in Berkeley, California, during the Spring 2018
semester. It is a pleasure to thank the Institute for support, hospitality, and stimulating environments.}
\thanks{The authors are grateful to Martin Liebeck and Geoffrey Robinson for helpful conversations.}

\begin{abstract} 
This paper is a continuation of \cite{GLT}, which develops a level theory and establishes 
strong character bounds for finite simple groups of linear and unitary type in the case that the centralizer
of the element has small order compared to $|G|$ in a logarithmic
sense.  We strengthen the results of \cite{GLT} and extend them to all groups of classical type. 
\end{abstract}

\maketitle

\tableofcontents

\section{Introduction}
Let $G$ be a finite group and $\chi$ an irreducible character.  For all $g\in G$ we have the trivial bound $|\chi(g)| \le \chi(1)$,
but stronger bounds typically hold, at least for most elements and for most characters.  The  centralizer bound
$|\CB_G(g)|^{1/2}$, which follows immediately from Schur's lemma, is often much better than $\chi(1)$.  In particular,
good bounds are most
easily obtained for elements with $|\CB_G(g)| \ll |G|$ and characters with $\chi(1)$ not too much smaller than $|G|^{1/2}$.
For symmetric groups, non-trivial bounds can be found in \cite{LaS,RS,Ro}; see also the references within.

For groups of Lie type, the best general result, due to Gluck \cite{Gl}, is rather weak. It was improved for all elements whose
support is bounded below in \cite[Theorem 1.2.1]{LST}.  At the opposite extreme, it is well known (see, e.g., \cite[(4.26.1)]{L}) that character values on regular semisimple elements can
be bounded above by a bound depending only on the rank of $G$.
Recently Bezrukavnikov, Liebeck, Shalev, and Tiep \cite{BLST}
gave a  sharp bound for general elements (that satisfy a mild condition on their centralizers).  

This paper is a continuation of \cite{GLT}, which gives
strong estimates for finite simple groups of linear and unitary type in the case that $|\CB_G(g)|$ is small compared to $|G|$ in a logarithmic
sense.  We strengthen the results of \cite{GLT} and extend them to all groups of classical type.  Together these two papers can be regarded
as giving the counterpart for groups of Lie type of the main result of \cite{LaS} for symmetric groups.

The idea behind the proof is that, starting with a larger than expected value $|\chi(g)|$, we replace $\chi$ by $\chi^m$ for a suitable positive integer $m$ 
to amplify the effect.
If $\chi^m$ decomposes into a manageable number of irreducible factors, one can show that for at least one of these factors, $g$ violates the centralizer bound.
One needs to show that if $\chi(1)$ is small compared to $|G|^{1/2}$ (and it is only when $\chi(1)$ is small that the centralizer bound is inadequate), for
suitable values of $m$, the total number of irreducible factors of $\chi^m$ is small. 

In \cite{GLT}, we develop a theory of \emph{levels} for irreducible representations for $\SL_n(q)$ and $\SU_n(q)$.  Low dimensional representations have low level, and the tensor product of two representations of low level decomposes into a controllable number of irreducible factors.  We then restrict any low degree character $\chi$ of a symplectic or  orthogonal group to a Levi subgroup of type $\GL_n(q)$ and show that the resulting character has a controllable number of irreducible factors.  We can then use results for $\GL_n(q)$ to bound the number of irreducible factors in $\chi^m$, and subsequently derive the desired 
character bounds for all finite classical groups. 

In parallel to \cite{GLT}, we also develop a level theory for representations of orthogonal and symplectic groups.
Note that in \cite{GH1}, \cite{GH2} Gurevich and Howe describe a {\it $U$-rank} theory for classical groups which may have some parallels with 
our level theory. As shown in \cite[Theorem 9.8]{GLT}, the $U$-rank is related to, but coarser than the level of irreducible characters. In this paper, 
we prove several results relating the level, the degree, and the $U$-rank, for classical groups not of type $A$. 

To formulate our main results more precisely, it is convenient to start with some definitions. 

\begin{defi}
\label{classical}
{\em Fix a constant $a > 0$, a prime power $q$, and an integer $n \geq 2$. 

\begin{enumerate}[\rm(i)]
\item A {\it $(q,n,a)$-classical group} is a finite group $G$ such that
the last term $G^{(\infty)}$ of its derived series satisfies the following two conditions.

\begin{enumerate}[\rm (a)]
\item $G^{(\infty)}$ is of the form $\tilde G/Z$ where $Z \leq \ZB(\tilde G)$, and $\tilde G$ is one of the following groups:
$\SL_n(q)$, $\SU_n(q)$, $\Sp_{2n}(q)$, $\Omega_{2n+1}(q)$ with $2 \nmid q$, $\Omega^+_{2n}(q)$, or $\Omega^-_{2n+2}(q)$.
\item $[G:G^{(\infty)}] \leq q^a$.
\end{enumerate}

\item $\CL_n(q)$ denotes the following collection of finite groups:
$\GL_n(q)$, $\SL_n(q)$, $\GU_n(q)$, $\SU_n(q)$, $\Sp_{2n}(q)$, $\SO^{\pm}_{2n}(q)$, (the full orthogonal groups) 
$\GO^{\pm}_{2n}(q)$, and $\SO_{2n+1}(q)$ for odd $q$.

\item $\CLS_n(q)$ denotes the following collection of finite groups:
$\GL_n(q)$, $\SL_n(q)$, $\GU_n(q)$, $\SU_n(q)$, $\Sp_{2n}(q)$, $\SO^+_{2n}(q)$, $\Spin^+_{2n}(q)$,
$\SO^-_{2n+2}(q)$, $\Spin^-_{2n+2}(q)$, and $\SO_{2n+1}(q)$ and $\Spin_{2n+1}(q)$ for odd $q$, and their quotients
by central subgroups.
\end{enumerate}
}
\end{defi}

\begin{defi}
{\em 

\begin{enumerate}[\rm(i)]
\item A {\it classical group} (with parameter $n \geq 2$) is a $(q,n,a)$-classical group for some prime power $q$ and a fixed 
$a > 0$, which will be taken to be $4$ henceforth. 

\item A \emph{spin group} $G$ is a finite group of the form $\Spin_{2n+1}(q)$, $n\ge 3$, or $\Spin_{2n}^{\pm}(q)$, $n\ge 4$.
\end{enumerate}
}
\end{defi}

In particular, all finite simple groups $S$ of Lie type $A_{n-1}$, $^2A_{n-1}$, $B_n$, $C_n$, $D_n$, and $^2D_n$, as well as 
{\it almost simple} groups $G$ with $S \lhd G \leq \Aut(S)$, are 
classical groups in our sense.

The main theorem of this paper is the following:

\begin{theor}\label{main1}
For every $\varep>0$, there exists $\delta > 0$ such that the following statement holds.
If $G$ is a classical group or a spin group and $g\in G$ satisfies
$|\CB_G(g)| \leq |G|^\delta$, then
$$|\chi(g)| \leq \chi(1)^\varep$$
for all $\chi \in \Irr(G)$.
\end{theor}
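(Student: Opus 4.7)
The plan is to follow the amplification strategy outlined in the introduction: if $|\chi(g)| > \chi(1)^\varepsilon$, then the tensor power $\chi^m$ evaluated at $g$ is correspondingly large, and if $\chi^m$ can be shown to decompose into few irreducible constituents, then at least one constituent $\psi$ violates the Schur centralizer bound $|\psi(g)| \le |\CB_G(g)|^{1/2}$, producing a contradiction. The first reduction is to the case when $G^{(\infty)}$ is quasisimple, using the built-in bound $[G:G^{(\infty)}] \le q^4$ together with standard Clifford-theoretic arguments to transfer characters between $G$ and its derived subgroup. Next, one splits according to the size of $\chi(1)$: if $\chi(1) \geq |G|^{1/2-\eta}$ for a suitably small $\eta = \eta(\varepsilon)$, the Schur bound already gives
\[
|\chi(g)| \leq |\CB_G(g)|^{1/2} \leq |G|^{\delta/2} \leq \chi(1)^{\delta/(1-2\eta)},
\]
which is at most $\chi(1)^\varepsilon$ once $\delta$ is chosen small in terms of $\varepsilon$ and $\eta$. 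Thus the essential case is $\chi(1) < |G|^{1/2-\eta}$.

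In this low-degree regime, the central tool is the level theory, extended in this paper from the \cite{GLT} treatment of $\SL_n(q)$ and $\SU_n(q)$ to all classical and spin groups: low-degree characters of $G$ have bounded level, and for a Levi subgroup $L$ of type $\GL_n(q)$ or $\GU_n(q)$, the level of $\chi$ controls the level of each constituent of $\chi|_L$. One then invokes the key structural fact from \cite{GLT} that the tensor product of two low-level representations of $\GL_n(q)$ or $\GU_n(q)$ decomposes into a manageable number of irreducible constituents. Iterating yields a bound $N = N(m,\ell)$ on the number of irreducible constituents, counted with multiplicity, of $(\chi|_L)^m = (\chi^m)|_L$; and since every irreducible constituent of $\chi^m$ in $G$ restricts to at least one constituent on $L$, the same bound $N$ applies to the number of constituents of $\chi^m$ itself.

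Putting these together, one obtains
\[
\chi(1)^{m\varepsilon} < |\chi(g)|^m \leq N(m,\ell) \cdot |\CB_G(g)|^{1/2} \leq N(m,\ell) \cdot |G|^{\delta/2},
\]
and choosing $m = m(\varepsilon)$ large and $\delta = \delta(\varepsilon)$ small forces the contradiction. The main obstacle, I expect, lies in the level-theoretic foundations for types $B$, $C$, $D$ and the spin groups: one must define the correct notion of level so that low-degree characters have low level, so that restriction to a $\GL$-type Levi respects level, and so that the tensor-product estimates of \cite{GLT} transfer cleanly through this restriction. Secondary technical points are the uniform treatment of central quotients and spin covers, and small-rank cases where asymptotic degree-versus-level estimates degenerate and which may require separate arguments or direct Deligne--Lusztig computations.
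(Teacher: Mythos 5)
Your outline reproduces the amplification skeleton sketched in the introduction, but it has a genuine gap at the decisive step. Knowing that every irreducible constituent of $\chi|_L$ (for $L$ a Levi of type $\GL_n(q)$ or $\GU_n(q)$) has small level, together with the tensor-product bound of \cite{GLT}, does \emph{not} yield your bound $N(m,\ell)$ on the number of constituents of $(\chi|_L)^m$. Writing $\chi|_L=\sum_\alpha b_\alpha\alpha$, the iteration only gives $\Csum((\chi|_L)^m,L)\le q^{O(m^2\ell^2)}\bigl(\sum_\alpha b_\alpha\bigr)^m=q^{O(m^2\ell^2)}\,\Csum(\chi|_L,L)^m$, and a priori $\Csum(\chi|_L,L)$ can be as large as $\chi(1)$, in which case the factor $\Csum(\chi|_L,L)^m$ swamps the amplified gain $\chi(1)^{m\varepsilon}$ in your final display. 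The heart of the matter, and the bulk of this paper's work, is precisely the multiplicity bound $\Csum(\chi,\GL_n(q))\le q^{O(\sqrt{nL^3})}$ for $\chi\in\Irr(G)$ of degree at most $q^{nL}$ (Theorem \ref{restr-gl}, Corollary \ref{restr-gl2}), packaged as the notion of $B$-boundedness used in Theorem \ref{classical-cases}. It is proved not by a level theory for types $B$, $C$, $D$, but by a chain of estimates of a different nature: inner-product bounds $[\chi|_H,\chi|_H]_H$ for $H\cong\Sp_{2n-2}(q)$ or $\SO^+_{2n}(q)$ via conjugacy-class counting and centralizer-dimension lower bounds (Propositions \ref{restr-sp}, \ref{restr-so}, \ref{cent}), the fact that low-degree characters of parabolic subgroups contain linear characters of smaller parabolics (Proposition \ref{parab}), and the double-coset and abelian-radical lemmas (Lemmas \ref{coset}, \ref{abelian}). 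Your proposal neither states such a bound nor indicates a route to it, so the key inequality $|\chi(g)|^m\le N(m,\ell)\cdot|\CB_G(g)|^{1/2}$ is unsupported.

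Two further points. Your ``main obstacle'' paragraph points in a somewhat wrong direction: the level theory for symplectic and orthogonal groups developed in Section \ref{level-classical} is in fact not used in the proof of Theorem \ref{main1}; the proof goes through restriction to a type-$A$ Levi and the type-$A$ level theory of \cite{GLT} only. Also, your degree split is miscalibrated: in the regime $\chi(1)<|G|^{1/2-\eta}$ the quantity $L=\log_{q^n}\chi(1)$ (and the level) can grow linearly in $n$, so ``bounded level'' fails there and constituent-count bounds become useless; what actually suffices is that the centralizer bound already covers all $\chi(1)\ge q^{c\delta n^2}$, so the amplification is needed only for $L\lesssim\delta n$, and $m\approx\delta n/L$ must be played off against bounds of the shape $q^{B(m\sqrt{n}L^{3/2}+m^2L^2)}$, exactly as in the proof of Theorem \ref{classical-cases}. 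Finally, the spin groups need the separate input that faithful characters not factoring through $\Omega^{\pm}$ have degree greater than roughly $q^{n(n-1)/2}/4$ (Proposition \ref{spin1}), rather than a generic central-quotient reduction.
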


The proof given could in principle be made effective but with very bad constants.
For $\varep > 4/5$, we have an effective version of this theorem with a somewhat different proof, giving substantially better bounds,
as follows:

\begin{theor}\label{main2}
For every $\varep$ with $4/5 < \varep < 1$, there exists an explicit constant $\delta > 0$ such that the following statement holds.
Let $q$ be any prime power, $n \geq 9$, and let $G \in \CLS_n(q)$.
Suppose that $g \in G$ satisfies $|\CB_G(g)| \leq q^{n^2\delta}$. Then
$$|\chi(g)| \leq \chi(1)^\varep$$
for all $\chi \in \Irr(G)$. 
\end{theor}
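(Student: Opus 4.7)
The plan is to prove Theorem \ref{main2} via the Schur centralizer bound $|\chi(g)| \leq |\CB_G(g)|^{1/2}$, split into cases on the size of $\chi(1)$, combined with a squaring amplification $\chi \mapsto \chi\bar\chi$ in the low-degree case, using the level theory developed in this paper (for classical groups not of type $A$) and in \cite{GLT} (for type $A$). The condition $\varep > 4/5$ is precisely what allows the squaring method to be effective; smaller values of $\varep$ require higher tensor powers and are the subject of the less explicit proof of Theorem \ref{main1}.

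After standard Clifford-theoretic reductions (using that the hypothesis $|\CB_G(g)| \leq q^{n^2\delta}$ is stable under central isogenies and short-index inclusions for $G \in \CLS_n(q)$), we may assume $G$ is a quasisimple classical group. Set $C := |\CB_G(g)|^{1/2} \leq q^{n^2\delta/2}$, so that the Schur bound reads $|\chi(g)| \leq C$. If $\chi(1) \geq q^{n^2\delta/(2\varep)}$, then $|\chi(g)| \leq C \leq \chi(1)^\varep$ directly, and we are done. Otherwise $\chi(1) < q^{n^2\delta/(2\varep)}$, and by the level-theoretic lower bound $\chi(1) \gtrsim q^{n\ell}$ for characters of level $\ell$, the level of $\chi$ is bounded by an explicit $\ell_0 = \ell_0(\delta,\varep)$.

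For such $\chi$, we decompose $\chi\bar\chi = 1_G + \sum_i c_i \psi_i$ into nontrivial irreducible constituents. The tensor-product structure in the level theory ensures each $\psi_i$ has level at most $2\ell_0$, hence degree $\psi_i(1) \geq D$ for an explicit lower bound $D$ depending on $n$, $q$, and $\ell_0$. Combining $|\psi_i(g)| \leq C$ with $\sum_i c_i \psi_i(1) = \chi(1)^2 - 1$ yields
$$|\chi(g)|^2 \leq 1 + C\,\frac{\chi(1)^2 - 1}{D}.$$
The theorem follows once $\delta$ is chosen small enough, as a function of $\varep$, so that $C\,\chi(1)^{2-2\varep} \leq D$ whenever $\chi(1) < q^{n^2\delta/(2\varep)}$; the constraint $\varep > 4/5$ is what makes this exponent balancing feasible with the second-moment (squaring) amplification.

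The main obstacle lies in obtaining a sharp enough lower bound $D$ on the degrees of nontrivial constituents of $\chi\bar\chi$, uniformly in $n \geq 9$ and across all classical types (linear, unitary, symplectic, orthogonal, and spin). The type-$A$ input is supplied by \cite{GLT}, while the non-type-$A$ cases rely on the level theory for symplectic, orthogonal, and spin groups developed in the earlier sections of this paper. In particular, Weil and near-Weil characters in the symplectic and orthogonal cases must be handled explicitly so that the estimate on $D$ is robust across all types; this is the heart of the argument.
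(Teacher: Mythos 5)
There is a genuine gap, and it sits at the heart of your argument. Your key step asserts that every nontrivial constituent $\psi_i$ of $\chi\bar\chi$ ``has level at most $2\ell_0$, hence degree $\psi_i(1)\geq D$'' with $D$ depending on $\ell_0$. This reverses the level--degree correspondence: an \emph{upper} bound on the level yields an \emph{upper} bound on the degree (the upper bounds in Theorem \ref{main3}), while large degree is forced only by large level (Theorems \ref{sp-bound1}--\ref{so-bound}). A constituent of small level can perfectly well have small degree, so the only lower bound available for the nontrivial constituents of $\chi\bar\chi$ is the minimal nontrivial degree of $G$, which is $q^{O(n)}$. With $D=q^{O(n)}$ your balancing condition $C\,\chi(1)^{2-2\varep}\leq D$ fails badly in the regime you are trying to cover: for $\chi(1)$ near the threshold $q^{n^2\delta/(2\varep)}$ the left-hand side is $q^{c\delta n^2}$ with $c>0$ fixed, which outgrows any $q^{O(n)}$ once $n$ is large. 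Hence no $\delta>0$ depending only on $\varep$ can make a single squaring work; the second moment $\chi\bar\chi$ simply cannot see characters of degree as large as $q^{n^2\delta/(2\varep)}$.

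The paper's proof is structured around exactly the ingredient your proposal lacks: control of the \emph{number} of constituents of a high tensor power, not just their degrees. One takes $(\chi\bar\chi)^m$ with $m$ growing with $n$ (chosen as in \eqref{for-m1}, $mL\asymp n\delta/(1-\varep)$; the hypothesis $\varep>4/5$ is what guarantees such an integer $m\geq 1$ exists given $L\leq n\delta/2\varep$), and bounds $\Csum((\chi\bar\chi)^m,G)$ by restricting to a Levi subgroup $\GL_n(q)$ and invoking Theorem \ref{restr-gl}, Corollary \ref{restr-gl2} and Proposition \ref{tensor}(ii) — this restriction technology (Propositions \ref{restr-sp}, \ref{restr-so}, \ref{parab}) is the real content. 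The constituents of $(\chi\bar\chi)^m$ are then split at the degree threshold $q^{n^2\delta}$, the small ones estimated trivially and the large ones by the centralizer bound, giving $|\chi(g)|\leq 4\chi(1)^{\varep^*}$ (Theorems \ref{main-bound2}, \ref{main2a}), after which Theorem \ref{main2} follows by absorbing the factor $4$ using $\chi(1)>q^{4n/5}$ for large $n$ and the centralizer bound for bounded $n$; the spin and special orthogonal cases are reduced via Propositions \ref{spin1} and \ref{spin2}. Your reduction steps and the large-degree case are fine, but without a multiplicity bound for high powers the low-degree case does not close.
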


For instance, if $\varep = 0.992$, 
then one can take $\delta = 0.0011$.

In fact, we deduce Theorem \ref{main2} from Theorem \ref{main2a}, which yields a slightly better constant $\delta$, but at the price of 
having an extra factor $4$ in the character bound.

Our next main result bounds the degree of any irreducible character of any given level (where the level $\cl(\chi)$ is 
defined below in Definition \ref{def-level}).

\begin{theor}\label{main3}
Let $q$ be a prime power and let $G$ be one of the following classical groups: $\Sp_{2n}(q)$ with $2 \nmid q$ and $n \geq 1$,
$\Sp_{2n}(q)$ with $2|q$ and $n \geq 2$, or $\Omega^\pm_n(q)$ with $n \geq 6$. Let $\chi \in \Irr(G)$ be of level $\ell = \cl(\chi)$. Then 
the following statements hold for $k := \lfloor (\ell+2)/3 \rfloor$.

\begin{enumerate}[\rm(i)]
\item If $G = \Sp_{2n}(q)$ with $2 \nmid q$, then 
$$q^{nk-k(k+1)/2}\biggl(\frac{q-1}{2}\biggr)^k \leq \chi(1) \leq \biggl(\frac{q^n+1}{2}\biggr)^\ell.$$
\item If $G = \Sp_{2n}(q)$ with $2|q$, then 
$$q^{2nk-k(2k+1)}\biggl(\frac{(q-1)^2}{2}\biggr)^k \leq \chi(1) \leq \biggl(\frac{q^{2n}-1}{q-1}\biggr)^\ell.$$
\item If $G = \Omega^\pm_n(q)$ and $(n,q) \neq (8,2)$, $(9,2)$, then 
$$q^{nk-2k(k+1)}(q-1)^k \leq \chi(1) \leq \biggl(\frac{q^n-1}{q-1}\biggr)^\ell.$$
\end{enumerate}
\end{theor}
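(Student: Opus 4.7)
The plan is to exploit the level theory for symplectic and orthogonal groups developed in the preceding sections, which gives two complementary descriptions of an irreducible character $\chi$ with $\cl(\chi)=\ell$: (a) $\chi$ appears as a constituent of $\tau^{\otimes \ell}$ for a distinguished basic character $\tau$ of minimal nontrivial level, and (b) $\chi$ appears in a suitable Harish-Chandra induction $R_L^G(\theta)$ from a Levi subgroup $L$ whose $\GL$-factor has rank controlled by $\ell$.

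The upper bound follows immediately from (a): any irreducible constituent of $\tau^{\otimes \ell}$ has degree at most $\tau(1)^\ell$. In the three cases the basic character $\tau$ is, respectively, the larger Weil character of $\Sp_{2n}(q)$ in odd characteristic, of degree $(q^n+1)/2$; the irreducible minimal faithful character of $\Sp_{2n}(q)$ in even characteristic, of degree $(q^{2n}-1)/(q-1)$; and the minimal faithful character of $\Omega_n^\pm(q)$, of degree $(q^n-1)/(q-1)$. Raising to the $\ell$-th power produces the three upper bounds exactly as stated.

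For the lower bound we combine (b) with a direct degree computation. With $k=\lfloor(\ell+2)/3\rfloor$, description (b) forces $\chi$ to lie in $R_L^G(\theta)$ for a Levi $L$ containing a $\GL_k(q)$-factor (with a torus of doubled rank in case (ii)) and $\theta$ restricting to a generic Deligne--Lusztig character attached to an $\FF_q^\times$-torus of type $(q-1)^k$. The Harish-Chandra degree formula $\chi(1)\ge [G:P]_p\cdot \theta(1)$ for a parabolic $P$ with Levi $L$ then yields the stated lower bound: the $p$-part of $[G:P]$ matches the prescribed power of $q$ (a direct root-count using the orders of $G$ and $L$), the semisimple character $\theta$ contributes the factor $(q-1)^k$ or $(q-1)^{2k}$, and the remaining factor of $2^{-k}$ reflects the splitting of $\tau$ into $\pm$-pieces in case (i), a wreath-product symmetry in case (ii), or the index-two embedding $\Omega\subset\SO$ in case (iii).

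The main obstacle, and the real content of the argument, is establishing the quantitative dictionary $k=\lfloor(\ell+2)/3\rfloor$ between level and $\GL$-rank in the Lusztig parameter. This amounts to showing, by induction on $\ell$ and using the level-theoretic structural results already in hand, that tensoring with the basic character $\tau$ raises the rank of the $\GL$-factor by at most one while raising the level by at most three, and that this bound is sharp along a specific minimum-degree family of constituents. The ``$3$'' in the floor function encodes this tripling phenomenon; pinning it down requires tracking the level under $\tau\otimes(-)$ at the symbol/partition level in each of the three cases. Once this combinatorial correspondence is verified, the remaining steps --- Harish-Chandra induction, the degree formula, and the identification of $\tau$ --- are routine.
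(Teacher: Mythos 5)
There is a genuine gap, and it sits exactly where you yourself locate "the real content": the lower bound. The paper does not obtain it through any dictionary between level and a $\GL_k$-factor in a Harish-Chandra/Lusztig parameter. Instead it proves, by induction on $k$ using the parabolic $P'=\Stab_G(v)$ (resp.\ $\Stab_G(e_1,e_2)$, $\Stab_G(u_1,u_2)$), the eigenspace decomposition for $\ZB(Q)$, and the branching of the Weil/permutation characters, the three statements: if $\chi(1)<\blc(n,k)$ (resp.\ the analogous bounds $\blc'(n,k)$, $\bld(n,k)$), then $\cl(\chi)\leq 3(k-1)$ (Theorems \ref{sp-bound1}, \ref{sp-bound2}, \ref{so-bound}). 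Theorem \ref{main3} is then a two-line corollary: with $k=\lfloor(\ell+2)/3\rfloor$ one has $\ell>3(k-1)$, so the contrapositive gives $\chi(1)\geq\blc(n,k)$, etc. Your proposal replaces this with two unestablished claims. First, the assertion that $\cl(\chi)=\ell$ forces $\chi$ into $R_L^G(\theta)$ with a $\GL_k(q)$-factor and a "generic" $\theta$ attached to a torus of type $(q-1)^k$ is precisely the hard combinatorial input; you acknowledge it is the main obstacle and only sketch how one might verify it ("tracking the level under $\tau\otimes(-)$ at the symbol/partition level"), so nothing is actually proved. Second, even granting that input, the inequality you invoke, $\chi(1)\geq[G:P]_p\cdot\theta(1)$ for an irreducible constituent of Harish-Chandra induction, is false in general: the trivial character is a constituent of $R_T^G(1_T)$ and has degree $1$. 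Degrees of constituents of $R_L^G(\theta)$ are governed by generic degrees of the relative Hecke algebra and can be far smaller than $[G:P]_p\theta(1)$, so the proposed derivation of the factor $q^{nk-k(k+1)/2}$ (etc.) does not go through as stated.

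The upper bound is essentially the paper's argument and is fine in outline, but note two inaccuracies: the level is defined via constituents of $(\om_n+\oms_n)^\ell$, resp.\ $(\tau_A+\zeta_A)^\ell$, not of a power of a single irreducible "basic" character, and in case (ii) the relevant degree $(q^{2n}-1)/(q-1)$ is not the minimal faithful character degree of $\Sp_{2n}(q)$, $2\mid q$ (that is $(q^n-1)(q^n-q)/2(q+1)$); what one actually uses is that every irreducible constituent of $\tau_A$ (restricted from $\GL_{2n}(q)$) has degree at most $(q^{2n}-1)/(q-1)$ and every constituent of $\zeta_A$ (restricted from $\GU_{2n}(q)$) has degree at most $(q^{2n}+1)/(q+1)$, and then a constituent of a product of $\ell$ such characters has degree at most the product of the bounds.
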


Theorems \ref{main1} and \ref{main2} are expected to be useful in a number of applications. They are already used in \cite{LST}. 
Here we offer two applications, one on random walks, see Corollary \ref{walk}, and another on product one subvarieties in simple 
algebraic groups, see Theorem \ref{product-one}.

\section{Preliminaries}

The following observation goes back (at least) to Burnside:

\begin{lemma}\label{value} {\rm \cite[Lemma 2.2]{GLT}}
Let $\Theta$ be a generalized character of a finite group $G$ which takes exactly $N$ different values $a_0 = \Theta(1)$, 
$a_1, \ldots ,a_{N-1}$ on $G$. Suppose also that $\Theta(g) \neq \Theta(1)$ for all $1 \neq g \in G$. Then every irreducible
character $\chi$ of $G$ occurs as an irreducible constituent of $\Theta^k$ for some $0 \leq k \leq N-1$.
\end{lemma}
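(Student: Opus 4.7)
The plan is to adapt a classical argument of Burnside using a single minimal polynomial for $\Theta$. Consider the polynomial
\[
P(x) = \prod_{i=1}^{N-1}(x - a_i) \in \CC[x],
\]
of degree $N-1$, and set $\Phi := P(\Theta)$, viewed as a class function on $G$. For any $g \neq 1$, the hypothesis says $\Theta(g) = a_i$ for some $i \geq 1$, so one factor vanishes and $\Phi(g) = 0$. On the other hand, $\Phi(1) = \prod_{i=1}^{N-1}(a_0 - a_i) \neq 0$ because the $a_i$ are distinct.

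Next, I would observe that $\Phi$ is supported on $\{1\}$, so $\Phi = \tfrac{\Phi(1)}{|G|}\,\rho_G$, where $\rho_G = \sum_{\chi \in \Irr(G)} \chi(1)\chi$ is the regular character. Consequently
\[
\langle \Phi, \chi \rangle = \frac{\Phi(1)\,\chi(1)}{|G|} \neq 0
\]
for every $\chi \in \Irr(G)$.

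Now expand $P(x) = \sum_{k=0}^{N-1} c_k x^k$, so that
\[
\Phi = \sum_{k=0}^{N-1} c_k\,\Theta^k
\]
as a $\ZZ$-linear combination of the generalized characters $\Theta^k$. Taking inner product with any fixed $\chi \in \Irr(G)$,
\[
0 \neq \langle \Phi, \chi \rangle = \sum_{k=0}^{N-1} c_k\,\langle \Theta^k, \chi \rangle,
\]
so at least one multiplicity $\langle \Theta^k, \chi \rangle$ must be nonzero for some $0 \leq k \leq N-1$, which is the desired conclusion.

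The argument is essentially immediate once one writes down $P$; there is no serious obstacle. The only subtlety worth checking is that the statement allows $\chi$ to appear with a possibly negative multiplicity in the generalized character $\Theta^k$, which is unavoidable since $\Theta$ itself need not be a proper character. If one wanted to insist on positive multiplicity (true when $\Theta$ is a character), one would combine the above with the observation that each $\Theta^k$ is itself a genuine character, so $\langle \Theta^k,\chi\rangle\neq 0$ already forces positivity.
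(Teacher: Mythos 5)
Your argument is correct and is essentially the same one the paper invokes: this is the classical Burnside observation, proved exactly by forming $\prod_{i=1}^{N-1}(\Theta-a_i)$, noting it is supported on the identity and hence a nonzero multiple of the regular character, and expanding in powers of $\Theta$, which is the proof given in the cited source \cite[Lemma 2.2]{GLT}. The only cosmetic slip is calling $\sum_k c_k\Theta^k$ a $\ZZ$-linear combination: the coefficients $c_k$ are elementary symmetric functions of the $a_i$ and need not be rational integers, but since the inner product is $\CC$-linear this does not affect the argument.
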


\begin{lemma}\label{coset}
Let $H$ be a subgroup of a finite group $G$ and let $\lam \in \Irr(H)$ be of degree $1$. Then 
$$[\Ind^G_H(\lam),\Ind^G_H(\lam)]_G \leq [\Ind^G_H(1_H),\Ind^G_H(1_H)]_G = |H\backslash G/H|.$$
\end{lemma}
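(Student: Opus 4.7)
The plan is to combine Mackey's double-coset formula with Frobenius reciprocity, which reduces the left-hand side to a sum of $0$/$1$ contributions indexed by double cosets. Let $g_1,\ldots,g_r$ be a set of representatives for $H\backslash G/H$, where $r:=|H\backslash G/H|$, and for each $i$ set $H_i := H \cap g_i H g_i^{-1}$. Mackey's decomposition gives
$$\Res^G_H \Ind^G_H(\lam) = \sum_{i=1}^r \Ind^H_{H_i}\bigl(\lam^{g_i}|_{H_i}\bigr),$$
where $\lam^{g_i}$ is the character of $g_iHg_i^{-1}$ defined by $\lam^{g_i}(x) := \lam(g_i^{-1} x g_i)$.

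Next, I would apply Frobenius reciprocity term by term:
$$[\Ind^G_H(\lam),\Ind^G_H(\lam)]_G = [\lam,\Res^G_H \Ind^G_H(\lam)]_H = \sum_{i=1}^r [\lam|_{H_i},\lam^{g_i}|_{H_i}]_{H_i}.$$
Because $\lam$ has degree $1$, both $\lam|_{H_i}$ and $\lam^{g_i}|_{H_i}$ are linear characters of $H_i$, so each summand on the right is either $0$ (if the two linear characters disagree) or $1$ (if they agree). Therefore the total is at most $r$, which proves the inequality.

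For the equality, specialize to $\lam = 1_H$: each restriction is the trivial character of $H_i$, so every inner product in the sum equals $1$, and the total is exactly $r = |H \backslash G/H|$.

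There is no real obstacle here: the whole argument is a formal application of standard facts, with the degree-$1$ hypothesis on $\lam$ used precisely to guarantee that each Mackey contribution is bounded by $1$. The only thing to be careful about is pinning down the conventions for Mackey's formula and for $\lam^{g_i}$, but these are standard and play no substantive role.
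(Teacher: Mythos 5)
Your proof is correct and is exactly the argument the paper intends: the paper's proof consists of the single line ``An application of Mackey's formula,'' and your write-up is the standard expansion of that via Mackey decomposition plus Frobenius reciprocity, with the degree-one hypothesis giving the $0$/$1$ bound on each double-coset term and $\lam = 1_H$ giving the stated equality.
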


\begin{proof}
An application of Mackey's formula.
\end{proof}

For any (not necessarily irreducible) character $\rho$ of a finite group $G$, let 
\begin{equation}\label{d-sum1}
  \Csum(\rho,G) := \sum_{\chi \in \Irr(G)}[\rho,\chi]_G
\end{equation}
denote the sum of all multiplicities of irreducible constituents of $\rho$, where $[\cdot,\cdot]_G$ is the 
usual scalar product of (complex-valued) class functions on $G$. Also, let
\begin{equation}\label{d-sum2}
  \Clin(\rho,G) := \sum_{\chi \in \Irr(G),~\chi(1) = 1}[\rho,\chi]_G
\end{equation}  
denote the sum of all multiplicities of {\it linear} irreducible constituents of $\rho$. Then we have the following 
elementary properties:

\begin{lemma}\label{mults}
Let $H$ be a subgroup of a finite group $G$, and let $\rho$ be a complex character of $G$ and $\varphi$ be a complex 
character of $H$. Then the following inequalities hold.
\begin{enumerate}[\rm(i)]
\item
$$\begin{aligned}
\Clin(\rho,G) & \leq \Csum(\rho,G) \leq [\rho,\rho]_G,\\
\Csum(\rho,H) & \leq \Csum(\rho,G) \cdot \max_{\al \in \Irr(G),~[\rho,\al]_G > 0}\Csum(\al,H),\\
\Clin(\rho,H) & \leq \Csum(\rho,G) \cdot \max_{\al \in \Irr(G),~[\rho,\al]_G > 0}\Clin(\al,H).
\end{aligned}$$
\item
$$\begin{aligned}
\Csum(\rho,G) & \leq \Csum(\rho,H) \leq \Csum(\rho,G)[G:H],\\
\Csum(\varphi,H) & \leq \Csum(\Ind^G_H(\varphi),G) \leq \Csum(\varphi,H)[G:H].
\end{aligned}$$
\end{enumerate}
\end{lemma}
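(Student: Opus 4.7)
The plan is to reduce every inequality, by linearity, to a statement about a single irreducible character, and then bound that using either the identity $m \leq m^2$ for non-negative integers $m$, or a restriction-norm estimate. I will decompose $\rho = \sum_{\alpha \in \Irr(G)} m_\alpha \alpha$ with $m_\alpha = [\rho,\alpha]_G \in \ZZ_{\geq 0}$, and similarly $\varphi = \sum_{\beta \in \Irr(H)} n_\beta \beta$ with $n_\beta \in \ZZ_{\geq 0}$.

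For part (i), the bound $\Clin(\rho,G) \leq \Csum(\rho,G)$ is immediate from the definitions, and $\Csum(\rho,G) = \sum_\alpha m_\alpha \leq \sum_\alpha m_\alpha^2 = [\rho,\rho]_G$ follows from $m_\alpha \leq m_\alpha^2$. Restricting the decomposition to $H$ gives $\rho|_H = \sum_\alpha m_\alpha \alpha|_H$, so
\[
\Csum(\rho,H) = \sum_\alpha m_\alpha \Csum(\alpha,H) \leq \biggl(\max_{\alpha:\, m_\alpha>0}\Csum(\alpha,H)\biggr)\Csum(\rho,G),
\]
and the identical computation with $\Csum$ replaced by $\Clin$ on the $H$-side yields the third inequality.

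For part (ii), the same identity $\Csum(\rho,H) = \sum_\alpha m_\alpha \Csum(\alpha,H)$ reduces both bounds to single-$\alpha$ statements. The lower bound $\Csum(\alpha,H) \geq 1$ holds because $\alpha|_H$ has $\alpha(1) > 0$ and therefore admits at least one irreducible $H$-constituent. The upper bound reduces to the claim $\Csum(\alpha,H) \leq [G:H]$, which I would obtain via
\[
\Csum(\alpha,H) \leq [\alpha|_H,\alpha|_H]_H = \frac{1}{|H|}\sum_{h \in H}|\alpha(h)|^2 \leq \frac{|G|}{|H|}[\alpha,\alpha]_G = [G:H],
\]
using $m \leq m^2$ once more and extending the summation range from $H$ to $G$.

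The two induced-character bounds are handled analogously after writing $\Ind^G_H\varphi = \sum_\beta n_\beta \Ind^G_H\beta$, so $\Csum(\Ind^G_H\varphi,G) = \sum_\beta n_\beta \Csum(\Ind^G_H\beta,G)$; it then suffices to prove $1 \leq \Csum(\Ind^G_H\beta,G) \leq [G:H]$ for each $\beta \in \Irr(H)$. The lower bound is trivial. For the upper bound, I would bound $\Csum$ by the squared norm and use Mackey's formula together with Frobenius reciprocity to obtain
\[
[\Ind^G_H\beta,\Ind^G_H\beta]_G = \sum_{x \in H\backslash G/H}[\beta|_{H \cap xHx^{-1}},\, \beta^x|_{H \cap xHx^{-1}}]_{H \cap xHx^{-1}}.
\]
By Cauchy--Schwarz and the restriction-norm estimate from the previous paragraph, each summand is at most $[H:H \cap xHx^{-1}] = |HxH|/|H|$, and these add up to $[G:H]$. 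This final Mackey step is the only non-purely-formal ingredient; everything else is linearity together with $m \leq m^2$.
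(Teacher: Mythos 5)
Your proposal is correct, but for the two non-trivial upper bounds it takes a different route from the paper. The paper also reduces to $\rho\in\Irr(G)$, respectively $\varphi\in\Irr(H)$, but then argues by Frobenius reciprocity plus degree comparison: for the restriction it writes $n_i=[\rho,\Ind^G_H(\al_i)]_G\leq \al_i(1)[G:H]/\rho(1)$ and telescopes $\sum_i n_i\leq\sum_i n_i^2\leq [G:H]$, and for the induction it uses $m_i=[\varphi,(\beta_i)|_H]_H\leq \beta_i(1)/\varphi(1)$ in the same way. You instead bound the multiplicity sum by the squared norm and then bound the norm: for restriction by the pleasantly elementary device of enlarging the summation range from $H$ to $G$, which gives $[\rho|_H,\rho|_H]_H\leq [G:H][\rho,\rho]_G$ with no mention of degrees, and for induction by Mackey's intertwining-number formula together with Cauchy--Schwarz and the same restriction-norm estimate on each double-coset term, the terms summing to $[G:H]$ since $\sum_x |HxH|=|G|$. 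Both arguments are valid; your restriction step is arguably slicker than the paper's, while your induction step is heavier machinery than the paper's one-line degree count (though it is close in spirit to the paper's Lemma \ref{coset}, which also invokes Mackey for double cosets). One small remark: your Mackey detour is not needed, since the simpler degree bound $[\Ind^G_H\beta,\chi]_G=[\beta,\chi|_H]_H\leq\chi(1)/\beta(1)$ already yields $\Csum(\Ind^G_H\beta,G)\leq[G:H]$ directly.
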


\begin{proof}
We will prove only the non-obvious ones among these inequalities. For (ii), without any loss we may assume that
$\rho \in \Irr(G)$ and decompose $\rho|_H = \sum_i n_i\al_i$, where $\al_i \in \Irr(H)$ are pairwise distinct. Then
$$n_i = [\rho|_H,\al_i]_H = [\rho,\Ind^G_H(\al_i)]_G \leq \frac{\al_i(1)[G:H]}{\rho(1)}$$
by degree consideration. Hence
$$\Csum(\rho,H) \leq \sum_i n_i^2 \leq  \frac{\sum_in_i\al_i(1)[G:H]}{\rho(1)} = [G:H].$$
Next, without loss we may also assume that
$\varphi \in \Irr(H)$ and decompose $\Ind^G_H(\varphi) = \sum_i m_i\beta_i$, where $\beta_i \in \Irr(G)$ are pairwise distinct. Then
$$m_i = [\Ind^G_H(\varphi),\beta_i]_G = [\varphi,(\beta_i)|_H]_H \leq \frac{\beta_i(1)}{\varphi(1)}$$
again by degree consideration. Hence
$$\Csum(\Ind^G_H(\varphi),G) \leq \sum_i m_i^2 \leq  \frac{\sum_im_i\beta_i(1)}{\varphi(1)} = [G:H].$$
\end{proof}

\begin{lemma}\label{abelian}
Let $G = A \rtimes H$ be a split extension of a normal abelian subgroup $A$ and a subgroup $H$. Then for any 
$\chi \in \Irr(G)$ and any $\lam \in \Irr(H)$ with $\lam(1) = 1$, we have 
$$[\chi|_H,\lam]_H \leq 1.$$
\end{lemma}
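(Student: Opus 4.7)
The plan is to apply Clifford theory with respect to the normal abelian subgroup $A$, using the split structure $G = A \rtimes H$ to reduce the computation of $[\chi|_H,\lam]_H$ to an inner product in $H$ of two irreducible characters, one of which is linear.

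First I would pick an irreducible constituent $\mu \in \Irr(A)$ of $\chi|_A$, and set $J := I_H(\mu) = \{h \in H : \mu^h = \mu\}$, so that the inertia subgroup of $\mu$ in $G$ is $I := I_G(\mu) = A \rtimes J$. Because $A$ is complemented in $I$ by $J$, the linear character $\mu$ extends to a linear character $\hat\mu$ of $I$ by the rule $\hat\mu(ah) = \mu(a)$ for $a \in A$ and $h \in J$; note that $\hat\mu$ is trivial on $J$. By Gallagher's theorem, every irreducible character of $I$ lying over $\mu$ has the form $\hat\mu \cdot \psi$ where $\psi \in \Irr(J)$ is inflated to $I$ through $I/A \cong J$. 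By the Clifford correspondence there exists $\psi \in \Irr(J)$ with
$$\chi = \Ind^G_I(\hat\mu \cdot \psi).$$

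Next I would compute $\chi|_H$ by Mackey's formula. Since $G = AH$ and $A \subseteq I$, we have $G = IH$, so $H \backslash G / I$ consists of a single double coset. Moreover $H \cap I = H \cap (AJ) = J$ because $H \cap A = 1$. Hence Mackey gives
$$\chi|_H = \Ind^H_J\bigl((\hat\mu \cdot \psi)|_J\bigr) = \Ind^H_J(\psi),$$
using that $\hat\mu$ is trivial on $J$.

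Finally, Frobenius reciprocity yields
$$[\chi|_H,\lam]_H = [\Ind^H_J(\psi),\lam]_H = [\psi,\lam|_J]_J.$$
Since $\lam$ is linear, $\lam|_J$ is a linear (and hence irreducible) character of $J$, and $\psi \in \Irr(J)$; therefore the final inner product is either $0$ or $1$, proving the claim. There is no real obstacle here; the only point requiring a moment of care is verifying $H \cap I = J$ and $G = IH$, which are immediate from the split structure and $A \leq I$.
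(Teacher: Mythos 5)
Your proof is correct and follows essentially the same route as the paper's: Clifford theory applied to the abelian normal subgroup $A$, Mackey's formula using $G = IH$ with $I \cap H = J$, and Frobenius reciprocity against the linear character $\lam|_J$. The only cosmetic difference is that you establish irreducibility of the relevant character of $J$ by extending $\mu$ trivially to $I$ and invoking Gallagher, whereas the paper deduces the same fact from the decomposition $T/\Ker(\al) \cong (A/\Ker(\al)) \times J$.
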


\begin{proof}
The statement is obvious if $A \leq \Ker(\chi)$. Assume otherwise and consider a nontrivial irreducible constituent 
$\al$ of $\chi|_A$. If $T := \Stab_G(\al) = A \rtimes J$, then by Clifford's theorem
$$\chi = \Ind^G_T(\hat\al)$$
for some $\hat\al \in \Irr(T)$ lying above $\al$. 
Note that $G=TH$, so applying Mackey's formula we get
$$\chi|_H = (\Ind^G_T(\hat\al))|_H = \Ind^H_{T \cap H}(\hat\al|_{T \cap H}) = \Ind^H_J(\hat\al|_J).$$
Next, observe that $\Ker(\al) \lhd T$, and 
$$T/\Ker(\al) = (A/\Ker(\al)) \times J,$$
whence $\hat\al|_J$ is irreducible. It follows that
$$[\chi|_H,\lam]_H = [\Ind^H_J(\hat\al|_J),\lam]_H = [\hat\al|_J,\lam|_J]_J \leq 1.$$
\end{proof}

\begin{lemma}\label{orbits1} {\rm  \cite[Lemma 2.4]{GLT}}
Let $(G,Q)$ be either $(\GL_n(q),q)$ or $(\GU_n(q),q^2)$, and let $V = \FF_Q^n$ denote the natural module for $G$.
Then, for any $1 \leq j \leq n$, the number $N_j$ of $G$-orbits on the set $\Omega_j$ of ordered  $j$-tuples $(v_1, \ldots, v_j)$
with $v_i \in V$ is at most $8q^{j^2/4}$ in the first case, and at most $2q^{j^2}$ in the second case.  
\end{lemma}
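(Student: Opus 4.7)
The plan is to identify the orbits with combinatorial invariants that can be counted directly. Writing a $j$-tuple $(v_1, \ldots, v_j)$ as the $n \times j$ matrix $M = [v_1 | \cdots | v_j]$, in the first case $G = \GL_n(q)$ acts by left multiplication, and two matrices lie in the same orbit if and only if they share a reduced row echelon form, equivalently if and only if their row spaces coincide as subspaces of $\FF_q^j$. Hence
\[
  N_j \;=\; \sum_{r=0}^{\min(n,j)} \binom{j}{r}_q,
\]
the sum of Gaussian binomials. I would then bound $\binom{j}{r}_q \leq q^{r(j-r)} \prod_{i \geq 1}(1-q^{-i})^{-1}$, observe that $r(j-r)$ is maximized at $r = \lfloor j/2 \rfloor$ with value $\lfloor j^2/4 \rfloor$, and treat the remaining terms as a rapidly convergent theta-like geometric series in $|r - j/2|$. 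A careful evaluation of the product of the resulting $q$-Pochhammer constant and theta sum---worst case $q = 2$, where these are approximately $3.46$ and $2.13$ respectively---yields the claimed bound $N_j \leq 8 q^{j^2/4}$.

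For the unitary case with $G = \GU_n(q)$ acting on $V = \FF_{q^2}^n$ equipped with a non-degenerate Hermitian form $h$, the central invariant of the tuple is the Hermitian Gram matrix $\Gamma(v) = \bigl( h(v_a, v_b) \bigr)_{1 \leq a, b \leq j}$. Its diagonal entries live in $\FF_q$ while strictly upper entries live in $\FF_{q^2}$, giving exactly $q^j \cdot q^{j(j-1)} = q^{j^2}$ possible Gram matrices. By Witt's extension theorem, whenever $\Span(v_i)$ is non-degenerate and $\Gamma(v) = \Gamma(w)$, the isometry sending $v_i \mapsto w_i$ extends to an element of $\GU_n(q)$, so $\Gamma$ is a complete orbit invariant in this regime.

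The hard step is the degenerate case: when $\rad(\Span(v_i))$ is nonzero, the Gram matrix is singular and distinct orbits can share it, because the way the radical sits inside the isotropic locus of $V$ carries additional information. I would resolve this by choosing a decomposition $\Span(v_i) = \rad(\Span(v_i)) \oplus W'$ with $W'$ non-degenerate, applying Witt's theorem on $W'$, and using that all totally isotropic subspaces of $V$ of a given dimension form a single $\GU_n(q)$-orbit to control the embedding of the radical. A counting argument tracking the admissible radical-lifts then shows at most two orbits map to any given Gram matrix, yielding $N_j \leq 2 q^{j^2}$. The degenerate-span bookkeeping in the unitary case is the principal technical obstacle; the linear case reduces to a straightforward $q$-series manipulation.
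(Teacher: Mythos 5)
Your treatment of the linear case is correct and is essentially the standard argument (orbits of $\GL_n(q)$ on $j$-tuples correspond to the possible kernels of the map $\FF_q^j\to V$, equivalently to subspaces of $\FF_q^j$ since $j\le n$, giving $N_j=\sum_r\binom jr_q$, which the geometric/theta estimate bounds by $8q^{j^2/4}$); this matches the way the paper handles the analogous counts (cf.\ the proof of Lemma \ref{orbits2}).

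The unitary case, however, contains a genuine error. Your final step asserts that ``at most two orbits map to any given Gram matrix.'' This is false, and not just unproven. The full $j\times j$ Gram matrix does not remember the linear relations among $v_1,\dots,v_j$, and tuples with different relation structure (different kernels of the map $\FF_{q^2}^j\to V$, $e_i\mapsto v_i$) can never lie in the same $\GU_n(q)$-orbit. Already the zero Gram matrix is shared by one orbit for \emph{each} admissible kernel $W\le\FF_{q^2}^j$ (any $W$ whose codimension is at most the Witt index of $V$ occurs, realized by a tuple spanning a totally isotropic subspace), so for $j\le n/2$ the number of orbits over this single Gram matrix is the total number of subspaces of $\FF_{q^2}^j$, which is at least $q^{2\lfloor j^2/4\rfloor}$ --- vastly more than $2$. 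A small explicit instance: with $v$ anisotropic and $u$ isotropic, $u\perp v$, the pairs $(v,v)$ and $(v,v+u)$ have identical Gram matrices but lie in different orbits. So the bound $2q^{j^2}$ cannot be obtained by multiplying the number of Hermitian matrices by $2$.

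The repair is also simpler than the route you sketch, and it is exactly the paper's method for Lemma \ref{orbits2}: Witt's extension theorem in the form cited there (Aschbacher, p.~81) applies to \emph{arbitrary} subspaces of a unitary space, degenerate or not, so no separate analysis of the radical is needed. The complete orbit invariant is the pair consisting of the kernel $W=\Ker(f_\varpi)$ together with the reduced Hermitian Gram matrix on a fixed basis of $\FF_{q^2}^j/W$: equal kernels make $f_\varpi(u_i)\mapsto f_{\varpi'}(u_i)$ a well-defined linear bijection of spans, equal Gram data make it an isometry, and Witt extends it to $\GU_n(q)$. Counting kernels by codimension $k$ and reduced Gram matrices by $q^{k^2}$ gives
$$N_j\ \le\ \sum_{k=0}^{j}\binom jk_{q^2}\,q^{k^2}\ \le\ q^{j^2}\sum_{d\ge0}\frac{q^{-d^2}}{\prod_{i=1}^{d}(1-q^{-2i})}\ <\ 2q^{j^2},$$
where the last estimate is the same kind of rapidly convergent series evaluation you already carried out in the $\GL$ case (worst case $q=2$, where the sum is about $1.76$). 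With this replacement your argument becomes correct; as written, the degenerate-case bookkeeping is the part that fails rather than merely the hard part.
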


\begin{lemma}\label{orbits2}
Let $V = \FF_q^n$ be endowed with either a non-degenerate symplectic form 
$(\cdot,\cdot)$, or a quadratic form $Q$ associated with a non-degenerate 
symmetric bilinear form $(\cdot,\cdot)$ (which is assumed to be alternating if $2|q$). Set $\eps = -1$ in the former case,
and $\eps = +1$ in the latter case.
Let $G = \Sp(V)$ be the full group of isometries of $(\cdot,\cdot)$ on $V$
in the former case, and $G = \GO(V)$ be the full group of isometries of $Q$ in the latter case.
Then, for any $1 \leq j \leq n$, the number $N_j$ of $G$-orbits on the set $\Omega_j$ of ordered  $j$-tuples $(v_1, \ldots, v_j)$
with $v_i \in V$ is less than $6q^{j(j+\eps)/2}$.  
\end{lemma}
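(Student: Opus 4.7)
The plan is to parametrize the $G$-orbits on $V^j$ by a purely combinatorial datum via Witt's extension theorem, and then to evaluate the resulting count exactly by Gauss's $q$-binomial theorem. To each tuple $\vec v = (v_1,\ldots,v_j) \in V^j$ I would attach its \emph{kernel} $K(\vec v) := \{\vec a \in \FF_q^j \mid \sum_i a_i v_i = 0\}$ and its \emph{pullback form} $\Phi_{\vec v}$ on $\FF_q^j$: the alternating form $(\vec a,\vec b) \mapsto (\sum_i a_i v_i, \sum_i b_i v_i)$ in the symplectic case, and the quadratic form $\vec a \mapsto Q(\sum_i a_i v_i)$ in the orthogonal case (in either characteristic). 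Both $K(\vec v)$ and $\Phi_{\vec v}$ are manifestly $G$-invariant; I would then show they are also \emph{complete}, i.e.\ that $\vec v$ and $\vec w$ lie in the same $G$-orbit if and only if $(K(\vec v), \Phi_{\vec v}) = (K(\vec w), \Phi_{\vec w})$. For the nontrivial direction, pick indices $i_1 < \cdots < i_r$ with $r = j - \dim K(\vec v)$ such that $(v_{i_k})_k$ is a basis of $U := \la v_i \ra$; equality of kernels then makes $(w_{i_k})_k$ a basis of $U' := \la w_i \ra$, and equality of pullback forms makes the assignment $v_{i_k} \mapsto w_{i_k}$ an isometry $U \to U'$. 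By Witt's extension theorem this extends to some $g \in G$, and the shared kernel forces $gv_i = w_i$ for every $i$.

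Having reduced the orbit count to a count of complete invariants, I would upper-bound $N_j$ by the number of pairs $(K, \Phi)$ in which $\Phi$ descends to a form on $\FF_q^j/K$. Since $\FF_q^r$ carries exactly $q^{r(r-1)/2}$ alternating bilinear forms and $q^{r(r+1)/2}$ quadratic forms (the latter count being independent of the parity of $q$), this yields
$$N_j \leq \sum_{d=0}^{j} \binom{j}{d}_q q^{(j-d)(j-d+\eps)/2}.$$
Substituting $e = j-d$ and applying Gauss's $q$-binomial identity
$$\sum_{e=0}^{j} \binom{j}{e}_q q^{e(e-1)/2} x^e = \prod_{i=0}^{j-1}(1 + q^i x)$$
at $x = 1$ (when $\eps = -1$) and $x = q$ (when $\eps = +1$) produces the closed forms $\prod_{i=0}^{j-1}(1+q^i)$ and $\prod_{i=1}^{j}(1+q^i)$, respectively.

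Finally, I would factor out $q^{j(j+\eps)/2}$ from each product, leaving $\prod_{i \geq 1}(1+q^{-i})$; the elementary estimate $\log(1+x) \leq x$ gives
$$\prod_{i=1}^{\infty}(1 + q^{-i}) \leq \exp\Bigl(\sum_{i=1}^{\infty} q^{-i}\Bigr) = \exp\Bigl(\tfrac{1}{q-1}\Bigr) \leq e,$$
so the bounds become $N_j \leq 2e \cdot q^{j(j-1)/2} < 6 q^{j(j-1)/2}$ in the symplectic case and $N_j \leq e \cdot q^{j(j+1)/2} < 6 q^{j(j+1)/2}$ in the orthogonal case, giving $N_j < 6 q^{j(j+\eps)/2}$ as required. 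The main delicate point is the appeal to Witt's extension theorem for the \emph{possibly degenerate} spans $U$ and $U'$, and in particular in characteristic $2$ for the orthogonal case, where the quadratic and polar bilinear data must be tracked simultaneously; once this standard input is in hand, the rest of the argument is a clean combinatorial computation.
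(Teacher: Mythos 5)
Your proposal is correct and follows essentially the same route as the paper: orbits on $j$-tuples are classified by the kernel together with the induced isometric (Gram/quadratic) data via Witt's extension theorem, giving exactly the paper's bound $N_j \le \sum_{i=0}^{j} \binom{j}{i}_q\, q^{i(i+\eps)/2}$. The only difference is in the final estimate, where you evaluate this sum in closed form with Gauss's $q$-binomial identity and bound the product by $2e\,q^{j(j+\eps)/2}$, while the paper instead bounds each Gaussian binomial by $\frac{32}{9}q^{i(j-i)}$ and sums the resulting tail; both yield the stated constant $6$.
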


\begin{proof}
(i) Consider $U = \FF_q^j$ with a fixed basis $(e_1, \ldots ,e_j)$. Then there is a natural bijection between $\Omega_j$
and $\Hom(U,V)$: any $\varpi = (v_1, \ldots,v_j)$ corresponds to $f=f_\varpi \in \Hom(U,V)$ with $f(e_i) = v_i$. Suppose that 
$\varpi' = g(\varpi)$ for some $g \in G$. Then $f_{\varpi'} = gf_\varpi$ and $\Ker(f_{\varpi'}) = \Ker(f_\varpi)$. Furthermore, the bilinear form $(\cdot,\cdot)$
of $V$ restricted to $f_\varpi(V)$ and $f_{\varpi'}(V)$ have the same Gram matrices in
the bases $(f_\varpi(u_1), \ldots,f_\varpi(u_k))$ and $(f_{\varpi'}(u_1), \ldots ,f_{\varpi'}(u_k))$, if $(u_1, \ldots,u_k)$ is a basis 
of  $U/\Ker(f_\varpi)$. Moreover, in the case $G = \GO(Q,V)$, we also have 
$Q(f_\varpi(u_i)) = Q(f_{\varpi'}(u_i))$ for $1 \leq i \leq k$. We will refer to this as $f_\varpi$ and $f_{\varpi'}$ having the same {\it isometric data}.
For the Gram matrices in the basis $(f_\varpi(u_1), \ldots,f_\varpi(u_k))$, there are at most 
$q^{k(k-1)/2}$ possibilities in the symplectic case, as well as in the quadratic case but with $2|q$, and 
at most $q^{k(k+1)/2}$ possibilities in the quadratic case with $2 \nmid q$. Adding $q^k$ possibilities for 
$Q(f_\varpi(u_i))$, $1 \leq i \leq k$, in the quadratic case with $2|q$, we see that there are at most 
$q^{k(k+\eps)/2}$ possible isometric data when $k$ is fixed.

Conversely, assume that $f_\varpi$ and $f_{\varpi'}$ have the same kernel $W$ for some $\varpi,\varpi' \in \Omega_j$.
Again we fix a basis $(u_1, \ldots,u_k)$ of $U/W$, and assume in addition that $f_\varpi$ and $f_{\varpi'}$ have 
the same isometric data. By Witt's lemma \cite[p. 81]{A}, 
there is some $g \in G$ such that $g(f_\varpi(u_i)) = f_{\varpi'}(u_i)$ for all $1 \leq i \leq k$. Hence $f_{\varpi'} =gf_\varpi$ and 
so $\varpi' = g(\varpi)$.

\smallskip
(ii) We have shown that $N_j$ is at most the sum over $k$ of 
the total number of $j-k$-dimensional subspaces $W$ in $U$ weighted by 
a factor of $q^{k(k+\eps)/2}$, i.e.
$$N_j \leq \sum_{i=0}^j  q^{i(i+1)/2}\binom ji_q,$$
where $\binom ji_q$ denotes the Gaussian binomial coefficient:
$$\binom ji_q := \frac{\prod^{i-1}_{t=0}(q^j-q^t)}{\prod^{i-1}_{t=0}(q^i-q^t)}.$$ 
By \cite[Lemma 4.1(i)]{LMT} we have
$$\binom ji_q  = \frac{\prod^{i-1}_{t=0}(q^j-q^t)}{\prod^{i-1}_{t=0}(q^i-q^t)} = 
   q^{i(j-i)}\frac{\prod^{i-1}_{t=0}(1-1/q^{j-t})}{\prod^{i-1}_{t=0}(1-1/q^{i-t})} < 
    \frac{32}{9}q^{i(j-i)}.$$
It follows that 
$$N_j  < \frac{32}{9}\sum^j_{i=0}q^{i(j-(i-\eps)/2)} < 
\frac{32}{9}q^{j(j+\eps)/2}\sum^{\infty}_{i=0}\frac{1}{q^{i(i+1)/2}} < 6q^{j(j+\eps)/2},
$$   
since
$$\sum^{\infty}_{i=0}\frac{1}{q^{i(i+1)/2}} <  1 + \frac{1}{q} + \frac{1}{q^3} + \sum^{\infty}_{t=6}\frac{1}{q^t} \leq \frac{53}{32}.$$
The argument above also shows that $N_j \geq q^{j(j+\eps)/2}$ if $j \leq n/2$, as $G$ has at least $q^{j(j+\eps)/2}$ possible isometric data, hence orbits, on linearly independent $j$-tuples.
\end{proof}

\section{Character level for finite classical groups}\label{level-classical}
Let $A = \FF_q^n$ be endowed with a non-degenerate, symmetric or alternating, bilinear form $(\cdot,\cdot)$, and 
possibly also with a quadratic form $\QF$ associated with $(\cdot,\cdot)$ if $2|q$. The group of isometries of either 
the form $(\cdot,\cdot)$ or of the quadratic form $\QF$ is a classical group on $A$, and its action on the point set of
$A$ affords the permutation character
\begin{equation}\label{tau1}
  \tau_A: g \mapsto |\CB_A(g)| = q^{\dim_{\FF_q}\Ker(g-1_A)}.
\end{equation}  
Correspondingly, we can consider $\Sp(A)$ or $\GO(A)$, and throughout this section $\Sp(A)$ and $\GO(A)$ will denote
such a group. One can also embed such a classical group $G$ in the unitary group $\GU_n(q)$ defined by a suitable Hermitian form on 
$A \otimes_{\FQ} \FF_{q^2}$,
and the restriction to $G$ of the {\it reducible Weil character $\zeta_n$} of $\GU_n(q)$, cf. \cite[\S4]{TZ2}, yields a (reducible) character
\begin{equation}\label{zeta1}
  \zeta_A:  g \mapsto (-1)^n(-q)^{\dim_{\FF_q}\Ker(g-1_A)}.
\end{equation}

As in \cite{LBST1} and \cite{GLT}, we will explore certain {\it dual pairs} $G \times S$,
where $G$ and $S$ are certain finite classical groups. Given any character $\om$ of a group $\Gamma$ with 
a fixed homomorphism $G \times S \to \Gamma$, we can decompose
$$\om|_{G \times S} = \sum_{\al \in \Irr(S)} D_\al \otimes \al,$$
where $D_\al$ is either zero or a $G$-character, and its value at any $g \in G$ is given by the formula
\begin{equation}\label{dual1}
  D_\al(g) = \frac{1}{|S|}\sum_{s \in S}\om(gs)\bar\al(s),
\end{equation}  
see \cite[Lemma 5.5]{LBST1}. 

The quadruples $(G,S,\Gamma,\om)$ we consider in this paper are as follows.
We assume $2 \nmid q$, and endow $A := \FF_q^{2n}$ with a non-degenerate symplectic form 
$(\cdot,\cdot)$. Also, consider $B := \FF_q^m$ with a non-degenerate symmetric 
bilinear form $(\cdot,\cdot)$. Then the formula
$$(a \otimes b,a' \otimes b') = (a,a')(b,b')$$
for $a \in A$ and $b \in B$, extended by bilinearity, 
defines a non-degenerate symplectic form on $W := A \otimes_{\FQ} B \cong \FF_q^{2nm}$. This yields 
a homomorphism $G \times S \to \Gamma := \Sp(W)$, where either 
\begin{equation}\label{pairs}
\begin{array}{lll}
{\rm (a)} & G = \Sp(A) \cong \Sp_{2n}(q), & S = \GO(B) \cong \GO^\pm_m(q),\mbox{ or}\\
{\rm (b)} & G = \SO(B) \cong \SO^\pm_m(q), & S = \Sp(A) \cong \Sp_{2n}(q).
\end{array}
\end{equation}
Next, $\om$ is one of the two {\it reducible Weil characters} $\om_{nm}$ and $\oms_{nm}$ of
degree $q^{nm}$ of $\Gamma \cong \Sp_{2nm}(q)$, see e.g. \cite[\S1]{GMT} for their definition.

We recall the following properties of these Weil characters $\om_n$ and $\oms_n$ of $\Sp_{2n}(q)$:

\begin{propo}\label{weil-sp}
Let $q$ be an odd prime power and let $\om_n$ and $\oms_n$ be the two reducible Weil characters of 
$G := \Sp_{2n}(q)$, with natural module $A := \FF_q^{2n}$. Then the following statements hold.
\begin{enumerate}[\rm(i)]
\item For any $g \in G$, $|\om_n(g)|^2 = |\oms_n(g)|^2 = \tau_A(g)= |\CB_A(g)| = q^{\dim_{\FQ}\Ker(g-1_A)}$.
\item If $q \equiv 1 (\mod 4)$ then 
$$\om_n = \overline{\om_n},~\oms_n = \overline{\oms_n},~(\om_n)^2 = (\oms_n)^2 = \tau_A,~\om_n\oms_n = \zeta_A,$$
and if $q \equiv 3 (\mod 4)$ then
$$\oms_n = \overline{\om_n},(\om_n)^2 = (\oms_n)^2 = \zeta_A,~\om_n\om_n^* = \tau_A,$$
where $\tau_A$ and $\zeta_A$ are as defined in \eqref{tau1} and \eqref{zeta1}.
\item In the situation of \eqref{pairs}{\rm (a)}, the restriction of $\om$ to $G$ is $\om_n^m$ or $\om_n^{m-1}\oms_n$.
Furthermore, the restriction of $\om$ to $\SO(B)$ is $(\tau_B)^n$, where $\tau_B$ denotes the permutation character of $\SO(B)$ 
acting on the point set of $B$.
\end{enumerate} 
\end{propo}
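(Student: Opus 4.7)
The plan is to deduce all three parts from an explicit formula for the values of the reducible Weil characters of $\Sp_{2n}(q)$. Fixing a non-trivial additive character $\psi: \FQ \to \CC^{\times}$, the standard realization (see, e.g., \cite{GMT}, \cite{TZ2}) gives
$$\om_n(g) = \eta_\psi(g) \cdot \tau_\psi^{d(g)},$$
where $d(g) := \dim_{\FQ}\Ker(g-1_A)$, $\tau_\psi := \sum_{x \in \FQ}\psi(x^2)$ is the quadratic Gauss sum (so $\tau_\psi^2 = \eps^* q$ and $|\tau_\psi|^2 = q$, with $\eps^* := (-1)^{(q-1)/2}$), and $\eta_\psi(g) \in \{\pm 1\}$ is the Weil sign. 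The companion character $\oms_n$ arises from replacing $\psi$ by $\psi'(x) := \psi(\gamma x)$ for a fixed non-square $\gamma \in \FQ^{\times}$, which sends $\tau_\psi \mapsto -\tau_\psi$ while preserving the Weil sign.

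Granting this formula, part (i) is immediate: $|\om_n(g)|^2 = |\tau_\psi|^{2d(g)} = q^{d(g)} = \tau_A(g)$, and likewise for $\oms_n$. For part (ii) I would split on $q \bmod 4$. When $q \equiv 1 \pmod 4$, one has $\tau_\psi \in \RR$, so both Weil characters are real-valued; direct computation yields $\om_n^2(g) = \oms_n^2(g) = q^{d(g)} = \tau_A(g)$ and $\om_n \oms_n(g) = (-q)^{d(g)}$, and the global sign $(-1)^n$ in $\zeta_A$ is then pinned down by evaluation at $g = 1$, using $\om_n(1) = \oms_n(1) = q^n$ and $\zeta_A(1) = (-1)^n(-q)^{2n} = q^{2n}$. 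When $q \equiv 3 \pmod 4$, $\tau_\psi$ is purely imaginary, so $\overline{\tau_\psi} = -\tau_\psi$; hence $\overline{\om_n} = \oms_n$, and the remaining identities $\om_n^2 = \oms_n^2 = \zeta_A$ and $\om_n\overline{\om_n} = \tau_A$ follow from $\tau_\psi^2 = -q$ by the analogous Gauss-sum arithmetic.

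For part (iii) I would exploit the tensor-product structure $W = A \otimes_{\FQ} B$ underlying the dual pair in \eqref{pairs}(a). For $g \in G$ embedded as $g \otimes 1_B$ in $\Sp(W)$, the fixed-point space on $W$ is $\Ker(g-1_A) \otimes B$ of dimension $m \cdot d(g)$; applying the Weil character formula on $\Sp_{2nm}(q)$ and using that the Weil sign factorizes compatibly across the tensor decomposition, the resulting value matches either $\om_n(g)^m$ or $\om_n(g)^{m-1}\oms_n(g)$, the binary choice being controlled by the discriminant of the form on $B$. For $h \in \SO(B)$ embedded as $1_A \otimes h$, the fixed-point space on $W$ is $A \otimes \Ker(h-1_B)$ of even dimension $2n \cdot \dim\Ker(h-1_B)$; because $h$ acts through $\SO(B)$ and the exponent of $\tau_\psi$ is even, both the Weil sign and the sign $\eps^*$ wash out, and the character value collapses to $q^{n\dim\Ker(h-1_B)} = \tau_B(h)^n$.

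The chief technical obstacle is the sign bookkeeping for part (iii): verifying cleanly that the Weil sign $\eta_\psi$ factorizes under the tensor decomposition $W = A \otimes B$, and pinning down which of $\om_n^m$ or $\om_n^{m-1}\oms_n$ arises depending on the discriminant of $B$. Parts (i) and (ii) are essentially a direct quadratic Gauss-sum calculation once the character formula is in hand.
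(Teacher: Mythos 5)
The paper itself disposes of this proposition by citation (part (i) to Theorem 2.1 and Lemma 2.2 of [GMT], parts (ii) and (iii) to [T1] and [MT, \S 5]), so your plan of actually computing with the exact value formula is a genuinely different route. Granting the formula $\om_n(g)=\eta_\psi(g)\,\tau_\psi^{d(g)}$ with $\eta_\psi(g)\in\{\pm 1\}$, your treatment of (i) and (ii) is correct — though note that this exact-value formula is due to G\'erardin/Thomas/Prasad rather than [GMT] (which only gives the absolute values), and that in \eqref{zeta1} the exponent in the prefactor is $\dim A=2n$, so $\zeta_A(g)=(-q)^{d(g)}$ on $\Sp_{2n}(q)$ and there is no residual global sign $(-1)^n$ to ``pin down'' at $g=1$; as you wrote it, $(-1)^n(-q)^{2n}=q^{2n}$ is false for odd $n$.

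The genuine gap is in part (iii), where the unexamined factor $\eta_\psi$ is the entire content. Your first claim correctly flags that the choice between $\om_n^m$ and $\om_n^{m-1}\oms_n$ is governed by the discriminant of $B$, but this cannot be checked while $\eta_\psi$ is treated as an abstract sign: in the exact formula it is a Legendre symbol of a discriminant built from the symplectic form (on $W$, not just from the linear action), and that is precisely where $\mathrm{disc}(B)$ enters. More seriously, your argument for $\om|_{\SO(B)}=(\tau_B)^n$ — that the exponent $2n\dim\Ker(h-1_B)$ is even, so ``the Weil sign and $\eps^*$ wash out'' — proves too much: the exponent is equally even for $h\in\GO(B)\setminus\SO(B)$, where the conclusion is false; the correct value on all of $\GO(B)$ carries an extra factor $\left(\frac{\det h}{q}\right)^n$, as one sees already for $h=-1$ acting on a $1$-dimensional $B$, where the value is $(-1)^{(q-1)/2}$ rather than $1$. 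The hypothesis $\det h=1$ must therefore be used, and the evenness argument never invokes it. A clean repair is to realize $\om$ in the Schr\"odinger model attached to the Lagrangian $X\otimes B$ of $W$, where $X\subset A$ is Lagrangian: an element $1_A\otimes h$ lies in the Siegel parabolic and acts on functions on $Y\otimes B$ by the quadratic character $\left(\frac{\det(1_Y\otimes h)}{q}\right)=\left(\frac{\det h}{q}\right)^n$ times the permutation action, so its trace is $\left(\frac{\det h}{q}\right)^n q^{\,n\dim\Ker(h-1_B)}$, which collapses to $\tau_B(h)^n$ exactly on $\SO(B)$; the same model also makes the $\mathrm{disc}(B)$-dependence in the first claim explicit. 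As it stands, the sketch asserts rather than establishes the factorization of the Weil sign, and gives an incorrect reason at the one point where the $\SO$ hypothesis is essential.
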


\begin{proof}
For (i), see Theorem 2.1 and Lemma 2.2 of \cite{GMT}. Parts (ii) and (iii) were proved in \cite{T1}, see also \cite[\S5]{MT}. 
\end{proof}

\begin{defi}\label{def-level}
{\em 
\begin{enumerate}[\rm(i)]
\item Let $q$ be an {\em odd} prime power and $G = \Sp_{2n}(q)$. The {\it level} $\cl(\chi)$ of an irreducible 
character $\chi \in \Irr(G)$ is defined to be the smallest non-negative integer $k$ such that $\chi$ is an irreducible 
constituent of $(\om_n+\oms_n)^k$.
\item Let $\Om(A) \leq G \leq \GO(A)$ with $A = \FF_q^n$, or $G = \Sp(A)$ with $A = \FF_q^{2n}$ and $2|q$. The {\it level} $\cl(\chi)$ of an irreducible character $\chi \in \Irr(G)$ is defined to be the smallest non-negative integer $k$ such that $\chi$ is an irreducible 
constituent of $(\tau_A+\zeta_A)^k$.
\end{enumerate}
}
\end{defi}

\begin{lemma}\label{even}
Let $G = \SO(A) = \SO^\pm_n(q)$ if $2 \nmid q$ and $G = \Omega(A) = \Omega^\pm_{2m}(q)$ if $2|q$. 
Then $\zeta_A = \tau_A$ on $G$. 
\end{lemma}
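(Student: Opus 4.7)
The plan is to reduce the identity $\zeta_A=\tau_A$ on $G$ to a single assertion about the parity of $\dim_{\FF_q}\Im(g-1_A)$, then to invoke one classical fact in each characteristic.

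First, comparing \eqref{tau1} and \eqref{zeta1} directly, and using $\dim\Im(g-1_A)=n-\dim\Ker(g-1_A)$, one has
\[
\zeta_A(g)=(-1)^n(-q)^{\dim\Ker(g-1_A)}=(-1)^{n-\dim\Ker(g-1_A)}q^{\dim\Ker(g-1_A)}=(-1)^{\dim\Im(g-1_A)}\,\tau_A(g).
\]
Thus the lemma is equivalent to showing that $\dim\Im(g-1_A)$ is even for every $g\in G$.

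In odd characteristic, I would cite the classical determinant--parity formula
\[
\det(g)=(-1)^{\dim\Im(g-1_A)},\qquad g\in\GO(A),
\]
which follows, for instance, from the Cartan--Dieudonn\'e decomposition of $g$ into reflections (each reflection has image of $g-1_A$ of dimension one and determinant $-1$, and the parity of the minimal length of such a decomposition matches $\dim\Im(g-1_A)$, even in the exceptional case where one needs $\dim\Im(g-1_A)+2$ factors). Since $\det(g)=1$ for $g\in G=\SO(A)$, the required parity follows. In characteristic $2$, I would instead invoke the Dickson invariant $D\colon\GO(A)\to\FF_2$, defined by $D(g)\equiv\dim\Im(g-1_A)\pmod 2$, whose kernel is $\Omega(A)=G$ by definition; hence $\dim\Im(g-1_A)$ is even on $G$. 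Combining the two cases with the sign reduction above yields the lemma.

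There is no real obstacle here; the only step requiring any care is the correct attribution of the determinant--parity formula in odd characteristic (whose proof ultimately uses that even-size Jordan blocks at eigenvalue $1$ of an orthogonal element appear with even multiplicity over $\bar\FF_q$), but this is completely standard, and modulo that input the proof is just the one-line reduction above.
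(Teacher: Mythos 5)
Your proof is correct, and it pivots on the same reduction as the paper: by \eqref{tau1} and \eqref{zeta1}, the lemma is exactly the statement that $\dim\Im(g-1_A)$ is even for all $g\in G$, equivalently $\dim\Ker(g-1_A)\equiv n \pmod 2$. Where you differ is in how that parity is discharged. In odd characteristic the paper argues directly: writing $g=su$, it decomposes $A$ orthogonally into the $\pm1$-eigenspaces of $s$ and the rest, uses $\det g=1$ to force $\dim\Ker(s+1_A)$ to be even, and then invokes (via \cite{C}) that even-size Jordan blocks of the unipotent part on $\Ker(s-1_A)$ occur with even multiplicity. You instead quote the classical identity $\det(g)=(-1)^{\dim\Im(g-1_A)}$ on $\GO(A)$, which indeed follows from the sharp (Scherk) form of Cartan--Dieudonn\'e; note that the crude form, which gives no control of the parity of the number of reflections relative to $\rank(g-1_A)$, would not suffice, but with the sharp count (including the $+2$ in the totally isotropic case) the parity comes out right, and this packages the paper's Jordan-block computation into one standard statement -- the reflection-count proof even bypasses Jordan blocks entirely. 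In characteristic $2$ the two arguments are the same fact in different clothing: the paper cites \cite{GT2} for $2\mid\dim\Ker(g-1_A)$ on $\Omega(A)$, while you use the rank-parity description of the Dickson invariant. One small caveat there: $\Omega(A)$ is not usually \emph{defined} as the kernel of $g\mapsto \rank(g-1_A)\bmod 2$; one defines the Dickson invariant via the Clifford algebra (or Wall form) and a theorem of Dieudonn\'e (see, e.g., Taylor's book) identifies it with $\rank(g-1_A)\bmod 2$ -- equivalently, one must know that rank mod $2$ is a homomorphism. So that step is a citation of a genuine theorem rather than a tautology, but with that reference supplied your argument is complete and slightly more self-contained in flavor than the paper's.
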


\begin{proof}
Let $d(g) := \dim\Ker(g-1_A)$ for any $g \in G$. Also, let $s$ denote the semisimple part of $g$. First we consider the 
case $2 \nmid q$. Then we can decompose the $s$-module $A$ as orthogonal sum $A_+ \oplus A_- \oplus A_0$, 
where $A_+ = \Ker(s-1_A)$, $A_- = \Ker(s+1_A)$, and $s$ has no eigenvalue $1$ or $-1$ on $A_0$. 
Since $g \in \SO(A)$, $\dim A_-$ and $\dim A_0$ are both even, whence $d \equiv n (\mod 2)$ for  
$d:=\dim A_+$. Using the description of unipotent elements in \cite[\S13.4]{C}, one can show that 
the total number $e$ of Jordan blocks of of the unipotent element $g|_{A_+}$ is congruent to $d$ modulo $2$.  
Since $d(g) = e$, we conclude that $d(g) \equiv n (\mod 2)$ and so $\tau_A(g) = \zeta_A(g)$ by \eqref{tau1} and 
\eqref{zeta1}, as stated. In the case $2|q$ we have $2|d(g)$ for all $g \in \Omega(A)$ by \cite[Lemma 5.8(ii)]{GT2},
and so we are done. 
\end{proof}

\begin{lemma}\label{level-range}
The following statements hold.
\begin{enumerate}[\rm(i)]
\item Let $q$ be an odd prime power and $G = \Sp_{2n}(q)$. Then $0 \leq \cl(\chi) \leq 2n+1$ for any $\chi \in \Irr(G)$.
\item Let $2|q$ and $G = \Sp_{2m}(q)$. Then $0 \leq \cl(\chi) \leq m+1$ for any $\chi \in \Irr(G)$.
\item Let $q$ be any prime power, $A = \FF_q^n$ be endowed with a non-denegerate quadratic form as above, 
$\Omega(A) \lhd G \leq \GO(A)$, and $\chi \in \Irr(G)$. Then $0 \leq \cl(\chi) \leq \lfloor n/2 \rfloor+1$.
Moreover, $0 \leq \cl(\chi) \leq \lfloor n/2 \rfloor$ if $G \leq \SO(A)$ when $2 \nmid q$ and $G = \Omega(A)$ when $2|q$.
\end{enumerate}
\end{lemma}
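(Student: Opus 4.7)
The plan is to apply Lemma \ref{value} in each case to a carefully chosen generalized character $\Theta$ of $G$: take $\Theta = \om_n + \oms_n$ in (i), and $\Theta = \tau_A + \zeta_A$ in (ii) and (iii). Each $\Theta$ is already an honest character. It suffices to show $\Theta(g) \neq \Theta(1)$ for every $g \neq 1$ and to upper bound the number $N$ of distinct values of $\Theta$; Lemma \ref{value} will then force $\cl(\chi) \leq N-1$ for every $\chi \in \Irr(G)$. In all three cases, $\Theta(g)$ is controlled entirely by $d(g) := \dim_{\FQ}\Ker(g-1_A)$.

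For part (i), the key input is the uniform identity $(\om_n+\oms_n)^2 = 2(\tau_A + \zeta_A)$ coming from Proposition \ref{weil-sp}(ii), valid in both residues of $q$ modulo $4$. Since $\dim_{\FQ}A = 2n$ is even, $\zeta_A(g) = (-1)^{d(g)}q^{d(g)}$, and hence
$$\Theta(g)^2 \;=\; 2q^{d(g)}\bigl(1 + (-1)^{d(g)}\bigr),$$
which is $0$ when $d(g)$ is odd and $4q^{d(g)}$ when $d(g)$ is even. Thus $\Theta(g) \in \{0\}\cup\{\pm 2q^{d/2} : d \text{ even},\ 0\leq d \leq 2n\}$; moreover $d(g)=2n$ forces $g=1$, so $|\Theta(g)| \leq 2q^{n-1} < 2q^n = \Theta(1)$ for all $g \neq 1$. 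The value set thus has cardinality at most $1 + 2n + 1 = 2n+2$, yielding $\cl(\chi)\leq 2n+1$.

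Parts (ii) and (iii) proceed analogously but without the squaring. Here $\Theta(g) = q^{d(g)}\bigl(1 + (-1)^{d(g)+\dim A}\bigr)$, which vanishes unless $d(g) \equiv \dim A \pmod 2$ and otherwise equals $2q^{d(g)}$. A direct count gives at most $m+2$ distinct values in (ii), hence $\cl(\chi)\leq m+1$, and at most $\lfloor n/2\rfloor + 2$ in (iii), hence $\cl(\chi)\leq \lfloor n/2\rfloor +1$; the separation $\Theta(g)\neq\Theta(1)$ for $g\neq 1$ again follows from $d(g) < \dim A$. For the ``moreover'' clause, Lemma \ref{even} gives $\zeta_A = \tau_A$ on $G$, so $\Theta|_G = 2\tau_A$ has no zero values; combined with the parity restriction $d(g) \equiv n \pmod 2$ for all $g \in G$ (in odd characteristic from the $\det = 1$ and Jordan-block-parity analysis in the proof of Lemma \ref{even}, and in even characteristic from $2\mid d(g)$ together with $n$ necessarily even), the value set shrinks to $\lfloor n/2\rfloor + 1$ elements, improving the bound to $\cl(\chi) \leq \lfloor n/2\rfloor$.

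I do not expect a substantive obstacle: this is a direct application of Lemma \ref{value}, and the uniform formula $(\om_n+\oms_n)^2 = 2(\tau_A + \zeta_A)$ from Proposition \ref{weil-sp}(ii) cleanly sidesteps any case split on $q \bmod 4$ in part (i). The one mildly delicate step is the parity restriction on $d(g)$ used in the ``moreover'' clause, but that is already present in the proof of Lemma \ref{even}.
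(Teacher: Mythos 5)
Your proposal is correct and follows essentially the same route as the paper: apply Lemma \ref{value} to $\Theta=\om_n+\oms_n$ (using $\Theta^2=2(\tau_A+\zeta_A)$ from Proposition \ref{weil-sp}(ii)) in case (i) and to $\Theta=\tau_A+\zeta_A$ in cases (ii) and (iii), count the possible values determined by $d(g)$, and invoke Lemma \ref{even} to drop the value $0$ for the ``moreover'' clause.
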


\begin{proof}
For (i), we apply Lemma \ref{value} to the character $\Theta:= \om_n+\oms_n$ of $G = \Sp(A)$. By 
Proposition \ref{weil-sp}(ii), $\Theta^2 = 2(\tau_A+\zeta_A)$. It follows that $\Theta(g) = 0$ if $2 \nmid d(g)$, and 
$\Theta(g) = \pm 2q^{d(g)/2}$ if $2|d(g)$, where $d(g):=\dim\Ker(g-1_A)$ for any $g \in G$. Thus 
$$\Theta(g) \in \{0,\pm 2,\pm 2q, \ldots ,\pm 2q^{n-1},2q^n\},$$
and moreover $\Theta(g) = \Theta(1) = 2q^n$ if and only if $g = 1$. Hence the statement follows.

\smallskip
For (ii) and (iii), we apply Lemma \ref{value} to the character $\Theta:= \tau_A+\zeta_A$, where $A = \FF_q^n$ is the natural module
for $G$. Note that
$$\Theta(g) \in \{0,2,2q^2, 2q^4,\ldots ,2q^{2m-2},2q^{2m}\}$$
if $n = 2m$ is even, and 
$$\Theta(g) \in \{0,2q,2q^3, \ldots ,2q^{2m-1},2q^{2m+1}\}$$
if $n = 2m+1$ is odd. Moreover, $\Theta(g) = \Theta(1) = 2q^n$ if and only if $g = 1$.
Furthermore, if we assume $G \leq \SO(A)$ when $2 \nmid q$ and $G = \Om(A)$ when $2|q$, then 
$\Theta(g) = 2\tau_A(g) \neq 0$ by Lemma \ref{even}.
Hence the statements follow.
\end{proof}

Abusing the language, we will say that a character $\al$ of a finite group $G$ {\it contains} another character $\beta$ of 
$G$, if $\al-\beta$ is zero or a character of $G$.
Now we can prove the following three key lower bounds on the degree of any irreducible character of given level:

\begin{theor}\label{sp-bound1}
Let $G=G_n := \Sp_{2n}(q)$ with $q$ a fixed odd prime power. For any $k \in \ZZ_{\geq 1}$, define
$$\blc(n,k) := q^{nk-k(k+1)/2}\biggl(\frac{q-1}{2}\biggr)^k.$$
Suppose $\chi \in \Irr(G)$ and $\chi(1) < \blc(n,k)$. Then $\cl(\chi) \leq 3(k-1)$.
\end{theor}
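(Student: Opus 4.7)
The plan is to prove the contrapositive: if $\cl(\chi) \geq 3k-2$, then $\chi(1) \geq \blc(n,k)$. A useful first observation is the multiplicative structure
$$\blc(n,k) \;=\; \prod_{j=1}^k \frac{q^{n-j}(q-1)}{2}, \qquad \frac{\blc(n,k)}{\blc(n,k-1)} = \frac{q^{n-k}(q-1)}{2},$$
which suggests an induction on $k$ in which each step adds three units to the symplectic level and multiplies the degree lower bound by $q^{n-k}(q-1)/2$.

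The base case $k=1$ asserts that every $\chi \in \Irr(\Sp_{2n}(q))$ with $\cl(\chi) \geq 1$ has degree at least $q^{n-1}(q-1)/2$. Here $\cl(\chi)=0$ forces $\chi=1_G$ (since $\Theta^0=1_G$), and the smallest nontrivial irreducible degree $(q^n-1)/2$, attained by an irreducible Weil character, exceeds $q^{n-1}(q-1)/2$; this is the Landazuri--Seitz / Tiep--Zalesski minimal-degree bound.

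For the inductive step, I would invoke the dual pair \eqref{pairs}(a) with $(G,S)=(\Sp_{2n}(q), \GO^{\eps}_m(q))$ and $m=3k-2$, embedded in $\Sp(W)=\Sp_{2nm}(q)$ via $W=A\otimes B$. By Proposition \ref{weil-sp}(iii) the Weil character $\om$ restricts to $G$ as $\om_n^m$ or $\om_n^{m-1}\oms_n$, so as $\eps$ and $\om$ vs.\ $\oms$ vary, every character of $G$ of level exactly $m$ appears in some $\om|_G$; via the decomposition $\om|_{G\times S} = \sum_{\al\in\Irr(S)} D_\al \otimes \al$ of \eqref{dual1}, such a $\chi$ is a constituent of some $D_\al$. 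On the $S$-side, Proposition \ref{weil-sp}(iii) gives $\om|_{\SO(B)} = \tau_B^n$, so Lemma \ref{orbits2} (applied with $\eps=+1$ and $j=n$) bounds $[\om|_{\SO(B)},\om|_{\SO(B)}]_{\SO(B)}$, the number of $\SO(B)$-orbits on $B^n$, by $6q^{n(n+1)/2}$; combined with Lemma \ref{mults} this controls the multiplicity $[D_\al,\chi]$. Combining the inequality $\chi(1)[D_\al,\chi] \leq D_\al(1)$ with the identity $\sum_\al D_\al(1)\al(1)=q^{nm}$ and a lower bound on $\al(1)$ coming from the orthogonal analogue of the theorem (Theorem \ref{main3}, whose analogue in this paper bounds orthogonal character degrees from below in terms of orthogonal level) yields the required lower bound on $\chi(1)$.

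The main obstacle is converting the dual-pair degree identity into the sharp constant $\blc(n,k)$ rather than a weaker polynomial bound. This requires pinning down the theta partner $\al$ precisely via Howe duality for the finite dual pair $(\Sp_{2n}(q), \GO^{\eps}_m(q))$, controlling both the reducibility of $D_\al$ and the multiplicity $[D_\al,\chi]$, and matching the symplectic level to an appropriate orthogonal level on the $S$-side. The specific constant $3$ in ``level $\leq 3(k-1)$'' presumably reflects the optimal block size: each unit of $\dim B$ contributes one unit of symplectic level and a factor $q^n$ to the total $q^{nm}$, while increment $3$ is the smallest block that accommodates an orthogonal Weil-type character on $S$ together with its ``next-level'' counterpart, producing the exact factor $q^{n-k}(q-1)/2$ at each inductive step.
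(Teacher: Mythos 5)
Your base case matches the paper's, but the inductive/dual-pair step has a genuine gap: there is no mechanism in your sketch that actually produces a \emph{lower} bound on $\chi(1)$ from the hypothesis $\cl(\chi)\geq 3k-2$. The inequalities you list run the wrong way: $\chi(1)[D_\al,\chi]_G \leq D_\al(1)$ bounds $\chi(1)$ \emph{above} by (a piece of) $D_\al(1)$, and the identity $\sum_\al D_\al(1)\al(1)=q^{nm}$ only bounds $D_\al(1)$ above; neither, even combined with bounds on $[\om|_{\SO(B)},\om|_{\SO(B)}]$, prevents $\chi$ from being a small constituent of a large $D_\al$. Making this work would require a precise Howe-type correspondence (injectivity/irreducibility statements for the theta lift and control of which $\al$ pairs with $\chi$), which is exactly what is not available here; the paper's own dual-pair statement in this direction (Proposition \ref{sp-dual}) only proves the \emph{existence} of level-$m$ constituents, and only in the stable range $n\geq m(m-1)+3$. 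Since the theorem must cover $k$ (hence $m=3k-2$) of order $n$ — note $\blc(n,k)$ is meaningful up to $k\approx n$, where it is about $q^{n^2/2}$ — the stable-range dual-pair setup cannot reach the full statement. Two further soft spots: $\om|_G$ is only $\om_n^m$ or $\om_n^{m-1}\oms_n$ (Proposition \ref{weil-sp}(iii)), so "every character of level exactly $m$ appears in some $\om|_G$'' does not follow from the definition of level via $(\om_n+\oms_n)^m$, whose constituents live in arbitrary mixed products $\om_n^j(\oms_n)^{m-j}$; and invoking "the orthogonal analogue of the theorem'' to lower-bound $\al(1)$ is circular in spirit, since those bounds (Theorem \ref{main3}) are deduced from the very statements being proved.

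The paper's proof is quite different and more elementary: it inducts on $k$ by restricting $\chi$ to $P'=\Stab_G(v)=Q\rtimes H$ with $Q$ of extraspecial type and $H\cong\Sp_{2n-2}(q)$. Clifford theory for $\ZB(Q)$ writes the $\lambda$-eigenspace of $X$ as $M_\lam\otimes Y$ with $\dim M_\lam=q^{n-1}$, and the $P$-orbit of $\lam$ of length $(q-1)/2$ forces $\dim Y<\blc(n-1,k-1)$, so the induction hypothesis applies to constituents $\psi$ of $Y$. The branching rule for Weil characters (\cite[Proposition 2.2(iii)]{TZ2}) shows $(\om_n+\oms_n)|_{P'}$ contains $\mu+\nu$ with $\nu|_H=\om_{n-1}+\oms_{n-1}$, so $\mu\psi$ sits in $(\om_n+\oms_n)^{3k-5}|_{P'}$; Frobenius reciprocity plus $\Ind^G_{P'}(1_{P'})\subseteq\tau_A\subseteq(\om_n+\oms_n)^2$ then gives $\cl(\chi)\leq 3k-3$. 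In particular the constant $3$ arises as $1+2$ (one level for $\mu$, two for absorbing $\Ind^G_{P'}(1_{P'})$), not from a block-size optimization on the orthogonal side. If you want to salvage a dual-pair route you would need genuinely new input (eta-correspondence-type theorems valid far outside the stable range), whereas the parabolic-restriction induction closes with only the cited branching facts.
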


\begin{proof}
We proceed by induction on $k$. If $k =1$, then 
$$\chi(1) < q^{n-1}(q-1)/2 \leq (q^n-1)/2,$$
hence $\chi = 1_G$ (see e.g. \cite[Theorem 1.1]{TZ1}) and $\cl(\chi) = 0$. 

For the induction step, we assume $k \geq 2$. If $n = 1$, then 
$$\chi(1) < \blc(1,k) < (q-1)/2,$$
again forcing $\chi = 1_G$ and $\cl(\chi) = 0$. So we will assume $n \geq 2$.
Consider the natural module $A = \FF_q^{2n}$ for $G$ so that $G = \Sp(A)$, and the stabilizer 
$P'=Q \rtimes H$ of a nonzero $v \in A$, with $Q$ a group of extraspecial type of order $q^{2n-1}$ and 
$H \cong G_{n-1}$. We may assume that $\chi \neq 1_G$ and $\chi$ is afforded by a $\CC G$-module $X$.
Since $\ZB(Q) \not\leq \ZB(G)$, 
the $\lam$-eigenspace $X_\lam$ of $\ZB(Q)$ on $X$ is nonzero for some nontrivial linear character $\lam$ of $\ZB(Q)$.
As shown in \cite[\S5]{GMST} and \cite[\S2]{MT}, the $P'$-module $X_\lam$ is isomorphic to $M_\lam \otimes Y$, where 
\begin{enumerate}[\rm(a)]
\item the $P'$-module $M_\lam$, with character say $\mu$, is irreducible over $Q$;
\item $Q$ acts trivially on the $P'$-module $Y$.
\end{enumerate} 
As $Q$ is normalized by $P:= \Stab_G(\langle v \rangle_{\FF_q})$ and the $P$-orbit of $\lam$ has length $(q-1)/2$, we see that
$$\dim Y \leq \frac{\dim X}{(\dim M_\lam)\cdot(q-1)/2} < \frac{\blc(n,k)}{q^{n-1}\cdot(q-1)/2} = \blc(n-1,k-1).$$
By the induction hypothesis, if $\psi$ is an irreducible constituent of the $H$-character afforded by $Y$, then $\cl(\psi) \leq 3(k-2)$.
Thus $\psi$ is contained in $(\om_{n-1}+\oms_{n-1})^{3k-6}$ by Definition \ref{def-level}(i).
Next, by \cite[Proposition 2.2(iii)]{TZ2} we have that $(\om_n+\oms_n)|_{P'}$ contains 
$\nu+\mu$, where $Q \leq \Ker(\nu)$ and $\nu|_H = \om_{n-1}+\oms_{n-1}$. It follows that 
$(\om_n+\oms_n)^{3k-5}|_{P'}$ contains $\mu\nu^{3k-6}$, which in turns contains $\mu\psi$. We have shown that
$$[\chi|_{P'},(\om_n+\oms_n)^{3k-5}|_{P'}]_{P'} > 0.$$ 
This implies by Frobenius' reciprocity that $\chi$ is contained in 
$$\Ind^G_{P'}\biggl(\bigl((\om_n+\oms_n)^{3k-5}\bigr)|_{P'} \biggr) = (\om_n+\oms_n)^{3k-5} \cdot \Ind^G_{P'}(1_{P'}).$$
Recalling that $P'=\Stab_G(v)$, we see that $\Ind^G_{P'}(1_{P'})$ is contained in the permutation character $\tau_A$ of $G$ 
acting on the point set of $A$, and the latter character is contained in $(\om_n+\oms_n)^2$ by Proposition \ref{weil-sp}(ii). Consequently,
$\chi$ is contained in $(\om_n+\oms_n)^{3k-3}$, i.e. $\cl(\chi) \leq 3k-3$, and the induction step is completed.
\end{proof}

\begin{theor}\label{sp-bound2}
Let $G:= \Sp_{2n}(q)$ with $q= 2^f$ and $n \geq 2$. For any $k \in \ZZ_{\geq 1}$, define
$$\blc'(n,k) := q^{2nk-k(2k+1)}\biggl(\frac{(q-1)^2}{2}\biggr)^k.$$
Suppose $\chi \in \Irr(G)$ and $\chi(1) < \blc'(n,k)$. Then $\cl(\chi) \leq 3(k-1)$.
\end{theor}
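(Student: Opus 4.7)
The plan is to prove Theorem \ref{sp-bound2} by induction on $k$, closely paralleling the proof of Theorem \ref{sp-bound1} but adjusted for even characteristic.

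\emph{Base case} ($k=1$). If $\chi(1) < \blc'(n,1) = q^{2n-3}(q-1)^2/2$, then known lower bounds on the smallest non-trivial irreducible character degrees of $\Sp_{2n}(q)$ for $q$ even and $n \ge 2$ (from \cite{GMST} and \cite{TZ1}) force $\chi = 1_G$, so $\cl(\chi) = 0 = 3(k-1)$.

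\emph{Inductive step} ($k \ge 2$). Assume $\chi \ne 1_G$, and pick a nonzero $v$ in the natural symplectic module $A = \FF_q^{2n}$. Set $P := \Stab_G(\langle v \rangle_{\FF_q}) = Q \rtimes L$ with $L = \GL_1(q) \times \Sp_{2n-2}(q)$, and $P' := \Stab_G(v) = Q \rtimes H$ with $H \cong \Sp_{2n-2}(q)$, so $|Q| = q^{2n-1}$. The essential new feature of even characteristic is that, by the Chevalley commutator formulas (with structure constant $\pm 2 \equiv 0 \pmod 2$ for the bracket of two short root subgroups yielding a long root), all root subgroups in $Q$ commute, so $Q$ is \emph{elementary abelian} of order $q^{2n-1}$ rather than extraspecial. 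Decomposing the $\CC G$-module $X$ affording $\chi$ under the $Q$-action yields $X = \bigoplus_{\mu \in \hat Q} X_\mu$. Since $\chi \ne 1_G$ and $G$ is generated by $G$-conjugates of the long-root subgroup contained in $Q$, some non-trivial $\mu \in \hat Q$ satisfies $X_\mu \ne 0$. As $L$-modules, $\hat Q \cong \FF_q \oplus \FF_q^{2n-2}$, where $\GL_1(q)$ acts by $\alpha \mapsto \alpha^2$ on the long-root summand and by $\alpha$ on the natural $\Sp_{2n-2}(q)$-summand; a careful analysis of $P$-orbits on $\hat Q$, with $\mu$ chosen to have the correct support, yields an orbit length whose contribution is precisely the factor $(q-1)^2/2$ appearing in $\blc'(n,k)$, giving $\dim X_\mu < \blc'(n-1,k-1)$. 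By the induction hypothesis, each $H$-irreducible constituent $\psi$ of $X_\mu$ (suitably inflated from $\Stab_H(\mu)$ to $H$) satisfies $\cl(\psi) \le 3(k-2)$, so $\psi$ is contained in $(\tau_B + \zeta_B)^{3(k-2)}$ where $B := \FF_q^{2n-2}$.

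To conclude, I would establish a characteristic-two analog of \cite[Proposition~2.2(iii)]{TZ2}: the restriction $(\tau_A + \zeta_A)|_{P'}$ contains a summand of the form $\mu \cdot \nu$ with $Q \le \Ker(\nu)$ and $\nu|_H = \tau_B + \zeta_B$. Combining this with Frobenius reciprocity, the permutation-character identity $\tau_A = 1_G + \Ind^G_{P'}(1_{P'})$, and the fact that $\tau_A$ is contained in $(\tau_A + \zeta_A)^2$ (cf. the argument in Theorem \ref{sp-bound1}), one deduces that $\chi$ is a constituent of $(\tau_A + \zeta_A)^{3k-3}$, so $\cl(\chi) \le 3(k-1)$. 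The main obstacle is expected to be the $L$-orbit analysis on $\hat Q$: unlike the odd case, where one exploits the one-dimensional centre of an extraspecial $Q$ and obtains the clean factor $(q-1)/2$ from the squaring map $\alpha \mapsto \alpha^2$, here $Q$ is abelian and one must track orbits in the full $(2n-1)$-dimensional $\hat Q$, identifying the right orbit yielding the factor $(q-1)^2/2$ while arranging that the stabilizer $\Stab_H(\mu)$ behaves well enough for the inductive step to go through.
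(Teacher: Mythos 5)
Your structural observation that in characteristic $2$ the unipotent radical $Q$ of the stabilizer of a vector is elementary abelian of order $q^{2n-1}$ is correct, but this very feature breaks your inductive scheme, and it is exactly why the paper abandons the first parabolic (used in Theorem \ref{sp-bound1}) and works with $P_2=\Stab_G(\langle e_1,e_2\rangle_{\FF_q})$ instead. Since your $Q$ is abelian, every constituent $\mu$ of $\chi|_Q$ is linear, and Clifford theory attaches $\chi$ to an irreducible character of $\Stab_{P'}(\mu)=Q\rtimes \Stab_H(\mu)$. The only thing you can force is that some $\mu$ is nontrivial on the long-root subgroup $Z\leq Q$, and for such $\mu$ the stabilizer $\Stab_H(\mu)$ is an orthogonal group $\GO^{\pm}_{2n-2}(q)$: the characters of $Q$ lying over a fixed nontrivial character of $Z$ correspond to quadratic forms polarizing to the symplectic form on $\FF_q^{2n-2}$, and the two $H$-orbits have lengths $q^{n-1}(q^{n-1}\pm 1)/2$ with orthogonal stabilizers. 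So $X_\mu$ is not an $H$-module, ``inflating from $\Stab_H(\mu)$ to $H$'' is not a defined operation, and the induction hypothesis (which concerns symplectic groups only) cannot be applied; you would need a simultaneous induction involving orthogonal groups, which you have not set up. Your stated source of the factor $(q-1)^2/2$ is also wrong: in characteristic $2$ the action $\alpha\mapsto\alpha^2$ of $\GL_1(q)$ on the characters of $Z$ is bijective, so it yields an orbit of length $q-1$, with no halving; and the bookkeeping does not close in any case, since $\blc'(n,k)$ divided by $(q-1)^2/2$ is $q^{2n-2k-1}\blc'(n-1,k-1)$, not $\blc'(n-1,k-1)$ --- in the odd-characteristic proof the missing power of $q$ came from the degree $q^{n-1}$ of the extraspecial module $M_\lam$, and with an abelian $Q$ there is no such factor. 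Finally, the ``characteristic-two analog of \cite[Prop.\ 2.2(iii)]{TZ2}'' that you postulate is precisely the nontrivial content that would have to be proved; it cannot be taken for granted.

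For contrast, the paper's proof takes $P=P_2=Q\rtimes L$ with $L\cong\GL_2(q)\times\Sp_{2n-4}(q)$, where $\ZB(Q)$ is elementary abelian of order $q^3$ and $[Q,Q]$ of order $q$. The $L$-orbit $\OC^-_2$ on $\Irr(\ZB(Q))$, of length $q(q-1)^2/2$, is the true origin of the factor $(q-1)^2/2$, and \cite[Lemma 9.2]{GT2} supplies a character $\mu$ of $P'=Q\rtimes H$, $H\cong\Sp_{2n-4}(q)$, of degree $q^{2n-4}$ lying over $\lam\in\OC^{\pm}_2$, so that Gallagher's theorem produces a constituent $\mu\psi$ with $\psi$ a character of the smaller symplectic group; the induction then proceeds with step $n\mapsto n-2$, the ratio $\blc'(n,k)/\blc'(n-2,k-1)=q^{2n-3}(q-1)^2/2$ matching $q^{2n-4}\cdot q(q-1)^2/2$ exactly. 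The final containment is obtained from Harish--Chandra restriction computations in \cite{GT2} (in place of your unproven analog of \cite[Prop.\ 2.2(iii)]{TZ2}), and small cases ($n\le 5$ for $k\ge 3$, and $(n,q)=(4,2),(5,2)$, etc.) are handled separately, a point your proposal also omits.
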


\begin{proof}
(i) We proceed by induction on $k$. If $k =1$ and $n \geq 2$, then 
$$\chi(1) < \blc'(n,1) \leq \frac{(q^n-1)(q^n-q)}{2(q+1)} = \dl(G),$$
by \cite[Theorem 1.1]{TZ1}, and so $\chi = 1_G$ and $\cl(\chi) = 0$. For the purposes of the inductive proof, we also
observe that $\blc'(1,1) = (q-1)^2/2q < q-1 = \dl(\Sp_2(q))$, and so the statement also holds for $n=1$.  

\smallskip
(ii) For the induction step, we assume $k \geq 2$. If $n = 2$, then 
$\cl(\chi) \leq 3 \leq 3(k-1)$ by Lemma \ref{level-range}(ii), and so we are done. Likewise, if 
$k \geq 3$ and $n \leq 5$, then $\cl(\chi) \leq 6 \leq 3(k-1)$ by Lemma \ref{level-range}(ii), and we are again done.
If $(q,k,n) = (2,2, \leq 5)$, then 
$$\chi(1) < \blc'(n,2) = 2^{4n-12},$$
and using \cite{GAP} one can check that $\cl(\chi) \leq 1$. If $(q,k,n) = (\geq 4,2,3)$, then
$$\chi(1) < \blc'(3,2) = q^2(q-1)^4/4,$$
and so $\cl(\chi) \leq 1$ by \cite[Theorem 6.1]{GT2}.
Hence, we may assume that 
\begin{equation}\label{for-n}
  n \geq 4,~(n,q) \neq (4,2),(5,2).
\end{equation}   

Consider the natural module $A = \FF_q^{2n}$ for $G$ so that $G = \Sp(A)$, 
with a symplectic basis 
$$(e_1,e_2, \ldots ,e_n,f_1, f_2, \ldots ,f_n),$$
and the stabilizer 
$P=P_2 :=\Stab_G(U) = Q \rtimes L$ of the isotropic subspace $U=\langle e_1,e_2 \rangle_{\FQ}$ of $A$, where 
$|Q| = q^{4n-5}$, $\ZB(Q) > [Q,Q]$, $\ZB(Q)$ elementary abelian of order $q^3$, $[Q,Q]$ elementary abelian of 
order $q$, and $L \cong \GL_2(q) \times \Sp_{2n-4}(q)$, see \cite[\S3]{GT2}. Moreover, each of the $q-1$ non-identity elements in
$[Q,Q]$ is conjugate to a (fixed) short-root element $\bolds \in G$, and $L$ acts on $\Irr(\ZB(Q))$ with four orbits:
$\{1_{\ZB(Q)}\}$, $\OC_1$ consisting of all $q^2-1$ nontrivial linear characters of $\ZB(Q)/[Q,Q]$, 
$\OC^+_2$ of length $q(q^2-1)/2$, and $\OC^-_2$ of length $q(q-1)^2/2$.

We may assume that $\chi \neq 1_G$ and $\chi$ is afforded by a $\CC G$-module $X$. Since $[Q,Q] \not\leq \ZB(G)$,
there must be some $\eps= \pm$ such that the $\lam$-eigenspace $X_\lam$ of $\ZB(Q)$ on $X$ is nonzero for some linear character 
$\lam \in \OC^\eps_2$ of $\ZB(Q)$. We also consider $P':=\Stab_G(e_1,e_2) = Q \rtimes H$ with $H \cong \Sp_{2n-4}(q)$.
In fact, we can write $U^\perp = U \oplus W$ such that $H = \Sp(W)$, where
$$W = \langle e_3,e_4, \ldots, e_n,f_3, f_4, \ldots ,f_n \rangle_{\FQ},$$
Recall the {\it linear-Weil characters} $\rho^1_n$, $\rho^2_n$, $\tau^i_n$, $1 \leq i \leq (q-2)/2$, and the 
{\it unitary-Weil characters} $\al_n$, $\beta_n$, $\zeta^j_n$,  $1 \leq j \leq q/2$, of $\Sp_{2n}(q)$, see \cite[Table I]{GT2}. In particular, 
$$\tau_A = 2 \cdot 1_G + \rho^1_n + \rho^2_n + 2\sum^{(q-2)/2}_{i=1}\tau^i_n,~
     \zeta_A = \al_n + \beta_n + 2\sum^{q/2}_{i=1}\zeta^i_n.$$
As shown in \cite[Lemma 9.2]{GT2}, there exists an irreducible $\CC G$-module $M$ of $G$ with the following properties:
\begin{enumerate}[\rm(a)]
\item $M$ affords 
the character $\rho^1_n$ of degree $(q^n+1)(q^n-q)/2(q-1)$ when $\eps = +$ and the character 
$\al_n$ of degree $(q^n-1)(q^n-q)/2(q+1)$ when $\eps = -$; and
\item the $\lam$-eigenspace $M_\lam$ of $\ZB(Q)$ in $M$ is a $P'$-module with character $\mu$ of degree $q^{2n-4}$, and
furthermore, $\mu|_Q$ is the unique irreducible character of $Q$ that lies above the character $\lam$ of $\ZB(Q)$. 
\end{enumerate}
It follows by Gallagher's theorem \cite[Corollary 6.17]{Is} that some irreducible constituent of the character of the 
$P'$-module $X_\lam$ can be written as $\mu\psi$, where $Q \leq \Ker(\psi)$ and $\psi|_H \in \Irr(H)$.
As $Q$ is normalized by $P$ and the $P$-orbit $\OC^\eps_2$ of $\lam$ has length at least $q(q-1)^2/2$, we see that
$$\psi(1) \leq \frac{\dim X}{\dim M_\lam \cdot |\OC^\eps_2|} < \frac{\blc'(n,k)}{q^{2n-4}\cdot q(q-1)^2/2} = \blc'(n-2,k-1).$$
By the induction hypothesis applied to the character $\psi|_H$ of $H = \Sp(W) \cong \Sp_{2n-4}(q)$, $\cl(\psi) \leq 3(k-2)$.
Thus $\psi|_H$ is contained in $(\tau_W+\zeta_W)^{3k-6}$ by Definition \ref{def-level}(ii). Also, as $\rho^1_n$ is contained in
$\tau_A$ and $\al_n$ is contained in $\zeta_A$, we also see that $\mu$ is contained in $(\tau_A+\zeta_A)|_{P'}$. 

\smallskip
(iii) In addition to $P=P_2$, we also consider the parabolic subgroup 
$$P_1 := \Stab_G(\langle e_1 \rangle_{\FQ} = Q_1 \rtimes L_1,$$
with Levi subgroup $L_1 = \Sp_{2n-2}(q) \times T_1$ fixing $\langle f_1 \rangle_{\FQ}$ and $T_1 \cong C_{q-1}$. 
Recall the assumption \eqref{for-n}, we then have by \cite[Proposition 7.4]{GT2} that
\begin{equation}\label{hc1}
  \SR^G_{L_1}(\al_n) = \al_{n-1} \otimes 1_{T_1},~\SR^G_{L_1}(\beta_n) = \beta_{n-1} \otimes 1_{T_1},~\SR^G_{L_1}(\tau^i_n) = 
    \tau^i_{n-1} \otimes 1_{T_1},
\end{equation}    
if $\SR^G_{L_1}$ denotes the Harish-Chandra restriction from $G$ to $L_1$. Applying the same statement to
$$K:= [L_1,L_1] = \Stab_G(e_1,f_1) \cong \Sp_{2n-2}(q),$$ 
we obtain
\begin{equation}\label{hc2}
  \SR^K_{L_{11}}(\al_{n-1}) = \al_{n-2} \otimes 1_{T_{11}},~\SR^K_{L_{11}}(\beta_{n-1}) = \beta_{n-2} \otimes 1_{T_{11}},
  ~\SR^K_{L_{11}}(\tau^i_{n-1}) = \tau^i_{n-2} \otimes 1_{T_{11}}.
\end{equation}      
Here, $L_{11} = \Sp_{2n_4}(q) \times T_{11} = \Stab_K(\langle f_2 \rangle_{\FQ}$ is a Levi subgroup of 
the parabolic subgroup $P_{11} := \Stab_K(\langle e_2 \rangle_{\FQ}) = Q_{11} \rtimes L_{11}$ of $K$ and $T_{11} \cong C_{q-1}$.

Consider any element $g \in Q$. Since $g(e_1) = e_1$, $g(e_2) = e_2$, and $g$ acts trivially on
$U^\perp/U$,  $g \in Q_1K$ and furthermore $g$ projects onto 
an element $h \in Q_{11}$ under $Q_K \to Q_1K/Q_1$. It follows that $g$ acts trivially on $\SR^K_{L_{11}}((\SR^G_{L_1}(Z))|_K)$ for any 
$G$-module $Z$. Hence, \eqref{hc1} and \eqref{hc2} imply that 
$(\SR^G_L(\al_n))|_H$ contains $\al_{n-2}$. Arguing similarly with $\beta_n$ and $\zeta^i_n$, we conclude that
$(\SR^G_L(\zeta_A))|_H$ contains $\zeta_W$. The same arguments, but using \cite[Proposition 7.9]{GT2} 
show that $(\SR^G_L(\tau_A))|_H$ contains $\tau_W$.

\smallskip
(iv) The results of (ii) and (iii) imply that 
$(\tau_A+\zeta_A)^{3k-5}|_{P'}$ contains $\mu\nu^{3k-6}$, which in turns contains $\mu\psi$. We have shown that
$$[\chi|_{P'},(\tau_A+\zeta_A)^{3k-5}|_{P'}]_{P'} > 0.$$ 
This implies by Frobenius' reciprocity that $\chi$ is contained in 
$$\Ind^G_{P'}\biggl(\bigl((\tau_A+\zeta_A)^{3k-5}\bigr)|_{P'} \biggr) = (\tau_A+\zeta_A)^{3k-5} \cdot \Ind^G_{P'}(1_{P'}).$$
Recalling that $P'=\Stab_G(e_1,e_2)$, we see that $\Ind^G_{P'}(1_{P'})$ is contained in the permutation character $(\tau_A)^2$ of $G$ 
acting on the point set of $A \times A$. Consequently,
$\chi$ is contained in $(\tau_A+\zeta_A)^{3k-3}$, i.e. $\cl(\chi) \leq 3k-3$, and the induction step is completed.
\end{proof}

\begin{theor}\label{so-bound}
Let $G:= \Omega^\eps_n(q)$ with $q = p^f$ a fixed power of a prime $p$, 
$\eps = \pm$, and $n \geq 6$. For any $k \in \ZZ_{\geq 1}$, define
$$\bld(n,k) := \left\{ \begin{array}{ll}q^{nk-2k(k+1)}(q-1)^k, & (n,k) \neq (8,2), (9,2),\\
                               q^4(q-1)^2/\gcd(2,q-1), & (n,k) = (8,2),\\
                               q^5(q^2-1)(q-1)/2, & (n,k) = (9,2). \end{array} \right.$$
Suppose $\chi \in \Irr(G)$ and $\chi(1) < \bld(n,k)$. Then $\cl(\chi) \leq 3(k-1)$.
\end{theor}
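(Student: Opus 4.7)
The plan is to adapt the inductive argument of Theorem \ref{sp-bound2} to the orthogonal setting, replacing the symplectic group $\Sp(A)$ by $\Omega^\eps(A)$ and the parabolic $P_2$ stabilizing a totally isotropic 2-space by its orthogonal analog. I proceed by induction on $k$. In the base case $k=1$, the bound $\bld(n,1) = q^{n-4}(q-1)$ is at most the minimum non-trivial irreducible character degree of $\Omega^\eps_n(q)$ for $n \geq 6$ by \cite[Theorem 1.1]{TZ1}, so $\chi = 1_G$ and $\cl(\chi) = 0$. Whenever $\lfloor n/2 \rfloor \leq 3(k-1)$, Lemma \ref{level-range}(iii) already forces $\cl(\chi) \leq 3(k-1)$, so I may assume $n$ is sufficiently large relative to $k$; a finite list of remaining small sporadic triples $(n,k,q)$ will be disposed of using \cite{GAP} and the low-level character tables in \cite{GT2}, exactly as in part (i) of the proof of Theorem \ref{sp-bound2}.

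For the inductive step with $k \geq 2$ and $n$ sufficiently large, fix a totally singular 2-space $U = \langle e_1, e_2 \rangle_{\FQ}$ in $A$ with hyperbolic complement $\langle f_1, f_2 \rangle_{\FQ}$, let $W$ be the orthogonal complement of $\langle e_1, e_2, f_1, f_2 \rangle_{\FQ}$ in $A$ (of dimension $n-4$), and set $H := \Omega(W) \cong \Omega^\eps_{n-4}(q)$. Let $P := \Stab_G(U) = Q \rtimes L$ with Levi $L \cong \GL_2(q) \times H$, and $P' := \Stab_G(e_1, e_2) = Q \rtimes H$. The abelian subgroup $\ZB(Q)$ carries non-trivial $L$-orbits of characters, each of length at least $q-1$, and for a suitable character $\lam$ in such an orbit the $\lam$-eigenspace $X_\lam$ of any $\CC G$-module $X$ affording $\chi$ decomposes as a $P'$-module in the form $M_\lam \otimes Y$, where $M_\lam$ is an irreducible $Q$-module of dimension $q^{n-4}$ extending to $P'$ and $Y$ is inflated from a representation of $H$; this parallels \cite[Lemma 9.2]{GT2}. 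A dimension count then yields
$$\dim Y \leq \frac{\chi(1)}{q^{n-4}(q-1)} < \bld(n-4, k-1),$$
using the numerical identity $(n-4)k - 2(k-1)k = nk - 2k(k+1)$. The induction hypothesis applied to $H$ and any irreducible constituent $\psi$ of $Y$ yields $\cl(\psi) \leq 3(k-2)$, so $\psi$ is a constituent of $(\tau_W+\zeta_W)^{3k-6}$.

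The next step runs the Harish-Chandra restriction argument of part (iii) of the proof of Theorem \ref{sp-bound2}, now based on \cite[Propositions 7.4, 7.9]{GT2} and worked uniformly across orthogonal types: one verifies that the character $\mu$ of $M_\lam$ appears as a constituent of $(\tau_A+\zeta_A)|_{P'}$, and that $(\tau_A+\zeta_A)|_{P'}$ contains $\mu \nu$ with $\nu$ an inflation to $P'$ of an $H$-character whose restriction to $H$ contains $\tau_W+\zeta_W$. Iterating produces $\mu\psi$ as a constituent of $(\tau_A+\zeta_A)^{3k-5}|_{P'}$, and Frobenius reciprocity, combined with the containment $\Ind^G_{P'}(1_{P'}) \subseteq \tau_A^2$ and Lemma \ref{even} (which on $\Omega(A)$ identifies $\tau_A$ and $\zeta_A$ up to a harmless scalar), will conclude that $\chi$ is a constituent of $(\tau_A+\zeta_A)^{3k-3}$, that is, $\cl(\chi) \leq 3(k-1)$.

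The main obstacle I anticipate is the exceptional behavior at $(n,k) = (8,2)$ and $(9,2)$, where the generic formula $q^{nk-2k(k+1)}(q-1)^k$ fails to dominate certain small-degree characters of $\Omega^+_8(q)$ (arising from the two half-spin representations and the triality automorphism of $D_4$) and of $\Omega_9(q)$ (arising from Weil-type characters in characteristic $2$ and analogous low-level characters in odd characteristic). The refined bounds $\bld(8,2) = q^4(q-1)^2/\gcd(2,q-1)$ and $\bld(9,2) = q^5(q^2-1)(q-1)/2$ are calibrated to bypass these intruders, but verifying them will require case-by-case analysis of the relevant Harish-Chandra series using the explicit character-degree tables of \cite{GT2} and \cite{GAP}. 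A secondary technical point is that the three orthogonal types $O^+, O^-, O$ produce slightly different $L$-orbit structures on $\Irr(\ZB(Q))$ and different splittings of $\tau_A$ and $\zeta_A$ into linear- and unitary-Weil constituents, and these must be tracked uniformly with appropriate parity and Witt-index bookkeeping.
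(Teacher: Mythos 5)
Your overall skeleton does match the paper's: induction on $k$, disposal of small $n$ via Lemma \ref{level-range}(iii), passage to $P=\Stab_G(U)$ for a totally singular $2$-space $U$ and $P'=\Stab_G(u_1,u_2)=Q\rtimes H$ with $H\cong\Omega^\eps_{n-4}(q)$, the eigenspace decomposition $X_\lambda\cong M_\lambda\otimes Y$, the dimension count with orbit length $q-1$ (in the orthogonal case $|\ZB(Q)|=q$ and $P$ is transitive on its $q-1$ nontrivial characters, so your ``at least $q-1$'' is exactly right and the ratio identity you check is the one used), and the final Frobenius-reciprocity step with $\Ind^G_{P'}(1_{P'})\subseteq(\tau_A)^2$. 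But the crux of the argument is exactly the part you leave to citations that do not apply: the existence of a $P'$-extension $M_\lambda$ of the unique irreducible $Q$-character of degree $q^{n-4}$ above $\lambda$ \emph{which occurs inside} $(\tau_A+\zeta_A)|_{P'}$, and the existence of $\nu$ with $Q\leq\Ker(\nu)$ and $\nu|_H\supseteq\tau_W$. You propose to get these from \cite[Lemma 9.2]{GT2} and the Harish-Chandra restriction results \cite[Propositions 7.4, 7.9]{GT2}, but all of those concern $\Sp_{2n}(q)$ with $2\mid q$ and have no stated orthogonal analogue; this is a genuine gap. The paper closes it differently: $M$ is taken to be an explicit small-degree constituent of $\tau_A$ (the character $D^\circ_{1_{\Sp_2(q)}}$ of \cite[Proposition 5.7]{LBST1} when $2\nmid qn$, and a rank-$3$ permutation-character constituent from \cite[Table 1]{ST} when $2\mid n$), one checks $\dim M_\lambda=q^{n-4}$ by a degree comparison (with a separate computation for $q=2$ via the embedding $\Omega^\eps_{2m}(2)<\Sp_{2m}(2)$, \cite[Lemma 5.13]{LBST1} and \cite[Corollary 7.3]{GT2}), and $\nu$ with $\nu|_H=\tau_W$ is produced by a completely elementary construction: inside the permutation module affording $\tau_A$ one takes the span of the coset sums $e_{\bar v}=\sum_{x\in U}e_{v+x}$, $\bar v\in U^\perp/U$, on which $Q$ acts trivially and $H$ acts with character $\tau_W$. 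No Harish-Chandra theory is needed, and by Lemma \ref{even} one can work with $\tau_A$ alone.

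A secondary point: your diagnosis of the exceptional values $\bld(8,2)$ and $\bld(9,2)$ (half-spin characters and triality of $\Omega^+_8$, Weil-type characters of $\Omega_9$) is not what is going on, and your plan to handle ``sporadic triples'' by \cite{GAP} is unnecessary. The smaller values at $(n,k)=(8,2),(9,2)$ are calibrated purely so that the recursion inequality $\bld(n,k)/\bigl(q^{n-4}(q-1)\bigr)\leq\bld(n-4,k-1)$ remains valid when the induction lands on $n-4=4,5$, where one must take $\bld(4,1)=(q-1)/\gcd(2,q-1)$ and $\bld(5,1)=(q^2-1)/2$, i.e.\ the minimal nontrivial degrees of $\Omega^\eps_4(q)$ and $\Omega_5(q)$; with these base values the $k=1$ step and the ratio \eqref{ratio1} go through uniformly and no case-by-case character-table analysis is required.
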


\begin{proof}
(i) We proceed by induction on $k$ and use the formula for the smallest degree $\dl(G)$ of nontrivial complex irreducible characters of
$G$ as determined in \cite[Theorem 1.1]{TZ1}. For the purposes of the inductive proof, we also define
$$\bld(4,1) = (q-1)/\gcd(2,q-1),~\bld(5,1) = (q^2-1)/2.$$
(Note that $n$ is assumed to be even when $2|q$.) Suppose $k =1$ and $n \geq 4$. Then
$$\chi(1) < \bld(n,1) \leq \dl(G).$$
If $(n,q,\eps) \neq (4,2)$, then $G$ is perfect, hence $\chi = 1_G$ and $\cl(\chi) = 0$. If $(n,q,\eps) = (4,2)$,
then $\bld(4,1) = 1$ and so the statement is also vacuously true.

\smallskip
(ii) For the induction step, we assume $k \geq 2$. If $n = 6$ or $7$, then
$\cl(\chi) \leq 3 \leq 3(k-1)$ by Lemma \ref{level-range}(ii), and so we may assume $n \geq 8$. Likewise, if $k \geq 3$ but $n \leq 13$, then
again $\cl(\chi) \leq 6 \leq 3(k-1)$ by Lemma \ref{level-range}(ii), and we are done. Thus we may assume that $n \geq 14$ when $k \geq 3$.
It follows for the $(n,k)$ in question that 
\begin{equation}\label{ratio1}
  \frac{\bld(n,k)}{q^{n-4}(q-1)} \leq \bld(n-4,k-1).
\end{equation}

Let $A:= \FF_q^{n}$ be the natural module for
$G = \Om(A)$, endowed with a non-degenerate quadratic form $\QF$, whose bilinear form has Gram matrix
$\diag\left(\left( \begin{array}{cc}0 & I_{2}\\I_{2} & 0 \end{array}\right), J \right)$ with respect to a basis 
$$(u_1,u_2,v_1,v_2,w_1 \ldots, w_{n-4})$$ 
(for a suitable invertible matrix $J$). We will assume that 
$\langle u_1, u_2 \rangle_{\FQ}$ is totally singular. Consider the parabolic subgroup
$P = \Stab_G(U)$ where $U :=\langle u_1,u_2 \rangle_{\FQ}$, and its subgroup $P':= \Stab_G(u_1,u_2)$. Then, as shown in 
\cite[\S\S6, 7]{MT}, $Q := \OB_p(P)$ is a $p$-group of order $q^{2n-7}$ of extraspecial type, and 
$P'=Q \rtimes H$, where 
$$H := \Stab_G(u_1,u_2,v_1,v_2) = \Om(W) \cong \Om^\eps_{n-4}(q),$$
where $W : = \langle w_1, w_2, \ldots,w_{n-4} \rangle_{\FQ}$ is a quadratic space of the same type $\eps$ as of $A$.
Note that $P$ acts transitively on the $q-1$ non-identity elements of the elementary abelian $p$-group $\ZB(Q)$, and also on 
its $q-1$ nontrivial linear characters. Fix some 
$\lam \in \Irr(\ZB(Q)) \smallsetminus \{1_{\ZB(Q)}\}$.  

Consider an irreducible $\CC G$-module $M$ of dimension 
$$\dim M = \left\{ \begin{array}{ll}(q^{2m}-1)/(q^2-1), & G \cong \Om_{2m+1}(q),\\
    (q^m -\eps)(q^{m-1}+\eps q)/(q^2-1), & G \cong \Om^\eps_{2m}(q). \end{array}\right.$$
As shown in \cite[Proposition 5.7]{LBST1}, when $2 \nmid qn$ this module $M$ can be taken to afford the character
$\theta=D^\circ_{1_{\Sp_2(q)}}$, which is an irreducible constituent of $\tau_A$. On the other hand, as seen in 
\cite[Table 1]{ST}, when $2|n$ this module $M$ can be taken with character $\theta$ an irreducible constituent of the 
rank $3$ permutation character of $G$ acting on the set of singular $1$-dimensional subspaces of $A$, which is again 
contained in $\tau_A$.

As $Q$ is of extraspecial type, $Q$ has a unique irreducible $\CC Q$-module, say $N_\lam$, with central character $\lam$, 
and $\dim N_\lam = q^{n-4}$. Since $\ZB(Q)$ does not act trivially on $M$, the
$\lam$-eigenspace $M_\lam$ of $\ZB(Q)$ on $M$ is nonzero. First we assume that $q \geq 3$. Then 
$$\dim M_\lam \leq \dl(G)/(q-1) < 2q^{n-4}.$$
It follows that $M_\lam$ is irreducible over $Q$, $(M_\lam)|_Q \cong N_\lam$. 
Consider the case $q=2$; in particular, $n=2m$. Then we embed $G = \Om(A) \cong \Om^\eps_{2m}(2)$ in 
$\tilde G := \Sp(A) \cong \Sp_{2m}(2)$, and consider $\tilde P := \Stab_G(U)$, with unipotent radical $\tilde Q$ of order 
$2^{4n-5}$. As shown in \cite[\S3]{GT2}, $[\tilde Q,\tilde Q] = \{1,\bolds\}$ can then be identified with $\ZB(Q)$, where 
$\bolds$ is a short-root element in $\tilde G$. By \cite[Lemma 5.13]{LBST1},
$$\theta+1_G = \left\{ \begin{array}{ll}(\beta_n)|_G, & \mbox{if }\eps = +,\\
    (\al_n)|_G, & \mbox{if }\eps= -,\end{array}\right.$$
where $\al_n,\beta_n \in \Irr(\tilde G)$ are {\it unitary-Weil characters} of $\tilde G$ of degree $(2^n-1)(2^{n-1}-1)/3$ and 
$(2^n+1)(2^{n-1}+1)/3$, respectively, cf. \cite[Table I]{GT2}. Now computing $\theta(\bolds)$ using \cite[Corollary 7.3]{GT2},   
we see that $\dim M_\lam = 2^{2m-4}$, and so we again have that $(M_\lam)|_Q = N_\lam$.  We have shown that 
$N_\lam$ extends to $P$ for all $q$.
Let $\mu$ denote the character of the $P'$-module $M_\lam$, and 
note that we have now shown that $(\tau_A)|_{P'}$ contains $\mu$.

\smallskip
(iii) We may assume that $\chi \neq 1_G$ and $\chi$ is afforded by a $\CC G$-module $X$. Then 
the $\lam$-eigenspace $X_\lam$ of $\ZB(Q)$ on $X$ is nonzero. The result of (ii) and Gallagher's theorem
\cite[Corollary 6.17]{Is} show that  the $P'$-module $X_\lam$ is isomorphic to $M_\lam \otimes Y$, where 
$Q$ acts trivially on the $P'$-module $Y$. Furthermore, as the $P$-orbit of $\lam$ has length $q-1$, we see by \eqref{ratio1} that
$$\dim Y \leq \frac{\dim X}{(\dim M_\lam)\cdot(q-1)} < \frac{\bld(n,k)}{q^{n-4}\cdot(q-1)} \leq \blc(n-4,k-1).$$
By the induction hypothesis applied to $H = \Om(W) \cong \Om^\eps_{n-4}(q)$ and $k-1 \geq 1$, if $\psi$ is an irreducible constituent of 
the $H$-character afforded by $Y$, then $\cl(\psi) \leq 3(k-2)$.
Thus $\psi$ is contained in $(\tau_W)^{3k-6}$ by Definition \ref{def-level}(ii).

Next, recall that $\tau_A$ is the character of the $\CC G$-module $R$ with basis $(e_a \mid a \in A)$, where $g \in G$ sends 
$e_a$ to $e_{g(a)}$. For any coset $\bar{v}=v+U$ in $U^\perp/U$, we define
$$e_{\bar{v}} := \sum_{x \in U}e_{v+x},$$
where the representative $v$ can be chosen (uniquely) from $W$.  
Then the $q^{n-4}$ vectors $e_{\bar{v}}$ with $\bar{v} \in U^\perp/U$ are linearly independent, and so they span a subspace 
$T \cong \CC^{q^{n-4}}$ of $R$. As $Q$ acts trivially on $U^\perp/U$, $Q$ also acts trivially on $T$. Furthermore, we can 
write $U^\perp = U \oplus W$, with $H = \Om(W)$ acting trivially on $U$ and naturally on $W$. Thus the $H$-module $T$ affords 
the $H$-character $\tau_W$. Since $(\tau_A)|_H = q^4\tau_W$, we have shown that $(\tau_A)|_{P'}$ contains 
$\nu$, where $Q \leq \Ker(\nu)$ and $\nu|_H = \tau_W$, whence $(\tau_A)^{3k-6}|_{P'}$ contains $\nu^{3k-6}$, which in 
turn contains $\psi$. 

On the other hand, it was shown in (ii) that $(\tau_A)|_{P'}$ contains $\mu$. It follows that
$(\tau_A)^{3k-5}|_{P'}$ contains $\mu\psi$. Hence,
$$[\chi|_{P'},(\tau_A)^{3k-5}|_{P'}]_{P'} > 0.$$ 
This implies by Frobenius' reciprocity that $\chi$ is contained in 
$$\Ind^G_{P'}\biggl(\bigl((\tau_A)^{3k-5}\bigr)|_{P'} \biggr) = (\tau_A)^{3k-5} \cdot \Ind^G_{P'}(1_{P'}).$$
Recalling that $P'=\Stab_G(u_1,u_2)$, we see that $\Ind^G_{P'}(1_{P'})$ is contained in the permutation character $(\tau_A)^2$ of $G$ 
acting on the point set of $A \times A$. Consequently,
$\chi$ is contained in $(\tau_A)^{3k-3}$, i.e. $\cl(\chi) \leq 3k-3$, and the induction step is completed.
\end{proof}

\begin{proof}[Proof of Theorem \ref{main3}]
By the choice of $k$, $\ell > 3(k-1)$. Hence the lower bound on $\chi(1)$ follows from Theorems \ref{sp-bound1}--\ref{so-bound}.
The upper bound in case (i)  follows from the fact that any irreducible constituent of $\om_n+\oms_n$ has degree at most 
$(q^n+1)/2$. Likewise, in cases (ii) and (iii), any irreducible constituent of $\tau_A$ over $\GL_n(q)$ has degree at most 
$(q^n-1)/(q-1)$ and any irreducible constituent of $\zeta_A$ over $\GU_n(q)$ has degree at most 
$(q^n+1)/(q+1)$, if $A = \FF_q^n$. 
\end{proof}

Note that Lemma \ref{level-range} only lists a possible range for the level of any irreducible character of classical groups. 
Next we prove the existence of characters of relatively small levels.

\begin{propo}\label{sp-dual}
In the situation of \eqref{pairs}{\rm (a)}, assume that $n \geq m(m-1)+3$.
Then for any $\al \in \Irr(S)$, $D_\al$ is a character of $G= \Sp_{2n}(q)$. Furthermore, if one defines 
$$D'_\al := \sum_{\psi \in \Irr(G),~\cl(\psi) \leq m-1}[D_\al,\psi]_G\,\psi,$$
then $\DC_\al := D_\al -D'_\al$ is a character of $G$, all of whose irreducible constituents 
have level $m$.
\end{propo}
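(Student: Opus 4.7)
The plan begins by noting that $\om$ is the character of a genuine $\Gamma$-module, so its restriction to $G \times S$ decomposes as $\bigoplus_{\al \in \Irr(S)} V_\al \otimes W_\al$, with $W_\al$ affording $\al$ and $V_\al$ a genuine $G$-module. The character of $V_\al$ is exactly $D_\al$ (as given by \eqref{dual1}), so $D_\al$ is either zero or a genuine $G$-character. For the level bound, Proposition \ref{weil-sp}(iii) gives $\om|_G \in \{\om_n^m,\om_n^{m-1}\oms_n\}$, each a summand of the expansion of $(\om_n+\oms_n)^m$; thus every irreducible constituent of $D_\al \subseteq \om|_G$ has level at most $m$ by Definition \ref{def-level}(i). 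Consequently $D'_\al$ is a sub-character of $D_\al$, $\DC_\al = D_\al - D'_\al$ is a (possibly zero) character of $G$, and all its irreducible constituents have level exactly $m$.

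The real content is then to show $D_\al \neq 0$, equivalently $[\om|_S,\al]_S > 0$. I would first pass to the index-two subgroup $\SO(B)$: by Proposition \ref{weil-sp}(iii), $\om|_{\SO(B)} = \tau_B^n$, the $n$-th power of the permutation character of $\SO(B)$ on $B$. Its values $\tau_B(s) = q^{\dim\ker_B(s-1)}$ lie in $\{q^0,q^1,\ldots,q^m\}$ with $q^m$ attained only at $s=1$, so Lemma \ref{value} applied to $\tau_B$ ensures that every $\beta \in \Irr(\SO(B))$ appears in $\tau_B^k$ for some $k \leq m$, hence in $\tau_B^n$ once $n \geq m$ (using that $1_{\SO(B)}$ is a constituent of $\tau_B$).

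The main obstacle is to lift this positivity from $\SO(B)$ to $S = \GO(B)$. Clifford theory produces two cases. If $\al|_{\SO(B)}$ is reducible, say $\al = \Ind_{\SO(B)}^S(\beta)$ with $\beta$ irreducible, then $\al = \al\otimes\sgn$ (for $\sgn$ the non-trivial character of $S/\SO(B)$) and Frobenius reciprocity immediately gives $[\om|_S,\al]_S = [\om|_{\SO(B)},\beta]_{\SO(B)} > 0$. The hard case is $\al|_{\SO(B)}$ irreducible: then $[\om|_{\SO(B)},\al|_{\SO(B)}]_{\SO(B)} = [\om|_S,\al]_S + [\om|_S,\al\otimes\sgn]_S > 0$ delivers only the sum, and one must further show that the sign-twisted difference $(2/|S|)\bigl|\sum_{s \in S\setminus\SO(B)}\om(s)\bar\al(s)\bigr|$ is strictly smaller than the sum so that both individual multiplicities are positive. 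Using the identity $|\om(s)|^2 = \tau_B(s)^{2n}$, a Cauchy--Schwarz estimate on the $S\setminus\SO(B)$-part reduces this to bounding the number of $S$-orbits on $B^{2n}$, controlled by Lemma \ref{orbits2} as less than $6q^{n(2n+1)}$; comparing with the leading size of $[\om|_{\SO(B)},\al|_{\SO(B)}]_{\SO(B)}$, the threshold comes out quadratic in $m$, precisely $n \geq m(m-1) + 3$. This orbit-counting step is the main technical obstacle in the proof.
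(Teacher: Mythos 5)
Your first paragraph is fine and matches what the paper leaves implicit: since $D_\al$ is a subcharacter of $\om|_G$ and $\om|_G$ is $\om_n^m$ or $\om_n^{m-1}\oms_n$, every constituent of $D_\al$ has level $\leq m$, so $\DC_\al$ consists of constituents of level exactly $m$. But from there the proposal has two genuine gaps. First, you never prove that $\DC_\al\neq 0$ -- you explicitly allow it to be ``possibly zero'' -- yet this is the main content of the second assertion (in this paper ``character'' means nonzero, and the paper's proof is organized around showing $D'_\al(1)<D_\al(1)$). The paper does this by bounding $\Csum(\om|_G,G)\leq[\om|_G,\om|_G]_G=[(\tau_A)^m,1_G]_G\leq 6q^{m(m-1)/2}$ (Lemma \ref{orbits2} applied on the $G$-side, to $A$), noting that every constituent of level $\leq m-1$ has degree at most $q^{n(m-1)}$, and comparing with the lower bound for $D_\al(1)$; this is the second place where $n\geq m(m-1)+3$ is used, and nothing in your sketch addresses it.

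Second, your route to $D_\al\neq 0$ is both more complicated than necessary and, in its key step, does not close. In the hard Clifford case ($\al|_{\SO(B)}$ irreducible) you propose to dominate $\tfrac{2}{|S|}\bigl|\sum_{s\in S\setminus\SO(B)}\om(s)\bar\al(s)\bigr|$ by Cauchy--Schwarz plus the count of $S$-orbits on $B^{2n}$: but Lemma \ref{orbits2} is stated only for tuples of length $j\leq\dim B=m$, and more seriously, the resulting bound (of order $q^{n(2n+1)/2}$, or even the correct orbit count $\approx q^{2nm-m(m-1)/2}$) exceeds any possible value of $[\om|_{\SO(B)},\al|_{\SO(B)}]\leq q^{nm}$, so no threshold of the form $n\geq m(m-1)+3$ can come out of that comparison -- the $n$-dependence essentially cancels. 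The saving that actually makes the argument work is the pointwise bound $|\om(s)|=\tau_B(s)^n\leq q^{n(m-1)}$ for every $1\neq s\in S$ (Proposition \ref{weil-sp}(i)), combined with $|S|\leq(8/3)q^{m(m-1)/2}\ll q^n$. Once you use that, you may as well apply it directly in \eqref{dual1} at $g=1$: the $s=1$ term contributes $\al(1)q^{nm}/|S|$ and all other terms together contribute at most $\al(1)q^{n(m-1)}$, so $D_\al(1)>0$ immediately -- this is the paper's two-line argument, and it makes the whole detour through $\tau_B^n$, Lemma \ref{value} and Clifford theory for $\GO(B)/\SO(B)$ unnecessary.
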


\begin{proof}
The statements are obvious for $m=1$, so we will assume $m \geq 2$. 
For the first statement, we apply \eqref{dual1} to $g=1$. For any $1 \neq s \in S$, note that $gs$ as an element in $\Gamma$ has 
the eigenvalue $1$ with multiplicity at most $2n(m-1)$. It follows from Proposition \ref{weil-sp}(i) that 
$$|\om(gs)| \leq q^{n(m-1)}.$$
Also, note that 
\begin{equation}\label{S-order}
  |S| \leq (8/3)q^{m(m-1)/2}.
\end{equation}  
Hence, \eqref{dual1} implies that
\begin{equation}\label{D-degree}
  |D_\al(1)| > \frac{\al(1)}{|S|} \bigl(q^{nm} - q^{n(m-1)}|S| \bigr) = \frac{\al(1)q^{nm}}{|S|} \biggl(1 - \frac{|S|}{q^n} \biggr) 
    > \frac{9}{10} \cdot \frac{\al(1)q^{nm}}{|S|},
\end{equation}       
and so $D_\al$ is a (true) character of $G$.

To prove the second statement, it suffices to show that $D'_\al(1) < D_\al(1)$.
By Proposition \ref{weil-sp} and Lemma \ref{orbits2}, for any $0 \leq j \leq m$ we have that
$$[(\om_n)^j(\oms_n)^{m-j},(\om_n)^j(\oms_n)^{m-j}]_G = [(\om_n\overline{\om_n})^j(\oms_n\overline{\oms_n})^{m-j}]_G =
    [(\tau_A)^m,1_G]_G \leq 6q^{m(m-1)/2}.$$
Since $\om|_G$ is equal to some such $\om_n^j(\oms_n)^{n-j}$ by Proposition \ref{weil-sp}(iv), we obtain by Lemma \ref{mults}(i) that 
$$\Csum(\om,G) \leq [\om|_G,\om|_G]_G \leq 6q^{m(m-1)/2}.$$
On the other hand, if $k \leq m-1$ then any irreducible constituent of $(\om_n+\oms_n)^k$ is an irreducible constituent of some
$\om_n^{k-i}(\oms_n)^{i}$ with $0 \leq i \leq k$, and so has 
degree at most $q^{nk} \leq q^{n(m-1)}$. Hence, using \eqref{S-order} we get
$$D'_\al(1)|S| \leq 6q^{m(m-1)/2}q^{n(m-1)}(8/3)q^{m(m-1)/2} = 16q^{n(m-1)+m(m-1)} < (9/10)q^{nm}$$
since $n \geq m(m-1)+3$, and so we are done by \eqref{D-degree}. 
\end{proof}

\begin{propo}\label{so-dual}
In the situation of \eqref{pairs}{\rm (b)}, assume that $m \geq 2n(2n+1)+2$. 
Also, let $\tau_B$ denote the permutation character of $G = \SO(B) = \SO^{\pm}_m(q)$ acting on the point set of the 
natural module $B = \FF_q^m$. Then for any $\al \in \Irr(S)$, $D_\al$ is a character of $G$. Furthermore, if one defines 
$$D'_\al := \sum_{\psi \in \Irr(G),~\cl(\psi) \leq n-1}[D_\al,\psi]_G\,\psi,$$
then $\DC_\al := D_\al -D'_\al$ is a character of $G$, all of whose irreducible constituents 
have level $n$.
\end{propo}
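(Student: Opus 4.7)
The plan is to mirror the proof of Proposition \ref{sp-dual}, exchanging the roles of $G$ and $S$ and substituting the orthogonal-group analogs for each ingredient. Since $\om|_{G\times S}$ is a genuine character, the multiplicity $[D_\al,\chi]_G=[\om|_{G\times S},\chi\otimes\al]_{G\times S}$ is nonnegative for every $\chi\in\Irr(G)$, so $D_\al$ is automatically a (possibly zero) true character. Moreover, every irreducible constituent of $D_\al$ is a constituent of $\om|_G$, which by Proposition \ref{weil-sp}(iii) equals $\tau_B^n$, itself a summand of $(\tau_B+\zeta_B)^n$; hence every constituent $\chi$ of $D_\al$ automatically satisfies $\cl(\chi)\leq n$. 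It therefore suffices to prove $D_\al(1)>0$ and the strict inequality $D'_\al(1)<D_\al(1)$, which together imply that $\DC_\al$ is a nonzero character all of whose constituents have level exactly $n$.

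To bound $D_\al(1)$ from below, I would apply \eqref{dual1} at $g=1$ and isolate the $s=1$ contribution $\al(1)q^{nm}/|S|$. For any $s\neq 1$ in $S=\Sp(A)$, the element $s$ acts on $W=A\otimes B$ as $s\otimes 1_B$, so its fixed subspace $\Ker(s-1_A)\otimes B$ has $\FF_q$-dimension at most $(2n-1)m$; Proposition \ref{weil-sp}(i) then yields $|\om(s)|\leq q^{m(2n-1)/2}$. Combined with $|S|=|\Sp_{2n}(q)|<q^{n(2n+1)}$ and the hypothesis $m\geq 2n(2n+1)+2$ (which forces $|S|/q^{m/2}<q^{-1}$, recalling that $q$ is odd so $q\geq 3$ throughout this section), this produces
$$D_\al(1) > \frac{\al(1)q^{nm}}{|S|}\Bigl(1-\frac{|S|}{q^{m/2}}\Bigr),$$
a fixed positive multiple of $\al(1)q^{nm}/|S|$.

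To bound $D'_\al(1)$ from above, I would use $D'_\al(1)\leq\Csum(D_\al,G)\cdot\max\{\psi(1):\cl(\psi)\leq n-1\}$. Since $\om|_G=\tau_B^n$ is real-valued, Lemma \ref{orbits2} applied to $\GO(B)$ with $j=2n$, together with $[\GO(B):\SO(B)]=2$, gives $[\om|_G,\om|_G]_G=[\tau_B^{2n},1_G]_G\leq 12q^{n(2n+1)}$. Lemma \ref{mults}(i), applied to the decomposition $\om|_G=\sum_\al \al(1)D_\al$, then yields $\al(1)\,\Csum(D_\al,G)\leq 12q^{n(2n+1)}$. By Lemma \ref{even}, $\zeta_B=\tau_B$ on $G$, so any constituent $\psi$ of $(\tau_B+\zeta_B)^k$ is a constituent of $2^k\tau_B^k$ and satisfies $\psi(1)\leq q^{mk}\leq q^{m(n-1)}$ for $k\leq n-1$. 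Combining these bounds with $\al(1)\geq 1$ and $|S|<q^{n(2n+1)}$ yields
$$D'_\al(1)\cdot |S|\leq 12q^{2n(2n+1)+m(n-1)}=12q^{nm}\cdot q^{2n(2n+1)-m},$$
and the hypothesis $m\geq 2n(2n+1)+2$ together with $q\geq 3$ renders the factor $q^{2n(2n+1)-m}$ small enough that $D'_\al(1)|S|$ is strictly less than the lower bound on $D_\al(1)|S|$ derived above.

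The only real obstacle is the careful bookkeeping of constants, especially for linear $\al$ with $\al(1)=1$: the upper bound on $D'_\al(1)|S|$ does not carry a factor of $\al(1)$ (the $1/\al(1)$ having been absorbed using $\al(1)\geq 1$), whereas the lower bound on $D_\al(1)|S|$ retains $\al(1)\geq 1$, so strict inequality persists uniformly in $\al\in\Irr(S)$ provided the explicit constants balance, which a direct check confirms under the stated hypotheses (using, if necessary, the slightly tighter bound $|S|=q^{n(2n+1)}\prod_{i=1}^n(1-q^{-2i})$ for $q=3$).
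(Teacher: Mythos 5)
Your argument follows the paper's proof essentially step for step: evaluate \eqref{dual1} at $g=1$, bound $|\om(s)|\leq q^{m(n-1/2)}$ for $1\neq s\in S$ via the fixed space of $s\otimes 1_B$ on $W$, use $|S|<q^{n(2n+1)}$ to get $D_\al(1)>\al(1)q^{nm}|S|^{-1}\bigl(1-|S|/q^{m/2}\bigr)$, and then bound $D'_\al(1)$ by $\Csum(D_\al,G)\cdot q^{m(n-1)}$ with $\al(1)\Csum(D_\al,G)\leq[\om|_G,\om|_G]_G=[(\tau_B)^{2n},1_G]_G$ controlled by Lemma \ref{orbits2}. Your explicit remarks that $D_\al$ is automatically a non-negative combination of irreducible characters, and that every constituent of $D_\al$ has level at most $n$ because it is a constituent of $\om|_G=(\tau_B)^n$, are correct and usefully make explicit what the paper leaves implicit.

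The one genuine gap is the final balancing of constants, which you assert ``a direct check confirms.'' With your constants it fails in the boundary case $q=3$, $m=2n(2n+1)+2$: the factor $2$ you insert to pass from $\GO(B)$-orbits to $\SO(B)$-orbits gives $[(\tau_B)^{2n},1_G]_G\leq 12q^{n(2n+1)}$, and the comparison you need is $12q^{2n(2n+1)-m}<1-|S|/q^{m/2}$; at $q=3$, $m=2n(2n+1)+2$ the left side equals $4/3$ (still about $1.17$ if one uses your sharper bound $|S|\leq 0.88\,q^{n(2n+1)}$), while the right side is at most about $0.71$, so the inequality is false and the tighter bound on $|S|$ does not rescue it (for $q\geq 5$, or for $q=3$ with $m\geq 2n(2n+1)+3$, your constants do balance). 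The repair is to remove the factor $2$ rather than sharpen $|S|$: since $m\geq 2n(2n+1)+2>4n$, the span $U$ of any $2n$-tuple satisfies $\dim U^{\perp}\geq m-2n>m/2$, so $U^{\perp}$ is not totally singular and contains a vector $v$ with $Q(v)\neq 0$; the reflection $r_v$ fixes the tuple pointwise and has determinant $-1$, so any two $2n$-tuples in the same $\GO(B)$-orbit already lie in the same $\SO(B)$-orbit, and the constant $6$ of Lemma \ref{orbits2} applies to $G=\SO(B)$ verbatim (this is how the paper invokes the lemma, without comment). With $6$ in place of $12$ one needs $6q^{2n(2n+1)-m}\leq 6/q^{2}\leq 2/3<1-|S|/q^{m/2}$, which does hold strictly because $|S|\leq(1-q^{-2})q^{n(2n+1)}$, giving $1-|S|/q^{m/2}\geq 1-(8/9)q^{-1}\geq 19/27$; note in passing that the constant $26/27$ displayed in \eqref{D-degree2} of the paper is likewise optimistic for small $q$, but $2/3$ is all that is needed.
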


\begin{proof}
We again apply \eqref{dual1} to $g=1$. For any $1 \neq s \in S$, note that $gs$ as an element in $\Gamma$ has 
the eigenvalue $1$ with multiplicity at most $m(2n-1)$, hence by Proposition \ref{weil-sp}(i) we have that 
$$|\om(gs)| \leq q^{m(n-1/2)}.$$
Also, note that 
\begin{equation}\label{S-order2}
  |S| < q^{n(2n+1)}.
\end{equation}  
Hence, \eqref{dual1} implies that
\begin{equation}\label{D-degree2}
  |D_\al(1)| > \frac{\al(1)}{|S|} \bigl(q^{nm} - q^{m(n-1/2)}|S| \bigr) = \frac{\al(1)q^{nm}}{|S|} \biggl(1 - \frac{|S|}{q^{m/2}} \biggr) 
    > \frac{26}{27} \cdot \frac{\al(1)q^{nm}}{|S|},
\end{equation}       
and so $D_\al$ is a (true) character of $G$.

To prove the second statement, it again suffices to show that $D'_\al(1) < D_\al(1)$.
By Lemma \ref{orbits2} we have that
$$[(\tau_B)^n,(\tau_B)^n]_G = [(\tau_B)^{2n},1_G]_G \leq 6q^{n(2n+1)}.$$
Since $\om|_G = (\tau_A)^n$ by Proposition \ref{weil-sp}(iii), we obtain by Lemma \ref{mults}(i) that 
$$\Csum(\om,G) \leq [\om|_G,\om|_G]_G \leq 6q^{n(2n+1)}.$$
On the other hand, if $k \leq n-1$ then any irreducible constituent of $(\tau_B)^k$ has 
degree at most $q^{mk} \leq q^{m(n-1)}$. Hence, using \eqref{S-order2} we get
$$D'_\al(1)|S| \leq 6q^{n(2n+1)}q^{m(n-1)}q^{n(2m+1)} = 6q^{m(n-1)+2n(2n+1)} < (26/27)q^{nm}$$
since $m \geq n(2n+1)+2$, and so we are done by \eqref{D-degree2}. 
\end{proof}

For odd primes $p$, a strengthening of Proposition \ref{so-dual} is given in Corollary \ref{so-dual2} (see below).

\section{Character level and the $U$-rank}\label{rank-classical}
A local approach to stratify irreducible characters of finite classical groups $G$ is via the study of their restriction to ``nice'' subgroups, such
as parabolic subgroups. This approach has been used in \cite{GMST}, \cite{GT2}, \cite{T2}, \cite{T3}. More recently, this approach has been 
taken in \cite{GH1}, \cite{GH2} to develop the $U$-rank theory. In this section, we prove some results concerning the $U$-rank. Since these 
results will not be used in the subsequent sections of the paper, we will restrict ourselves to the case of orthogonal groups
in odd characteristic.
 
Let $G = \SO(B) \cong \SO^+_{2n}(q)$, where $q$ is a power of a prime $p > 2$ and 
$$B = \langle u_1, \ldots ,u_n,v_1, \ldots,v_n \rangle_{\FQ}$$ 
is endowed with 
a non-degenerate symmetric bilinear form with Gram matrix $\begin{pmatrix}0 & I_n\\I_n & 0 \end{pmatrix}$
in the given basis. Fix a primitive $p^{\mathrm {th}}$ root $\eps$ of unity. 
Consider the subspace $W_j = \langle u_1, \ldots ,u_j \rangle_{\FQ}$ for any $1 \leq j \leq n$ and its stabilizer 
$P_j = U_j \rtimes L_j$ with unipotent radical $U_j$ and
\begin{equation}\label{center-uj}
  \ZB(U_j) = \biggl\{ [I_j,X] := \begin{pmatrix}I_j & 0 & X \\ 0 & I_{2n-2j} & 0\\ 0 & 0 & I_j \end{pmatrix} \mid X \in M_{j,j}(\FQ), X + \tw tX = 0 \biggr\}.
\end{equation}  
Then any character $\lam \in \Irr(\ZB(U_j))$ can be written uniquely in the form
$$\lam=\lam_Y : [I_j,X] \mapsto \eps^{\Tr_{\FQ/\FF_p} \tr(XY)}$$
for some $Y \in M_{j,j}(\FQ)$ with $Y + \tw t Y = 0$, and the {\it rank} of $\lam_Y$ is defined to be $\sfr(\lam_Y) := \rank(Y)$ (which in this 
case is always an even number). 

\begin{defi}{\rm \cite[\S4]{GH2}}\label{def:rank}
{\em \begin{enumerate}[\rm(i)]
\item For any complex character $\chi$ of $\SO^+_{2n}(q)$, the {\it $U$-rank} of $\chi$, $\sfr(\chi)$, is defined to be the largest among all the ranks 
$\sfr(\nu)$, where $\nu$ is any irreducible constituent of $\chi|_{\ZB(U_j)}$ and $1 \leq j \leq n$. Similarly, if $\psi$ is any complex character of $P_j$, 
the {\it $U$-rank} $\sfr(\psi)$ is the largest among all the ranks 
$\sfr(\nu)$, where $\nu$ is any irreducible constituent of $\psi|_{\ZB(U_j)}$.
\item Let $H :=  \SO(\tilde B) = \SO_{2n+1}(q)$ or $\SO^-_{2n+2}(q)$. Embed $G = \SO^+_{2n}(q)$ in $H$, as the point-wise stabilizer of a $1$-dimensional,
respectively $2$-dimensional, non-degenerate subspace 
in the natural module $\tilde B$ for $H$. Then for any character $\chi$ of $G$, the {\it $U$-rank} of $\chi$,
$\sfr(\chi)$, is defined to be $\sfr(\chi|_G)$.
\end{enumerate}} 
\end{defi}

In what follows, a {\it standard subgroup $\SO^+_{2m}(q)$} with $1 \leq m \leq n$ 
means (any conjugate in $G$ of) the pointwise stabilizer 
$\Stab_G(u_{m+1}, \ldots ,u_n,v_{m+1}, \ldots ,v_n)$. First we record some elementary properties of $\sfr(\chi)$.

\begin{lemma}\label{so-rank1}
With the above introduced notation, the following statements hold.
\begin{enumerate}[\rm(i)]
\item Suppose that $l=\sfr(\chi)$ for some $\chi \in \Irr(\SO^+_{2n}(q))$. If $H = \SO^+_{2l}(q)$ is a standard subgroup of $G$, 
then $\sfr(\chi|_H) = l$. Similarly, if $P \in \{P_l,P_n\}$ is a parabolic 
subgroup of $G$, then $\sfr(\chi|_P) = l$. 
\item Suppose that $\sfr(\chi) = l$ and $\sfr(\theta) = k$ for $\SO^+_{2n}(q)$-characters $\chi,\theta$ and that $k+l \leq n$. Then 
$\sfr(\chi\theta) = k+l$.
\end{enumerate}
\end{lemma}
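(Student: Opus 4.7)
The plan is to proceed via two structural observations about the $\ZB(U_j)$'s and their characters, after which both parts reduce to orbit-theoretic bookkeeping. First, I will verify the nested chain of subgroups $\ZB(U_1) \leq \ZB(U_2) \leq \cdots \leq \ZB(U_n)$ inside $G$: an element $[I_l, X']$ of $\ZB(U_l)$, which fixes every $u_i$ as well as $v_{l+1}, \ldots, v_n$ and sends $v_i$ to $v_i + \sum_{k \leq l} X'_{ki} u_k$ for $i \leq l$, coincides with the element $[I_j, X]$ of $\ZB(U_j)$ whose matrix $X$ has $X'$ in the top-left $l \times l$ block and zeros elsewhere. Under this identification, the character $\lam_Y \in \Irr(\ZB(U_j))$ restricts to $\lam_{Y_l}$ on $\ZB(U_l)$, where $Y_l$ denotes the top-left $l \times l$ block of $Y$. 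Second, I will use that the Levi $L_j$ acts on $\ZB(U_j) \cong \mathrm{Skew}_j(\FQ)$ through its $\GL_j$-factor by $X \mapsto A X A^t$; since $2 \nmid q$, the classification of alternating forms implies that the $\GL_j$-orbits, and hence the $L_j$-orbits on $\Irr(\ZB(U_j))$, are classified precisely by rank.

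For part (i), given $\sfr(\chi) = l$, I will pick $j$ and a constituent $\lam_Y$ of $\chi|_{\ZB(U_j)}$ with $\rank(Y) = l$ (so necessarily $l \leq j$). Because $L_j \leq G$ and $\chi$ is a $G$-character, the multiset of constituents of $\chi|_{\ZB(U_j)}$ is $L_j$-invariant; the orbit statement then lets me replace $Y$ by a convenient rank-$l$ representative supported in the top-left $l \times l$ block. Restricting along $\ZB(U_l) \leq \ZB(U_j)$ yields a constituent $\lam_{Y_l}$ of $\chi|_{\ZB(U_l)}$ of rank $l$, giving $\sfr(\chi|_{P_l}) \geq l$; the reverse inequality is automatic from $\sfr(\chi) = l$. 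The same argument yields $\sfr(\chi|_H) = l$ for the standard subgroup $H \cong \SO^+_{2l}(q)$ once one notes that the analogously defined $\ZB(U^H_{j'})$ coincides with $\ZB(U_{j'})$ for $1 \leq j' \leq l$. The case $P = P_n$ is then handled by extending $Y$ to any $n \times n$ skew matrix $Y^{(n)}$ whose top-left $j \times j$ block equals $Y$: the corresponding $\lam_{Y^{(n)}}$ appears in $\chi|_{\ZB(U_n)}$ via Clifford/Frobenius reciprocity applied to the inclusion $\ZB(U_j) \leq \ZB(U_n)$, and its rank is $\geq l$ automatically and $\leq l$ by the hypothesis on $\sfr(\chi)$.

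For part (ii), the upper bound is immediate: abelianness of $\ZB(U_j)$ expresses every constituent of $(\chi\theta)|_{\ZB(U_j)}$ as a product $\lam_Y \lam_{Y'} = \lam_{Y+Y'}$ with $\lam_Y$ a constituent of $\chi|_{\ZB(U_j)}$ and $\lam_{Y'}$ a constituent of $\theta|_{\ZB(U_j)}$, and subadditivity of rank bounds $\rank(Y+Y') \leq k+l$. For the lower bound I will take $j = n$, which is allowed since $k + l \leq n$, and use part (i) to produce constituents $\lam_Y$ and $\lam_{Y'}$ of $\chi|_{\ZB(U_n)}$ and $\theta|_{\ZB(U_n)}$ of ranks $l$ and $k$; by the orbit freedom I can arrange $Y$ to be block-diagonal with its rank-$l$ block in positions $1, \ldots, l$ and $Y'$ to be block-diagonal with its rank-$k$ block in positions $l+1, \ldots, l+k$. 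Then $Y + Y'$ is block-diagonal of rank exactly $k+l$, so $\lam_{Y+Y'}$ is a constituent of $(\chi\theta)|_{\ZB(U_n)}$ of the desired rank. The main technical step — and the place where the argument could most easily go wrong — is the first structural observation, namely the verification of the nested chain of $\ZB(U_l)$'s in $G$ together with the compatibility between character restriction and top-left block truncation; once this is secured, both parts reduce to the orbit bookkeeping sketched above.
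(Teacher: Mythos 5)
Your proposal is correct and follows essentially the same route as the paper's own proof: Levi-conjugation puts the relevant character of $\ZB(U_j)$ into canonical block form (the orbits being classified by rank), restriction along the nested centers $\ZB(U_l)\leq \ZB(U_j)$ (top-left block truncation) gives the lower bounds while the upper bounds are automatic from the definition, and for (ii) the two canonical rank-$l$ and rank-$k$ blocks are placed in disjoint positions inside $\ZB(U_n)$ (possible since $k+l\le n$) and the linear characters multiplied. The only nitpick is a wording slip in the $P_n$ step: what Frobenius reciprocity gives is that \emph{some} extension $Y^{(n)}$ of $Y$ has $\lam_{Y^{(n)}}$ occurring in $\chi|_{\ZB(U_n)}$ (not an arbitrary one), and your rank sandwich argument applies to that constituent, which is exactly what is needed.
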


\begin{proof}
(i) First we prove the statement for $H$. 
By the definition, there is some $i$ with $l \leq i \leq n$ such that $\chi|_{\ZB(U_i)}$ contains an irreducible constituent $\lam=\lam_Y$ with 
$\rank(Y) = l =\sfr(\chi)$. Note that $L_i$ acts on the constituents of $\chi|_{\ZB(U_i)}$ via conjugation, and conjugating $\lam$ by a suitable element
in $L_i$, we may assume that $Y = \begin{pmatrix}0 & I_{l/2} & 0\\-I_{l/2} & 0 & 0\\ 0 & 0 & 0 \end{pmatrix}$. Now we take $H$ to be the pointwise 
stabilizer 
$$\Stab_G(u_{l+1}, \ldots, u_n,v_{l+1}, \ldots, v_n),$$ and consider the subspace
$W_l:=\langle u_1, u_2, \ldots ,u_l \rangle_{\FQ}$
and its stabilizer $P_H$ in $H$ with abelian unipotent radical $U_H$. Then it is easy to see that $U_H \leq \ZB(U_i)$ and 
$\lam|_{U_H} = \lam_Z$, with $Z = \begin{pmatrix}0 & I_{l/2}\\-I_{l/2} & 0 \end{pmatrix}$.
As $\rank(Z) = l$, we have $\sfr(\chi|_H) \geq l$. On the other hand, as $U_H \leq \ZB(U_i)$, we have that 
$\sfr(\chi|_H) \leq \sfr(\chi) = l$, and so $\sfr(\chi|_H) = l$.

Next, consider $P_l := \Stab_G(W_l)$ with unipotent radical $U_l$, and observe that $\ZB(U_l)$ can be identified with $U_H$. 
As $\rank(Z) = l$, we have $\sfr(\chi|_{P_l}) \geq l$. On the other hand,
$\sfr(\chi|_{P_l}) \leq \sfr(\chi) = l$ by Definition \ref{def:rank}, hence $\sfr(\chi|_{P_l}) = l$.

Finally, consider $P_n := \Stab_G\bigl(\langle u_1, u_2, \ldots ,u_n \rangle_{\FQ}\bigr)$ with unipotent radical $U_n$. Since 
$U_H \leq U_n$ and $\rank(Z) = l$, we have $\sfr(\mu) \geq l$ for any $\mu \in \Irr(U_n)$ lying above $\lam_Z$, and so 
$\sfr(\chi|_{P_n}) \geq l$. It then follows from Definition \ref{def:rank} that $\sfr(\chi|_{P_n}) = l$.

\smallskip
(ii) The arguments in (i) show that $\chi|_{U_n}$ contains an irreducible constituent $\lam = \lam_X$ of rank $l$ and 
$\theta|_{U_n}$ contains an irreducible constituent $\mu = \lam_Y$ of rank $k$. Again conjugating $\lam$ and $\mu$ by a suitable element in 
$P_n$, we may assume that 
$$Y = \begin{pmatrix}0 & I_{l/2} & 0\\-I_{l/2} & 0 & 0\\ 0 & 0 & 0\end{pmatrix},~Z = \begin{pmatrix}0 & 0 & 0\\0 & 0 & I_{k/2}\\0 & -I_{k/2} & 0\end{pmatrix}.$$
It follows that $(\chi\theta)|_{U_n}$ contains $\lam\mu = \lam_{Y+Z}$ with $\rank(Y+Z) = k+l$. Since the upper bound 
$\sfr(\chi\theta) \leq k+l$ is obvious, the statement follows.
\end{proof}

Recall the character $\tau=\tau_B$ from \eqref{tau1}.

\begin{corol}\label{so-rank2}
For any $\chi \in \Irr(\SO^+_{2n}(q))$, $2|\sfr(\chi) \leq \min(2\cl(\chi),n)$. The same statement holds for irreducible characters of 
$\SO(\tilde B) = \SO_{2n+1}(q)$, $\SO^-_{2n+2}(q)$.
\end{corol}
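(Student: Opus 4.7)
The plan is to bound $\sfr(\chi)$ by first reducing to a bound on $\sfr(\tau_B^{\cl(\chi)})$ via Lemma \ref{even}, then bounding $\sfr(\tau_B) \leq 2$ by a direct computation, and finally using subadditivity. The divisibility $2 \mid \sfr(\chi)$ is automatic, since $\sfr(\lam_Y) = \rank(Y)$ and antisymmetric matrices over a field of odd characteristic have even rank; likewise $\sfr(\chi) \leq n$ is automatic since $Y$ is $j \times j$ with $j \leq n$.

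For $G = \SO^+_{2n}(q)$: Lemma \ref{even} gives $\zeta_B = \tau_B$ on $G$, so $(\tau_B + \zeta_B)^k = 2^k \tau_B^k$, and any $\chi$ with $\cl(\chi) = k$ is a constituent of $\tau_B^k$. To show $\sfr(\tau_B) \leq 2$, I would restrict the permutation module $\CC B$ to $\ZB(U_j)$ and decompose it as $\bigoplus_{c \in \FF_q^j} M_c$ according to the $v$-coordinate $c$. Under the action of $[I_j, X] \in \ZB(U_j)$ sending $(a,b,c) \mapsto (a+Xc, b, c)$, the stabilizer of a point in $M_c$ is $\{X \text{ antisymmetric}: Xc = 0\}$, and the characters $\lam_Y$ of $\ZB(U_j)$ appearing in $M_c$ are precisely those trivial on this stabilizer. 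A short linear algebra computation --- using that the map $X \mapsto Xc$ on antisymmetric matrices has image of dimension $j-1$, and that its annihilator in antisymmetric $Y$ is spanned by matrices of the form $c\tw td - d\tw tc$ with $d \in \FF_q^j$ --- shows that every such $Y$ has rank at most $2$. Hence $\sfr(\tau_B) \leq 2$. Combining the upper bound in Lemma \ref{so-rank1}(ii) (which holds without the hypothesis $k + l \leq n$, as that proof notes) with the trivial bound $\sfr(\cdot) \leq n$ then yields $\sfr(\tau_B^k) \leq \min(2k, n)$ by induction, and monotonicity of $\sfr$ under taking constituents --- immediate from Definition \ref{def:rank} --- gives $\sfr(\chi) \leq \min(2\cl(\chi), n)$.

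For $H = \SO_{2n+1}(q)$ or $H = \SO^-_{2n+2}(q)$, realize $G = \SO^+_{2n}(q) \leq H$ as the pointwise stabilizer of a non-degenerate subspace $V_0 \subset \tilde B$ of dimension $d \in \{1, 2\}$, so that $\tilde B = V_0 \oplus B$ orthogonally and $G$ fixes $V_0$ pointwise. Then $|\CB_{\tilde B}(g)| = q^d \cdot |\CB_B(g)|$ for $g \in G$, giving $\tau_{\tilde B}|_G = q^d \tau_B$. Applying Lemma \ref{even} to $H$ gives $\zeta_{\tilde B} = \tau_{\tilde B}$ on $H$, so $\chi \in \Irr(H)$ with $\cl(\chi) = k$ is a constituent of $\tau_{\tilde B}^k$, and $\chi|_G$ is therefore a sum of constituents of $q^{dk} \tau_B^k$. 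The previous paragraph now gives $\sfr(\chi|_G) \leq \min(2k, n)$, and by Definition \ref{def:rank}(ii) this equals $\sfr(\chi)$.

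The main obstacle is the explicit computation that $\sfr(\tau_B) \leq 2$: while conceptually routine, it requires a careful identification of the annihilator of $\{X \text{ antisymmetric}: Xc = 0\}$ inside $\Irr(\ZB(U_j))$ and the observation that all resulting matrices $Y$ have rank at most $2$.
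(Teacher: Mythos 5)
Your proposal is correct and follows essentially the same route as the paper: reduce via Lemma \ref{even} to $\chi$ being a constituent of $\tau_B^{\cl(\chi)}$, bound $\sfr(\tau_B)$ by $2$, and conclude by the subadditivity noted in the proof of Lemma \ref{so-rank1}(ii) together with the trivial bounds $2\mid \sfr(\chi)\le n$, handling $\SO_{2n+1}(q)$ and $\SO^-_{2n+2}(q)$ by restriction to the standard $\SO^+_{2n}(q)$ exactly as the paper does. The only difference is that you verify $\sfr(\tau_B)\le 2$ by an explicit (and correct) computation of the annihilator of $\{X \text{ antisymmetric}: Xc=0\}$, where the paper instead refers to the argument of \cite[\S 6.1]{ST}; this makes your write-up self-contained but does not change the proof.
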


\begin{proof}
First we prove the statement for $G = \SO(B) = \SO^+_{2n}(q)$. Then 
the claim $2|\sfr(\chi) \leq n$ is immediate by the definition. By Definition \ref{def-level} and Lemma \ref{even},
$\chi$ is a constituent of $(\tau_B)^j$. Arguing similarly as in \cite[\S6.1]{ST}, one can show that $\sfr(\tau_B) = 2$. It then follows by Lemma \ref{so-rank1}(ii) that $\sfr(\chi) \leq 2j$.  

Next consider $H = \SO(\tilde B)$, where $\tilde B = \FF_q^N$ is a non-degenerate quadratic space, either of dimension $N = 2n+1$, or 
of type $-$ of dimension $N = 2n+2$. Then we can embed $G$ as the point-wise stabilizer of an $(N-2n)$-dimensional non-degenerate subspace
of $\tilde B$. Since $(\tau_{\tilde B})|_G = q^{N-2n}\tau_B$, it follows that every irreducible constituent $\chi$ of the restriction of $\varphi \in \Irr(H)$
to $B$ has level $\leq \cl(\varphi)$. Hence the statement for $\varphi$ follows from the statement for all $\chi$ in $\varphi|_G$.
\end{proof}

\begin{lemma}\label{so-upper}
Let $G = \SO(B) \cong \SO^+_{2n}(q)$ as above, and let $j=2r$ with $1 \leq r \leq n/2$. If $\lam \in \Irr(\ZB(U_j))$ has rank $2r$, where $U_j$ is the unipotent
radical of the $j^{\mathrm {th}}$ parabolic subgroup $P_j$ of $G$, then the multiplicity of $\lam$ in $(\tau_B)^r|_{\ZB(U_j)}$ is 
$q^{2r(n-2r)}|\Sp_{2r}(q)|$.
\end{lemma}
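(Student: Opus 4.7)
The plan is to compute $[\tau_B^r|_{\ZB(U_j)},\lam]$ directly by recognizing $\tau_B^r$ as the permutation character of $G$ on $B^r$ and enumerating $\ZB(U_j)$-orbits. Since $\ZB(U_j)$ is abelian, one has
$$[\tau_B^r|_{\ZB(U_j)},\lam_Y] = \#\bigl\{\ZB(U_j)\text{-orbits } O \text{ on } B^r : \lam_Y|_{\Stab(O)} = 1\bigr\}.$$
To exploit this, fix the decomposition $B = W_j \oplus M \oplus T$, with $M := \langle u_{j+1}, \ldots, u_n, v_{j+1}, \ldots, v_n\rangle_{\FQ}$ and $T := \langle v_1, \ldots, v_j\rangle_{\FQ}$; a direct inspection using \eqref{center-uj} shows that $z = [I_j, X] \in \ZB(U_j)$ sends $(a, m, c) \in W_j \oplus M \oplus T$ to $(a + Xc, m, c)$, where $X$ acts on $T \cong \FQ^j$ as an antisymmetric matrix. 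Consequently, for $\mathbf{b} = (b_1, \ldots, b_r)$ with $b_k = (a_k, m_k, c_k)$, the span $V(\mathbf{b}) := \langle c_1, \ldots, c_r\rangle_{\FQ} \subseteq T$ is a $\ZB(U_j)$-invariant of the orbit, and $\Stab_{\ZB(U_j)}(\mathbf{b}) = \{X \in \ZB(U_j) : V(\mathbf{b}) \subseteq \Ker X\}$.

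Next I would analyze the triviality condition $\lam_Y|_{\Stab(V)} = 1$, which translates to $\tr(XY) = 0$ for all antisymmetric $X$ with $V \subseteq \Ker X$, i.e., $Y \in \Stab(V)^\perp$ under the trace pairing on antisymmetric matrices. In a basis of $T$ adapted to $V$, the stabilizer consists precisely of antisymmetric matrices supported on the complementary lower-right $(j-s) \times (j-s)$ block (where $s = \dim V$), and non-degeneracy of the trace form on antisymmetric matrices in odd characteristic forces the corresponding block of $Y$ to vanish. Combining this local computation with the transitive action of $\Sp(\beta_Y)$ on isotropy classes of $V$ and a dimension count identifies, for $Y$ of full rank $j = 2r$, the intrinsic condition: $V$ must be a Lagrangian subspace for the non-degenerate symplectic form $\beta_Y(u, v) = u^T Y v$ on $T \cong \FQ^{2r}$. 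In particular, $\dim V = r$ is forced.

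Finally I would count. For each such Lagrangian $V$, of which there are $\prod_{i=1}^{r}(q^i + 1)$, the number of ordered $r$-tuples $(c_1, \ldots, c_r) \in V^r$ with span $V$ is $|\GL_r(q)|$; the middle components $(m_1, \ldots, m_r) \in M^r$ are entirely free, contributing a factor $q^{r(2n-2j)}$; and the $W_j$-components $(a_1, \ldots, a_r)$ are determined modulo the image of $X \mapsto (Xc_1, \ldots, Xc_r)$, whose kernel coincides with $\Stab(V)$ of dimension $\binom{j-r}{2}$, yielding $q^{jr - \binom{j}{2} + \binom{j-r}{2}} = q^{r(r+1)/2}$ cosets when $j = 2r$. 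Multiplying these factors together and applying the identity
$$|\GL_r(q)| \cdot \prod_{i=1}^{r}(q^i + 1) = q^{-r(r+1)/2}\,|\Sp_{2r}(q)|$$
collapses the total to $|\Sp_{2r}(q)| \cdot q^{2r(n-2r)}$, as claimed.

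The main obstacle lies in the second step: establishing that the condition on $V$ is genuinely intrinsic (independent of the chosen basis extension) and translates precisely to ``$V$ is Lagrangian for $\beta_Y$". While ``$V$ isotropic" is clearly necessary once the complementary block is forced to vanish in one adapted basis, the argument that no isotropic $V$ of smaller dimension contributes requires the observation that a $\beta_Y$-isotropic complement of $V$ exists only when $\dim V \geq r$, which combined with $\dim V \leq r$ forces equality. After this characterization is in place, the remaining arithmetic is routine.
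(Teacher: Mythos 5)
Your argument is a genuinely different route from the paper's. The paper does no orbit counting at all: it introduces the virtual character $\Sigma:=\prod_{i=0}^{r-1}\bigl(\tau_B-q^{2n-2r+2i}\cdot 1_G\bigr)$, checks that $\Sigma$ vanishes at every nontrivial element of $\ZB(U_j)$ and equals $q^{2r(n-2r)}|\ZB(U_j)|\,|\Sp_{2r}(q)|$ at the identity, so that $\Sigma|_{\ZB(U_j)}$ is $q^{2r(n-2r)}|\Sp_{2r}(q)|\cdot\reg_{\ZB(U_j)}$, and then invokes Corollary \ref{so-rank2} to see that $\tau_B^r-\Sigma$, being an integral combination of powers $\tau_B^i$ with $i<r$, has $U$-rank $<2r$, so the rank-$2r$ character $\lam$ cannot occur in its restriction. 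Your direct count of $\ZB(U_j)$-orbits on $B^r$ is self-contained (it does not use the level--rank machinery), and your bookkeeping is right: the free middle factor $q^{2r(n-2r)}$, the coset count $q^{jr-\binom j2+\binom{j-r}2}=q^{r(r+1)/2}$ in the $W_j$-direction, and the identity $|\GL_r(q)|\prod_{i=1}^r(q^i+1)=q^{-r(r+1)/2}|\Sp_{2r}(q)|$ all check out, so the totals agree. The trade-off is that the paper's proof is shorter but leans on Corollary \ref{so-rank2} (hence on Lemma \ref{so-rank1} and the level theory), while yours is elementary but requires the delicate duality analysis below.

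That analysis is exactly where your write-up has a slip, in the step you yourself flag as the main obstacle. With $S_V:=\{X\ \mbox{antisymmetric}: XV=0\}$, one has that $S_V$ is spanned by the matrices $X_{w_1,w_2}:=w_1\,\tw t{w_2}-w_2\,\tw t{w_1}$ with $w_1,w_2\in V^{\perp}$, where the perp is taken with respect to the standard dot product on $T\cong\FQ^{\,j}$ in the basis $(v_1,\ldots,v_j)$ (a dimension count gives $\dim S_V=\binom{j-s}2$ with $s=\dim V$, which matches). Since $\tr(X_{w_1,w_2}Y)=-2\,\tw t{w_1}Yw_2$ and $q$ is odd, the condition $\lam_Y|_{S_V}=1$ is equivalent to $V^{\perp}$ being $\beta_Y$-isotropic, \emph{not} to $V$ being isotropic or Lagrangian. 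Your adapted-basis computation shows the vanishing of the block of $Y$ attached to the complementary basis vectors, i.e.\ isotropy of that complement, which is the perp rather than $V$; note also that the condition ``some $\beta_Y$-isotropic complement of $V$ exists'' is strictly weaker and would overcount (a non-Lagrangian $r$-dimensional $V$ can admit a Lagrangian complement). With the corrected characterization everything else goes through: since $\dim V\le r$ and $\rank(Y)=2r$, isotropy of $V^{\perp}$ forces $\dim V=r$ and $V^{\perp}$ Lagrangian, and because $V\mapsto V^{\perp}$ is a dimension-preserving bijection on subspaces of $T$, the number of admissible $V$ is still $\prod_{i=1}^r(q^i+1)$, so your final total $q^{2r(n-2r)}|\Sp_{2r}(q)|$ is unaffected. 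Please restate the key step accordingly.
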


\begin{proof}
For any element $[I_j,X] \in \ZB(U_j)$, as given in \eqref{center-uj}, we have $\tau([I_j,X])= q^{2n-2r+2i}$, where $0 \leq i \leq r$. Hence, if we 
define
$$\Sigma:= \prod^{r-1}_{i=0}(\tau-q^{2n-2r+2i} \cdot 1_G),$$
then $\Sigma([I_j,X])$ is zero if $X \neq 0$, and $q^{2r(n-2r)}|\ZB(U_j)| \cdot |\Sp_{2r}(q)|$ if $X = 0$, i.e.
$$\Sigma|_{\ZB(U_j)} = q^{2r(n-2r)}|\Sp_{2r}(q)| \cdot \reg_{\ZB(U_j)}.$$
In particular, the multiplicity of $\lam$ in $\Sigma|_{\ZB(U_j)}$ is $q^{2r(n-2r)}|\Sp_{2r}(q)|$. Note that every irreducible constituent of 
$\tau^r-\Sigma$ has level $< r$, and so $\sfr(\tau^r-\Sigma) < 2r$ by Corollary \ref{so-rank2}. It follows
that $\lam$ is not a constituent of $(\tau^r-\Sigma)|_{\ZB(U_j)}$, and we are done.
\end{proof}

Note that each character $\lam$ of $\ZB(U_j)$ of rank $r$ as in Lemma \ref{so-upper} gives rise to an irreducible character $\psi_\lam$ of $U_j$ of degree 
$q^{2n(n-2r)}$. Hence the term $|\Sp_{2r}(q)|$ in the multiplicity of $\lam$ in $(\tau_B)^r|_{\ZB(U_j)}$ suggests that the $\psi_\lam$-homogeneous 
component of $(\tau_B)^r$ may carry the structure of a regular $\Sp_{2r}(q)$-module. This is clarified in the next statement. 

\begin{propo}\label{so-regular}
Let $q$ be any odd prime, and consider the dual pairs $G \times S \to \Gamma \cong \Sp_{4nr}(q)$ in {\rm \eqref{pairs}(b)}, where $G = \SO(B) \cong \SO^+_{2n}(q)$ and 
$S = \Sp(A) \cong \Sp_{2r}(q)$, with $1 \leq r \leq n/2$. Also consider the Siegel 
parabolic subgroup $P = P_n = \Stab_G(\langle u_1, \ldots, u_n\rangle_{\FQ})$ and 
its radical $U=U_n$. Let $\lam \in \Irr(U)$ be any character of rank $2r$. Then the restriction of $\om|_{G \times S}$ to $U \times S$ contains 
the character $\lam \otimes \reg_S$.
\end{propo}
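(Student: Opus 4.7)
The plan is to realize the underlying $\Gamma$-module of $\om$ in the Schr\"odinger-type model on $\CC[L^*]$, where $L^* := A \otimes_{\FQ} B_2$ and $L := A \otimes_{\FQ} B_1$ are complementary Lagrangians of $W = A \otimes_{\FQ} B$ (with $B_1 = \langle u_1, \ldots, u_n\rangle_{\FQ}$ and $B_2 = \langle v_1, \ldots, v_n\rangle_{\FQ}$). Since $U = U_n$ preserves $B_1$ pointwise and shears $B_2$ into $B_1$, it sits inside the abelian subgroup of $\Sp(W)$ consisting of upper unipotents preserving $L$ pointwise, so its action on $\CC[L^*]$ in this model is diagonal: each basis vector $\delta_\eta$ ($\eta \in L^*$) is a $U$-eigenvector with character $\chi_\eta \in \hat U$. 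The group $S = \Sp(A)$ commutes with $U$ inside $\Sp(W)$ by the dual-pair construction in \eqref{pairs}(b), preserves $L^*$ (acting componentwise via its action on $A$), and acts on $\CC[L^*]$ by the permutation representation $s \cdot \delta_\eta = \delta_{s\eta}$.

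The heart of the argument is the explicit identification of $\chi_\eta(u)$. Writing $u = [I_n, X]$ with antisymmetric $X \in M_{n,n}(\FQ)$ and $\eta = \sum_j c_j \otimes v_j$ with $c_j \in A$, unfolding the standard formulas for the Weil representation of upper unipotents together with the identification $u \mapsto B_u = \id_A \otimes X : L^* \to L$ would give
$$\chi_\eta(u) = \eps^{\Tr_{\FQ/\FF_p}(\tr(X \cdot G(\eta)))},$$
where $G(\eta) := \bigl((c_j,c_k)_A\bigr)_{j,k}$ is the Gram matrix of the tuple $(c_1, \ldots, c_n)$ under the symplectic form on $A$. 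Because the trace pairing $(X, Z) \mapsto \Tr_{\FQ/\FF_p}(\tr(XZ))$ is non-degenerate on antisymmetric matrices over $\FQ$ (with $2 \nmid q$), this shows $\chi_\eta = \lam_Y$ precisely when $G(\eta) = Y$. Consequently the $\lam$-isotypic of $\om|_U$ in $\CC[L^*]$ is the permutation $S$-submodule $\CC[\mathcal{M}(Y)]$, where $\mathcal{M}(Y) := \{\eta \in L^* : G(\eta) = Y\}$.

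The last step is an application of Witt's lemma for the non-degenerate symplectic space $A$ of dimension $2r$. Since $\rank(Y) = 2r = \dim A$ and $\rank G(\eta) \leq \dim_{\FQ}\Span(c_1, \ldots, c_n) \leq 2r$, every $\eta \in \mathcal{M}(Y)$ has $(c_1, \ldots, c_n)$ spanning $A$. Therefore $S$ acts freely on $\mathcal{M}(Y)$ (any $s \in \Sp(A)$ fixing a spanning set of $A$ is the identity) and transitively on it by Witt's lemma (two $n$-tuples in $A$ with the same Gram matrix are $\Sp(A)$-related), so $\mathcal{M}(Y)$ is a single free $S$-orbit of size $|\Sp_{2r}(q)| = |S|$. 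Hence $\CC[\mathcal{M}(Y)] \cong \reg_S$ as $S$-modules, which yields that $\om|_{U \times S}$ contains $\lam \otimes \reg_S$, as claimed. The main obstacle I anticipate is the careful setup of the Schr\"odinger model and the explicit derivation of the formula for $\chi_\eta(u)$ from the upper-unipotent Weil-representation action; once this identification with the trace pairing against $G(\eta)$ is in hand, the transitivity and freeness of the $S$-action on $\mathcal{M}(Y)$ close the argument cleanly via Witt's lemma.
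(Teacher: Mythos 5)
Your proposal is correct and follows essentially the same route as the paper: the same Schr\"odinger-type model on functions on the Lagrangian $A\otimes\langle v_1,\ldots,v_n\rangle_{\FQ}$, the same computation showing that $U$ acts diagonally on the delta-basis with eigencharacter determined by the Gram matrix of $\eta$ (the exact value is $\psi\bigl(\tfrac12\tr(X\,G(\eta))\bigr)$, i.e.\ $\chi_\eta=\lam_{G(\eta)/2}$, a harmless factor of $2$ since $q$ is odd), and the same observation that $S$ permutes the relevant delta-vectors freely because it commutes with $U$. The only difference is in the endgame and is minor: the paper exhibits one explicit eigenvector and then uses transitivity of $P$ on the rank-$2r$ characters of $U$, whereas you describe the whole $\lam$-eigenspace as $\CC[\mathcal{M}(Y)]$ and apply Witt's lemma to see it is a single free $S$-orbit (which in fact gives the slightly stronger statement that the $\lam$-isotypic component equals $\lam\otimes\reg_S$; you should just add the one-line check that $\mathcal{M}(Y)$ is nonempty).
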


\begin{proof}
Fix a Witt basis $(e_1, \ldots,e_r,f_1, \ldots,f_r)$ of the natural module $A = \FF_q^{2r}$ for $S = \Sp(A)$. Also consider a pair of 
complementary maximal totally isotropic subspaces in $V = B \otimes_{\FQ} A$:
$$W := \langle -u_i \otimes f_j,u_i \otimes e_j \rangle_{\FQ},~~W' := \langle v_i \otimes e_j,v_i \otimes f_j \rangle_{\FQ}.$$ 
Recall that $\om$ is the character of a reducible Weil module of dimension $q^{2nr}$ of $\Gamma = \Sp(V)$. We will fix the nontrivial
character 
$$\psi: (\FQ,+) \to \CC^\times,~x \mapsto \eps^{\Tr_{\FQ/\FF_p}(x)},$$ 
and use the model given in \cite[\S13]{Gr}
for such a representation, with $\Gamma$ acting on the space $\cW$ of complex-valued functions on $W'$. If $\delta_u$ denotes the delta-function
for any point $u \in W'$, then $(\delta_u \mid u \in W')$ is a basis for $\cW$. The action of the Siegel parabolic subgroup 
$\Stab_{\Gamma}(W)$ in this basis is described in \cite[(13.3)]{Gr}. 

Now fix the following vector
$$w:= (v_1 \otimes e_1 + v_2 \otimes f_1) + (v_3 \otimes e_2 + v_4 \otimes f_2) + \ldots + (v_{2r-1} \otimes e_r + v_{2r} \otimes f_r).$$
First we consider the action of any $g = [I,X] \in U$, given in the form of \eqref{center-uj} (with $j = n$). Using \cite[(13.3)]{Gr} we have
$$g(\delta_{v_1 \otimes e_1+v_2 \otimes f_1}) = \psi\biggl(\frac{1}{2}\bigl((g-1)(v_1 \otimes e_1+v_2 \otimes f_1),v_1 \otimes e_1+v_2 \otimes f_1\bigr)\biggr) =
    \psi(-x_{12})\delta_{v_1 \otimes e_1+v_2 \otimes f_1},$$ 
where $X = (x_{ij})$ is anti-symmetric. The same computation shows that 
$$g(\delta_w) = \psi(-x_{12}-x_{34} - \ldots -x_{2r-1,2r})\delta_w.$$
Choosing 
$$Y := \diag\biggl( \underbrace{\begin{pmatrix}0 & 1\\-1 & 0 \end{pmatrix}, \begin{pmatrix}0 & 1\\-1 & 0 \end{pmatrix}, \ldots, \begin{pmatrix}0 & 1\\-1 & 0     
     \end{pmatrix}}_{r~\mathrm{times}},  \underbrace{0, \ldots ,0}_{n-2r~\mathrm{times}}\biggr),$$
we then have 
$$g(\delta_w) = \psi(\tr(XY))\delta_w = \lam_Y(g)\delta_w,$$
for all $g \in U$, and with $\sfr(\lam_Y) = 2r$.

On the other hand, if $s \in S = \Sp(A)$, then $s$ fixes both $W$ and $W'$, and moreover $s(w) = w$ if and only if $s = 1$. As the action of 
$U \times S$ on $\cW$ is monomial in the basis $(\delta_u \mid u \in W')$, we conclude that 
$$\cW' := \langle \delta_{s(w)} \mid s \in S \rangle_\CC$$
is a $U \times S$-module, with character $\Ind^{U \times S}_U(\lam_Y \otimes 1_S) = \lam_Y \otimes \reg_S$.  Since $P$ acts transitively on
the set of $U$-characters of any given rank, the statement follows.
\end{proof}

Now we can prove the promised strengthening of Proposition \ref{so-dual}, which, in particular, establishes the existence of characters of 
$\SO^+_{2n}(q)$ of any level up to $n/2$ (when $q$ is odd):

\begin{corol}\label{so-dual2}
Let $q$ be any odd prime. 
Consider the dual pair $\tilde G \times S \to \Gamma \cong \Sp_{2Nr}(q)$ in {\rm \eqref{pairs}(b)}, where
$S = \Sp(A) \cong \Sp_{2r}(q)$, $1 \leq r \leq n/2$, and either 
\begin{enumerate}[\rm (a)] 
\item $N = 2n$ and $\tilde G = \SO(B) \cong \SO^+_{2n}(q)$, or 
\item $N = 2n+1$ and $\tilde G = \SO(\tilde B) = \SO_{2n+1}(q)$, or
\item $N = 2n+2$ and  $\tilde G = \SO(\tilde B) = \SO^-_{2n+2}(q)$.
\end{enumerate}
Then for any $\al \in \Irr(S)$, $D_\al$ is a character of $\tilde G$. Furthermore, if one defines 
$$D'_\al := \sum_{\nu \in \Irr({\tilde G}),~\cl(\nu) \leq r-1}[D_\al,\nu]_{\tilde G}\,\nu,$$
then $\DC_\al := D_\al -D'_\al$ is a character of $\tilde G$, all of whose irreducible constituents 
have level $r$. In fact, at least one irreducible constituent of $\DC_\al$ has $U$-rank $2r$.
\end{corol}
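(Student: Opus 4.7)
The statement has three parts: (i) $D_\al$ is a character of $\tilde G$; (ii) the constituents of $\DC_\al$ have level exactly $r$; (iii) at least one has $U$-rank $2r$. Part (i) is automatic from the module structure: $\om$ is a genuine representation of $\Gamma$, hence of $\tilde G \times S$, and the $S$-isotypic decomposition $\om = \bigoplus_\al V_\al \otimes \al$ expresses $D_\al$ as the character of the (possibly zero) $\tilde G$-module $V_\al$. For part (ii), Proposition \ref{weil-sp}(iii) (or its straightforward analog in the odd-dimensional case) gives $\om|_{\tilde G} = (\tau_{\tilde B})^r$, so by Lemma \ref{even} every constituent of $\om|_{\tilde G}$ appears in $(\tau_{\tilde B} + \zeta_{\tilde B})^r$ and hence has level $\leq r$. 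Since $D_\al$ appears in $\om|_{\tilde G}$ with multiplicity $\al(1)$, the same bound applies to its constituents; as $\DC_\al$ removes by definition all constituents of level $\leq r-1$, its constituents have level exactly $r$ whenever $\DC_\al \neq 0$.

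The substantive step is part (iii), which also guarantees $\DC_\al \neq 0$. By Corollary \ref{so-rank2}, if some constituent $\nu$ of $D_\al$ has $\sfr(\nu) \geq 2r$, then combined with the level bound $\cl(\nu) \leq r$ we force $\sfr(\nu) = 2r$ and $\cl(\nu) = r$. In case (a), Proposition \ref{so-regular} applies directly to the dual pair $\tilde G \times S \to \Gamma$: writing $U := U_n$ for the radical of the Siegel parabolic of $\tilde G$, we have $\om|_{U \times S} \supseteq \lam \otimes \reg_S$ for some $\lam \in \Irr(U)$ of rank $2r$. Taking the $\al$-isotypic component on the $S$-side yields $D_\al|_U \supseteq \al(1) \lam$, hence some irreducible constituent $\nu$ of $D_\al$ satisfies $[\nu|_U, \lam]_U > 0$, giving $\sfr(\nu) \geq 2r$ as required.

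For cases (b) and (c), the plan is to lift the case-(a) construction along the embedding $G := \SO^+_{2n}(q) \hookrightarrow \tilde G$ realized as the pointwise stabilizer of $B' := B^\perp \subset \tilde B$, where $B$ is a $+$-type non-degenerate subspace of dimension $2n$. Under the orthogonal decomposition $\tilde B \otimes A = (B \otimes A) \oplus (B' \otimes A)$, the image of $G \times S$ in $\Gamma$ lies in $\Sp(B \otimes A) \times \Sp(B' \otimes A)$, with $G$ acting only on the first factor (via its own dual pair with $S$) and $S$ acting diagonally via $1 \otimes s$. The standard factorization of a Weil character over an orthogonal direct sum gives
$$\om^{\tilde G}|_{G \times S} = \om^G \cdot (1_G \otimes \om^{B'}|_S),$$
where $\om^G$ is the case-(a) Weil character of the dual pair $G \times S \to \Sp(B \otimes A)$ and $\om^{B'}$ is the Weil character of $\Sp(B' \otimes A)$. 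Restricting to $U = U_n \leq G$, applying Proposition \ref{so-regular} to $\om^G$, and using the absorption identity $\reg_S \cdot \chi = \chi(1) \reg_S$ for any character $\chi$ of $S$, we obtain
$$\om^{\tilde G}|_{U \times S} \supseteq \om^{B'}(1) \cdot (\lam \otimes \reg_S), \qquad \om^{B'}(1) = q^{(N-2n)r} > 0,$$
for some rank-$2r$ character $\lam$ of $U$. Extracting the $\al$-isotypic component yields $[D_\al^{\tilde G}|_U, \lam]_U > 0$, so some irreducible $G$-constituent $\mu$ of $D_\al^{\tilde G}|_G$ has $\sfr_G(\mu) \geq 2r$, and consequently some $\tilde G$-constituent $\nu$ of $D_\al^{\tilde G}$ satisfies $\sfr_{\tilde G}(\nu) = \sfr_G(\nu|_G) \geq 2r$ by Definition \ref{def:rank}(ii). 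The main technical hurdle will be the careful justification of the Weil-representation factorization over the orthogonal direct sum and the bookkeeping of the $G$, $S$, $U$ embeddings in the two factors, but no new character-theoretic input beyond Proposition \ref{so-regular} should be required.
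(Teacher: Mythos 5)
Your argument is correct, and in outline it matches the paper's: case (a) is handled exactly as in the paper via Proposition \ref{so-regular}, the upper bounds $\cl(\nu)\leq r$ and $\sfr(\nu)\leq 2r$ for constituents of $\tilde\om|_{\tilde G}$ come from Proposition \ref{weil-sp}(iii) together with Corollary \ref{so-rank2}, and cases (b), (c) rest on the same factorization $\tilde\om|_{G\times S}=(1_G\otimes\om_r^{N-2n})\cdot\om|_{G\times S}$ over the orthogonal decomposition $\tilde B\otimes A=(B\otimes A)\perp(B'\otimes A)$ (the paper's \eqref{2pairs2}), which the paper likewise asserts without detailed proof, so your flagging of that identity as the one step needing care puts you at the same level of rigor as the printed argument. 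Where you genuinely differ is the endgame in (b), (c): the paper stays at the level of $G\times S$, chooses an auxiliary $\beta\in\Irr(S)$ with $[\om_r^{N-2n}\beta,\al]_S>0$, applies case (a) to $D_\beta$, and then tracks the constituent $\chi\otimes\al$ through the two decompositions \eqref{2pairs1} and \eqref{2pairs2} to locate $\varphi\in\Irr(\tilde G)$ lying above a rank-$2r$ character $\chi$ of $G$; you instead restrict the factorization all the way to $U\times S$ and use the absorption identity $\reg_S\cdot\theta=\theta(1)\reg_S$ to exhibit $\lam\otimes\reg_S$ with positive multiplicity inside $\tilde\om|_{U\times S}$, from which $[(\tilde D_\al)|_U,\lam]_U\geq q^{r(N-2n)}\al(1)>0$ follows at once. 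Your route avoids the auxiliary $\beta$ and makes the precise identity of the extra factor irrelevant (you only use that $\om^{B'}|_S$ is a genuine character of positive degree, sidestepping any $\om$ versus $\oms$ bookkeeping), a mild gain in robustness and brevity; the paper's route yields a bit more structural information along the way, namely an explicit $\chi\in\Irr(G)$ of level $r$ and $U$-rank $2r$ contained in $\varphi|_G$. One small point of ordering: your part (i) as stated only gives that $D_\al$ is zero or a character (the content of the claim is its nonvanishing), but this is exactly what your part (iii) computation supplies, just as in the paper.
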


\begin{proof}
By Proposition \ref{weil-sp}(iii) and Corollary \ref{so-rank2}, for any irreducible constituent $\nu$ of $\om|_{\tilde G}$ we have that
\begin{equation}\label{2pairs0}
 \cl(\nu) \leq r,~\sfr(\nu) \leq 2r. 
\end{equation}  

First we consider the case of (a), so that $\tilde G = G$, and
consider the unipotent radical $U$ of a Siegel parabolic subgroup $P$ of $G$ and fix $\lam \in \Irr(U)$ of rank $2r$ as
in Proposition \ref{so-regular}. As $\om|_{U \times S}$ contains $\lam \otimes \reg_S$, it follows that $D_\al$ is a (true) character of $G$, and at least 
one irreducible constituent $\chi$ of $D_\al$ must afford $\lam$ on restriction to $U$. It follows that $\sfr(\chi) \geq \sfr(\lam) =2r$, whence
$\sfr(\chi) = 2r$ by \eqref{2pairs0} and $\cl(\chi) = r$ by Corollary \ref{so-rank2}. 
This in turn implies that $\chi$ is a constituent of $\DC_\al$, and so $\DC_\al$ is a (true) character of $G$.

Now we consider the cases of (b) and (c). To distinguish between the dual pair $\tilde G \times S$ in these cases and the one in (a),
we will use the notations with $\tilde{\;}$ for the objects in (b) or (c), and without for the ones in (a), with $G$ embedded in $\tilde G$ as in
the proof of Corollary \ref{so-rank2}. In particular, $\tilde\om = \om_{Nr}$ and 
\begin{equation}\label{2pairs1}
  \tilde\om|_{\tilde G \times S} = \sum_{\al \in \Irr(S)}\tilde D_\al \otimes \al;
\end{equation}   
respectively, $\om = \om_{2nr}$, and $\om|_{G \times S} = \sum_{\al \in \Irr(S)}D_\al \otimes \al$. Given the embedding $G \hookrightarrow \tilde G$ and 
the construction of the Weil representations affording $\om$ and $\tilde\om$,
it is not difficult to show that
\begin{equation}\label{2pairs2}
  \tilde\om|_{G \times S} = \bigl(1_G \otimes \om_r^{N-2n}\bigr) \cdot \om|_{G \times S} =  \sum_{\al \in \Irr(S)}D_\al \otimes \bigl( \om_r^{N-2n}\al \bigr).
\end{equation}    
Now, for any given $\al \in \Irr(S)$, we choose $\beta \in \Irr(S)$ so that 
$$0 < [\om_r^{N-2n}\overline\al,\overline\beta]_S = [\om_r^{N-2n}\beta,\al]_S.$$  
By the result for (a), we can find an irreducible constituent $\chi \in \Irr(G)$ of $D_\beta$ of level $r$ and $U$-rank $2r$. Then \eqref{2pairs2} implies
that $\chi \otimes \al$ is an irreducible constituent of $\tilde\om|_{G \times S}$. Restricting $\tilde\om$ down to $G \times S$ using \eqref{2pairs1}, we see
that there exists an irreducible constituent $\varphi \in \Irr(\tilde G)$ of $\tilde D_\al$ such that $\varphi|_G$ contains $\chi$. As $\sfr(\chi) = 2r$, 
it then follows from \eqref{2pairs0} that $\sfr(\varphi) = 2r$. Again using Corollary \ref{so-rank2} and \eqref{2pairs0}, we get $\cl(\varphi) = r$. This 
shows that both $\tilde D_\al$ and $\tilde{D}^\circ_\al$ are (true) characters of $\tilde G$ with $\varphi$ as an irreducible constituent.
\end{proof}

\section{Restrictions to natural subgroups}

In this section, by a {\it standard} subgroup $\GL_m(q)$ of $\GL_n(q) = \GL(V)$ we mean the subgroup
$$\Stab_{\GL(V)}(\langle e_1, \ldots,e_m \rangle_{\FF_q},e_{m+1}, \ldots,e_n) \cong \GL_m(q)$$
for some basis $(e_1, \ldots,e_n)$ of $V = \FF_q^n$. Likewise, by a {\it standard} subgroup $\GU_m(q)$ of $\GU_n(q) = \GU(V)$ we mean the subgroup
$$\Stab_{\GU(V)}(\langle e_1, \ldots,e_m \rangle_{\FF_{q^2}},e_{m+1}, \ldots,e_n) \cong \GU_m(q)$$
for some orthonormal basis $(e_1, \ldots,e_n)$ of the Hermitian space $V = \FF_{q^2}^n$. We will also use the notation 
$\GL^\eps_n(q)$ to denote $\GL_n(q)$ when $\eps = +$ and $\GU_n(q)$ when $\eps = -$.

\begin{propo}\label{tensor}
Let $G = \GL^\eps_n(q)$ with $n \geq 7$, $\eps = \pm$, and let 
$$\chi_1, \ldots,\chi_m,\chi'_1, \ldots, \chi'_m \in \Irr(G)$$ 
be of degrees at most $q^{nL}$ 
with $0 \leq L \leq n/5$. Then the following statements hold.
\begin{enumerate}[\rm(i)]
\item If $1 \leq r \leq n- \lfloor 1.4L \rfloor$ and $\eps= +$,
then the restriction of $\chi_i$ to a standard subgroup $\GL^\eps_r(q)$ contains a linear character of $\GL^\eps_r(q)$. 
\item In general, if $2.8mL \leq n$, then $[\chi_1\chi_2 \ldots \chi_m,\chi'_1\chi'_2 \ldots \chi'_m]_G \leq 8q^{2m^2L^2}$.
\end{enumerate}
\end{propo}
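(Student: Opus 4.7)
The plan is to combine the level theory developed in \cite{GLT} for $G = \GL^\eps_n(q)$ with the orbit bound of Lemma \ref{orbits1}. The key input from \cite{GLT} is a degree-to-level conversion: any $\chi \in \Irr(G)$ with $\chi(1) \leq q^{nL}$ is an irreducible constituent of $\tau_V^{k}$ in the $+$ case, and of some monomial $\zeta_V^a \bar\zeta_V^{k-a}$ in the $-$ case, for some $k \leq \lfloor 1.4L \rfloor$. Here $\tau_V$ is the permutation character of $G$ on the natural module $V$ and $\zeta_V$ is the reducible Weil character of $\GU_n(q)$ satisfying $|\zeta_V(g)|^2 = \tau_V(g)$. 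The constant $1.4 \approx \sqrt{2}$ is chosen so that the exponent $K^2/4$ in the orbit bound of Lemma \ref{orbits1} stays below $2m^2L^2$ when $K \leq 2.8mL$.

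For part (i), fix $\chi = \chi_i$ with level $k \leq \lfloor 1.4L\rfloor$, so $\tau_V^k$ contains $\chi$. Pick a standard $H \cong \GL_r(q)$ with $r \leq n - \lfloor 1.4L\rfloor$, so $H$ fixes pointwise the subspace $W' = \langle e_{r+1},\ldots,e_n\rangle$ of dimension $n - r \geq k$. Realize the module $X$ affording $\chi$ inside the permutation module $\CC[V^k]$. Since $H$ acts trivially on the subspace $\CC[(W')^k] \subset \CC[V^k]$ spanned by ordered $k$-tuples in $W'$, the space of $H$-invariants in $\CC[V^k]$ is at least $q^{k(n-r)}$-dimensional. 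The plan is to allocate these invariants among the $G$-isotypic components of $\CC[V^k]$ via the dual-pair correspondence between $G$ and $S = \GL_k(q)$ acting on $V^k$ from the right, in the style of Propositions \ref{sp-dual}--\ref{so-dual}: under this pairing $\chi$ corresponds to a specific character $\alpha_\chi \in \Irr(S)$, and the $H$-invariants in the $\chi$-isotypic component of $\CC[V^k]$ are identified with an explicit $S$-subrepresentation which is nonzero because $n - r \geq k$. This yields an $H$-fixed vector in $X$, and hence $\chi|_H$ contains the trivial (in particular, a linear) character of $H$.

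For part (ii), multiplicativity of the level bound yields
\begin{equation*}
\chi_1\cdots\chi_m \overline{\chi_1'\cdots\chi_m'} \leq \tau_V^{K} \quad (\eps = +), \qquad \leq \zeta_V^{A}\bar\zeta_V^{B}~\text{with}~A+B=K \quad (\eps = -),
\end{equation*}
where $K = \sum_j k_{\chi_j} + \sum_j k_{\chi_j'} \leq 2.8mL \leq n$. Pairing with $1_G$ and invoking Lemma \ref{orbits1}, in the $+$ case (using $\bar\tau_V = \tau_V$),
\begin{equation*}
[\chi_1\cdots\chi_m,\chi_1'\cdots\chi_m']_G \leq [\tau_V^K,1_G]_G \leq 8q^{K^2/4} \leq 8q^{(2.8mL)^2/4} \leq 8q^{2m^2L^2}.
\end{equation*}
In the $-$ case, the identity $|\zeta_V(g)|^{A+B} = \tau_V(g)^{(A+B)/2}$ combined with the triangle inequality bounds the inner product by $[\tau_V^{K/2},1_G]_G \leq 2q^{(K/2)^2} \leq 2q^{(1.4mL)^2} \leq 8q^{2m^2L^2}$ using the unitary case of Lemma \ref{orbits1}.

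The main obstacle is part (i): once we know $\tau_V^k$ contains $\chi$, pinning an $H$-invariant vector inside the $\chi$-summand of $\CC[V^k]$ requires more than the straightforward count of copies of $1_H$ in $\tau_V^k|_H$, since those invariants could in principle all lie in constituents of $\tau_V^k$ other than $\chi$. The Howe/dual-pair framework adapted from \cite{GLT} is needed to track $H$-invariants on the $G$-side compatibly with isotypic data on the $S$-side. By contrast, once (i) is set up this way, (ii) reduces to a mechanical calibration of constants in Lemma \ref{orbits1}.
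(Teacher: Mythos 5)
Your part (ii) is essentially the paper's own argument: convert the degree hypothesis into a level bound $\cl(\chi_i)\leq\lfloor 1.4L\rfloor$, observe that the product of all $2m$ (suitably twisted) characters sits inside $\tau^{K}$ (resp.\ a product of $\zeta$-powers), and invoke Lemma \ref{orbits1} with $2km\leq 2.8mL\leq n$. Two points of sloppiness there are repairable but real: the definition of level only gives that $\chi_i\al_i$, for some \emph{linear} $\al_i$, is a constituent of $\tau^{j}$ (resp.\ $\zeta^{j}$), so your displayed containment $\chi_1\cdots\chi_m\overline{\chi'_1\cdots\chi'_m}\leq\tau_V^{K}$ is off by a linear twist (the paper avoids this by bounding the two self-inner products $[\prod\chi_i,\prod\chi_i]_G$ and $[\prod\chi'_i,\prod\chi'_i]_G$ separately and applying Cauchy--Schwarz); and the constant $1.4$ is not "calibrated against the orbit bound" — it comes from the inequality $q^{j(n-j)}\leq\chi_i(1)\leq q^{nL}$ of \cite[Theorem 1.1]{GLT} (available only after checking $\cl(\chi_i)<n/2$ from $\chi_i(1)\leq q^{n^2/5}<q^{n^2/4-2}$) combined with $L\leq n/5$, a short quadratic computation you assert but never perform.

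The substantive gap is part (i). Your proposal does not prove it: the step where the $H$-invariants of $\CC[V^k]$ are "allocated" to the $\chi$-isotypic component via a $G\times\GL_k(q)$ dual pair is exactly the missing content, and as stated it is not correct. In the decomposition $\CC[V^k]|_{G\times S}=\sum_{\beta\in\Irr(S)}D_\beta\otimes\beta$ the pieces $D_\beta$ are in general reducible and a given $\chi$ may occur in several of them, so there is no "specific $\alpha_\chi$" attached to $\chi$, and a large space of $H$-invariants in the full permutation module gives no information about the particular constituent $\chi$ — which is precisely the obstacle you acknowledge without overcoming. Note also that what is true is only that $\chi\al$ (some linear $\al$) lies in $\tau^k$, so one cannot realize a module affording $\chi$ itself inside $\CC[V^k]$, and the correct conclusion is that $\chi|_H$ contains a linear, generally nontrivial, character, not an $H$-fixed vector. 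The paper does not reprove any of this here: part (i) is quoted directly from Propositions 3.3, 3.5 and Theorem 3.6 of \cite{GLT}, which establish exactly that an irreducible character of level $j$ of $\GL_n(q)$ restricted to a standard $\GL_{n-j}(q)$ contains a linear character; if you want a self-contained argument you would need to reproduce that analysis, not the isotypic-allocation sketch proposed.
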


\begin{proof}
(a) Since $n \geq 7$, we have that 
$$\chi_i(1) \leq q^{n^2/5} < q^{n^2/4-2}.$$
It follows by \cite[Theorem 1.1]{GLT} that $j:= \cl(\chi_i) < n/2$ and so 
\begin{equation}\label{deg-1}
  q^{nL} \geq \chi_i(1) \geq q^{j(n-j)}.
\end{equation}  
Consider the polynomial $f(t) = t^2-nt+nL \in \CC[t]$. Then $f(1.4L) \leq 0$ and 
$$\frac{n+\sqrt{n^2-4nL}}{2} \geq \frac{n+L\sqrt{5}}{2} \geq n/2 +1.1L.$$
It follows that 
\begin{equation}\label{root1}
\mbox{If }t \leq n/2+1.1L \mbox{ and }f(t) \geq 0, \mbox{ then }t \leq 1.4L.
\end{equation}
In particular, \eqref{deg-1} implies
that $j \leq k := \lfloor 1.4L \rfloor$. 

\smallskip
(b) Here we assume that $\eps = +$. By the definition of the level $\cl(\chi_i)$ \cite[Definition 3.1]{GLT}, there is some 
linear character $\al_i \in \Irr(G)$ that $\chi_i\al_i$ is an irreducible constituent of $\tau^j$, where 
$\tau$ is the permutation character of $G$ acting on the set of vectors of the natural module $V = \FF_q^n$. 
Propositions 3.3, 3.5, and Theorem 3.6 of \cite{GLT} now imply that (i) holds.
Furthermore, note that $1_G$ is a constituent of $\tau$, so $\chi_i\al_i$ is also a constituent of $\tau^k$.
This is true for all $1 \leq i \leq m$. It follows that 
$$[\chi_1\chi_2 \ldots \chi_m,\chi_1\chi_2 \ldots \chi_m]_G$$ 
is at most $[\tau^{km},\tau^{km}]_G$, which equals the number of $G$-orbits on the set $\Omega_{2km}$ (in the notation of 
Lemma \ref{orbits1}). By Lemma \ref{orbits1} the latter is at most
$$8q^{k^2m^2} \leq 8q^{2m^2L^2}$$
since $2km \leq 2.8mL \leq n$. We have shown that if 
$$\chi_1\chi_2 \ldots \chi_m = \sum_{\gam \in \Irr(G)}a_\gam \gam,~~
    \chi'_1\chi'_2 \ldots \chi'_m = \sum_{\gam \in \Irr(G)}b_\gam \gam,$$
then 
$$\sum_{\gam \in \Irr(G)}a_\gam^2 \leq 8q^{2m^2L^2},~~\sum_{\gam \in \Irr(G)}b_\gam^2 \leq 8q^{2m^2L^2}.$$
It follows by the Cauchy--Schwarz inequality that
$$[\chi_1\chi_2 \ldots \chi_m,\chi'_1\chi'_2 \ldots \chi'_m]_G = \sum_{\gam \in \Irr(G)}a_\gam b_\gam \leq 
   \biggl(\sum_{\gam \in \Irr(G)}a_\gam^2 \cdot\sum_{\gam \in \Irr(G)}b_\gam^2\biggl)^{1/2} \leq 8q^{2m^2L^2}.$$
   
\smallskip
(c) Now we assume that $\eps = -$. By the definition of the level $\cl(\chi_i)$ \cite[Definition 4.2]{GLT}, there is some 
linear character $\al_i \in \Irr(G)$ that $\chi_i\al_i$ is an irreducible constituent of $\zeta^j$, where 
$\zeta = \zeta_n$ is defined in \cite[(4.1)]{GLT} and 
$\zeta^2$ is the permutation character of $G$ acting on the set of vectors of the natural module $V = \FF_{q^2}^n$. 
This is true for all $1 \leq i \leq m$. It follows that 
$$[\chi_1\chi_2 \ldots \chi_m,\chi_1\chi_2 \ldots \chi_m]_G$$ 
is at most $[\zeta^a,\zeta^a]_G$ for some $a \leq km$. Note that $[\zeta^a,\zeta^a]_G$  
equals the number of $G$-orbits on the set $\Omega_{km}$ (in the notation of 
Lemma \ref{orbits1}). By Lemma \ref{orbits1} the latter is at most
$$2q^{k^2m^2} \leq 2q^{2m^2L^2}$$
since $2km \leq 2.8mL \leq n$. Now we can finish as in (b).
\end{proof}

\begin{corol}\label{tensor2}
Let $G = \GL^\eps_n(q)$ with $n \geq 7$, $\eps = \pm$, and let $\chi_1, \chi_2$ be complex characters of $G$ of degree at most $q^{nL_1}$  and $q^{nL_2}$ 
with $L_1,L_2 \geq 0$ and $L_1 + L_2 \geq 1$. Then the following statements hold.
\begin{enumerate}[\rm(i)]
\item If $\chi_1, \chi_2 \in \Irr(G)$ then $\Csum(\chi_1\chi_2,G) \leq q^{5(L_1+L_2)^2}$.
\item In general, $\Csum(\chi_1\chi_2,G) \leq \Csum(\chi_1,G)\Csum(\chi_2,G)q^{5(L_1+L_2)^2}$. 
\end{enumerate}
\end{corol}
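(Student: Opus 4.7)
The plan is to deduce Corollary \ref{tensor2} from the techniques already developed in Proposition \ref{tensor}(ii), first refining that proof to handle asymmetric degree bounds $L_1, L_2$, and then bootstrapping to non-irreducible characters by linearity.

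For part (i), I would start from Lemma \ref{mults}(i), which gives $\Csum(\chi_1\chi_2,G) \leq [\chi_1\chi_2, \chi_1\chi_2]_G$, so the task reduces to bounding this self-inner product. The argument then splits into two regimes by comparing $L_1+L_2$ with $n/5$. In the ``small'' regime $L_1+L_2 \leq n/5$, each $L_i \leq n/5$, so the level calculation in part (a) of Proposition \ref{tensor} applies individually to each $\chi_i$ and yields $\cl(\chi_i) \leq k_i := \lfloor 1.4L_i \rfloor$. As in parts (b)--(c) of that proof, there exist linear characters $\alpha_1, \alpha_2$ such that $\chi_i\alpha_i$ is a constituent of $\tau^{k_i}$ when $\eps=+$ (respectively of $\zeta^{k_i}$ when $\eps=-$); hence $\chi_1\chi_2\alpha_1\alpha_2$ sits inside $\tau^{k_1+k_2}$ or $\zeta^{k_1+k_2}$, and Lemma \ref{orbits1} bounds $[\chi_1\chi_2, \chi_1\chi_2]_G$ by $8q^{(k_1+k_2)^2}$. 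Combining this with $k_1+k_2 \leq 1.4(L_1+L_2)$ and $L_1+L_2 \geq 1$ yields the desired bound $q^{5(L_1+L_2)^2}$ after a short numerical check valid for all $q \geq 2$. In the complementary ``large'' regime $L_1+L_2 > n/5$, the trivial inequality
\[
\Csum(\chi_1\chi_2,G) \leq \chi_1(1)\chi_2(1) \leq q^{n(L_1+L_2)} \leq q^{5(L_1+L_2)^2}
\]
already suffices.

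For part (ii), I would decompose $\chi_1 = \sum_i a_i\alpha_i$ and $\chi_2 = \sum_j b_j\beta_j$ as $\NN$-linear combinations of pairwise distinct irreducible characters. Each constituent satisfies $\alpha_i(1) \leq \chi_1(1) \leq q^{nL_1}$ and $\beta_j(1) \leq \chi_2(1) \leq q^{nL_2}$, so part (i) applies to every pair $(\alpha_i, \beta_j)$ with the same $L_1, L_2$, yielding
\[
\Csum(\chi_1\chi_2, G) \leq \sum_{i,j} a_i b_j\, \Csum(\alpha_i\beta_j, G) \leq q^{5(L_1+L_2)^2}\, \Csum(\chi_1, G)\Csum(\chi_2, G).
\]

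The only substantive point is the asymmetric version of the level argument, but this is essentially built into Proposition \ref{tensor}: part (a) of its proof treats each factor independently, and the subsequent orbit-counting step only cares about the total level $k_1+k_2 \leq 1.4(L_1+L_2)$. The cosmetic case split at $L_1+L_2 = n/5$ is the only genuinely new ingredient, and it is resolved by the elementary trivial bound on $\Csum$, so no serious obstacles are anticipated.
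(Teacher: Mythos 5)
Your proposal is correct and follows the paper's own proof essentially verbatim: the same case split at $L_1+L_2 = n/5$, the same use of the level bound $\lfloor 1.4L_i\rfloor$ from Proposition \ref{tensor} together with Lemma \ref{orbits1} in the small regime, the trivial degree bound $\Csum(\chi_1\chi_2,G)\leq q^{nL}\leq q^{5L^2}$ in the large regime, and the same bilinear expansion into irreducible constituents for (ii). One cosmetic caveat: for $\eps=-$ you should say that $\chi_i\al_i$ is a constituent of $\zeta^{j_i}$ for \emph{some} $j_i\leq\lfloor 1.4L_i\rfloor$ rather than of $\zeta^{\lfloor 1.4L_i\rfloor}$ itself (unlike $\tau$, the character $\zeta$ does not contain $1_G$, so one cannot pad the exponent), but this is exactly how the paper phrases it and it changes nothing in the estimate.
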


\begin{proof}
(i) Let $L := L_1+L_2$ and consider first the case $L \leq n/5$. 

Assume that $\eps = +$. Then the proof of Proposition \ref{tensor} shows that, for 
each $i = 1,2$, there exist a linear character $\al_i$ of $G$ such that $\chi_i\al_i$ is a constituent of $\tau^{k_i}$ for 
$k_i:= \lfloor 1.4L_2 \rfloor$. Hence $\chi_1\chi_2\al_1\al_2$ is a constituent of $\tau^k$ with 
$$k := \lfloor 1.4L \rfloor \geq k_1 +k_2.$$
By Lemma \ref{orbits1}, we now have
$$\Csum(\chi_1\chi_2,G) \leq [\tau^k,\tau^k] \leq 8q^{k^2} \leq 8q^{2L^2} \leq q^{5L^2}.$$ 
Next assume that $\eps = -$. Then the proof of Proposition \ref{tensor} shows that, for 
each $i = 1,2$, there exist a linear character $\al_i$ of $G$  and some $k_i \leq \lfloor 1.4L_2 \rfloor$
such that $\chi_i\al_i$ is a constituent of $\zeta^{k_i}$. Hence $\chi_1\chi_2\al_1\al_2$ is a constituent of $\zeta^k$ with 
$$k := k_1 + k_2 \leq \lfloor 1.4L \rfloor.$$
By Lemma \ref{orbits1}, we now have
$$\Csum(\chi_1\chi_2,G) \leq [\zeta^k,\zeta^k] \leq 2q^{k^2} \leq 2q^{2L^2} \leq q^{5L^2}.$$ 
On the other hand, if $L \geq n/5$, then in both of the cases of $\eps = +$ and $\eps = -$ we also have 
$$\Csum(\chi_1\chi_2,G) \leq (\chi_1\chi_2)(1) \leq q^{nL} \leq q^{5L^2}$$
as well.

\smallskip
(ii) Write $\chi_1 = \sum_i a_i\al_i$ and $\chi_2 = \sum_j b_j \beta_j$ with 
$\al_i,\beta_j \in \Irr(G)$ being pairwise distinct. Then $\al_i(1) \leq q^{nL_1}$ and $\beta_j(1) \leq q^{nL_2}$, and so 
$$\Csum(\al_i\beta_j,G) \leq q^{5L^2}$$
by (i). It follows that 
$$\Csum(\chi_1\chi_2) = \sum_{i,j}a_ib_j \Csum(\al_i\beta_j,G) \leq \bigl(\sum_ia_i\bigl)\bigl(\sum_j b_j\bigl)q^{5L^2} = \Csum(\chi_1,G)\Csum(\chi_2,G)q^{5L^2}.$$
\end{proof}

\begin{corol}\label{tensor3}
Let $G = \GL^\eps_n(q)$ with $n \geq 7$, $\eps = -$, and let $\chi$ be a complex character of degree at most $q^{nL}$ 
with $L \geq 1/2$. Then for any integer $m \geq 1$ we have 
$$\Csum(\chi^m,G) \leq q^{15m^2L^2}\Csum(\chi,G)^m.$$
\end{corol}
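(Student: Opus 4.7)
The plan is to argue by strong induction on $m \geq 1$, using Corollary \ref{tensor2}(ii) as the essential input. The base case $m = 1$ is immediate, since $q^{15L^2} \geq 1$.

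For the inductive step with $m \geq 2$, I would split $\chi^m = \chi^a \cdot \chi^b$ with $a = \lfloor m/2 \rfloor$ and $b = \lceil m/2 \rceil$, so that $a + b = m$ and $1 \leq a \leq b$. Since $\chi^a(1) \leq q^{naL}$ and $\chi^b(1) \leq q^{nbL}$, and $aL + bL = mL \geq 1$ (as $L \geq 1/2$ and $m \geq 2$), Corollary \ref{tensor2}(ii) yields
$$\Csum(\chi^m, G) \leq \Csum(\chi^a, G)\, \Csum(\chi^b, G)\, q^{5m^2L^2}.$$
Applying the induction hypothesis to both $a, b < m$ and combining then gives
$$\Csum(\chi^m, G) \leq q^{(15a^2 + 15b^2 + 5m^2)L^2}\, \Csum(\chi, G)^m.$$

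It remains to verify the numerical inequality $15a^2 + 15b^2 + 5m^2 \leq 15m^2$, equivalently $a^2 + b^2 \leq \tfrac{2}{3}m^2$. For $m = 2k$ even this is $m^2/2 \leq 2m^2/3$; for $m = 2k+1$ odd with $k \geq 1$, $a^2 + b^2 = 2k^2 + 2k + 1 \leq (8k^2 + 8k + 2)/3 = \tfrac{2}{3}m^2$ reduces to $2k^2 + 2k \geq 1$, which holds.

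The main obstacle is the choice of split. A naive induction based on $\chi^m = \chi^{m-1} \cdot \chi$ produces the exponent $15(m-1)^2 + 5m^2$ at each step, which exceeds $15m^2$ once $m \geq 6$, so the desired bound cannot be obtained by peeling off one factor of $\chi$ at a time. The balanced split $a, b \approx m/2$ minimizes $a^2 + b^2$ subject to $a + b = m$, and this minimum is exactly small enough to absorb the additive penalty $5m^2L^2$ contributed by Corollary \ref{tensor2}(ii) into the inductive bound.
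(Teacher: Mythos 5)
Your proof is correct, and the application of Corollary \ref{tensor2}(ii) is legitimate at every step: with $\chi_1=\chi^a$, $\chi_2=\chi^b$ one has $L_1=aL$, $L_2=bL$, and $L_1+L_2=mL\geq 1$ because $L\geq 1/2$ and $m\geq 2$, so the hypotheses of that corollary hold; the numerical verification $a^2+b^2\leq \tfrac{2}{3}m^2$ for the balanced split is also right, and your observation that the naive split $\chi^{m-1}\cdot\chi$ fails beyond $m=5$ correctly identifies why some care in the decomposition is needed.

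The route differs from the paper's in the choice of decomposition. The paper first proves a doubling estimate by induction on $k$, namely $\Csum(\chi^{2^k},G)\leq q^{5(2^{2k+1}-2^{k+1})L^2}\Csum(\chi,G)^{2^k}$ (so exponent about $10\cdot 4^k L^2$, better than the generic $15\cdot 4^kL^2$), and then runs an induction on $m$ using the greedy split $m=a+b$ with $a=2^{\lfloor\log_2 m\rfloor}$, closing the estimate via $5(m^2+2a^2+3b^2)\leq 15m^2$, which uses $b<a$. Your single strong induction with the balanced split $a=\lfloor m/2\rfloor$, $b=\lceil m/2\rceil$ dispenses with the separate power-of-two lemma: the balance minimizes $a^2+b^2$ and makes the crude bound $15a^2+15b^2$ from the inductive hypothesis already small enough to absorb the $5m^2$ penalty. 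Your version is somewhat cleaner and more self-contained; the paper's two-stage argument buys a sharper intermediate constant for $m=2^k$, which is what allows its unbalanced split to close, but both arguments land on the same final constant $15$.
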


\begin{proof}
First we prove by induction on $k \geq 0$ that 
\begin{equation}\label{power-1}
  \Csum(\chi^{2^k},G) \leq q^{5(2^{2k+1}-2^{k+1})L^2}\Csum(\chi,G)^{2^k}.
\end{equation}  
The induction base $k=0$ is trivial. Now, for any $k \geq 0$ we have by Corollary \ref{tensor2}(ii) and the induction hypothesis that
$$\Csum(\chi^{2^{k+1}},G) \leq q^{5\cdot 2^{2k+2}L^2}\Csum(\chi^{2^k},G)^2 \leq q^{5(2^{2k+3}-2^{k+2})L^2}\Csum(\chi,G)^{2^{k+1}},$$
completing the induction step. 

Now we prove the desired statement by induction on $m \geq 1$, again with the trivial induction base $m=1$. For $m \geq 2$, 
take $a := 2^k$ with $k := \lfloor \log_2 m \rfloor$. According to \eqref{power-1} we have that
$$\Csum (\chi^a,G) \leq q^{10a^2L^2}\Csum(\chi,G)^a.$$
In particular, we are done if $a=m$.  Suppose that $b:= m-a \geq 1$ and apply the induction hypothesis to $b$. Then by 
Corollary \ref{tensor2}(ii) we have 
$$\Csum(\chi^m,G) \leq q^{5m^2L^2}\Csum(\chi^a,G)\Csum(\chi^b,G) \leq q^{5(m^2+2a^2+3b^2)L^2}\Csum(\chi,G)^m 
    \leq q^{15m^2L^2}\Csum(\chi,G)^m,$$
since $a > b$.
\end{proof}

\begin{propo}\label{cent}
Let $V = \FF_q^n$ be a non-degenerate symplectic or orthogonal space and let 
$G = \Sp(V)$ or $\SO(V)$, respectively. Suppose that $|\CB_V(g)| = q^k$ 
for some $g \in G$. Then 
$$|\CB_G(g)| \geq q^{(k^2-3k)/2}.$$ 
\end{propo}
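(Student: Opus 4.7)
Set $W := \CB_V(g) = \Ker(g - 1_V)$ of dimension $k$, write $u := g - 1_V$, and let $\varepsilon := +1$ when $G = \Sp(V)$ and $\varepsilon := -1$ when $G = \SO(V)$. The plan is to exhibit an explicit subgroup $K \leq \CB_G(g)$ of order at least $q^{(k^2 - 3k)/2}$.

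\emph{The centralizing subgroup.} First I would verify $u(V) = W^\perp$: the inclusion $\subseteq$ is forced by the isometry property, since $(u(v), w) = (gv, w) - (v, w) = (gv, gw) - (v, w) = 0$ for every $w \in W$, and equality follows from $\dim u(V) = n - k = \dim W^\perp$. Now let
\[
K := \{h \in G : h|_{W^\perp} = 1_{W^\perp}\}
\]
be the pointwise stabilizer of $W^\perp$ in $G$. Applying the same isometry argument to $h \in K$ in place of $g$ gives $(h - 1_V)(V) \subseteq W$. For any $v \in V$, both $hgv$ and $ghv$ then equal $v + u(v) + (h - 1_V)v$: indeed $hgv = h(v + u(v)) = hv + u(v)$ because $u(v) \in W^\perp$ is fixed by $h$, and $ghv = g(v + (h - 1_V)v) = gv + (h - 1_V)v$ because $(h - 1_V)v \in W$ is fixed by $g$. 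Hence $K \leq \CB_G(g)$.

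\emph{Counting $|K|$.} Set $r := \dim(W \cap W^\perp)$ and fix an orthogonal decomposition
\[
V = (R \oplus R^*) \oplus W_0 \oplus W_1,
\]
where $R := W \cap W^\perp$, $R^*$ is an isotropic complement making $R \oplus R^*$ hyperbolic, $W_0 \subset W$ is a non-degenerate complement to $R$ (of dimension $k - r$), and $W_1 := (R \oplus R^* \oplus W_0)^\perp$. Every $h \in K$ is the identity on $W^\perp = R \oplus W_1$ and is determined by $h(w) = w + a_R(w) + a_0(w)$ for $w \in W_0$ and $h(v) = v + b_R(v) + b_0(v)$ for $v \in R^*$, with $a_R \in \Hom(W_0, R)$, $a_0 \in \End(W_0)$, $b_R \in \Hom(R^*, R)$, $b_0 \in \Hom(R^*, W_0)$. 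Unpacking the isometry relations $(hx, hy) = (x, y)$ yields: (a) $1_{W_0} + a_0$ ranges over the full classical group $G(W_0)$ on $W_0$ of the same type as $G$ (the condition $\det h = 1$, in the orthogonal case, reduces precisely to $\det(1_{W_0} + a_0) = 1$); (b) $a_R$ is arbitrary; (c) $b_0$ is uniquely determined by $a_0$ and $a_R$; (d) one of the two natural symmetries of the bilinear form $\tilde b_R(v, v') := (v, b_R(v'))$ on $R^*$ is forced, while the other is free and contributes $q^{r(r + \varepsilon)/2}$ choices. Therefore $|K| = |G(W_0)| \cdot q^{r(k - r) + r(r + \varepsilon)/2}$, and the standard estimate $|G(W_0)| \geq \tfrac12 q^{(k - r)(k - r + \varepsilon)/2}$ gives $|K| \geq \tfrac12 q^{k(k + \varepsilon)/2}$. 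Since
\[
\frac{k(k + \varepsilon)}{2} - \frac{k^2 - 3k}{2} = \frac{(3 + \varepsilon)k}{2} \geq k,
\]
the factor $\tfrac12$ is absorbed whenever $k \geq 1$; the case $k = 0$ is trivial ($|K| \geq 1 = q^0$). The claimed bound follows.

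\emph{Main obstacle.} The delicate point is (d), where the isometry relation
\[
(v, b_R(v')) + (b_R(v), v') + (b_0(v), b_0(v')) = 0
\]
couples the linear parametrization by $b_R$ with the quadratic expression in $b_0$, forcing exactly one of the two symmetries of $\tilde b_R$ and leaving the other free (antisymmetric free and symmetric forced in the orthogonal case, the reverse in the symplectic case). In characteristic $2$ the symmetric/antisymmetric dichotomy degenerates and must be replaced by the analogous decomposition via quadratic refinements, but the resulting dimension count is unchanged.
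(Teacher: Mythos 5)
Your construction is correct and gives the stated bound (indeed a stronger one), but it is a genuinely different route from the paper's. The paper passes to the Jordan decomposition $g=su$, restricts to $A=\Ker(s-1_V)$, and quotes the Liebeck--Seitz description of centralizers of unipotent elements in the corresponding algebraic group to bound $\dim \CB(u)^\circ$ from below, then descends to the finite group; this is uniform in all characteristics but relies on the classification of unipotent centralizers. You instead exhibit the explicit subgroup $K=\{h\in G: h|_{W^\perp}=1\}$ with $W=\Ker(g-1_V)$, prove $K\leq \CB_G(g)$ by the neat observation that $(g-1)V=W^\perp$ and $(h-1)V\subseteq W$, and count $|K|$ by elementary linear algebra. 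I checked your parametrization and the exponent bookkeeping: the conditions on $(a_0,a_R,b_R,b_0)$ are exactly as you state in odd characteristic (and in characteristic $2$ for $\Sp$, where only the alternating form enters and the symmetrization map is onto alternating forms with kernel of dimension $r(r+1)/2$), the exponents sum to $k(k+\varepsilon)/2$, and the slack $(3+\varepsilon)k/2\geq k$ absorbs the constant $\tfrac12$ from $|G(W_0)|$. Your bound $|\CB_G(g)|\geq \tfrac12 q^{k(k+\varepsilon)/2}$ is sharper than $q^{(k^2-3k)/2}$ and entirely self-contained, which is a real advantage over the paper's argument.

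The one genuine deficiency is the characteristic-$2$ orthogonal case, which the paper does need (Proposition 5.6 and Theorem 5.10 invoke this proposition with $2\mid q$ and $G=\SO(V)$ or $\Om(V)$), and which your closing remark disposes of too quickly. The issue is not only that the symmetric/antisymmetric dichotomy degenerates: when $2\mid q$ the group is defined by the quadratic form $Q$, and (1) $R=W\cap W^\perp$ is totally isotropic for the bilinear form but need not be totally singular for $Q$ (e.g.\ images of orthogonal transvections), so the condition $Q(hw)=Q(w)$ on $W_0$ becomes $Q((1+a_0)w)+Q(a_Rw)=Q(w)$, coupling $a_0$ with $a_R$ (with possible Arf-invariant obstructions for some $a_R$); (2) the condition $Q(hv)=Q(v)$ on $R^*$ constrains the diagonal of $\tilde b_R$ via $(v,b_Rv)=Q(b_Rv)+Q(b_0v)$, which is quadratic in $b_R$; and (3) ``$\det h=1$'' is vacuous and should be replaced by the Dickson invariant. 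None of this is fatal --- for instance, restricting $a_R$ and $b_R$ to have image in the hyperplane $R_0=\{x\in R: Q(x)=0\}$ (the kernel of the $\FF_q$-linear functional $x\mapsto \sqrt{Q(x)}$) restores a clean count at the cost of $q^{O(k)}$, which the slack between $k(k-1)/2$ and $(k^2-3k)/2$ can absorb --- but as written the claim that ``the dimension count is unchanged'' is an assertion, not a proof, and it is precisely the case the paper handles through Liebeck--Seitz (note their extra correction term $\sum_{2\mid i}r_i$ when $2\mid q$). If you carry out this characteristic-$2$ repair, your argument is a complete and more elementary substitute for the paper's proof.
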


\begin{proof}
First we make the following observation that 
\begin{equation}\label{order}
  |\Sp_{2k}(q)| > q^{k(2k+1)}/2,~~|\SO^\pm_m(q)| \geq q^{m(m-1)/2}/2
\end{equation}
for $k \geq 1$ and $m \geq 2$. Indeed, 
$$|\Sp_{2k}(q)| = q^{k^2}\prod^k_{i=1}(q^{2i}-1),$$
so the $\Sp$-case follows from \cite[Lemma 6.1(i)]{GLT}. The same argument applies to the $\SO$-case with $m=2k+1$ since 
$$|\SO_{2k+1}(q)| = q^{k^2}\prod^k_{i=1}(q^{2i}-1).$$
Consider the $\SO$-case with $m = 2k$. If $k \leq 3$ then \eqref{order} can be checked directly. If $k \geq 4$, 
then by \cite[Lemma 6.1(i)]{GLT} we have  
$$|\SO^\pm_{2k}(q)| \geq q^{k(k-1)}\prod^{k-1}_{i=1}(q^{2i}-1)(q^k-1) \geq q^{k(2k-1)} \cdot \frac{9}{16} \cdot \frac{15}{16} > 
    q^{k(2k-1)}/2.$$

Let $g=su$ denote the Jordan decomposition of $g$, with $s$ semisimple and $u$ unipotent. Then 
we can decompose $V = A \oplus A^\perp$, where $A = \Ker(s-1_V)$ is non-degenerate. Next, if $J_i$ denotes
the Jordan block of size $i \times i$ with eigenvalue $1$, then we have
$$u|_A = \oplus_i J_i^{r_i}$$
for some $r_i \in \ZZ_{\geq 0}$ and 
\begin{equation}\label{sum1}
  \sum_i r_i = k. 
\end{equation}  
We will view $H := \Sp(A)$, respectively
$H:=\SO(A)$,  as $H = \HC^F$, where  
$\HC := \Sp(A \otimes_{\FF_q}\overline\FF_q)$, respectively $\HC := \SO(A \otimes_{\FF_q}\overline\FF_q)$ and 
$F:\HC\to\HC$ a Frobenius endomorphism. The structure of the connected component $\CL:=\CB_\HC(u)^\circ$ is described in Theorems 3.1, 6.6, and Lemma 6.2 of \cite{LSe}; in particular,
$$\dim \CL \geq \left\{\begin{array}{ll}\sum_i ir_i^2/2 + \sum_{i < j}ir_ir_j + \sum_{2 \nmid i}r_i/2-\sum_{2|i}r_i, & \HC = \Sp,\\
    \sum_i ir_i^2/2 + \sum_{i < j}ir_ir_j - \sum_{2 \nmid i}r_i/2-\sum_{2|i}r_i, & \HC = \SO, \end{array} \right.$$
(where the additional term $\sum_{2|i}r_i$ appears only when $2|q$). Of course, if $r_i > 0$ for some $2|i$, then 
$$ir_i^2 \geq r_i^2 + r_i,$$
and so \eqref{sum1} implies that
$$\sum_i ir_i^2/2 + \sum_{i < j}ir_ir_j \geq \sum_ir_i^2/2 + \sum_{i < j}r_ir_j + \sum_{2|i}r_i/2 \geq k^2/2 + \sum_{2|i}r_i/2.$$ 
It follows that
$$\dim \CL \geq k^2/2-\sum_i r_i/2 \geq (k^2-k)/2.$$
The structure of $\CL^F$ is described in Theorems 7.1 and 7.3 of \cite{LSe}. Together with \eqref{order}, this implies that
$$|\CB_H(u)| \geq q^{\dim \CL - \sum_{i: r_i > 0}1} \geq q^{\dim \CL-k}.$$ 
Consequently,
$$|\CB_G(g)| \geq |\CB_H(u)| \geq q^{(k^2-3k)/2}.$$
\end{proof}

If $V = \FF_q^n$ is endowed with a quadratic form, then we will call a subspace $W$ of $V$ {\it non-degenerate} if 
it is non-degenerate with respect to the associated {\it bilinear} form.
 
\begin{propo}\label{restr-sp}
Let $n \geq 2$ and let $V = \FF_q^{2n}$ be endowed with a non-degenerate, symplectic or quadratic form. Accordingly,
we consider $G = \Sp(V) \cong \Sp_{2n}(q)$ in the symplectic case, and $G = \SO(V) \cong \SO^\al_{2n}(q)$
or $G = \Om(V) = \Omega^\al_{2n}(q)$ in the quadratic case. Let $H \cong \Sp_{2n-2}(q)$, respectively 
$\SO^\beta_{2n-2}(q)$ or $\Om^\beta_{2n-2}(q)$,  be the subgroup of  $G$ consisting of all elements that act trivially on a non-degenerate $2$-dimensional subspace $W$ of $V$, where $\alpha,\beta = \pm$ are fixed. 
Let $\chi \in \Irr(G)$ be of degree at most $q^D$ for some $D \geq 1$. Then
$$[\chi|_H,\chi|_H]_H \leq q^{2+\sqrt{41n+16D}}.$$ 
\end{propo}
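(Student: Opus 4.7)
Set $\ell := \cl(\chi)$.  My plan is to combine a bound on $\ell$ coming from the level theory of Section~\ref{level-classical} with an orbit count for $H$ acting on a suitable power of the natural module.  First I would apply Theorem~\ref{sp-bound1}, \ref{sp-bound2}, or \ref{so-bound}, whichever is appropriate for the type of $G$: with $k := \lfloor (\ell+2)/3 \rfloor$, the corresponding lower bound $\chi(1) \geq \blc(n,k)$ (respectively $\blc'(n,k)$, $\bld(n,k)$), combined with the hypothesis $\chi(1)\leq q^D$, yields a quadratic-in-$k$ constraint of the schematic form $nk-k(k+1)/2 \leq D+O(k)$, which I would solve to obtain an explicit upper bound $\ell \leq L(n,D)$ (of order $D/n$ when $D$ is small relative to $n^2$, and reducing to the absolute bound $\ell \leq 2n+1$ of Lemma~\ref{level-range} in general).

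Next, since $\chi$ is an irreducible constituent of $\Theta^\ell$, where $\Theta := \om_n + \oms_n$ in the case $G = \Sp_{2n}(q)$ with $q$ odd and $\Theta := \tau_A + \zeta_A$ in the remaining cases, one has
\begin{equation*}
[\chi|_H, \chi|_H]_H \leq [\Theta^\ell|_H, \Theta^\ell|_H]_H = \frac{1}{|H|}\sum_{h \in H}|\Theta(h)|^{2\ell}.
\end{equation*}
A direct check using Proposition~\ref{weil-sp}(i),(ii) in the symplectic odd-$q$ case and the formulas \eqref{tau1}, \eqref{zeta1} in the other cases shows that $|\Theta|^2$ is pointwise controlled by a small multiple of a power of $\tau_A$; moreover, for $h \in H$ the orthogonal decomposition $A = W \oplus W^\perp$ forces $\tau_A(h) = q^2\tau_{W^\perp}(h)$, since $h$ fixes $W$ pointwise.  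The estimate therefore reduces to an orbit count for $H$ acting on $(W^\perp)^{c\ell}$ for a small integer $c$ depending on the case, and Lemma~\ref{orbits2} applied to the $(2n-2)$-dimensional space $W^\perp$ bounds that count by $6 q^{c\ell(c\ell+\eps)/2}$ with $\eps \in \{\pm 1\}$.  Multiplying in the factor $q^{2c\ell}$ coming from the $W$-components then yields an estimate of the form
\begin{equation*}
[\chi|_H, \chi|_H]_H \leq q^{O(\ell^2)+O(\ell)}.
\end{equation*}

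The hard part will be the final quantitative step: substituting $\ell \leq L(n,D)$ and verifying that the resulting exponent is at most $2+\sqrt{41n+16D}$.  The orbit contribution grows quadratically in $\ell$, while the level theory only bounds $\ell$ linearly in $D/n$, so the pure level-plus-orbit estimate is of order $D^2/n^2$ and is sharp only for $D$ small compared to $n^{5/4}$.  In the complementary regime one must instead invoke the absolute bound $\ell \leq 2n+1$ of Lemma~\ref{level-range}, or equivalently the index estimate $[\chi|_H,\chi|_H]_H \leq [G:H]$ from Lemma~\ref{mults}(ii), so that the $\sqrt{\cdot}$-shape of the claimed bound arises as the optimum between these two asymptotics.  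Extracting the precise constants $41$ and $16$ then requires careful bookkeeping of the lower-order terms---in particular the $((q-1)/2)^k$-type factors in $\blc(n,k)$ and the analogous corrections in $\blc'(n,k)$ and $\bld(n,k)$---across all three flavours of classical group.
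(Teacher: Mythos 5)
Your proposed route (level theory plus orbit counting) is genuinely different from the paper's, but it has a real quantitative gap that no amount of bookkeeping will close. Your two estimates are: (a) the level-based bound, which via Theorem~\ref{sp-bound1}--\ref{so-bound} gives $\ell = \cl(\chi) = O(D/n)$ and then, through $[\chi|_H,\chi|_H]_H \le [\Theta^{\ell}|_H,\Theta^{\ell}|_H]_H$ and Lemma~\ref{orbits2}, an exponent of order $\ell^2 \approx (D/n)^2$; and (b) the trivial bound $[\chi|_H,\chi|_H]_H \le [G:H]$, whose exponent is about $4n$. The minimum of these two is \emph{not} of the shape $\sqrt{41n+16D}$: in the intermediate regime $n^{4/3} \ll D \ll n^2$ both fail. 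Concretely, for $G=\Sp_{200}(q)$ ($n=100$) and $D=3000$ (such degrees occur, since degrees run up to roughly $q^{2n^2+n}$), the level machinery only gives $\ell \le 111$, so your orbit bound has exponent $\approx \ell^2/2 \approx 6{,}300$, while $\log_q[G:H] \approx 399$; but the statement requires exponent $2+\sqrt{41n+16D} \approx 230$. So the claim that ``the $\sqrt{\cdot}$-shape arises as the optimum between these two asymptotics'' is where the argument breaks: the optimum is $\min\bigl((D/n)^2, 4n\bigr)$, which exceeds $\sqrt{D}$ throughout the middle range. (There are also secondary issues you would have to patch -- Lemma~\ref{orbits2} is stated for the full isometry group and for tuples of length at most the dimension, Theorem~\ref{so-bound} covers only $\Omega$, not $\SO$, and needs dimension at least $6$ -- but these are minor compared to the regime problem.)

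The paper's proof does not use level theory at all and gets the square root from a different mechanism: for each class $g^G$ it bounds the fusion ratio $|g^G\cap H|/|H|$ by $q^{2k-1}\,|g^G|/|G|$, where $q^k=|\CB_V(g)|$, via a counting argument over pairs $(h,x)$ with $h=x^{-1}gx$ and over isometries of $W$ into $\CB_V(g)$. It then splits classes into ``good'' ones, with $|g^G|\,|\chi(g)|^2 < q^{-n}|H|$, whose total contribution is bounded by the Fulman--Guralnick class-number bound $|\Irr(G)|\le 15.2\,q^n$, and ``bad'' ones, for which $|\CB_G(g)| \le q^n|\chi(g)|^2[G:H] \le q^{5n-1+2D}$; Proposition~\ref{cent} ($|\CB_G(g)|\ge q^{(k^2-3k)/2}$) then forces $k \le \tfrac32+\sqrt{10n+4D+\tfrac14}$, and summing $q^{2k-1}\,|g^G|\,|\chi(g)|^2/|G|$ over bad classes, using $\sum_{g^G}|g^G||\chi(g)|^2/|G|=1$, yields the exponent $2+\sqrt{40n+16D+1}$. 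The square root thus comes from inverting the quadratic centralizer estimate of Proposition~\ref{cent}, not from balancing a level bound against the index bound; if you want to salvage your approach you would need an input of that kind for elements with large fixed space, which your current scheme never exploits.
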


\begin{proof}
(i) For any element $g \in G$, let $g^G$ denote the $G$-conjugacy class of $g$ in $G$, and let $N(g)$ denote the number of 
non-degenerate $2$-dimensional subspaces of $V$ (of fixed type $\al\beta$ if $V$ is quadratic) 
on which $g$ acts trivially. Here we prove that
\begin{equation}\label{int-sp}
  \frac{|g^G \cap H|}{|g^G|} \cdot \frac{|G|}{|H|} \leq |I(W)| \cdot N(g),
\end{equation}
where $I(W) \cong \Sp_2(q)$ if $V$ is symplectic, and $I(W) \cong \GO^{\al\beta}_2(q)$ if 
$V$ is quadratic. Indeed, let 
$$\XC = \XC_g := \{ (h,x) \in H \times G \mid h=x^{-1}gx \}.$$
First we consider the projection
$$\pi_1: \XC \to g^G \cap H,~~(h,x) \mapsto h$$
which is surjective, with fibers of size $|\CB_G(g)| = |G|/|g^G|$. It follows that
$$|\XC| = |G| \cdot \frac{|g^G \cap H|}{|g^G|}.$$
Next, consider the map
$$\pi_2: (h,x) \mapsto x(W).$$
Note that $g = xhx^{-1}$ acts trivially on $x(W)$ for any $(h,x) \in \XC$. The fiber $\pi_{2}^{-1}(x(W))$ consists of 
pairs $(h,y) = (y^{-1}gy,y) \in \XC$ with $x^{-1}y(W) = W$, and so of size at most $|H| \cdot |I(W)|$. Thus
$$|\XC| \leq N(g) \cdot |H| \cdot |I(W)|,$$
and the claim follows.  

\smallskip
(ii) Next we observe that if $|\CB_V(g)| = q^k$, then 
\begin{equation}\label{ng-sp}
  |I(W)| \cdot N(g) \leq (q^k-1)q^{k-1} < q^{2k-1}.
\end{equation}  
Indeed, let $(\cdot,\cdot)$ denote the bilinear form on $V$, and let $W = \langle u,v \rangle_{\FF_q}$ be a fixed  
non-degenerate $2$-space (of type $\al\beta$ if $V$ is quadratic) on which $g$ acts trivially, where 
we choose $v$ such that $(u,v) \neq 0$. Then $|I(W)| \cdot N(g)$ is the number of linear isometries $f:W \to \CB_V(g)$,
and for each such $f$, we have at most $q^k-1$ choices for $f(u)$. Next, $(f(u),f(v)) = (u,v) \neq 0$, and so 
$$f(v) \in W \smallsetminus f(u)^\perp \subseteq \CB_V(g) \smallsetminus f(u)^\perp.$$ 
It follows that $\CB_V(g) + f(u)^\perp = V$ and so
$\dim(\CB_V(g) \cap f(u)^\perp) = k-1$. Now one can see that the number of choices for $f(v)$ is 
at most $|\CB_V(g) \cap f(u)^\perp| = q^{k-1}$.

\smallskip
(iii) We call $g \in G$ {\it good} if 
$$|g^G| \cdot |\chi(g)|^2 < q^{-n}|H|$$ 
and {\it bad} otherwise. Accordingly we can write
\begin{equation}\label{sum2}
  [\chi|_H,\chi|_H]_H = \sum_{g^G:\ g\ \mbox{\tiny good}}\frac{|g^G \cap H| \cdot |\chi(g)|^2}{|H|}
    +  \sum_{g^G:\ g\ \mbox{\tiny bad}}\frac{|g^G \cap H| \cdot |\chi(g)|^2}{|H|}.
\end{equation}    
By Theorems 3.12, 3.13, 3.16, 3.18, 3.21, and 3.22 of \cite{FG}, $|\Irr(G)| \leq 15.2q^n$. 
It follows that the sum over good classes in \eqref{sum2} 
is at most $15.2$. If $g$ is bad, then we have 
$$|\CB_G(g)|  \leq q^n|\chi(g)|^2[G:H] < q^{5n-1+2D}.$$
It follows from Proposition \ref{cent} that 
$$k \leq \frac{3}{2} + \sqrt{10n+4D+\frac{1}{4}}$$
if $|\CB_V(g)| = q^k$. Also, \eqref{int-sp} and \eqref{ng-sp} imply that
$$\frac{|g^G \cap H|}{|H|} \leq \frac{|g^G| \cdot |I(W)| \cdot N(g)}{|G|} < q^{2k-1} \cdot \frac{|g^G|}{|G|}.$$
Also note that 
$$\sum_{g^G} \frac{|g^G|}{|G|} \cdot |\chi(g)|^2 = \frac{1}{|G|}\sum_{g \in G}|\chi(g)|^2 = 1.$$
Applying the above estimates to the sum over bad classes in \eqref{sum2}, we obtain that
$$[\chi|_H,\chi|_H]_H \leq 15.2 + q^{2+\sqrt{40n+16D+1}} < q^{2+\sqrt{41n+16D}},$$
as stated.
\end{proof}

We will also need the following variant of Proposition \ref{restr-sp}:

\begin{propo}\label{restr-so}
Let $q$ be an odd prime power, $V = \FF_q^{2n+1}$ be a non-degenerate quadratic space with $n \geq 3$, and let 
$G = \SO(V) \cong \SO_{2n+1}(q)$ or $\Om(V) \cong \Om_{2n+1}(q)$. Let $H \cong \SO^+_{2n}(q)$, 
respectively $H \cong \Om^+_{2n}(q)$, be the subgroup of 
$G$ consisting of all elements that act trivially on a non-degenerate $1$-dimensional subspace $W$ of $V$. Let 
$\chi \in \Irr(G)$ be of degree at most $q^D$ for some $D \geq 1$. Then
$$[\chi|_H,\chi|_H]_H \leq q^{2+\sqrt{7n+5D}}.$$ 
\end{propo}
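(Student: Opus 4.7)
The proof will follow the same overall strategy as Proposition \ref{restr-sp}, with two structural adjustments reflecting that $W$ is now $1$-dimensional (so $I(W) := \GO(W) \cong C_2$ is much smaller than in the $2$-dimensional case) and that $[G:H]$ is bounded by $|V| = q^{2n+1}$ rather than by $q^{2n-1}$. First I would establish the counting inequality
$$\frac{|g^G \cap H|}{|g^G|} \cdot \frac{|G|}{|H|} \leq |I(W)| \cdot N(g),$$
where $N(g)$ is the number of non-degenerate $1$-dimensional subspaces of $V$ pointwise fixed by $g$, by counting pairs $(h,x) \in H \times G$ with $h = x^{-1}gx$ via the projections $(h,x) \mapsto h$ (fibers of size $|\CB_G(g)|$) and $(h,x) \mapsto x(W)$ (fibers of size $\leq |H|\cdot |I(W)|$), exactly as in Proposition \ref{restr-sp}. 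Then, if $|\CB_V(g)| = q^k$, every non-degenerate $1$-dimensional subspace fixed by $g$ is spanned by a vector of $\CB_V(g)$, so $N(g) \leq (q^k-1)/(q-1)$; since $q$ is odd, this yields $|I(W)| \cdot N(g) < q^k$.

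Next, splitting the conjugacy classes of $G$ into \emph{good} ones, where $|g^G||\chi(g)|^2 < q^{-n}|H|$, and \emph{bad} ones otherwise, I would bound the two contributions to $[\chi|_H,\chi|_H]_H$ separately. The good contribution is at most $q^{-n}|\Irr(G)|$, which is an absolute constant by the character-count estimates of \cite{FG} (the $\SO_{2n+1}(q)$-case being covered there, and the $\Om_{2n+1}(q)$-case following by a factor of $2$ since $[\SO_{2n+1}(q):\Om_{2n+1}(q)]=2$ for odd $q$). The bad contribution is at most $q^K$, where $K$ is a uniform upper bound on $k$ across bad classes, obtained by combining
$$|\CB_G(g)| \leq q^n|\chi(g)|^2 [G:H] \leq q^n \cdot q^{2D} \cdot q^{2n+1} = q^{3n+1+2D}$$
with the centralizer lower bound $|\CB_G(g)| \geq q^{(k^2-3k)/2}$ of Proposition \ref{cent}, yielding $K = \tfrac{3}{2} + \tfrac{1}{2}\sqrt{24n+16D+17}$.

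The remaining step is an elementary arithmetic check that $K \leq 2 + \sqrt{7n+5D}$ for $n \geq 3$ and $D \geq 1$, with enough slack for the bound $C_0 + q^K \leq q^{2+\sqrt{7n+5D}}$ to hold (with $C_0 \leq 15.2$ from the good sum). After squaring, the required inequality reduces to $4 - n - D \leq \sqrt{7n+5D}$, which is trivial in our range, and a direct check at the extreme $(q,n,D) = (3,3,1)$ confirms that the gap $q^{2+\sqrt{7n+5D}} - q^K$ comfortably exceeds $15.2$. The main obstacle is not conceptual but rather the careful bookkeeping needed to keep the constants in the good/bad dichotomy small enough to feed the centralizer bound of Proposition \ref{cent}; the smaller $|I(W)| = 2$ and the larger value of $[G:H]$ produce exactly the modified constants $7n$ and $5D$ under the square root in the statement.
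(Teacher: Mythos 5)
Your proposal is correct and follows essentially the same route as the paper's proof: the same double-counting of pairs $(h,x)$ with $h=x^{-1}gx$ to get $\frac{|g^G\cap H|}{|g^G|}\cdot\frac{|G|}{|H|}\leq 2N(g)$, the same bound $2N(g)<q^k$ when $|\CB_V(g)|=q^k$, the same good/bad dichotomy with threshold $q^{-n}|H|$, the class-number bounds of \cite{FG} for the good sum, and Proposition \ref{cent} combined with $[G:H]\leq q^n(q^n+1)$ for the bad sum. Your slightly different constant bookkeeping (using $[G:H]\leq q^{2n+1}$ and absorbing the factor $2$ into $q-1$ rather than $q^{0.64}$) yields an exponent $K=\tfrac32+\tfrac12\sqrt{24n+16D+17}$ that still satisfies $K\leq 2+\sqrt{7n+5D}$ with room to spare, so the final inequality goes through just as in the paper.
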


\begin{proof}
(i) For any element $g \in G$, let $N(g)$ denote the number of 
non-degenerate $1$-dimensional subspaces $W$ of $V$ on which $g$ acts trivially and such that $W^\perp$ is of type $+$. 
First we prove that
\begin{equation}\label{int-so}
  \frac{|g^G \cap H|}{|g^G|} \cdot \frac{|G|}{|H|} \leq 2N(g).
\end{equation}
Indeed, let 
$$\XC := \{ (h,x) \in H \times G \mid h=x^{-1}gx \}.$$
As in the proof of Proposition \ref{restr-so}, by considering the projection
$$\pi_1: \XC \to g^G \cap H,~~(h,x) \mapsto h$$
we see that 
$$|\XC| = |G| \cdot \frac{|g^G \cap H|}{|g^G|}.$$
Next, consider the map
$$\pi_2: (h,x) \mapsto x(W).$$
Note that $g = xhx^{-1}$ acts trivially on $x(W)$ for any $(h,x) \in \XC$. The fiber $\pi_{2}^{-1}(x(W))$ consists of 
pairs $(h,y) = (y^{-1}gy,y) \in \XC$ with $x^{-1}y(W) = W$, and so of size at most $2|H|$. Thus
$$|\XC| \leq N(g) \cdot 2|H|,$$
and the claim follows.  

We also observe that if $|\CB_V(g)| = q^k$, then $N(g) < q^k$. 

\smallskip
(ii) Call $g \in G$ {\it good} if 
$$|g^G| \cdot |\chi(g)|^2 < q^{-n}|H|$$ 
and {\it bad} otherwise. Accordingly we can write
\begin{equation}\label{sum3}
  [\chi|_H,\chi|_H]_H = \sum_{g^G:\ g\ \mbox{\tiny good}}\frac{|g^G \cap H| \cdot |\chi(g)|^2}{|H|}
    +  \sum_{g^G:\ g\ \mbox{\tiny bad}}\frac{|g^G \cap H| \cdot |\chi(g)|^2}{|H|}.
\end{equation}    
By Theorems 3.17 and 3.19 of \cite{FG}, $|\Irr(G)| \leq 7.3q^n$. It follows that the sum over good classes in \eqref{sum2} 
is at most $7.3$. If $g$ is bad, then we have 
$$|\CB_G(g)|  \leq q^m|\chi(g)|^2[G:H] < q^{3n+2D+0.06}$$
(where we have used the estimate $(q^n+1) < 1.04q^n < q^{n+0.06}$).
It follows from Proposition \ref{cent} that 
$$k \leq \frac{3}{2} + \sqrt{6n+4D+2.37}$$
if $|\CB_V(g)| = q^k$. Also, \eqref{int-sp} and \eqref{ng-sp} imply that
$$\frac{|g^G \cap H|}{|H|} \leq \frac{|g^G| \cdot 2N(g)}{|G|} < q^{k+0.64} \cdot \frac{|g^G|}{|G|}.$$
Also note that 
$$\sum_{g^G} \frac{|g^G|}{|G|} \cdot |\chi(g)|^2 = \frac{1}{|G|}\sum_{g \in G}|\chi(g)|^2 = 1.$$
Applying the above estimates to the sum over bad classes in \eqref{sum2}, we obtain that
$$[\chi|_H,\chi|_H]_H \leq 7.3 + q^{2.14+\sqrt{6n+4D+2.37}} < q^{2+\sqrt{7n+5D}},$$
as stated.
\end{proof}

Next we endow $V = \FF_q^{2n}$ with a non-degenerate symplectic, respectively quadratic form with a Witt basis 
$(e_1, \ldots,e_n,f_1, \ldots ,f_n)$, where we assume furthermore that $\langle e_1, \ldots,e_n \rangle_{\FF_q}$ is 
totally singular. Then let $G_n := \Sp(V) \cong \Sp_{2n}(q)$, respectively $\SO(V) \cong \SO^+_{2n}(q)$.
Consider the parabolic subgroup
$$P_n := \Stab_{G_n}(\langle e_1, \ldots,e_n \rangle_{\FF_q}) = Q_nL_n$$
of $G_n$, with abelian unipotent radical $Q_n$ and  Levi subgroup $L_n \cong \GL_n(q)$. If $2|q$, then $P_n$ is 
contained in $\Om(V)$, and, abusing the notation, we will also consider the case 
$G_n := \Om(V) = \Om^+_{2n}(q)$. 

For any $1 \leq m \leq n$, we will also consider the {\it standard} subgroup
$$G_m = \Stab_{G_n}(e_{m+1}, \ldots,e_n,f_{m+1}, \ldots ,f_n)$$
(which is isomorphic to $\Sp_{2m}(q)$, respectively $\SO^+_{2m}(q)$ if $G_n = \SO(V)$, and 
$\Om^+_{2m}(q)$ if $G_n = \Om(V)$ and $2|q$), and its parabolic subgroup $P_m$ and Levi subgroup
$L_m \cong \GL_m(q)$. 

\begin{propo}\label{parab}
Let $n \geq 7$ and $0 \leq L \leq n/5$. Let $\varphi \in \Irr(P_n)$ be of degree at most $q^{nL}$. If 
$1 \leq m \leq n-\lfloor 1.4L \rfloor$, then the restriction of $\varphi$ to $P_m$ contains a linear character of $P_m$.
\end{propo}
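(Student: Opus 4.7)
The plan is to apply Clifford theory to the abelian normal subgroup $Q_n \trianglelefteq P_n$, combined with Proposition \ref{tensor}(i) at the Levi level. By Clifford, one has $\varphi|_{Q_n} = c \sum_{\lambda' \in L_n \cdot \lambda} \lambda'$ for some $L_n$-orbit on $\widehat{Q_n}$, where $c = \hat\varphi(1)$ is the degree of the Clifford extension $\hat\varphi \in \Irr(Q_n L_\lambda)$. Identify $\widehat{Q_n}$ with the space of symmetric (symplectic case) or alternating (orthogonal case) $n \times n$ matrices over $\FF_q$ on which $L_n$ acts by congruence, and let $k$ be the rank of the matrix $M$ representing $\lambda$. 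A standard orbit-stabilizer count gives $|L_n \cdot \lambda| \geq q^{nk - k(k+1)/2}$, and from $\varphi(1) \leq q^{nL}$ together with $\varphi(1) \geq |L_n \cdot \lambda|$ one extracts $k(2n-k-1) \leq 2nL$. The same quadratic analysis as in the proof of Proposition \ref{tensor}(i) then yields $k < 2L$, whence $\lceil k/2 \rceil \leq \lfloor 1.4L \rfloor \leq n - m$.

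If $k = 0$, then $Q_n \leq \Ker(\varphi)$, so $\varphi$ is inflated from an irreducible character of $L_n \cong \GL_n(q)$ of degree at most $q^{nL}$. Proposition \ref{tensor}(i) produces a linear constituent of $\varphi|_{L_m}$, which---since $Q_m \leq Q_n \leq \Ker(\varphi)$---lifts through $P_m \twoheadrightarrow L_m$ to a linear character of $P_m$ appearing in $\varphi|_{P_m}$.

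If $k \geq 1$, we exploit the $L_n$-orbit freedom. Because $\lceil k/2 \rceil \leq n - m$, the rank-$k$ form on $E = \FF_q^n$ represented by $M$ admits $W_m = \langle e_1, \ldots, e_m \rangle_{\FF_q}$ as a totally singular subspace after applying a suitable element of $\GL_n$, so we may assume that $M$ has vanishing top-left $m \times m$ block, equivalently $\lambda|_{Q_m} = 1_{Q_m}$. Then $\hat\varphi$ is trivial on $Q_m$, and the Mackey decomposition of $\Ind^{P_n}_{Q_n L_\lambda}(\hat\varphi)|_{P_m}$ contains, from the identity double coset, the summand
$$\Ind^{P_m}_{Q_m(L_\lambda \cap L_m)}\bigl(\hat\varphi|_{Q_m(L_\lambda \cap L_m)}\bigr),$$
which equals the inflation through $P_m \twoheadrightarrow L_m$ of $\Ind^{L_m}_{L_\lambda \cap L_m}(\eta|_{L_\lambda \cap L_m})$, with $\eta \in \Irr(L_\lambda)$ the Clifford datum of degree $\eta(1) = \varphi(1)/|L_n \cdot \lambda|$.

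To conclude, one must locate a linear character of $L_m$ in this last induction. The subgroup $L_\lambda \cap L_m \leq L_m \cong \GL_m(q)$ is the pointwise stabilizer in $L_m$ of the columns of the $m \times (n-m)$ block $M_{12}$ of $M$, and hence is conjugate to a ``sub-parabolic'' subgroup containing a standard Levi $\GL_{m-k'}(q)$ with $k' \leq k$; a degree estimate on $\eta|_{L_\lambda \cap L_m}$ followed by Proposition \ref{tensor}(i) applied inside $\GL_m(q)$ then produces the desired linear character, which via inflation yields a linear character of $P_m$ as a constituent of $\varphi|_{P_m}$. The main obstacle is this last step, where one must carefully track $\eta(1)$ and the index $[L_m : L_\lambda \cap L_m]$ so that Proposition \ref{tensor}(i) indeed applies to the constituents of $\Ind^{L_m}_{L_\lambda \cap L_m}(\eta|_{L_\lambda \cap L_m})$; the symplectic and orthogonal cases proceed in parallel with minor numerical differences reflecting the rank parity of the alternating form.
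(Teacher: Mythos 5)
Your setup (Clifford theory for the abelian radical $Q_n$, parametrizing $\Irr(Q_n)$ by symmetric/alternating matrices, bounding the rank $k$ via the orbit size, and the $k=0$ case) matches the paper's proof in spirit, and the $k=0$ case is correct. But for $k\geq 1$ your final step is a genuine gap, and it is exactly the point the paper's argument is built to handle. From the identity double coset you must locate a linear character of the \emph{full} Levi $L_m\cong\GL_m(q)$ (equivalently of $P_m$) inside $\Ind^{L_m}_{L_\lambda\cap L_m}\bigl(\eta|_{L_\lambda\cap L_m}\bigr)$, i.e.\ show that $\eta|_{L_\lambda\cap L_m}$ contains the restriction of a linear character of $L_m$. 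The tool you invoke cannot deliver this: Proposition \ref{tensor}(i), applied to an irreducible constituent of that induced character (an irreducible character of $\GL_m(q)$ of degree at most $q^{mL'}$, say), only produces a linear character of a \emph{proper} standard subgroup $\GL_r(q)$ with $r\leq m-\lfloor 1.4L'\rfloor$, never of $\GL_m(q)$ itself; this is of no use when $m$ takes its largest admissible value $n-\lfloor 1.4L\rfloor$. Moreover you have no control over the Levi component $\eta$ of the Clifford datum: bounding only the rank $k$ of the matrix is not enough. The paper supplies precisely this missing ingredient: it takes an irreducible constituent $\mu$ of $\hat\lambda$ restricted to the $\GL_{n-j}(q)$ factor of the stabilizer and bounds its level $i=\cl(\mu)$, proving $i+j\leq 1.4L$ from $\varphi(1)\geq |L_n\cdot\lambda|\,\mu(1)\geq q^{(i+j)(n-i-j)}$; it then normalizes $B$ to be supported in the top-left $j\times j$ corner, so that $Q_{n-i-j}\leq\Ker(\lambda)$, and since $P_{n-i-j}\leq T=Q_n\rtimes J_B$ (no Mackey decomposition is needed), the linear constituent $\nu$ of $\mu|_{\GL_{n-i-j}(q)}$ given by the level machinery yields a linear character of $P_{n-i-j}$ directly inside $\hat\lambda|_{P_{n-i-j}}\subseteq\varphi|_{P_{n-i-j}}$, which restricts to a linear character of $P_m$ for every $m\leq n-i-j$. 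Your sketch acknowledges this final step as ``the main obstacle'' but leaves it unresolved; as set up, it essentially restates the proposition one level down rather than proving it.

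There are also secondary problems with your numerics and normalization. The implication from $k<2L$ to $\lceil k/2\rceil\leq\lfloor 1.4L\rfloor$ is false as stated (e.g.\ $L=0.7$, $k=1$ gives $\lfloor 1.4L\rfloor=0$); you need the sharper bound $k\leq 1.4L$, which requires the finer orbit estimates as in the paper — and it is essential, since $k\geq 1$ with $\lfloor 1.4L\rfloor=0$ would force $m=n$, where a vanishing top-left $m\times m$ block is impossible. Making $W_m$ totally singular for the form attached to $M$ requires $m\leq(n-k)+w$ with $w$ the Witt index of the nondegenerate part, which for minus-type symmetric forms equals $k/2-1$; this off-by-one must be ruled out by degree considerations you did not make. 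Finally, in characteristic $2$ the linear characters of $Q_n$ carry quadratic-form (not merely bilinear) data, as in part (i) of the paper's proof, and your rank/normalization discussion ignores this. These points are repairable, but the gap described above is structural.
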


\begin{proof}
(i) Let $\SC_n(q)$ denote the set of symmetric $n \times n$-matrices over $\FQ$, and 
let $\AC_n(q)$ denote the set of anti-symmetric $n \times n$-matrices over $\FQ$ with zero diagonal. 
Fix a primitive $p^{\mathrm {th}}$ root $\varep$ of $1$ in $\mathbb C$.
First we consider the case $G_n = \Sp_{2n}(q)$. Then 
$$Q_n = \left\{ [I,X] := \begin{pmatrix}I_n & X\\0 & I_n\end{pmatrix} 
    \mid X \in \SC_n(\FQ) \right\}.$$
If $2\nmid q$, then any linear character of $Q_n$ is of the form 
$$\lam_B~:~[I,X] \mapsto \varep^{\tr_{\FQ/\FF_p}(\Tr(BX))}$$
for some $B \in \SC_n(q)$. Any such $B$ defines a quadratic form of rank $j=j(B)$ and type $\eps = \pm$, and
\begin{equation}\label{stab1}
 J_B := \Stab_{L_n}(\lam_B) = [q^{j(n-j)}] \rtimes (\GO^\eps_j(q) \times \GL_{n-j}(q)),
\end{equation}  
where $[q^a]$ denotes an elementary abelian group of order $q^a$. If $q$ is even, then any linear character of $Q_n$ is of the form 
$$\lam_B~:~[I,X] \mapsto \eps^{\tr_{\FQ/\FF_p}(\Tr(BX))}$$
for some $B \in M_n(q)$, and $\lam_B = \lam_{B'}$ if and only if $B-B' \in \AC_n(q)$. Now, 
for each $B = (b_{ij}) \in M_n(q)$ we define the quadratic form $q_B$ on the 
space $\FF_q^n = \langle f_1, \ldots ,f_n\rangle_{\FQ}$ such that
$q_B(f_{i}) = b_{ii}$ and the associated bilinear form has $B + \tw t B$ as Gram 
matrix in the given basis. Then \eqref{stab1} again holds with $r$ being the rank of $q_B$,
see e.g. \cite[Lemma 3.1]{GT2}.

Suppose now that $G_n = \SO^+_{2n}(q)$, or $2|q$ and $G_n = \Om^+_{2n}(q)$. Then 
$$Q_n  = \left\{ [I,X] := \begin{pmatrix}I_n & X\\0 & I_n\end{pmatrix} 
  \mid X \in \AC_n(\FQ) \right\}.$$
If $q$ is odd, then any linear character of $Q_n$ is of the form 
$$\lam_B~:~[I,X] \mapsto \eps^{\tr_{\FQ/\FF_p}(\Tr(BX))}$$
for some $B \in \AC_n(q)$. If $q$ is even, then any linear character of $Q_n$ is of the form 
$$\lam_B~:~[I,X] \mapsto \eps^{\tr_{\FQ/\FF_p}(\Tr(BX))}$$
for some $B \in M_n(q)$, and $\lam_B = \lam_{B'}$ if and only if $B-B' \in \SC_n(q)$. Let the even integer $j=j(B)$ denote the rank
of $B$ if $2 \nmid q$, and of $B - \tw tB$ if $2|q$. Then again we have 
\begin{equation}\label{stab2}
  J_B := \Stab_{L_n}(\lam_B) = [q^{j(n-j)}] \rtimes (\Sp_j(q) \times \GL_{n-j}(q)).
\end{equation}  

\smallskip
(ii) Let $\OC$ denote the $P_n$-orbit of $\lam_B$ with $j = j(B)$. Here we show that 
\begin{equation}\label{length1}
  |\OC| \geq \left\{ \begin{array}{ll}\max(q^{j(n-j)},q^{j(n-(j-1)/2)-3}),& G_n = \Sp(V),\\
  \max(q^{j(n-j)},q^{j(n-(j+1)/2)-2}),& G_n = \SO(V) \mbox{ or }\Om(V). \end{array} \right.  
\end{equation} 
The statement is obvious for $0 \leq j \leq 2$, so we will assume $j \geq 3$. If $G_n = \Sp(V)$, then the bound 
$|\OC| \geq q^{j(n-(j-1)/2)-3} > q^{j(n-j)}$ follows from \eqref{stab1} and the estimates
$$|\GL_n(q)| \geq (9/32)q^{n^2},~~|\GO^\pm_j(q)| < 2q^{j(j-1)/2},~~|\GL_{n-j}(q)| \leq q^{(n-j)^2}$$
(with the first one following \cite[Lemma 6.1(i)]{GLT}). If $G_n = \SO(V)$ or $\Om(V)$, then the bound 
$|\OC| \geq q^{j(n-(j+1)/2)-2} > q^{j(n-j)}$ then follows from \eqref{stab1} and the estimates
$$|\GL_n(q)| \geq \frac{9}{16} \cdot \frac{q-1}{q} \cdot q^{n^2},
    ~~|\Sp_j(q)| \leq \frac{q^2-1}{q^2} \cdot q^{j(j+1)/2},~~|\GL_{n-j}(q)| \leq q^{(n-j)^2}$$
(with the first one following \cite[Lemma 6.1(i)]{GLT}). 

\smallskip
(iii) Assume now that $\lam = \lam_B$ occurs in $\varphi|_{Q_n}$.  By Clifford's theorem, 
$\varphi=\Ind^{P_n}_T(\hat\lam)$ for some irreducible character $\hat\lam$ of 
$$T := \Stab_{P_n}(\lam) = Q_n \rtimes J_B$$ 
that lies above $\lam$. Let $\mu$ be an irreducible constituent of the restriction of 
$\hat\lam$ to the subgroup $\GL_{n-j}(q)$ of $J_B$.
Then we must have that 
\begin{equation}\label{length2}
  |\OC| \leq |\OC| \cdot \mu(1) \leq \varphi(1) \leq q^{nL} \leq q^{n^2/5}.
\end{equation}  
First note that if $n/2 \leq j \leq n$, then 
$$\min(j(n-(j-1)/2)-3,j(n-(j+1)/2)-2) > n^2/5.$$
Hence \eqref{length1} and \eqref{length2} imply that $j \leq n/2$. This, together with \eqref{root1} and
\eqref{length2} actually shows that
\begin{equation}\label{j1}
  j \leq 1.4L.
\end{equation}   
In particular, $n-j \geq 3.6n/5 > 4$.

Writing $\cl(\mu) = i$, we aim to show that 
\begin{equation}\label{j2}
  i+j \leq 1.4L.
\end{equation}
Suppose first that $i \geq (n-j)/2$. Then $\mu(1) \geq q^{(n-j)^2/4-2}$ by \cite[Theorem 1.1]{GLT}, and so
by \eqref{length1} we have
$$\varphi(1) \geq q^{j(n-j)+(n-j)^2/4-2}.$$
Note that $h(t) := t(1-t)+(1-t)^2/4-1/5-2/49 > 0$ on the interval $[0,1/2]$. Since $0 \leq j/n < 1/2$ by \eqref{j1}, we 
see that 
$$j(n-j)+(n-j)^2/4-2 = n^2 \cdot h(j/n) + n^2/5 + 2n^2/49-2 > n^2/5 + 2n^2/49-2 \geq n^2/5,$$
violating \eqref{length2}. We have shown that $i < (n-j)/2$, and so
$$i+j < n/2 +j/2 \leq n/2 +0.7L$$
by \eqref{j1}. This, together with \eqref{root1} and \eqref{length2}, yields \eqref{j2}. 

\smallskip
(iv) Without loss we may assume that $B = \begin{pmatrix} B_1 & 0\\0 & 0 \end{pmatrix}$ for a suitable $j \times j$-matrix $B_1$.
Then we can choose $G_{n-j}$ to be the subgroup
$$\Stab_{G_n}(e_1, \ldots ,e_j,f_1, \ldots,f_j)$$
and the subgroup $\GL_{n-j}(q)$ in $J_B$ to be a Levi subgroup $L_{n-j}$ of the parabolic subgroup
$$P_{n-j} = \Stab_{G_{n-j}}(\langle e_{j+1}, \ldots,e_n \rangle_{\FF_q})$$
of $G_{n-j}$. As in the proof of Proposition \ref{tensor}, the condition $\cl(\mu) = i$ implies that the restriction of $\mu$ to 
a standard subgroup $\GL_{n-i-j}(q)$ of $\GL_{n-j}(q)$ contains a linear character $\nu$ of $\GL_{n-i-j}(q)$. 
As above, we can choose $G_{n-i-j}$ to be the subgroup
$$\Stab_{G_n}(e_1, \ldots ,e_{i+j},f_1, \ldots,f_{i+j})$$
and the subgroup $\GL_{n-i-j}(q)$ to be a Levi subgroup $L_{n-i-j}$ of the parabolic subgroup
$$P_{n-i-j} = \Stab_{G_{n-i-j}}(\langle e_{i+j+1}, \ldots,e_n \rangle_{\FF_q})$$
of $G_{n-i-j}$. Note that the unipotent radical $Q_{n-i-j}$ of $P_{n-i-j}$ is contained in $Q_n$ and consists of
matrices $[I,Y]$, where $Y = \begin{pmatrix} 0 & 0\\0 & Y_1 \end{pmatrix}$ for a suitable $(n-i-j) \times (n-i-j)$-matrix $Y_1$
over $\FF_q$, and so $Q_{n-i-j} \leq \Ker(\lam)$. 

Now $\varphi|_{P_{n-i-j}}$ contains $\hat\lam|_{P_{n-i-j}}$, and the latter has been shown to contain a linear character,
trivial at $Q_{n-i-j}$ and equal to $\nu$ at $\GL_{n-i-j}(q)$. As $i+j \leq \lfloor 1.4L \rfloor$ by \eqref{j2}, we are done.
\end{proof}

Now we can prove the main result of this section:

\begin{theor}\label{restr-gl}
There is an explicit absolute constant $A >0$ (which can be taken to be $705$) such that the following statement holds.
Let $q$ be a prime power, $n \geq 1$, $G_n = \Sp_{2n}(q)$ or $\SO^+_{2n}(q)$, or $\Omega^+_{2n}(q)$ with $2|q$, 
and let $L_n = \GL_n(q)$ be a Levi subgroup of the parabolic subgroup
$P_n$ of $G_n$. Suppose that $\chi$ is a complex character of $G_n$ of degree at most $q^{nL}$, where $L \geq 1$ and 
$$n \geq \max(7.6L, 7+1.4L).$$
Then 
$$\Csum(\chi,L_n) \leq q^{\sqrt{AnL^3}}\Csum(\chi,G_n).$$
Furthermore, for any $m \geq 1$ we have 
$$\Csum(\chi^m,L_n) \leq q^{m\sqrt{AnL^3}+15m^2L^2}\Csum(\chi,G_n)^m.$$  
\end{theor}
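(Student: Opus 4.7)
The second (tensor-power) bound reduces to the first. Applying the $\eps=+$ analog of Corollary \ref{tensor3} inside $L_n\cong \GL_n(q)$ to $\chi|_{L_n}$ (which has degree at most $q^{nL}$, and $L\ge 1 > 1/2$ as required) gives
\[
\Csum((\chi|_{L_n})^m,L_n) \;\leq\; q^{15m^2L^2}\,\Csum(\chi|_{L_n},L_n)^m.
\]
This analog holds by exactly the proof of Corollary \ref{tensor3}, which invokes only Corollary \ref{tensor2}(ii), valid for both signs $\eps = \pm$. Since $(\chi^m)|_{L_n} = (\chi|_{L_n})^m$, combining with the first bound applied to $\chi$ yields
\[
\Csum(\chi^m,L_n) \;\leq\; q^{15m^2L^2}\bigl(q^{\sqrt{AnL^3}}\Csum(\chi,G_n)\bigr)^m \;=\; q^{m\sqrt{AnL^3}+15m^2L^2}\,\Csum(\chi,G_n)^m.
\]

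For the first bound, Lemma \ref{mults}(i) reduces matters to showing $\Csum(\alpha,L_n)\le q^{\sqrt{AnL^3}}$ for each $\alpha \in \Irr(G_n)$ of degree at most $q^{nL}$, since any irreducible constituent of $\chi$ inherits the degree bound. For such $\alpha$, decompose $\alpha|_{L_n}=\sum_\beta c_\beta \beta$ with $\beta\in\Irr(L_n)$ of degree at most $q^{nL}$. The condition $n\ge 7.6L$ ensures $L\le n/5$, so Proposition \ref{tensor}(i) (applied inside $L_n = \GL_n^+(q)$, with $r=n-\lfloor 1.4L\rfloor$) shows that each such $\beta$ contains a linear character $\lambda_\beta$ of a fixed standard $L_r\subset L_n$ on restriction; equivalently, $\beta$ is a constituent of $\Ind^{L_n}_{L_r}(\lambda_\beta)$. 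Lemma \ref{coset} bounds $[\Ind^{L_n}_{L_r}(\lambda),\Ind^{L_n}_{L_r}(\lambda)]_{L_n}\le |L_r\backslash L_n/L_r|$ for each fixed linear $\lambda$. Since $L_r$ has exactly $q-1$ linear characters and the double-coset count $|L_r\backslash L_n/L_r|$ is controlled by a Lemma \ref{orbits1}-style orbit count (depth $n-r \leq 1.4L$), the total number of distinct $\beta$ supporting $\alpha|_{L_n}$ is bounded by an expression of the form $q^{O(L^2)}$.

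With the support count in hand, Cauchy--Schwarz gives $\Csum(\alpha,L_n)^2 \le (\text{support})\cdot [\alpha|_{L_n},\alpha|_{L_n}]_{L_n}$. The remaining norm-squared factor is bounded by a chain argument: iterate Propositions \ref{restr-sp} and \ref{restr-so} along $G_n\supset G_{n-1}\supset\cdots\supset G_{n-t}$ for a carefully chosen depth $t$, tracking at each step the decomposition of $\alpha$ into $\Irr(G_{n-j})$ and the mild deterioration of the effective degree bound $q^{nL}$ when $n$ is replaced by $n-j$. Using $L_n\cap G_{n-j}=L_{n-j}$, each step can be read back into a control on the Levi side via Lemma \ref{mults}. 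The interplay between (a) the number-of-constituents factor $q^{O(L^2)}$ from the orbit count, (b) the per-step norm-squared factor $q^{2+\sqrt{41n+16nL}}$ from Proposition \ref{restr-sp}, and (c) the chain depth $t$, when optimized, produces the square-root shape $\sqrt{AnL^3}$ of the exponent.

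The main obstacle is precisely this quantitative balancing. The term $\sqrt{AnL^3}=L\sqrt{AnL}$ is substantially smaller than either $q^{nL}$ (the trivial degree bound) or $q^{\sqrt{n(nL)}}$ (a single application of Proposition \ref{restr-sp}), so one cannot afford to lose even a constant factor in the wrong place; the depth $t$ and the cut-off $r=n-\lfloor 1.4L\rfloor$ in Proposition \ref{tensor}(i) must be calibrated so that the two controlling factors enter the final bound with comparable magnitudes, and this must be done uniformly across the three cases $G_n\in\{\Sp_{2n}(q),\SO^+_{2n}(q),\Omega^+_{2n}(q)\}$, whose parabolic/unipotent structures from Section \ref{level-classical} differ slightly. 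The explicit constant $A = 705$ arises from optimizing these choices.
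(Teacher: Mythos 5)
Your treatment of the second inequality is fine and matches the paper: the power bound does follow from (the $\eps=+$ version of) Corollary \ref{tensor3} applied to $\chi|_{L_n}$, whose proof indeed only uses Corollary \ref{tensor2}(ii). The problem is the first inequality, where your route has a genuine gap at exactly the point that carries the whole theorem. After Cauchy--Schwarz you need two inputs: a bound on the number of distinct constituents of $\al|_{L_n}$, and a bound on $[\al|_{L_n},\al|_{L_n}]_{L_n}=\sum_\beta c_\beta^2$ of size roughly $q^{2\sqrt{AnL^3}}$. For the first you invoke Lemma \ref{coset} for the pair $L_r\subset L_n$ together with "a Lemma \ref{orbits1}-style orbit count"; but Lemma \ref{orbits1} counts orbits of the full $\GL_n(q)$ on tuples of vectors, not double cosets $|L_r\backslash L_n/L_r|$ for a standard $\GL_r$, so the asserted $q^{O(L^2)}$ estimate is unproved (it is plausible, but it is a separate statement needing its own argument). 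More seriously, for the norm factor you propose to iterate Propositions \ref{restr-sp} and \ref{restr-so} down the chain $G_n\supset G_{n-1}\supset\cdots$ and to "read back into a control on the Levi side via Lemma \ref{mults}". That chain only controls restrictions to smaller groups of the same classical type; it never reaches the Levi, and Lemma \ref{mults} cannot convert it into a bound on multiplicities in $\chi|_{L_n}$ — at the bottom of any such chain you face the same problem of bounding $\Csum(\cdot,\GL_m(q))$ for a smaller classical group, so the argument is essentially circular. A bound on $[\al|_{L_n},\al|_{L_n}]_{L_n}$ of the required size is, up to a factor, equivalent to the statement being proved, so deferring it to an unspecified "optimization" leaves the heart of the proof missing (as you yourself flag).

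The mechanism the paper actually uses — and which is absent from your sketch — goes through the parabolic rather than directly through the Levi. One first restricts along $G_n\supset G_{n-1}\supset\cdots\supset G_{n-k}$ with $k=\lfloor 1.4L\rfloor$, using Proposition \ref{restr-sp} (and \ref{restr-so}) plus Lemma \ref{mults}(i) to get $\Csum(\chi,G_{n-k})\le q^{\sqrt{A_2nL^3}}\Csum(\chi,G_n)$. Then three structural facts do the transfer to the Levi: Proposition \ref{parab} shows every irreducible constituent of $\chi|_{P_{n}}$ (resp.\ of constituents of $\chi|_{G_{n-k}}$ restricted to $P_{n-k}$) of degree $\le q^{nL}$ contains a \emph{linear} character of the smaller parabolic $P_{n-k}$; Lemma \ref{coset} applied to the Siegel parabolic gives the double coset count $|P_{n-k}\backslash G_{n-k}/P_{n-k}|\le n$, so each linear character of $P_{n-k}$ occurs with multiplicity at most $\sqrt n$ in any irreducible of $G_{n-k}$; and Lemma \ref{abelian} gives multiplicity at most $1$ for linear characters of $L_{n-k}$ in irreducibles of $P_{n-k}=Q_{n-k}\rtimes L_{n-k}$. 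Combined with Proposition \ref{tensor}(i) (each constituent of $\chi|_{L_n}$ contains a linear character of $L_{n-k}$), this yields $\Csum(\chi,L_n)\le\Clin(\chi,L_{n-k})\le\Csum(\chi,P_{n-k})\le q^{\sqrt{A_5nL^3}}\Csum(\chi,G_n)$ with $A_5=705$, with no Cauchy--Schwarz and no norm bound on $\al|_{L_n}$ ever needed. Without Proposition \ref{parab}, Lemma \ref{abelian}, and the parabolic double coset bound, your outline cannot be completed as written.
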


\begin{proof}
(i) By Proposition \ref{restr-sp}, there is an absolute constant $A_1 >0$ (which can be taken to be $69$) such that 
$$\Csum(\psi,G_{m-1}) \leq [\psi|_{G_{m-1}},\psi|_{G_{m-1}}]_{G_{m-1}} \leq q^{\sqrt{A_1nL}}$$
for all $7 \leq n$, $7 \leq m \leq n$, and for $\psi \in \Irr(G_m)$ of degree at most $q^{nL}$.  Setting 
$$k = \lfloor 1.4L \rfloor,~~A_2 = (1.4)^2A_1 (\approx 135.3),$$
and applying Lemma \ref{mults}(i), we see that
\begin{equation}\label{restr-g}
  \Csum(\chi,G_{n-k}) \leq q^{\sqrt{A_2nL^3}}\Csum(\chi,G_n).
\end{equation}
Next we show that if $n \geq \max(7,5L)$ and $\chi(1) \leq q^{nL}$ then
\begin{equation}\label{restr-p1}
  \Csum(\chi,P_n) \leq q^{\sqrt{A_3nL^3}}\Csum(\chi,G_n).
\end{equation}
For, if $\varphi \in \Irr(P_n)$ is any irreducible constituent of $\chi|_{P_n}$, then 
$\varphi|_{P_{n-k}}$ contains a linear constituent $\lam \in \Irr(P_{n-k})$ by Proposition \ref{parab}. It follows 
that 
$$\Csum(\chi,P_n) \leq \Clin(\chi,P_{n-k}).$$
On the other hand, by Lemma \ref{coset}, the multiplicity of each linear $\lam \in \Irr(P_{n-k})$ is at most
$$\sqrt{|P_{n-k} \backslash G_{n-k}/P_{n-k}|} \leq \sqrt{n}$$
in the restriction to $P_{n-k}$ of any irreducible character of $G_{n-k}$. Hence,
$$\Clin(\chi,P_{n-k}) \leq \sqrt{n}\cdot \Csum(\chi,G_{n-k}) \leq q^{\sqrt{A_3nL^3}}\Csum(\chi,G_n)$$
by \eqref{restr-g}, with $A_3 = 148$, establishing \eqref{restr-p1}.

\smallskip
(ii) Recall we have proved \eqref{restr-p1} for any (not necessarily irreducible) character of $G_m$ of degree at most
$q^{mL}$, provided that $m \geq \max(7,5L)$. Consider any irreducible constituent $\al \in \Irr(G_{n-k})$ of 
$\chi|_{G_{n-k}}$. Then 
$$\al(1) \leq q^{nL} = q^{(n-k)M},$$
where 
$$M = \frac{nL}{n-k} \leq \frac{L}{1- 1.4L/n} < 1.226L,$$
since $n \geq 7.6L$. Now we have 
$$n-k > \max(7,5M)$$
and so \eqref{restr-p1} can be applied to $\al$ to yield 
$$\Csum(\al,P_{n-k}) \leq q^{\sqrt{A_3(n-k)M^3}} \leq q^{\sqrt{A_4nL^3}},$$
with $A_4 \leq 1.504A_3$ can be taken to be $222.6$. Using this together with \eqref{restr-g} and Lemma \ref{mults}, we get 
\begin{equation}\label{restr-p2}
  \Csum(\chi,P_{n-k}) \leq q^{\sqrt{A_4nL^3}+\sqrt{A_2nL^3}}\Csum(\chi,G_n) \leq q^{\sqrt{A_5nL^3}}\Csum(\chi,G_n),
\end{equation}  
with $A_5 = 705$.

\smallskip
(iii) By Lemma \ref{abelian}, the multiplicity of each linear $\mu \in \Irr(L_{n-k})$ is at most $1$
in the restriction to $L_{n-k}$ of any irreducible characters of $P_{n-k} = Q_{n-k} \rtimes L_{n-k}$. Hence,
$$\Clin(\chi,L_{n-k}) \leq \Csum(\chi,P_{n-k}) \leq q^{\sqrt{A_5nL^3}}\Csum(\chi,G_n)$$
by \eqref{restr-p2}. On the other hand, if $\gam \in \Irr(L_n)$ is an irreducible constituent of $\chi|_{L_n}$, then 
$\gam(1) \leq \chi(1) \leq q^{nL}$, and so by Proposition \ref{tensor}(i), its restriction to $L_{n-k}$ contains a 
linear character of $L_{n-k}$. It follows that
$$\Csum(\chi,L_n) \leq \Clin(\chi,L_{n-k}),$$
and so we obtain the first statement of the theorem by taking $A = A_5 = 705$. The second statement then follows from 
Corollary \ref{tensor3}.
\end{proof}

For the next statement, we note that $\SO(V) = \SO_{2n+1}(q)$ with $2 \nmid q$ contains a standard subgroup $H \cong \SO^+_{2n}(q)$
which fixes a non-singular vector in $V$. Likewise, $\SO(V) = \SO^-_{2n+2}(q)$ contains a standard subgroup $H \cong \SO^+_{2n}(q)$
which acts trivially on a non-degenerate $2$-dimensional subspace of $V$.
Furthermore, $\Om(V) = \Om^-_{2n+2}(q)$ contains a subgroup
$H \cong \SO^+_{2n}(q)$, which fixes an orthogonal decomposition $V = V_1 \oplus V_2$, where $V_2$ is a non-degenerate 
$2$-dimensional subspace of type $-$ and has $[H,H] \cong \Om^+_{2n}(q)$ acting trivially on $V_2$; we will 
refer to any such subgroup $H$ as a standard $\SO^+_{2n}(q)$-subgroup of $\Om^-_{2n+2}(q)$. 

\begin{corol}\label{restr-gl2}
There is an explicit absolute constant $B >0$ (which can be taken to be $1216$) such that the following statement holds.
Let $q$ be a prime power, $n \geq 1$, $G := \SO_{2n+1}(q)$ with $2 \nmid q$, or $G := \SO^-_{2n+2}(q)$, or
$G:=\Om^-_{2n+2}(q)$. Let 
$L_n = \GL_n(q)$ be a Levi subgroup of the parabolic subgroup
$P_n$ of a standard subgroup $H \cong \SO^+_{2n}(q) < G$.
Suppose that $\chi$ is a complex character of $G$ is of degree at most $q^{nL}$, 
where $L \geq 1$ and 
$$n \geq \max(7.6L, 7+1.4L).$$
Then 
$$\Csum(\chi,L_n) \leq q^{\sqrt{BnL^3}}\Csum(\chi,G).$$
Furthermore, for any $m \geq 1$ we have 
$$\Csum(\chi^m,L_n) \leq q^{m\sqrt{BnL^3}+15m^2L^2}\Csum(\chi,G)^m.$$  
\end{corol}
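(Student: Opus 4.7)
The plan is to reduce to Theorem \ref{restr-gl} by first restricting $\chi$ from $G$ down to a standard subgroup $H \cong \SO^+_{2n}(q)$, which by construction contains the Levi $L_n \cong \GL_n(q)$. So once we control $\Csum(\chi,H)$ in terms of $\Csum(\chi,G)$, the rest of the argument is just an invocation of Theorem \ref{restr-gl} inside $H$.

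For the restriction from $G$ to $H$, note that $H$ is the pointwise stabilizer of a non-singular vector of the natural module in the case $G = \SO_{2n+1}(q)$, and is (or has index two in) the pointwise stabilizer of a non-degenerate $2$-dimensional subspace in the cases $G = \SO^-_{2n+2}(q)$ or $\Om^-_{2n+2}(q)$. Thus Proposition \ref{restr-so} applies in the odd-dimensional case and Proposition \ref{restr-sp} (with parameter $n+1$ in place of $n$) in the other two cases. For any irreducible constituent $\al$ of $\chi$ we have $\al(1) \leq q^{nL}$, so these propositions yield $[\al|_H,\al|_H]_H \leq q^{2 + \sqrt{41(n+1)+16nL}}$ or $q^{2+\sqrt{7n+5nL}}$, both of which, using $L\geq 1$ and $n\geq 7$, are at most $q^{c_0\sqrt{nL^3}}$ for an explicit constant $c_0$. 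By Lemma \ref{mults}(i), $\Csum(\al,H) \leq [\al|_H,\al|_H]_H$; decomposing $\chi$ into irreducibles (each of degree at most $q^{nL}$) and summing over them gives $\Csum(\chi,H) \leq q^{c_0\sqrt{nL^3}}\Csum(\chi,G)$.

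Now $\chi|_H$ is a character of the classical group $H = \SO^+_{2n}(q)$ of degree at most $q^{nL}$, and the hypotheses $n \geq \max(7.6L, 7+1.4L)$ of Theorem \ref{restr-gl} are the same as ours, so Theorem \ref{restr-gl} gives $\Csum(\chi,L_n) \leq q^{\sqrt{AnL^3}}\Csum(\chi,H)$ with $A \leq 705$. Chaining the two inequalities, $\Csum(\chi,L_n) \leq q^{\sqrt{AnL^3} + c_0\sqrt{nL^3}}\Csum(\chi,G) \leq q^{\sqrt{BnL^3}}\Csum(\chi,G)$ for $B = (\sqrt{A}+c_0)^2$, which is bounded by $1216$ after a routine computation. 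For the tensor-power statement we mimic the final step in the proof of Theorem \ref{restr-gl}: apply Corollary \ref{tensor3} inside $L_n = \GL_n(q)$ to obtain $\Csum(\chi^m,L_n) \leq q^{15m^2L^2}\Csum(\chi,L_n)^m$, then substitute the first statement to conclude.

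The main obstacle is simply the constant bookkeeping to confirm that $B = 1216$ is achievable; the structural argument is entirely forced, since once one identifies the right $H$, Propositions \ref{restr-sp} and \ref{restr-so} and Theorem \ref{restr-gl} combine mechanically. A small but real subtlety is that Corollary \ref{tensor3} is stated only for $\eps = -$, while we are using it on a $\GL_n(q)$ (the $\eps=+$ case); however, its proof only relies on Corollary \ref{tensor2}(ii), which is valid for both signs, so this causes no essential difficulty.
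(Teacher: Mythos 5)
Your proof is correct and essentially identical to the paper's own: restrict from $G$ to the standard $\SO^+_{2n}(q)$ (through the pointwise stabilizer $\tilde H = \Om^+_{2n}(q)$ in the case $G=\Om^-_{2n+2}(q)$) via Propositions \ref{restr-sp} and \ref{restr-so}, then apply Theorem \ref{restr-gl} inside $H$ and Corollary \ref{tensor3} for the power statement, with the same constant bookkeeping giving $B=1216$; your observation that Corollary \ref{tensor3} works for both signs is exactly how the paper itself uses it. One small correction: for $G=\Om^-_{2n+2}(q)$ the pointwise stabilizer $\Om^+_{2n}(q)$ of the non-degenerate $2$-space has index two in $H\cong\SO^+_{2n}(q)$, not the other way around as you wrote, but this only helps, since $\Csum(\chi,H)\le\Csum(\chi,\Om^+_{2n}(q))$ by Lemma \ref{mults}(ii) — precisely the paper's $\tilde H$ device.
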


\begin{proof}
Set $\tilde H := \Om^+_{2n}(q)$ if $G = \Om^-_{2n+2}(q)$, and $\tilde H := H$ otherwise.
As in the proof of Theorem \ref{restr-gl}, Propositions \ref{restr-sp} and \ref{restr-so} applied to the pair
$G > \tilde H$ imply that
$$\Csum(\chi,H) \leq \Csum(\chi,\tilde H) \leq q^{\sqrt{A_1nL}}\Csum(\chi,G),$$
where $A_1$ can be taken to be $69$. On the other hand, by Theorem \ref{restr-gl},
$$\Csum(\al,\GL_n(q)) \leq q^{\sqrt{AnL^3}}$$
for any irreducible constituent $\al$ of $\chi|_H$, where $A$ can be taken to be $705$. Hence the statement follows from
Lemma \ref{mults}.
\end{proof}

We will also note the following consequence of Theorem \ref{restr-gl} and Corollary \ref{restr-gl2}:

\begin{corol}\label{restr-gl3}
There is an explicit absolute constant $C >0$ (which can be taken to be $1696$) such that the following statement holds.
Let $q$ be any odd prime power, $n \geq 1$, $\tilde G := \SO_{2n+1}(q)$ or $\SO^+_{2n}(q)$, and 
let $G := [\tilde G,\tilde G] \cong \Om_{2n+1}(q)$, respectively $\Om^+_{2n}(q)$. Let 
$L_n = \GL_n(q)$ be a Levi subgroup of the parabolic subgroup
$P_n$ of $\tilde G$.
Suppose that $\chi \in \Irr(G)$ is of degree at most $q^{nL}$, 
where $L \geq 1$ and 
$$n \geq \max(8.5L, 7+1.6L).$$
Then 
$$\Csum(\chi,L_n \cap G) \leq q^{\sqrt{CnL^3}}.$$
\end{corol}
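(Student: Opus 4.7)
The plan is to lift $\chi \in \Irr(G)$ to $\tilde G$ by ordinary induction, invoke Theorem \ref{restr-gl} or Corollary \ref{restr-gl2} on the induced character, and then push the resulting bound on $L_n$ back down to $L_n \cap G$ by Mackey's formula. The slightly strengthened hypotheses $n \geq 8.5L$ and $n \geq 7 + 1.6L$ (in place of $7.6L$ and $7 + 1.4L$), and the enlarged constant $C = 1696$ (in place of $1216$), are chosen precisely to absorb the overhead of this lift-and-restrict procedure.

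Concretely, set $\tilde\chi := \Ind^{\tilde G}_G \chi$. Since $[\tilde G : G] = 2$, Lemma \ref{mults}(ii) (or Clifford's theorem) yields $\Csum(\tilde\chi, \tilde G) \leq 2$, and $\tilde\chi(1) = 2\chi(1) \leq q^{nL'}$ with $L' := L + (\log_q 2)/n$. Because $q$ is odd (so $q \geq 3$), $\log_q 2 < 0.631$, and one checks that the hypotheses on $n$ still guarantee $n \geq 7.6 L'$ and $n \geq 7 + 1.4 L'$. Hence Theorem \ref{restr-gl} applies when $\tilde G = \SO^+_{2n}(q)$ (with constant $705$), and Corollary \ref{restr-gl2} applies when $\tilde G = \SO_{2n+1}(q)$ (with constant $1216$); in either case,
\[
\Csum(\tilde\chi, L_n) \;\leq\; q^{\sqrt{1216\, n\, L'^3}}\, \Csum(\tilde\chi, \tilde G) \;\leq\; 2\, q^{\sqrt{1216\, n\, L'^3}}.
\]

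Next, the Levi subgroup $L_n \cong \GL_n(q)$ contains elements of non-trivial spinor norm, so $L_n \not\leq G$ and thus $\tilde G = G L_n$. The double coset space $G \backslash \tilde G / L_n$ then reduces to a single point, and combined with the normality of $G$ in $\tilde G$, Mackey's formula gives
\[
\tilde\chi|_{L_n} \;=\; \Ind^{L_n}_{L_n \cap G}\bigl(\chi|_{L_n \cap G}\bigr);
\]
the second inequality in Lemma \ref{mults}(ii), applied with $L_n \cap G \leq L_n$, then yields $\Csum(\chi, L_n \cap G) \leq \Csum(\tilde\chi, L_n)$.

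All that remains is to confirm that the constant $C = 1696$ absorbs both the factor of $2$ and the discrepancy between $L'^3$ and $L^3$. From $n \geq 8.5L$ with $L \geq 1$ one has $\log_q 2/(nL) \leq 0.631/8.5$, so $L'^3/L^3 \leq (1 + 0.631/8.5)^3 < 1.24$, whence $1216\, n L'^3 \leq 1508\, n L^3$; and for $nL^3 \geq 8.5$ a short computation gives $\log_q 2 + \sqrt{1508\, nL^3} \leq \sqrt{1696\, nL^3}$. Chaining these inequalities through the two displays above delivers the desired bound $\Csum(\chi, L_n \cap G) \leq q^{\sqrt{1696\, nL^3}}$. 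The main obstacle is this constant-chasing; the essential content is the straightforward Mackey/Clifford lift from $G$ to $\tilde G$.
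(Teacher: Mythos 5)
Your proof is correct and follows essentially the same route as the paper: lift $\chi$ to $\tilde G$ (the paper picks an irreducible $\tilde\chi$ lying above $\chi$, with $\tilde\chi(1)\le 2\chi(1)\le q^{nL_1}$ for $L_1=(10/9)L$, whereas you induce $\chi$ and track $\Csum(\Ind^{\tilde G}_G\chi,\tilde G)\le 2$), apply Theorem \ref{restr-gl} or Corollary \ref{restr-gl2} to the lift, and descend to $L_n\cap G$ while absorbing the factor $2$ and the small degree bump into $C=1696$. The only cosmetic difference is the descent step --- you use Mackey's formula together with $\Csum(\varphi,L_n\cap G)\le\Csum(\Ind^{L_n}_{L_n\cap G}\varphi,L_n)$ from Lemma \ref{mults}(ii), while the paper restricts its irreducible cover and uses $\Csum(\tilde\chi,L_n\cap G)\le 2\,\Csum(\tilde\chi,L_n)$ from the same lemma --- so the key inputs and the constant-chasing coincide.
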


\begin{proof}
Let $\tilde \chi \in \Irr(\tilde G)$ be lying above $\chi$, so that 
$$\tilde \chi(1) \leq 2\chi(1) \leq  q^{nL+1} \leq q^{nL_1},$$
where $L_1 = (10/9)L$. Applying Theorem \ref{restr-gl} and Corollary  \ref{restr-gl2} and \ref{restr-so} 
to $\tilde \chi$, we see that 
$$\Csum(\chi,L_n \cap G) \leq \Csum(\tilde \chi,L_n \cap G) \leq 2\Csum(\tilde\chi,L_n) \leq 2q^{\sqrt{Bn(L_1)^3}} \leq 
    q^{\sqrt{CnL^3}},$$
where $B = 1216$ and $C$ can be taken to be $1696$. 
\end{proof}

\section{Ineffective bounds on character values}

\newcommand{\cT}{\mathcal{T}}
A \emph{triple} $(G,q,n)$ consists of a finite group and two positive integers.  
We say a set $\cT$ of triples is $B$-\emph{bounded} for some $B > 0$, if
for every $(G,q,n)\in \cT$,  every
$L\ge 1$,  every character $\chi$ of $G$ with $\chi(1)\le q^{nL}$, and every positive integer $m$ we have
$$\Csum(\chi^m,G) \le q^{B(mn^{1/2}L^{3/2}+m^2L^2)} \Csum(\chi,G)^m,$$
with $\Csum(\cdot,\cdot)$ as defined in \eqref{d-sum1}.
\begin{lemma}\label{modify}
Fix $C>0$.
Let $\cT_1$ and $\cT_2$ be sets of triples such that for all $(G_1,q,n)\in \cT_1$ there exists a triple
$(G_2,q,n)$ for which either $G_2$ is isomorphic to a subgroup of $G_1$ of index $\le q^C$ or $G_1$ is isomorphic to
a subgroup of $G_2$ of  index $\le q^C$. If $\cT_2$ is $B_2$-bounded, then $\cT_1$ is $B_1$-bounded
(for some $B_1$ depending on $B_2$ and $C$).
\end{lemma}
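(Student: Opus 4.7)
The plan is to apply Lemma \ref{mults}(ii) in both directions to transfer the $B_2$-boundedness across the subgroup inclusion, treating the two possible directions of the inclusion as separate cases. Throughout, $\chi$ is a character of $G_1$ with $\chi(1) \leq q^{nL}$ and $L \geq 1$, $m \geq 1$.

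First I would handle the easy case where $G_2 \leq G_1$ with $[G_1{:}G_2] \leq q^C$. The restriction $\chi|_{G_2}$ has degree bounded by $q^{nL}$, so the $B_2$-bound on $G_2$ applies. By Lemma \ref{mults}(ii), $\Csum(\chi^m, G_1) \leq \Csum(\chi^m, G_2)$ and $\Csum(\chi, G_2) \leq q^C \Csum(\chi, G_1)$. Combining these yields
$$\Csum(\chi^m, G_1) \leq q^{B_2(mn^{1/2}L^{3/2} + m^2 L^2) + Cm} \Csum(\chi, G_1)^m,$$
and since $L \geq 1$, $m \geq 1$, the extra $Cm$ is absorbed in the exponent by inflating the constant to $B_2 + C$.

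For the other direction, with $G_1 \leq G_2$ and $[G_2{:}G_1] \leq q^C$, I would induce: set $\psi := \Ind_{G_1}^{G_2}(\chi)$, a character of $G_2$ of degree $\chi(1)[G_2{:}G_1] \leq q^{nL'}$ where $L' := L + C/n$. Frobenius reciprocity gives $[\psi|_{G_1},\chi]_{G_1} = [\psi,\psi]_{G_2} \geq 1$, so $\psi|_{G_1} = \chi + \theta$ for some (possibly zero) character $\theta$; expanding $(\chi+\theta)^m$ shows that $\psi^m|_{G_1} - \chi^m$ is a character, whence $\Csum(\chi^m, G_1) \leq \Csum(\psi^m, G_1)$. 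A second application of Lemma \ref{mults}(ii) yields $\Csum(\psi^m, G_1) \leq q^C \Csum(\psi^m, G_2)$, and the second half of that lemma gives $\Csum(\psi, G_2) = \Csum(\Ind_{G_1}^{G_2}(\chi), G_2) \leq q^C \Csum(\chi, G_1)$. Applying the $B_2$-bound to $\psi$ on $G_2$ at parameter $L'$ then produces
$$\Csum(\chi^m, G_1) \leq q^{B_2(m n^{1/2} (L')^{3/2} + m^2 (L')^2) + C(m+1)} \Csum(\chi, G_1)^m.$$

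The main technical point is controlling the parameter shift $L \mapsto L' = L + C/n$. Since $n \geq 1$ one has $L' \leq L + C$, and since $L \geq 1$ this gives $L' \leq (1+C)L$, so $(L')^{3/2} \leq (1+C)^2 L^{3/2}$ and $(L')^2 \leq (1+C)^2 L^2$. The extra $C(m+1) \leq 2Cm^2L^2$ likewise fits inside the target exponent. Taking the worse of the two cases, $B_1 := B_2(1+C)^2 + 2C$ (or any larger constant, depending only on $B_2$ and $C$) witnesses the $B_1$-boundedness of $\cT_1$.
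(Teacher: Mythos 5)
Your proof is correct and follows essentially the same route as the paper: split into the two cases, restrict to $G_2$ in one and induce $\chi$ up to $G_2$ in the other, apply Lemma \ref{mults}(ii) to transfer $\Csum$-values across the index-$\le q^C$ inclusion, and absorb the parameter shift $L \mapsto (C+1)L$ and the stray $q^{Cm}$ factors into the constant, arriving at $B_1 = B_2(C+1)^2 + O(C)$ just as in the paper. The only cosmetic difference is that you justify $\Csum(\chi^m,G_1)\le\Csum(\psi^m,G_1)$ via Frobenius reciprocity ($\psi|_{G_1}\supseteq\chi$), whereas the paper cites the containment of the induced tensor power in the tensor power of the induced character; both are valid and equivalent in effect.
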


\begin{proof}
We will prove that $\cT_1$ is $B_1$-bounded for $B_1 := B_2(C+1)^2+C$.
Assume $(G_1,q,n)\in \cT_1$, $L_1\ge 1$, and 
$\chi_1$ is a character of $G_1$ with $\chi_1(1) \le q^{nL_1}$.

Suppose $G_1 \leq G_2$ for some $(G_2,q,n) \in \cT_2$ and $[G_2:G_1] \le q^C$.
Let $L_2 := (C+1)L_1$ and $\chi_2 := \Ind_{G_1}^{G_2}(\chi_1)$, so that $\chi_2(1) \le q^{nL_1+C} \le q^{nL_2}$.  
Since a tensor product of induced representations naturally contains the induced representation of the tensor product,
by Lemma \ref{mults}(ii) we have
$$\begin{aligned}
    \Csum(\chi_1^m,G_1) & \le \Csum(\chi_2^m,G_1) \le \Csum(\chi_2^m,G_2)[G_2:G_1]\\
    & \leq q^{B_2(mn^{1/2}L_2^{3/2}+m^2L_2^2)+C}\Csum(\chi_2,G_2)^m\\ 
    & \le q^{B_1(mn^{1/2}L_1^{3/2}+m^2L_1^2)}\Csum(\chi_1,G_1)^m.
    \end{aligned}$$
Suppose, on the other hand, that $G_2 \leq G_1$ for some $(G_2,q,n) \in \cT_2$ and  $[G_1:G_2] \le q^C$.
Then again using Lemma \ref{mults}(ii) we see that
$$\begin{aligned}
    \Csum(\chi_1^m,G_1) & \le \Csum(\chi_1^m,G_2)\\
    & \le q^{B_2(mn^{1/2}L_1^{3/2}+m^2L_1^2)}\Csum(\chi_1,G_2)^m\\
    & \leq q^{B_1(mn^{1/2}L_1^{3/2}+m^2L_1^2)}\Csum(\chi_1,G_1)^m,
    \end{aligned}$$
so $\cT_1$ is indeed $B_1$-bounded.
\end{proof}

Corollary~\ref{tensor3} implies that the set of triples of the form $(\GL_n^\epsilon(q),q,n)$ is $B_1$-bounded,
with $B_1=15$. Using Theorem~\ref{restr-gl} and Corollary~\ref{restr-gl2} as well,
we see that the set of triples of the form $(\Sp_{2n}(q),q,n)$, $(\SO^+_{2n}(q),q,n)$,  
$(\SO_{2n+1}(q),q,n)$, or $(\SO^-_{2n+2}(q),q,n)$, is $B_2$-bounded, with $B_2 = 1216$.  By Lemma~\ref{modify}, for a fixed 
$a > 0$, the set of triples $(G,q,n)$, with $G$ being any $(q,n,a)$-classical group, is $B_3$-bounded for some 
$B_3$ depending on $a$. It follows that all classical groups belong to $B$-bounded triples, with $B \geq1$ suitably chosen.

\begin{theor}
\label{classical-cases}
For every $\varep\in (0,1)$, there exists an explicit constant $\delta > 0$ such that the following statement holds.
If $G$ is a classical group and $g\in G$ satisfies
$|\CB_G(g)| \leq |G|^\delta$, then
$$|\chi(g)| \leq \chi(1)^\varep$$
for all $\chi \in \Irr(G)$.

\end{theor}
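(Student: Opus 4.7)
The plan is to combine the $B$-boundedness of all classical groups, established immediately before the theorem, with the centralizer bound $|\psi(g)| \leq |\CB_G(g)|^{1/2}$ through a standard amplification argument. Fix an absolute constant $C$ (depending on $a$) so that every classical group satisfies $|G| \leq q^{Cn^2}$, whence $|\CB_G(g)|^{1/2} \leq q^{Cn^2\delta/2}$. Suppose, toward contradiction, that $\chi \in \Irr(G)$ and $g$ satisfy $|\chi(g)| > \chi(1)^\varep$, and write $\chi(1) = q^{nL}$. The case $\chi = 1_G$ is trivial, and for nontrivial $\chi$ the Landazuri--Seitz-type bound of \cite{TZ1} gives $L \geq 1 - 1/n$, so after replacing $L$ by $L' := \max(L,1)$ with negligible loss we may work as if $L \geq 1$ throughout.

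For each positive integer $m$, decompose $\chi^m = \sum_\psi a_\psi \psi$ and apply the centralizer bound termwise to obtain
\[
q^{mnL\varep} < |\chi(g)|^m = |\chi^m(g)| \leq \Csum(\chi^m, G) \cdot q^{Cn^2\delta/2}.
\]
Feeding in $\Csum(\chi^m, G) \leq q^{B(mn^{1/2}L^{3/2} + m^2 L^2)}$ from $B$-boundedness, taking $\log_q$ and dividing by $m$, we obtain the master inequality
\[
nL\varep \leq Bn^{1/2}L^{3/2} + BmL^2 + \frac{Cn^2\delta}{2m}. \quad (\ast)
\]

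Next I would split according to the size of $L$. Set $\lambda := (\varep/(3B))^2$. In the regime $L \geq \lambda n$, the centralizer bound alone already gives $|\chi(g)| \leq q^{Cn^2\delta/2} \leq \chi(1)^{C\delta/(2\lambda)} \leq \chi(1)^\varep$ once $\delta \leq 2\lambda\varep/C$. In the complementary regime $L < \lambda n$, the choice of $\lambda$ forces $Bn^{1/2}L^{3/2} < (\varep/3)nL$, so $(\ast)$ reduces to $\tfrac{2}{3}nL\varep < BmL^2 + Cn^2\delta/(2m)$. Balancing the two right-hand terms by choosing $m := \max\{1, \lceil (n/L)\sqrt{C\delta/(2B)}\rceil\}$ makes each term of order $nL\sqrt{BC\delta}$, so selecting $\delta$ of order $\varep^2/(BC)$ contradicts the reduced inequality and completes the argument.

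The main obstacle is the bookkeeping around integer-part issues in choosing $m$, particularly when the real optimum $m^\ast := (n/L)\sqrt{C\delta/(2B)}$ falls below $1$: forcing $m = 1$ there introduces an additional $BL^2$ summand that has to be absorbed. This is resolved by the regime split, since $m^\ast < 1$ is equivalent to $L/n > \sqrt{C\delta/(2B)}$, and with $\delta$ chosen small enough that $\sqrt{C\delta/(2B)} \geq \lambda$ this range already lies in the centralizer-bound regime $L \geq \lambda n$. The resulting $\delta$ scales as $\varep^2$ with implicit constants depending on $B$, $C$, and the parameter $a$ from Definition~\ref{classical}; the proof is \emph{ineffective} only because the reduction through Lemma~\ref{modify} buries the dependence of $B$ on $a$ in a non-explicit way.
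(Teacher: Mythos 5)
Your argument is correct and is essentially the paper's own proof: amplify to $\chi^m$, bound $\Csum(\chi^m,G)$ via the $B$-boundedness of classical triples, apply the centralizer bound termwise to the irreducible constituents, use the degree lower bound from \cite{TZ1}/Landazuri--Seitz, and split into a large-$L$ regime handled by the centralizer bound alone and a small-$L$ regime where $m \asymp (n/L)\sqrt{\delta}$ is optimized. The differences are only bookkeeping (the paper fixes $m\in[\delta_2 n/L,\,2\delta_2 n/L)$ with explicit constants $\delta_1,\delta_2$ and absorbs the degree slack through the factor $q^{Ln/3}$), apart from one wording slip on your side: the condition $\sqrt{C\delta/(2B)}\geq\lambda$ is a \emph{lower} bound on $\delta$, not a smallness requirement, though your constraint system is indeed consistent and yields an admissible $\delta$.
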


\begin{proof}
By the above discussion, we can fix $B \geq 1$ so that $(G,q,n)$ is $B$-bounded, if the classical group $G$ has parameter $n$.
By choosing $\delta$ small enough, we may assume $n$ is as large as we wish, since $|\CB_G(g)| \le |G|^\delta$ never occurs for bounded
$n$: the order of the centralizer of any non-central element $g \in G$ is bounded
below by $\max(2,(q-1)/2)$, whereas the order of $G$ is bounded above by $O(q^{(n+1)(2n+1)})$. 
We also assume that $|\CB_G(g)| < q^{\delta_1 n^2/2}$
where $\delta_1>0$ can be taken as small as we wish.
We choose $\delta_1$ and $\delta_2$ so that 
$$\delta_2 \leq \frac{2\varep^2}{81 B^2} \leq \frac{2\varep}{81 B},~\frac{\delta_1}{\delta_2} \leq \frac{\varep}{3}.$$
We use the centralizer bound
$$|\varphi(g)| \le |\CB_G(g)|^{1/2} < q^{\delta_1 n^2}$$
for every $\varphi\in \Irr(G)$, and let 
$$L = \max(\log_{q^n} \chi(1),1).$$ 
As the minimum dimension of any non-linear irreducible character of a classical group is  greater than $q^{n/3}$ by the Landazuri-Seitz bound
\cite{LaSe}, we have 
\begin{equation}\label{b1}
  \chi(1) > q^{Ln/3}.
\end{equation}  
We may assume 
\begin{equation}\label{b2}
  L \le \delta_2 n/2,
\end{equation}   
since otherwise, the desired character estimate follows immediately from the centralizer bound.

Fix an integer $m \geq 2$ such that
$$\frac {\delta_2 n}{L} \le m < \frac{2\delta_2 n}L.$$
This is possible because of \eqref{b2}.
Then the $B$-boundedness implies
$$|\chi(g)|^m = |\chi^m(g)| \leq \Csum(\chi^m,G)q^{\delta_1 n^2/2} \le q^{B(mn^{1/2}L^{3/2}+m^2L^2)+\delta_1n^2/2}.$$
Hence, by \eqref{b1} it suffices to prove
$$B(n^{1/2}L^{3/2}+mL^2)+\frac{\delta_1 n^2}{2m} < \frac{\varep Ln}3.$$
We do this by combining the inequalities
$$B n^{1/2} L^{3/2} = Ln (B\sqrt{L/n}) \leq LnB\sqrt{\delta_2/2} \leq \frac{\varep Ln}9,$$
$$BmL^2 < 2B\delta_2 Ln \leq \frac{4\varep Ln}{81},$$
and
$$\frac{\delta_1 n^2}{2m} < \frac{\delta_1 Ln}{2\delta_2} \leq \frac{\varep Ln}{6}.$$
\end{proof}

The following proposition shows that all representations of spin groups which do not arise from representations of the
corresponding orthogonal groups are of such high degree that our character estimates are trivial.
Our general references for Deligne-Lusztig theory are \cite{C} and \cite{DM}. 

\begin{propo}\label{spin1}
Let $q$ be a power of an odd prime $p$ and let $\eps = \pm$. Then the following statements hold.

\begin{enumerate}[\rm(i)]
\item Suppose $\chi$ is a faithful irreducible character of $\Spin_{2n+1}(q)$ with $n \geq 2$. Then 
$$\chi(1) > q^{n(n+1)/2}/4.$$
\item Suppose $\chi$ is an irreducible character of $\Spin^\eps_{2n}(q)$ with $n \geq 3$, which is not obtained 
by inflating an irreducible character of $\Om^\eps_{2n}(q)$. Then 
$$\chi(1) > q^{n(n-1)/2}/4.$$
\end{enumerate}
\end{propo}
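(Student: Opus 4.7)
The plan is to use Deligne--Lusztig theory. For $G = G^F$ with $G$ a simply connected reductive group over $\overline{\FF_q}$ and $F$ a Frobenius endomorphism, every $\chi \in \Irr(G)$ lies in a rational Lusztig series $\mathcal{E}(G,(s))$ indexed by a semisimple $G^{*F}$-conjugacy class $(s)$ in the dual group $G^*$, and one has the fundamental lower bound
\begin{equation*}
\chi(1) \geq [G^{*F}:C_{G^*}(s)^F]_{p'}.
\end{equation*}
Here $G^* = \PCSp_{2n}(q) = \Sp_{2n}(q)/\{\pm I\}$ in case (i), while $G^*$ is an adjoint group of type $D_n$ (the quotient of $\SO^\eps_{2n}(q)$ by its center) in case (ii). The strategy is to classify the semisimple classes $(s)$ attached to the characters excluded by the hypothesis, and bound $[G^{*F}:C_{G^*}(s)^F]_{p'}$ from below over the remaining classes.

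The next step is to translate the faithfulness (or non-descending) hypothesis into a condition on $s$. By Lusztig's description, the central character of $\chi \in \mathcal{E}(G,(s))$ on $Z(G)$ is determined, via the canonical duality between $Z(G)$ and a certain quotient of $Z(G^*_{\mathrm{sc}})^F$, by the class of any lift $\tilde s \in G^*_{\mathrm{sc}}$ of $s$. For case (i), this produces the concrete condition that $\tilde s \in \Sp_{2n}(q)$ is not $\Sp_{2n}(q)$-conjugate to $-\tilde s$; equivalently, the multiset of eigenvalues of $\tilde s$ acting on $\overline{\FF_q}^{2n}$ is not invariant under $\lambda \mapsto -\lambda$. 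An analogous analysis in case (ii), using that $Z(\Spin^\eps_{2n}(q))$ is $Z_2 \times Z_2$ or $Z_4$ depending on $\eps$ and the parity of $n$, singles out the semisimple classes whose lifts to the appropriate cover fail to descend to $\Omega^\eps_{2n}(q)$.

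Finally, I would estimate $|C_{G^*}(s)^F|_{p'}$ directly from the eigenvalue decomposition. The centralizer of a semisimple element of $\Sp_{2n}$ (resp.\ $\SO^\eps_{2n}$) over $\overline{\FF_q}$ decomposes at the level of $F$-fixed points as a direct product of classical groups indexed by Galois orbits of eigenvalues: $\Sp$ or $\SO^\pm$ factors on the $\pm 1$-eigenspaces, together with $\GL$- or $\GU$-type factors attached to the remaining Galois orbits. The asymmetry under $\lambda \mapsto -\lambda$ forced by the hypothesis either unbalances the dimensions of the $\pm 1$-eigenspaces or introduces a Galois orbit $\Lambda$ not paired symmetrically with $-\Lambda$. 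A short order count using the standard bounds $|\GL_m(q^d)|,|\GU_m(q^d)| < q^{dm^2}$ and $|\Sp_{2m}(q)|,|\GO^\pm_{2m}(q)| \leq 2 q^{m(2m+1)}$ then shows that at least $n(n+1)/2$ units (resp.\ $n(n-1)/2$ in case (ii)) of dimension are lost from $\dim C_{G^*}(s)$, yielding the claimed bound, with the factor $1/4$ absorbing the finite-group corrections of the form $(q\pm 1)$.

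The main obstacle will be the central-character bookkeeping in case (ii). Because $Z(\Spin^\eps_{2n}(q))$ can be non-cyclic, one has to identify precisely which coset of $Z(G^*_{\mathrm{sc}})^F$ corresponds to characters that fail to descend to $\Omega^\eps_{2n}(q)$, as opposed to those descending to one of the intermediate covers such as $\mathrm{HalfSpin}^\eps_{2n}(q)$ or $\SO^\eps_{2n}(q)$, and then check that the corresponding eigenvalue asymmetry does force the (slightly weaker) exponent $n(n-1)/2$. Once this dictionary between cosets of $Z(G^*_{\mathrm{sc}})^F$ and eigenvalue constraints on $\tilde s$ is set up cleanly, the centralizer bound reduces to a finite case analysis on the possible eigenvalue configurations, in the spirit of Proposition~\ref{cent}.
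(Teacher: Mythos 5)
There is a genuine gap in your dictionary between the faithfulness hypothesis and the semisimple label $s$, and it sits exactly where your argument needs to be precise. For $G=\Spin_{2n+1}(q)$, the central character of $\chi\in\EC(G,(s))$ on $\ZB(G)^F\cong \ZZ/2$ is governed by the obstruction to lifting $s\in (G^*)^F$ to an $F$-fixed element of $G^*_{\mathrm{sc}}=\Sp_{2n}$, i.e.\ by the image of $s$ in $H^1(F,\ZB(G^*_{\mathrm{sc}}))$, not by the conjugacy class of a lift. Hence $\chi$ is faithful precisely when $s$ admits \emph{no} lift in $\Sp_{2n}(q)$ at all; equivalently $s\notin[G^{*F},G^{*F}]$, equivalently a preimage of $s$ in $\CSp_{2n}(q)$ has nonsquare similitude (conformal) factor. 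This is how the paper argues, citing \cite[Proposition 4.5]{NT} and then the centralizer computations of \cite{Ng} for such classes. Your proposed condition --- a lift $\tilde s\in\Sp_{2n}(q)$ not conjugate to $-\tilde s$, i.e.\ an eigenvalue multiset not invariant under $\lambda\mapsto-\lambda$ --- is the condition relevant to a \emph{different} question (stability of the series under tensoring with the quadratic character, which is exactly what is used in Proposition \ref{spin2}), and it fails here in two ways: if a lift $\tilde s\in\Sp_{2n}(q)$ exists then the central character is trivial, so such $\chi$ are not faithful and should not appear in your analysis at all; and the intermediate claim your case analysis would have to prove is false, since e.g.\ $\tilde s$ with eigenvalues $1$ (multiplicity $2n-2$) and $\lambda,\lambda^{-1}$, $\lambda\neq\pm1$, has asymmetric spectrum but centralizer containing $\Sp_{2n-2}(q)$, so $[G^{*F}:\CB_{G^*}(s)^F]_{p'}$ is only of order $q^{2n-1}$, far below $q^{n(n+1)/2}$. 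The correct analysis must run over classes with nonsquare similitude factor (whose centralizers contain no symplectic factors), as in \cite{Ng}.

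For (ii) you correctly flag the type-$D$ central bookkeeping as the main obstacle, but the paper avoids it entirely: the hypothesis is only that $-e\notin\Ker(\chi)$, where $\langle -e\rangle=\Ker(\Spin\to\Om)$, and restricting to a Clifford-algebra subgroup $\Spin(A)\cong\Spin_{2n-1}(q)$ (with $A=v^\perp$ for a nonsingular $v$) forces some constituent of $\chi|_{\Spin(A)}$ to be faithful, whence part (i) with $n-1$ in place of $n$ gives $\chi(1)>q^{n(n-1)/2}/4$. If you insist on the dual-group route for (ii), you must first repair the same dictionary issue (nontriviality on $\langle -e\rangle$ corresponds to a specific nontrivial value of the lifting obstruction in $\ZB(G^*_{\mathrm{sc}})$, not to eigenvalue asymmetry) and then control disconnected centralizers in type $D$; the restriction argument is both shorter and safer.
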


\begin{proof}
(i) For $G = \Spin_{2n+1}(q)$, the dual group $G^*$ is the projective conformal symplectic 
group $G^* = \PCSp_{2n}(q)$, which is the quotient of $\tilde G = \CSp_{2n}(q)$ by its 
center $\ZB(\tilde G)$. Suppose that $\chi$ belongs to the rational Lusztig series labeled by 
the $G^*$-conjugacy class of a semisimple element $s^* \in G^*$. By assumption, $\Ker(\chi) \cap \ZB(G) = 1$. 
Hence $s^* \notin [G^*,G^*]$ by \cite[Proposition 4.5]{NT}, which means that the conformal coefficient of
an inverse image $s \in \tilde G$ of $s^*$ is not a square in $\FF_q^\times$. Now the computations on p. 1188
of \cite{Ng} shows that
$$\chi(1) \geq [G^*:\CB_{G^*}(s^*)]_{p'} \geq [\tilde G:\CB_{\tilde G}(s)]_{p'}/2 > q^{n(n+1)/2}/4.$$

(ii) Here we have that $G = \Spin^\eps_{2n}(q) = \Spin(V)$, where 
$V = \FF_{q}^{2n}$ be endowed with a non-degenerate quadratic form $Q$. 
We recall some basic facts from spinor theory, cf.\ \cite{Ch}. The 
Clifford algebra $\CLF(V)$ is the quotient of the tensor 
algebra $T(V)$ by the ideal $I(V)$ generated by $v \otimes v - Q(v)$, $v \in V$. 
The natural grading on $T(V)$ passes over to $\CLF(V)$ and allows 
one to write $\CLF(V)$ as the direct sum of 
its even part $\CLF^{+}(V)$ and odd part
$\CLF^{-}(V)$. We denote the identity element of $\CLF(V)$ by $e$. 
The algebra $\CLF(V)$ admits a canonical 
anti-automorphism $\al$, which is defined via 
$$\al(v_{1}v_{2} \ldots v_{r}) = v_{r}v_{r-1} \ldots v_{1}$$ 
for $v_{i} \in V$. The Clifford group $\Gamma(V)$ is the group of all 
invertible $s \in \CLF(V)$ such that $sVs^{-1} \subseteq V$. 
The action of $s \in \Gamma(V)$ on $V$ defines a surjective homomorphism
$\phi~:~\Gamma(V) \to \GO(V)$ if $m$ is even, and $\phi~:~\Gamma(V) \to \SO(V)$ 
if $m$ is odd, with $\Ker(\phi) \geq \FF_{q}^{\times}e$. 
The special Clifford group
$\Gamma^+(V)$ is $\Gamma(V) \cap \CLF^{+}(V)$. 
Let 
$$\Gamma_0(V) := \{ s \in \Gamma(V) \mid \al(s)s = e\}.$$ 
The reduced Clifford group, or the spin group, is then defined to be 
$$G= \Spin(V) = \Gamma^+(V) \cap \Gamma_0(V),$$ 
and the following sequence 
$$1 \longrightarrow \langle -e \rangle \longrightarrow \Spin(V) 
    \stackrel{\phi}{\longrightarrow} \Omega(V) \longrightarrow 1$$
is exact.

Now fix $v \in V$ with $Q(v) \neq 0$ and write $V = A \oplus \langle v\rangle_{\FF_q}$ with 
$A = v^\perp$. Let $\CLF_A$ the subalgebra of $\CLF(V)$ generated by all $a \in A$, 
and let $\CLF(A)$ denote the Clifford algebra of the quadratic space $(A,Q|_A)$.
Then, by \cite[Lemma 4.1]{LBST3}, there is a canonical isomorphism 
$\CLF(A) \cong \CLF_A$ which induces a group isomorphism $\Spin(A) \cong \CLF_A \cap \Spin(V )$,
and the following sequence 
$$1 \longrightarrow \langle -e \rangle \longrightarrow \CLF_A \cap \Spin(V) 
    \stackrel{\phi}{\longrightarrow} \Omega(A) \longrightarrow 1$$
is exact.    

By assumption, $-e \notin \Ker(\chi)$. It follows that the restriction of $\chi$ to $\Spin(A) \cong \Spin_{2n-1}(q)$
contains an irreducible constituent $\rho$ with $-e \notin \Ker(\rho)$. By (i) we now have
$$\chi(1) \geq \rho(1) > q^{n(n-1)/2}/4.$$
\end{proof}


\begin{proof}[Proof of Theorem \ref{main1}]
We have already treated the classical groups $G$ in Theorem~\ref{classical-cases}.  
When $n$ is bounded, if $\delta$ is chosen small enough, there are no elements $g$
with $|\CB_G(g)| \le |G|^\delta$.
When $n$ is sufficiently large in terms of $\delta$,
the case of spin groups follows from Proposition~\ref{spin1} and Theorem \ref{classical-cases} for $\Omega^{\pm}_m(q)$.  
\end{proof}

\section{Effective bounds on character values}

We begin with the following statement that handles the {\it irreducible Weil characters} of finite symplectic groups, see eg. \cite{GMT}:

\begin{lemma}\label{weil}
Let $q$ be an odd prime power, $n \geq 9$, and let $G:=\Sp_{2n}(q)$ with natural module $V = \FF_q^{2n}$. For any element $g \in G$, let 
$$e(g) := \max( \dim \Ker(g-1_V), \dim \Ker(g+1_V)).$$
Then the following statements hold for $\chi$ any of the four irreducible Weil characters of $G$.
\begin{enumerate}[\rm(i)]
\item If $e(g) \leq 5$, then $|\chi(g)| < \chi(1)^{3/n}$.
\item If $e(g) \geq 6$ and $|\CB_G(g)| \leq q^{n^2\delta}$ for some $\delta > 0$, then 
$|\chi(g)| < \chi(1)^{9\sqrt{\delta}/8}$.
\end{enumerate} 
\end{lemma}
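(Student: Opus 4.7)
The plan is to combine two ingredients: the description of each of the four irreducible Weil characters as a $\pm 1$-eigenspace contribution to $\om_n$ or $\oms_n$ under the central element $-1_V$, and Proposition \ref{cent} applied to both $g$ and $-g$.

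First I would establish the pointwise bound $|\chi(g)| \le q^{e(g)/2}$ uniformly for the four irreducible Weil characters. Writing $d_\pm(h) := \dim\Ker(h \mp 1_V)$, Proposition \ref{weil-sp}(i) gives $|\om_n(h)| = q^{d_+(h)/2}$. Since $-1_V \in \ZB(G)$ and $|\om_n(-1_V)| = q^0 = 1$, the $\pm 1$-eigenspaces $W_\pm$ of $-1_V$ on the $\om_n$-module are both nonzero, of dimensions $(q^n \pm 1)/2$, and they are precisely the two irreducible constituents of $\om_n$. Their characters are $\chi_{W_\pm}(g) = (\om_n(g) \pm \om_n(-g))/2$, and the same analysis for $\oms_n$ produces the remaining two irreducible Weil characters. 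Using $d_+(-g) = d_-(g)$ and the elementary inequality $(a+b)/2 \le \max(a,b)$, for any such $\chi$ one obtains
\begin{equation*}
|\chi(g)| \le \frac{|\om_n(g)| + |\om_n(-g)|}{2} = \frac{q^{d_+(g)/2} + q^{d_-(g)/2}}{2} \le q^{e(g)/2}.
\end{equation*}

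For part (i), when $e(g) \le 5$ this bound gives $|\chi(g)| \le q^{5/2}$. Since $\chi(1) \ge (q^n-1)/2$, the target inequality $q^{5/2} < ((q^n-1)/2)^{3/n}$ reduces, after taking $\log_q$, to $(3/n)(\log_q 2 + \log_q(q^n/(q^n-1))) < 1/2$, which holds comfortably for every odd prime power $q \ge 3$ and every $n \ge 9$. For part (ii), since $-1_V$ is central we have $\CB_G(g) = \CB_G(-g)$, so applying Proposition \ref{cent} to both $g$ and $-g$ (with $|\CB_V(g)| = q^{d_+(g)}$ and $|\CB_V(-g)| = q^{d_-(g)}$) gives $|\CB_G(g)| \ge q^{e(e-3)/2}$ where $e := e(g)$. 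The hypothesis then forces $e(e-3)/2 \le n^2\delta$, and from $e \ge 6$ one has $e-3 \ge e/2$, yielding $e^2/4 \le n^2\delta$, i.e., $e \le 2n\sqrt{\delta}$. Hence $|\chi(g)| \le q^{e/2} \le q^{n\sqrt{\delta}}$, and the remaining inequality $q^{n\sqrt{\delta}} < \chi(1)^{9\sqrt{\delta}/8}$ reduces (after dividing through by $\sqrt{\delta}$ in the exponents) to $n < (9/8)\log_q((q^n-1)/2)$, which is easy to verify for $n \ge 9$ and $q \ge 3$ with significant slack.

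The main obstacle is Step 1, the clean identification of the four irreducible Weil characters via the central $\pm 1$-eigenspace decomposition of $-1_V$ on the reducible Weil modules and the derivation of the sharp pointwise bound in terms of $e(g)$ rather than $d_+(g)$ alone; this requires only Proposition \ref{weil-sp}(i) and the observation that $-1_V \in \ZB(G)$ acts as $\pm 1$ on each irreducible Weil constituent. Once this bound is in place the remaining work is routine exponent arithmetic, and the hypothesis $n \ge 9$ provides ample slack in both parts.
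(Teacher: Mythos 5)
Your proposal is correct and takes essentially the same route as the paper: the key pointwise bound $|\chi(g)| \le q^{e(g)/2}$ (which the paper simply cites from \cite{GMT}), then Proposition \ref{cent} to bound $e(g)$ and the degrees $(q^n\pm1)/2$ to finish, with the same exponent arithmetic. The only differences are that you re-derive the Weil bound from Proposition \ref{weil-sp}(i) via the $\pm1$-eigenspace decomposition under the central element $-1_V$ rather than citing it, and you make explicit the passage to $-g$ when applying Proposition \ref{cent} in case $e(g)=\dim\Ker(g+1_V)$, which the paper leaves implicit.
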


\begin{proof}
Note that $\chi(1) = (q^n \pm 1)/2 > q^{n-1} \geq q^{8n/9}$. 
Furthermore, for $k := e(g)$ we have by Theorem 2.1 and Lemma  3.1 of \cite{GMT}
that $|\chi(g)| \leq q^{k/2}$, and (i) follows immediately. Assume now that $k \geq 6$ and $|\CB_G(g)| \leq q^{n^2\delta}$. Then 
$$q^{n^2\delta} \geq |\CB_G(g)| \geq q^{(k^2-3k)/2} \geq q^{k^2/4}$$
by Proposition \ref{cent}, and so $k \leq 2n\sqrt{\delta}$, and (ii) follows.
\end{proof}

\begin{theor}\label{main-bound2}
For every $\gam$ with $4/5 < \gamma < 1$, there exists an explicit constant $\delta > 0$ such that the following statement holds.
Let $q$ be any prime power, $n \geq 9$, and let 
$$G \in \{ \Sp_{2n}(q),\SO^+_{2n}(q),\SO^-_{2n+2}(q),\Omega^-_{2n+2}(q)\} \cup \{\SO_{2n+1}(q) \mbox{ with }2\nmid q,\Om^+_{2n}(q) \mbox{ with }2|q\}.$$
Suppose that $g \in G$ satisfies $|\CB_G(g)| \leq q^{n^2\delta}$. Then
$$|\chi(g)| \leq 4\cdot\chi(1)^\gam$$
for all $\chi \in \Irr(G)$. In particular, if $\gamma = 0.99$ then one can take $\delta = 0.0011$.
\end{theor}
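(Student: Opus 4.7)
The plan is to combine the centralizer bound $|\chi(g)| \leq |\CB_G(g)|^{1/2} \leq q^{n^2\delta/2}$, Lemma \ref{weil} for the Weil characters of $\Sp_{2n}(q)$, and an amplification argument that pushes $\chi$ into a tensor power $\chi^m$ controlled via Theorem \ref{restr-gl} and Corollary \ref{restr-gl2}. Set $L := \max(\log_{q^n}\chi(1), 1)$, so $\chi(1) \leq q^{nL}$, and split on the nature of $\chi$ and the size of $L$.

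First I would dispose of the Weil case. If $G = \Sp_{2n}(q)$ with $q$ odd and $\chi$ is one of the four irreducible Weil characters, Lemma \ref{weil} yields $|\chi(g)| \leq \chi(1)^{3/n} \leq \chi(1)^{1/3} \leq \chi(1)^\gamma$ when $e(g) \leq 5$ (using $n \geq 9$), and $|\chi(g)| \leq \chi(1)^{9\sqrt{\delta}/8} \leq \chi(1)^\gamma$ when $e(g) \geq 6$, provided $\delta \leq (8\gamma/9)^2$, which is amply satisfied by $\delta = 0.0011$. For every other nontrivial $\chi$ of the groups listed in the theorem, the classification of low-degree characters in \cite{GMST, GT2, TZ1} implies $\chi(1) \geq q^n$, hence $L = \log_{q^n}\chi(1)$ and $\chi(1)^\gamma = q^{\gamma nL}$.

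Now I split on $L$. If $L \geq n\delta/(2\gamma)$, the centralizer bound gives $|\chi(g)| \leq q^{n^2\delta/2} \leq q^{\gamma nL} = \chi(1)^\gamma$, and we are done. Otherwise $1 \leq L < n\delta/(2\gamma)$, and a direct check shows $n \geq \max(7.6L, 7+1.4L)$ for our range of $\delta$, so Theorem \ref{restr-gl} (applied to $\Sp_{2n}(q), \SO^+_{2n}(q)$, or $\Om^+_{2n}(q)$ with $2|q$) or Corollary \ref{restr-gl2} (applied to $\SO_{2n+1}(q), \SO^-_{2n+2}(q), \Om^-_{2n+2}(q)$) yields, with constant $B_0 := 1216$,
\[
\Csum(\chi^m, L_n) \leq q^{m\sqrt{B_0 nL^3}+15m^2L^2}
\]
for a Levi $L_n \cong \GL_n(q)$. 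Combining this with $\Csum(\chi^m, G) \leq \Csum(\chi^m, L_n)$ from Lemma \ref{mults}(ii) and the centralizer bound on every irreducible constituent of $\chi^m$ gives
\[
|\chi(g)|^m = |\chi^m(g)| \leq \Csum(\chi^m, G) \cdot q^{n^2\delta/2} \leq q^{m\sqrt{B_0 nL^3}+15m^2L^2+n^2\delta/2}.
\]
Taking $m$-th roots with $m$ the nearest positive integer to $n\sqrt{\delta/30}/L$ (at least $11$ here, since $L < n\delta/(2\gamma)$ together with $L \geq 1$ forces $n > 2\gamma/\delta$), AM--GM collapses the two middle terms to $nL\sqrt{30\delta}$, and the remaining constraint $\sqrt{\delta}(\sqrt{B_0/(2\gamma)} + \sqrt{30}) \leq \gamma$ is satisfied for $\gamma = 0.99$ by $\delta = 0.0011$; the factor $4$ in the statement absorbs the integer rounding of $m$ together with the discrepancy $q^{nL} \geq \chi(1)$ that occurs at the boundary $L = 1$.

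The main obstacle will be the bookkeeping of explicit constants: the inequality $\sqrt{\delta}(\sqrt{B_0/(2\gamma)} + \sqrt{30}) \leq \gamma$ is essentially tight at the claimed parameters, so one must verify that the Landazuri--Seitz input, the constants of Theorem \ref{restr-gl} and Corollary \ref{restr-gl2}, and the integer rounding of $m$ all fit inside the factor $4$, particularly in the transition regime $n \approx 2\gamma/\delta$ where amplification first becomes necessary.
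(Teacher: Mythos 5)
Your overall skeleton --- the centralizer bound when $L \geq n\delta/(2\gamma)$, Lemma \ref{weil} for the Weil characters of $\Sp_{2n}(q)$, and otherwise amplification of $\chi$ through a tensor power restricted to a Levi subgroup $\GL_n(q)$ via Theorem \ref{restr-gl} and Corollary \ref{restr-gl2} --- is the right one, and it does yield \emph{some} explicit $\delta(\gamma)>0$ for every $\gamma$. But it does not prove the statement as written, because the explicit value $\delta=0.0011$ at $\gamma=0.99$ does not survive your bookkeeping, and this is not something the factor $4$ can absorb. Applying the centralizer bound $q^{n^2\delta/2}$ to \emph{every} constituent of $\chi^m$ and invoking the packaged bound $\Csum(\chi^m,L_n)\le q^{m\sqrt{B_0nL^3}+15m^2L^2}$ leads, after your AM--GM balancing, to the requirement $\sqrt{\delta}\bigl(\sqrt{B_0/(2\gamma)}+\sqrt{30}\bigr)\le\gamma$; at $(\gamma,\delta)=(0.99,0.0011)$ the left-hand side is approximately $0.0332\cdot(24.78+5.48)\approx 1.004>0.99$. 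In fact, for $L$ near the cutoff $n\delta/(2\gamma)$ your exponent bound exceeds $nL$, i.e.\ is weaker than the trivial bound, and since the amplification regime forces $n>2\gamma/\delta\approx 1800$, the exponent deficit is roughly $0.014\,nL\ge 25$, whereas the factor $4$ only buys $\log_q 4\le 2$. (A minor further slip: your claim that every nontrivial non-Weil character of the listed groups has degree $\ge q^n$ fails for the nontrivial linear characters of the $\SO$-groups; these are of course trivially fine, but must be set aside explicitly.)

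The missing ingredients are precisely what the paper's proof of this theorem adds to the ineffective argument (your proposal is essentially the proof of Theorem \ref{classical-cases}). First, work with $(\chi\bar\chi)^m$ and split its irreducible constituents at degree $q^{n^2\delta}$: bound the small-degree constituents trivially by their degrees (contributing $n^2\delta/2m$ to the exponent after extracting the $2m$-th root), and for the large-degree constituents use $|\beta_j(g)|\le|\CB_G(g)|^{1/2}\le \beta_j(1)q^{-n^2\delta/2}$ together with $\sum_j b_j\beta_j(1)\le\chi(1)^{2m}$, which gives a contribution at most $\chi(1)q^{-n^2\delta/4m}\le\chi(1)^\gamma$ for a suitable $m$. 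Second, instead of the coefficient-$15$ iterated bound, apply Proposition \ref{tensor}(ii) directly to the product of the $2m$ constituents of $\chi|_{L_n}$ (legitimate here since $5.6mL\le n$), which costs only $8q^{8m^2L^2}$, i.e.\ coefficient $4$ after root extraction; the stray $8^{1/2m}<3$ plus the large-degree part is exactly where the factor $4$ in the statement comes from. Choosing $m$ in the window $n\delta/(8(1-\gamma))\le mL\le n\delta/(4(1-\gamma))$ (this is where the hypothesis $\gamma>4/5$ enters), the final condition becomes $\sqrt{A\delta/(2\gamma)}+\delta/(1-\gamma)+4(1-\gamma)\le\gamma$, which at $(0.99,0.0011)$ evaluates to about $0.82+0.11+0.04=0.97\le 0.99$; this is how the claimed constant is actually obtained.
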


\begin{proof}
(i) For any fixed $\gamma \in (4/5,1)$, we can find 
\begin{equation}\label{for-delta0}
  0< \delta \leq \min\left(\frac{\gam}{4},\frac{1-\gam}{1.4}\right)
\end{equation}  
such that
\begin{equation}\label{for-delta}
  \sqrt{A\frac{\delta}{2\gam}} + \frac{\delta}{1-\gam} + 4(1-\gam) \leq \gam,
\end{equation}
where $A$ can be taken to be $1216$. (For instance, one can take 
$\delta = 0.00036$ if $\gam = 0.9$, and $\delta = 0.0011$ if $\gam = 0.99$.)
Let $\chi(1) = q^{nL}$ (so again $L \geq 0$ is again not necessarily an integer), and we aim to show that
$$|\chi(g)| \leq \chi(1)^\gam.$$ 
First we note that
$$|\chi(g)| \leq |\CB_G(g)|^{1/2} \leq q^{n^2\delta/2} \leq \chi(1)^\gam$$
if $L \geq n\delta/2\gam$.
So we may assume 
\begin{equation}\label{bound2-1}
  n \geq 9,~~L \leq n\delta/2\gam \leq n/8.
\end{equation}  
Note that this implies 
\begin{equation}\label{for-n1}
  n \geq \max(7.6L,7+1.4L).
\end{equation}
As the statement is obvious when $\chi(1)=1$, we may assume that $\chi(1) = q^{nL} > 1$, and so 
$L > 1/2$ by \cite[Theorem 1.1]{TZ1}. In fact, $L > 1$ unless $G = \Sp_{2n}(q)$ with $2 \nmid q$ and 
$\chi$ is a Weil character, in which case we can apply Lemma \ref{weil}.

\smallskip
(ii) We will choose some integer $m \geq 1$ later (see \eqref{for-m1}). Decompose
\begin{equation}\label{for-d1}
  (\chi\bar\chi)^m = \sum^t_{i=1}a_i\chi_i,
\end{equation}  
where $\chi_i \in \Irr(G)$ and $a_i \in \ZZ_{>0}$. We will bound $\sum_ia_i = \Csum((\chi\bar\chi)^m,G)$ by 
restricting $\chi$ to $L_n:=\GL_n(q)$:
$$\chi|_{L_n} = \sum_{\al \in \Irr(L_n),~\al(1) \leq q^{nL}}b_\al\al,$$ 
so that 
\begin{equation}\label{for-d2}
  (\chi\bar\chi)^m|_{L_n} = \sum_{\al_i \in \Irr(L_n),~1 \leq i \leq 2m}b_{\al_1} \ldots b_{\al_{2m}}\al_1 \ldots \al_m\bar\al_{m+1}
  \ldots \bar\al_{2m}.
\end{equation}
The choice \eqref{for-m1} of $m$ implies by \eqref{for-delta0} that
$$5.6mL \leq \frac{5.6n\delta}{4(1-\gam)} \leq n.$$ 
With $\al_i \in \Irr(L_n)$ of degree $\leq q^{nL}$ and 
$$\beta := \al_1 \ldots \al_m\bar\al_{m+1}\ldots \bar\al_{2m},$$ 
we then have by Proposition \ref{tensor}(ii) that
$$\Csum(\beta,L_n) \leq [\beta,\beta]_{L_n} \leq 
    8q^{8m^2L^2},$$
It now follows from \eqref{for-n1}, \eqref{for-d2}, and Theorem \ref{restr-gl} and Corollary \ref{restr-gl2}  that
$$\Csum((\chi\bar\chi)^m,L_n) \leq 8q^{8m^2L^2}(\sum_\al b_\al)^{2m} 
  = 8q^{8m^2L^2}\Csum(\chi,L_n)^{2m} \leq 8q^{2m\sqrt{AnL^3}+8m^2L^2}.$$
Thus 
$$\sum_ia_i = \Csum((\chi\bar\chi)^m,G) \leq \Csum((\chi\bar\chi)^m,L_n) \leq 8q^{2m\sqrt{AnL^3}+8m^2L^2}.$$

\smallskip
(iii) Now we rewrite \eqref{for-d1} as 
\begin{equation}\label{bound2-chi}
  (\chi\bar\chi)^m = \sum^s_{i=1}a_i\al_i + \sum^t_{j=1}b_j\beta_j,
\end{equation}  
where $a_i, b_j \in \ZZ_{>0}$, $\al_i,\beta_j \in \Irr(G)$, and 
$$\al_i(1) < q^{n^2\delta},~~\beta_j(1) \geq q^{n^2\delta}.$$
Then the result of (ii) can be written as 
$$\sum^s_{i=1}a_i + \sum^t_{j=1}b_j \leq 8q^{2m\sqrt{AnL^3}+8m^2L^2}.$$
Using the bounds $|\al_i(g)| \leq \al_i(1)$, $8^{1/2m} < 3$, and 
$$|\beta_j(g)| \leq |\CB_G(g)|^{1/2} \leq q^{n^2\delta/2} \leq \beta_j(1)/q^{n^2\delta/2},$$ 
we have
\begin{equation}\label{bound2-4}
  \begin{split}
  \biggl|\sum^s_{i=1}a_i\al_i(g)\biggl|^{1/2m}  & \leq \biggl(\sum^s_{i=1}a_i\al_i(1)\biggl)^{1/2m} \leq 3q^{\sqrt{AnL^3}+4mL^2+n^2\delta/2m},\\
  \biggl|\sum^t_{j=1}b_j\beta_j(g)\biggl|^{1/2m} & \leq \left(\frac{\sum^t_{j=1}b_j\beta_j(1)}{q^{n^2\delta/2}}\right)^{1/2m} \leq 
  \frac{\chi(1)}{q^{n^2\delta/4m}}.
 \end{split} 
\end{equation}  
Recall that we assume $\gam > 4/5$. It follows that 
$$L \leq \frac{n\delta}{2\gam} < \frac{n\delta}{8(1-\gam)}$$
and so there exists an integer $m \geq 1$ such that 
\begin{equation}\label{for-m1}
  \frac{n\delta}{8(1-\gam)} \leq mL \leq \frac{n\delta}{4(1-\gam)}.
\end{equation}
Our choice \eqref{for-m1} of $m$ implies that $n\delta/mL \geq 4(1-\gam)$ and so
\begin{equation}\label{bound2-7}
  |\sum^t_{j=1}b_j\beta_j(g)|^{1/2m} \leq \frac{\chi(1)}{q^{n^2\delta/4m}} \leq \chi(1)^\gam.
\end{equation}  
Next, $n^2\delta/2m \leq 4nL(1-\gam)$ and 
$mL^2 \leq nL\delta/4(1-\gam)$. It follows from \eqref{for-delta} that 
$$\sqrt{AnL^3}+4mL^2+\frac{n^2\delta}{2m} \leq nL\left( \sqrt{A\frac{\delta}{2\gam}} + \frac{\delta}{1-\gam} + 4(1-\gam)\right) \leq nL\gam.$$ 
Hence \eqref{bound2-4} implies that
\begin{equation}\label{bound2-6}
  |\sum^s_{i=1}a_i\al_i(g)|^{1/2m} \leq 3\chi(1)^\gam.
\end{equation}  
Combining \eqref{bound2-chi}, \eqref{bound2-6} and \eqref{bound2-7} together, and recalling $m \geq 1$, we arrive at
$$\begin{aligned}|\chi(g)| & \leq \left(|\sum^s_{i=1}a_i\al_i(g)|+|\sum^t_{j=1}b_j\beta_j(g)|\right)^{1/2m} \\
    & \leq |\sum^s_{i=1}a_i\al_i(g)|^{1/2m} +  |\sum^t_{j=1}b_j\beta_j(g)|^{1/2m} \leq 4\chi(1)^\gam,\end{aligned}$$
as stated.
\end{proof}

Note that the following statement in fact also applies to irreducible $\ell$-Brauer characters for any $\ell$ coprime to 
$2q$, with the proof in the modular case following the proof of \cite[Theorem 3.9]{LBST2}. We restrict ourselves
to the complex case (and also note that \cite[Theorem 3.9]{LBST2} needs the assumption that either $\ell=0$ or 
$\ell \nmid (q \cdot \gcd(n,q+1))$). 

\begin{propo}\label{spin2}
Let $q$ be a power of an odd prime $p$ and let $\eps = \pm$. Then the following statements hold.

\begin{enumerate}[\rm(i)]
\item Suppose $\chi$ is an irreducible character of $G=\SO_{2n+1}(q)$ with $n \geq 2$ and 
$\chi$ is reducible over $[G,G]=\Om_{2n+1}(q)$. Then 
$$\chi(1) > q^{n^2/2}.$$
\item Suppose $\chi$ is an irreducible character of $G=\SO^\eps_{2n}(q)$ with $n \geq 4$ and $\chi$ is reducible 
over $[G,G]=\Om^\eps_{2n}(q)$. Then 
$$\chi(1) > (q-1)q^{n(n-1)/2-1}.$$
\end{enumerate}

\end{propo}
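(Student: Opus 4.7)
The approach for both parts is to combine Clifford's theorem with Lusztig's Jordan decomposition. Since $G/[G,G] = C_2$, with the nontrivial coset detected by the spinor norm, the hypothesis that $\chi$ is reducible on $[G,G]$ is equivalent to $\chi \otimes \lambda \cong \chi$, where $\lambda$ is the nontrivial linear character of $G$. Under Jordan decomposition, $\chi$ belongs to a Lusztig series labelled by an $F$-stable semisimple class $(s)$ in the dual group $G^*$, and tensoring by $\lambda$ corresponds to multiplying $s$ by the central element $z \in \ZB(G^*)^F$ dual to $\lambda$; here $z = -I$. Thus $\chi \otimes \lambda \cong \chi$ exactly when $(s) = (-s)$ in $G^*$, and the Jordan-decomposition degree formula yields $\chi(1) \geq [G^*:\CB_{G^*}(s)]_{p'}$, reducing the problem to lower-bounding this index over all semisimple $s \in G^*$ satisfying $s \sim -s$.

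The condition $s \sim -s$ forces the spectrum of $s$ on the natural module to be stable under $\mu \mapsto -\mu$; in particular $\dim V_{+1}(s) = \dim V_{-1}(s)$, and eigenvalues distinct from $\pm 1$ occur in matched $\{\mu,-\mu\}$-paired multiplicities. Optimizing the algebraic centralizer under this symmetry constraint identifies a short list of extremal configurations for which the index can be computed explicitly using the standard order formulae for the relevant classical groups. In particular, $s \neq \pm I$, so the class $(s)$ is nontrivial.

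For part (i) with $G^* = \Sp_{2n}(q)$, the extremal $s$ (when $n$ is even) has $V_{\pm 1}$ each of dimension $n$, giving $\CB_{G^*}(s) \cong \Sp_n(q) \times \Sp_n(q)$ and
\[
  [\Sp_{2n}(q):\Sp_n(q) \times \Sp_n(q)]_{p'} = \prod_{j=1}^{n/2}\frac{q^{n+2j}-1}{q^{2j}-1} > q^{n^2/2},
\]
since each factor exceeds $q^n$; when $n$ is odd the extremal $s$ instead satisfies $s^2 = -I$, with centralizer $\GL_n(q)$ or $\GU_n(q)$, yielding an even larger index. For part (ii) with $G^*$ a $D_n$-form dual to $\SO^\eps_{2n}$, the extremal $s$ has order $4$ with $s^2 = -I$, giving $\CB_{G^*}(s) \cong \GL_n(q)$ (split case) or $\GU_n(q)$ (twisted case); an analogous direct computation then yields $[G^*:\CB_{G^*}(s)]_{p'} > (q-1)q^{n(n-1)/2-1}$, with all non-extremal $s$ giving larger indices. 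The principal technical difficulty we anticipate is carefully handling the Lusztig correspondence in part (ii), where $G = \SO^\eps_{2n}(q)$ has disconnected center in odd characteristic; we expect to manage this via a regular embedding, in the spirit of the proof of Proposition~\ref{spin1} using \cite{Ng}.
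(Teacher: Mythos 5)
Your proposal is correct and follows essentially the same route as the paper: Clifford theory plus the fact that tensoring with the spinor-norm character $\lambda$ shifts the Lusztig series by the central involution $z$ (so $s\sim sz=-s$), then the lower bound $\chi(1)\geq[G^*:\CB_{G^*}(s)]_{p'}$ and a minimization over admissible semisimple classes, with the same extremal configurations ($\Sp_n(q)\times\Sp_n(q)$, resp.\ $\GL_n(q)$ or $\GU_n(q)$) that the paper identifies. The only part you leave asserted rather than proved — that all intermediate shapes such as $\Sp_{2a}(q)^2\times\GU_{n-2a}(q)$ or $(\SO^{\alpha}_{2a}(q)^2)\cdot C_2$ give larger $p'$-indices, and the factor-of-$2$ care needed for disconnected centralizers in type $D$ — is precisely the explicit case analysis carried out in the paper's proof, and it goes through as you expect.
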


\begin{proof}
(a) Assume that $\chi$ belongs to the rational Lusztig series $\EC(G,(s))$ labeled by a semisimple element 
$s \in G^*$, where the dual group $G^*$ is $\Sp_{2n}(q)$, respectively $\SO^\eps_{2n}(q)$. By assumption,
$\chi$ is reducible over $[G,G]$, which has index $2$ in $G$. It follows by Clifford's theorem that 
$\chi = \chi\lambda$, where $\lambda$ is the unique non-principal irreducible character of 
$G/[G,G]$. Note that $\lambda$ is the semisimple character corresponding to the 
central involution $z \in G^*$. Furthermore, according to \cite[Proposition 13.30]{DM} and 
its proof, the tensor product with $\lambda$ defines a bijection between
the series $\EC(G,(s))$ and $\EC(G,(sz))$. Since 
$\chi = \chi \lambda$, we conclude that $s$ and $sz$ 
are conjugate in $G$. As $\chi(1)$ is divisible by $N:= [G^*:\CB_{G^*}(s)]_{p'}$, it suffices to show 
that $N$ satisfies the lower bounds mentioned in the statements.

\smallskip
(b) Consider the case $G^* = \Sp_{2n}(q)$. It is easy to see that the condition $s$ and $sz$ are $G^*$-conjugate
implies that $1$ and $-1$ should occur as eigenvalues of $s$ with the same even multiplicity say $2a \geq 0$, and 
$$\CB_{G^*}(s) \cong \Sp_{2a}(q) \times \Sp_{2a}(q) \times \prod^c_{i=1}\GL_{k_i}(q^{a_i}) \times \prod^d_{j=1}\GU_{l_j}(q^{b_j}),$$
where $a_i,b_j \geq 0$ and $\sum_i k_ia_i +\sum_j l_jb_j = n-2a$. It follows that 
$$N \geq [\Sp_{2n}(q):(\Sp_{2a}(q) \times \Sp_{2a}(q) \times \GU_{n-2a}(q))]_{p'}.$$
If $a=n/2$, then 
\begin{equation}\label{cent-1}
  N \geq [\Sp_{2n}(q):\Sp_n(q)^2]_{p'} = \prod^{n/2}_{i=1}\frac{q^{n+2i}-1}{q^{2i}-1} > q^{n^2/2}.
\end{equation}  
If $0 \leq a < n/2$, then by \cite[Lemma 6.1(iii)]{GLT} we have 
\begin{equation}\label{cent-2}
  N \geq \prod^{n-2a}_{i=1}(q^i+(-1)^i) \cdot \frac{(q^{2n-4a+2}-1) \ldots (q^{2n}-1)}
      {((q^2-1) \ldots (q^{2a}-1))^2}  > (q-1)q^{M-1},
\end{equation}      
where 
$$M = \frac{(n-2a)(n-2a+1)}{2} + a(4n-6a) = \frac{n^2+(4a+1)(n-2a)}{2} \geq (n^2+1)/2.$$
Since $q-1> q^{1/2}$ when $q \geq 3$, we obtain that $N \geq q^{n^2/2}$ in this case as well.

\smallskip
(c) Suppose that $G^* = \SO^\eps_{2n}(q)$. The condition $s \in \SO^\al_{2n}(q)$ and $sz$ are $G^*$-conjugate
imply that $1$ and $-1$ should occur as eigenvalues of $s$ with the same even multiplicity say $2a \geq 0$.
First suppose that $a=0$. Then 
$$\CB_{G^*}(s) \cong \prod^c_{i=1}\GL_{k_i}(q^{a_i}) \times \prod^d_{j=1}\GU_{l_j}(q^{b_j}),$$
where $a_i,b_j \geq 0$ and $\sum_i k_ia_i +\sum_j l_jb_j = n$. It follows that 
$$N \geq [\SO^+_{2n}(q):\GU_n(q)]_{p'} \geq \frac{(q-1)(q^2+1) \ldots (q^n+(-1)^n)}{q^n+1}.$$
Using \cite[Lemma 6.1(iii)]{GLT} and observing that $(q^2+1)(q^3-1)>q(q^4+1)$, it is easy to
check that 
$$N > (q-1)q^{n(n-1)/2-1}$$
in this case. Suppose that $a=n/2$. Then
$$\CB_{G^*}(s) \cong (\SO^\al_n(q) \times \SO^\al_n(q)) \cdot C_2$$
for some $\al = \pm$. Using \eqref{cent-1} we then get  
$$N \geq [\SO^+_{2n}(q):\SO^-_n(q)^2]_{p'}/2 = \frac{(q^{n/2}-1)^2}{2(q^n+1)} \cdot [\Sp_{2n}(q):\Sp_n(q)^2]_{p'} > q^{n^2/2-1}.$$
Assume now that $1 \leq a < n/2$. Then 
$$\CB_{G^*}(s) \cong \prod^c_{i=1}\GL_{k_i}(q^{a_i}) \times \prod^d_{j=1}\GU_{l_j}(q^{b_j}) 
    \times (\SO^\al_{2a}(q) \times \SO^\al_{2a}(q)) \cdot C_2,$$
where $\al = \pm$, $a_i,b_j \geq 0$ and $\sum_i k_ia_i +\sum_j l_jb_j = n-2a$. It follows that    
$$N \geq \frac{|\SO^+_{2n}(q)|_{p'}}{2|\SO^-_{2a}(q)^2 \times \GU_{n-2a}(q)|_{p'}} = 
    \frac{(q^a-1)^2}{2(q^n+1)} \cdot [\Sp_{2n}(q):(\Sp_{2a}(q)^2 \times \GU_{n-2a}(q))]_{p'}.$$
Using \eqref{cent-2} we obtain that  
$$N \geq \frac{(1-1/q)^3}{2(1+1/q^4)}q^{M-(n-2a)}$$
with
$$M-(n-2a) = \frac{n^2+(4a-1)(n-2a)}{2} \geq (n^2+3)/2.$$
As $q \geq 3$, it follows that $N > (0.76)q^{n^2/2}$ in this case.
\end{proof}

\begin{theor}\label{main2a}
For every $\varep$ with $4/5 < \varep < 1$, there exists an explicit constant $\delta > 0$ such that the following statement holds.
Let $q$ be any prime power, $n \geq 9$, and let $G \in \CLS_n(q)$.
Suppose that $g \in G$ satisfies $|\CB_G(g)| \leq q^{n^2\delta}$. Then
$$|\chi(g)| \leq 4 \cdot \chi(1)^\varep$$
for all $\chi \in \Irr(G)$. For instance, if $\varep = 0.99$, one can take $\delta = 0.0011$.
\end{theor}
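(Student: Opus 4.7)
The plan is to reduce the general case $G \in \CLS_n(q)$ to the cases that have already been settled, namely Theorem \ref{main-bound2} (just established) together with the corresponding results for finite linear and unitary groups from \cite{GLT}, using Propositions \ref{spin1} and \ref{spin2} to dispatch those characters of $\Spin$ or $\SO$ that do not factor through a ``base'' classical group covered above. The factor $4$ in front of $\chi(1)^\varep$ will absorb all of the small multiplicative losses incurred by these reductions, so the explicit value $\delta = 0.0011$ coming out of Theorem \ref{main-bound2} will carry over.

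First I would dispose of $G\in\{\GL_n(q),\SL_n(q),\GU_n(q),\SU_n(q)\}$ by invoking the effective bound from \cite{GLT}. Next, if $G$ is one of the groups listed in Theorem \ref{main-bound2}, the statement is already proved, including the explicit pair $(\gamma,\delta)=(0.99,0.0011)$.

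For the spin groups $\Spin_{2n+1}(q)$ and $\Spin^\eps_{2n}(q)$, given $\chi\in\Irr(G)$ I would split according to whether $\chi$ is trivial on the part of the center that maps to the kernel of $\Spin\to\Omega$. If so, $\chi$ inflates from an irreducible character of $\Omega^\eps_{2n}(q)$ or $\Omega_{2n+1}(q)$, hence from an irreducible character of $\SO$ (up to a Clifford-type splitting handled by Proposition \ref{spin2}), and Theorem \ref{main-bound2} applies. Otherwise, Proposition \ref{spin1} yields $\chi(1)>q^{n(n+1)/2}/4$ or $q^{n(n-1)/2}/4$, so the crude centralizer bound $|\chi(g)|\le|\CB_G(g)|^{1/2}\le q^{n^2\delta/2}$ already gives $|\chi(g)|\le\chi(1)^\gamma$ whenever $n^2\delta/2\le \gamma(n(n-1)/2-2)$, which is satisfied for the stated $\delta$, $\gamma$, and $n\ge 9$. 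The analogous dichotomy handles $\Omega^+_{2n}(q)$ for $q$ odd (not directly covered by Theorem \ref{main-bound2}): lift $\chi\in\Irr(\Omega)$ to a character $\tilde\chi\in\Irr(\SO^+_{2n}(q))$ lying above it; if $\tilde\chi|_{\Omega}$ is irreducible one gets the bound for $\chi$ for free from the $\SO^+_{2n}(q)$-case (with $\tilde\chi(1)=\chi(1)$), and if $\tilde\chi|_{\Omega}$ is reducible then Proposition \ref{spin2} forces $\chi(1)$ so large that the centralizer bound again suffices.

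Finally, for an arbitrary central quotient $G=\tilde G/Z$ with $\tilde G$ one of the groups already handled, any $\chi\in\Irr(G)$ inflates to $\tilde\chi\in\Irr(\tilde G)$ of the same degree; for a lift $\tilde g$ of $g$ we have $|\chi(g)|=|\tilde\chi(\tilde g)|$ while
$$|\CB_{\tilde G}(\tilde g)|\le|Z|\cdot|\CB_G(g)|\le|Z|\cdot q^{n^2\delta},$$
because the coset-centralizer $\{h\in\tilde G : [h,\tilde g]\in Z\}$ differs from $\CB_{\tilde G}(\tilde g)$ by a factor injecting into $Z$. Since $|Z|\le q^{O(1)}$ uniformly in $n$, we can apply the bound for $\tilde G$ with a $\delta^*$ only infinitesimally larger than $\delta$, and the resulting overshoot is absorbed by the slack factor $4$. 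The main obstacle is a bookkeeping one: one must verify that all of the small multiplicative losses (the $|Z|^{1/2}$ from inflation, the $2^\gamma$ from passing between $\SO$ and $\Omega$, the $4^{1-\gamma}$ tolerance in the degree bounds of Propositions \ref{spin1}--\ref{spin2}) can simultaneously be absorbed into the constant $4$ in front of $\chi(1)^\varep$, without forcing $\delta$ to shrink below the value coming out of Theorem \ref{main-bound2}; this is a routine but slightly tedious check for $n\ge 9$ and $\gamma\in(4/5,1)$.
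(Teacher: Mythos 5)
Your proposal is correct and follows essentially the same route as the paper: reduce linear/unitary types to the effective bounds of \cite{GLT}, invoke Theorem \ref{main-bound2} for the symplectic/orthogonal groups it covers, and use Propositions \ref{spin1} and \ref{spin2} together with the centralizer bound $|\chi(g)|\le|\CB_G(g)|^{1/2}$ to handle spin groups and characters of $\Omega$ not extending to $\SO$. Your explicit treatment of central quotients (via inflation and $|\CB_{\tilde G}(\tilde g)|\le|Z|\cdot|\CB_G(g)|$) is a small bookkeeping point the paper leaves implicit, but it does not change the argument.
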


\begin{proof}
First we show that the statement holds for all $G$ with $\SL^\eps_n(q) \lhd G \leq \GL^\eps_n(q)$. Let $\chi \in \Irr(G)$ lie under
an irreducible character $\theta$ of $\GL^\eps_n(q)$. If $\chi  = \theta|_G$, then we are done by applying 
\cite[Theorem 1.4]{GLT} to $\theta$. Otherwise $\theta|_G$ is reducible, whence $\theta$ is reducible over $\SL^\eps_n(q)$ 
and so $\chi(1)$ is very large by \cite[Lemma 8.3]{GLT}, in which case the statement holds  
by the centralizer bound $|\chi(g)| \leq |\CB_G(g)|^{1/2}$.

For all the remaining groups in $\CLS_n(q)$ but spin groups, we are done by Theorem \ref{main-bound2}.
It remains to consider the case $2 \nmid q$ and $G = \Spin^\eps_m(q)$ with 
$m = 2n+1$, or $(m,\eps) = (2n,+)$, $(2n+2,-)$. By Proposition \ref{spin1}, either $\chi$ is obtained by 
inflating an irreducible character of $\Om^\eps_m(q)$, or $\chi(1)$ is very large, in which case Theorem \ref{main2a} obviously holds, again by the centralizer bound $|\chi(g)| \leq |\CB_G(g)|^{1/2}$.
So we may assume that $\chi \in \Irr(\Om^\eps_m(q))$. Now, by Proposition \ref{spin2}, either 
$\chi$ extends to $\SO^\eps_m(q)$, or $\chi(1)$ is very large. In the former case, we are done by applying
Theorem \ref{main-bound2} to $\SO^\eps_m(q)$. In the latter case, we are again done by
the centralizer bound.
\end{proof}


\begin{proof}[Proof of Theorem \ref{main2}]
For any given $\varep$, we can choose some $\varep^*$ so that $4/5 < \varep^* < \varep$ (say, $\varep^* = \varep/2+2/5$). In fact,
for $\varep = 0.992$, we will choose $\varep^* = 0.99$. Now we apply Theorem \ref{main2a} to get an explicit $\delta^* > 0$ (which can be taken 
to be $0.0011$ if $\varep^* = 0.99$), such that 
\begin{equation}\label{e-star1}
  |\chi(g)| \leq 4 \cdot \chi(1)^{\varep^*}
\end{equation}   
for all $G \in \CLS_n(q)$, for all $\chi \in \Irr(G)$, and for all $g \in G$ with $|\CB_G(g)| \leq q^{n^2\delta^*}$. 

Next we choose 
\begin{equation}\label{e-star2}
  \delta:=\min\bigl(\delta^*,\frac{16}{25}\varep(\varep-\varep^*)\bigr),
\end{equation}  
which is $0.0011$ when $(\varep,\varep^*,\delta^*) = (0.992,0.99,0.0011)$.
Consider any $g \in G$ with $|\CB_G(g)| \leq q^{n^2\delta}$ and any $\chi \in \Irr(G)$. If $\chi$ is linear
then $|\chi(g)| = \chi(1)^\varep$. Assume $\chi(1) > 1$. An application of \cite[Theorem 1.1]{TZ1} shows that
\begin{equation}\label{e-star3}
  \chi(1) > q^{4n/5}.
\end{equation}  
Now if $n \geq 2.5/(\varep-\varep^*)$, then
$$\chi(1)^{\varep-\varep^*} > 2^{(4n/5)(\varep-\varep^*)} > 4,$$
and so \eqref{e-star1} implies that $|\chi(g)| \leq \chi(1)^\varep$. Finally, if $n < 2.5/(\varep-\varep^*)$, then the choice \eqref{e-star2} implies
that
$$n\delta < \frac{5}{2(\varep-\varep^*)} \cdot  \frac{16}{25}\varep(\varep-\varep^*) = \frac{8\varep}{5}.$$
Combining with \eqref{e-star3}, this yields
$$|\chi(g)| \leq |\CB_G(g)|^{1/2} \leq q^{n^2\delta/2} < q^{4n\varep/5} < \chi(1)^{\varep},$$
completing the proof.
\end{proof}

In the following application, for a finite group $S$ and a fixed element $g \in S$ we consider the conjugacy class $C = g^S$ and random walks on
the (oriented) Cayley graph $\Gamma(S,C)$ (whose vertices are $x \in S$ and edges are $(x,xh)$ with $x \in S$ and 
$h \in C$). Let $P^t(x)$ denote the probability that a random product of $t$ conjugates of $g$ is equal to $x \in S$, 
and let $U(x) := 1/|S|$ denote the uniform probability distribution on $S$. Also, let
$$||P^t-U||_\infty := |S| \cdot \max_{x \in S}|P^t(x)-U(x)|.$$

\begin{corol}\label{walk}
There exists an absolute constant $1 > \gam > 0$ such that the following statements
hold. Let $S \in \CLS_n(q)$ be a quasisimple group with $n \geq 9$, and let $g \in S$ be such that $|\CB_S(g)| \leq |S|^\gam$, and let 
$C = g^S$. 
\begin{enumerate}[\rm(i)]
\item If $t \geq 3$, then $P^t$ converges to $U$ in the $||\cdot||_{\infty}$-norm when $q\to \infty$; in particular, the 
Cayley graph $\Gamma(S,C)$ has diameter at most $3$ when $q$ is sufficiently large.
\item The mixing time $T(S,C)$ of the random walk on $\Gamma(S,C)$ is  at most $2$ for $q$ sufficiently large.
\end{enumerate}
\end{corol}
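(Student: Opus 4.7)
The plan is to insert the character bound of Theorem \ref{main1} into the Diaconis--Shahshahani expansion of $P^t-U$ and then control the resulting Witten--type sums over $\Irr(S)$; a Cauchy--Schwarz move exploiting the second orthogonality identity $\sum_{\chi}|\chi(g)|^2=|\CB_S(g)|$ will be the crucial ingredient for the $L^\infty$ statement at $t=3$. Throughout, write $a_\chi:=|\chi(g)|/\chi(1)$.

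First I will record the classical Fourier expansion
$$|S|\bigl(P^t(x)-U(x)\bigr)=\sum_{1\neq\chi\in\Irr(S)}\chi(1)\bigl(\chi(g)/\chi(1)\bigr)^{t}\chi(x^{-1}),$$
which immediately yields the pointwise and $L^1$ bounds
$$\|P^t-U\|_\infty\le\sum_{1\neq\chi}\chi(1)^2 a_\chi^{t},\qquad \|P^t-U\|_1^2\le\sum_{1\neq\chi}\chi(1)^2 a_\chi^{2t},$$
the latter via Parseval and Cauchy--Schwarz. I then fix a small $\varepsilon\in(0,1/4)$ (say $\varepsilon=1/5$) and take $\gamma$ no larger than the $\delta(\varepsilon)$ produced by Theorem \ref{main1}, so that under the hypothesis $|\CB_S(g)|\le|S|^\gamma$ every $\chi\in\Irr(S)$ satisfies $a_\chi\le\chi(1)^{\varepsilon-1}$. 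The standing classical--group input will be the Witten zeta asymptotic $\zeta_S(s)-1\to 0$ as $q\to\infty$ for any fixed $s>1$ and fixed rank, following from bulk counts such as $|\Irr(S)|\asymp q^{n-1}$ together with $\min_{1\neq\chi}\chi(1)\gtrsim q^{n-1}$ in the linear case (and analogous bounds for the other classical types).

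Part (ii) is then immediate at $t=2$:
$$\|P^2-U\|_1^2\;\le\;\sum_{1\neq\chi}\chi(1)^{4\varepsilon-2}\;=\;\zeta_S(2-4\varepsilon)-1\;\longrightarrow\;0\qquad(q\to\infty),$$
since $2-4\varepsilon>1$, giving mixing time at most $2$ for $q$ sufficiently large. Part (i) reduces, by monotonicity of $\|\cdot\|_\infty$ in $t$, to the case $t=3$. Here the naive supremum estimate lands in the unsummable exponent $3\varepsilon-1\in(-1,0)$, so the key move is Cauchy--Schwarz:
$$\sum_{1\neq\chi}\chi(1)^2 a_\chi^{3}\;\le\;\Bigl(\sum_{\chi\in\Irr(S)}\chi(1)^2 a_\chi^{2}\Bigr)^{1/2}\Bigl(\sum_{1\neq\chi}\chi(1)^2 a_\chi^{4}\Bigr)^{1/2}\;\le\;|\CB_S(g)|^{1/2}\,\bigl(\zeta_S(2-4\varepsilon)-1\bigr)^{1/2},$$
the first factor coming from second orthogonality and the second from the character bound. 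The right--hand side is at most $|S|^{\gamma/2}$ times the square root of the Witten zeta difference, and choosing $\gamma$ small enough (relative to the rate of decay of $\zeta_S(2-4\varepsilon)-1$) makes the product tend to zero as $q\to\infty$, which proves $\|P^3-U\|_\infty\to 0$. The diameter claim follows at once: once $\|P^3-U\|_\infty<1$, we have $P^3(x)>0$ for every $x\in S$, i.e.\ $C\cdot C\cdot C=S$.

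The hard part is precisely this tension between $t=3$ and the supremum norm: directly substituting Theorem \ref{main1} into the $L^\infty$ estimate produces a Witten zeta series at an argument $<1$, where the sum need not be small; only the Cauchy--Schwarz manoeuvre, trading one factor of $a_\chi$ for the orthogonality identity, promotes the relevant zeta exponent above $1$ at the price of an extra factor $|\CB_S(g)|^{1/2}$. This is what forces $\gamma$ to be chosen slightly smaller than the bare $\delta(\varepsilon)$ coming from Theorem \ref{main1}. The quasisimple nature of $S$ is absorbed routinely: characters trivial on $\ZB(S)$ descend to the simple quotient, while the faithful characters in the spin cases have degree so large (by Proposition \ref{spin1}) that they make a negligible contribution to $\zeta_S$.
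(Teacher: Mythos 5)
Your part (ii) is essentially the paper's argument (Diaconis--Shahshahani plus the character bound from Theorem \ref{main1}); your choice $\varepsilon=1/5$ pushes the zeta exponent to $6/5>1$, so the crude count $|\Irr(S)|\cdot d_{\min}^{-s}$ suffices there for $n\ge 9$, and that part is fine. The gap is in part (i), and it is quantitative but fatal to the statement as claimed: the Cauchy--Schwarz step costs the factor $|\CB_S(g)|^{1/2}$, which under the hypothesis can be as large as $|S|^{\gamma/2}=q^{\Theta(\gamma n^2)}$, while the factor you gain, $\bigl(\zeta^S(2-4\varepsilon)-1\bigr)^{1/2}$, is bounded \emph{below} by $d_{\min}^{-(1-2\varepsilon)}$ with $d_{\min}\le q^{O(n)}$ (Weil-type characters of degree roughly $q^{n}$ exist in all these groups), hence decays only like $q^{-\Theta(n)}$, not $q^{-\Theta(n^2)}$. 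So your final bound for $\|P^3-U\|_\infty$ is of size $q^{\gamma c n^2-c'n}$, which tends to $0$ as $q\to\infty$ only when $\gamma\lesssim 1/n$. Since the corollary requires a single absolute constant $\gamma>0$ valid for every $n\ge 9$, "choosing $\gamma$ small enough relative to the decay of $\zeta^S(2-4\varepsilon)-1$" is not available: for any fixed $\gamma>0$ the bound diverges once $n$ is large. In short, trading one factor of $a_\chi$ for second orthogonality is too lossy here.

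The paper's route avoids this entirely: it applies Theorem \ref{main1} with $\varepsilon=1/4$ and feeds the naive $L^\infty$ estimate $\|P^t-U\|_\infty\le\zeta^S(3t/4-2)-1$ directly into the Liebeck--Shalev theorem \cite[Theorem 1.1]{LS}, which gives $\zeta^S(s)\to 1$ as $q\to\infty$ for every fixed $s>2/h$, $h$ the Coxeter number; since $h\ge n\ge 9$, the exponent $3t/4-2=1/4$ at $t=3$ already qualifies. The essential point your argument misses is that convergence of $\zeta^S(s)$ at exponents well below $1$ cannot be seen from $|\Irr(S)|$ and $d_{\min}$ alone -- it encodes the full character-degree distribution -- and it is exactly this deeper input that lets the exponent $\varepsilon=1/4$ character bound close the $t=3$ case with an absolute $\gamma$ and no Cauchy--Schwarz. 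To repair your proof, replace the crude zeta estimate by the citation of \cite[Theorem 1.1]{LS} at $s=1/4$ (and then your Cauchy--Schwarz step becomes unnecessary); as written, part (i) does not establish the corollary.
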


\begin{proof}
We follow the proof of \cite[Theorem 1.11]{BLST}.
Consider the {\it Witten $\zeta$-function}
\begin{equation}\label{witten}
  \zeta^S(s) = \sum_{\chi \in \Irr(S)}\frac{1}{\chi(1)^s}.
\end{equation}  
By \cite[Theorem 1.1]{LS}, $\lim_{q \to \infty}\zeta^S(s) = 1$ as long as $s > 2/h$, where $h$ is the Coxeter number, i.e. 
$h := n$ if $S = \SL^\eps_n(q)$,
$h:=2n$ if $S = \Sp_{2n}(q)$, $\Spin_{2n+1}(q)$ or $\Spin^-_{2n+2}(q)$, and $h := 2n-2$ if $S = \Spin^+_{2n}(q)$. 

For (i), we apply Theorem \ref{main1} with $\varep = 1/4$. Choosing $\gamma$ suitably, we can ensure that
$|\CB_S(g)| \leq q^{n^2\delta}$, and so $|\chi(g)| \leq \chi(1)^{1/4}$ for all $\chi \in \Irr(S)$. Now
we have by a well-known result (see \cite[Chapter 1, 10.1]{AH}) that
$$||P^t-U||_\infty \leq \sum_{1_S \neq \chi \in \Irr(S)} \left( \frac{|\chi(g)|}{\chi(1)}\right)^t\chi(1)^2 \leq \zeta^S(3t/4-2)-1.$$
Now, as $n \geq 9$, if $t \geq 3$ then $3t/4-2 > 2/h$, and so the statement follows.

For (ii), note that $P^t(x)$ is the probability that a random walk on the Cayley graph $\Gamma(S,C)$ reaches $x$ after $t$ steps. Let
$$||P^t-U||_1 := \sum_{x \in S}|P^t(x)-U(x)|.$$ 
For (i), we apply Theorem \ref{main1} with $\varep = 1/3$. Choosing $\gamma$ suitably, we can ensure that
$|\CB_G(g)| \leq q^{n^2\delta}$, and so $|\chi(g)| \leq \chi(1)^{1/3}$ for all $\chi \in \Irr(S)$.
By the Diaconis-Shahshahani bound \cite{DS},
$$(||P^t-U||_1)^2 \leq \sum_{1_S \neq \chi \in \Irr(S)} \left( \frac{|\chi(g)|}{\chi(1)}\right)^{2t}\chi(1)^2 \leq \zeta^S(4t/3-2)-1.$$
As $n \geq 9$, if $t \geq 2$ then $4t/3-2 > 2/h$, and the statement follows.
\end{proof}

Lubotzky conjectured in \cite[p. 179]{Lub} that, if $S$ is a finite simple group and $C$ is a non-trivial conjugacy class
of $S$, then the mixing time of the random walk on $\Gamma(G, C)$ is
linearly bounded above in terms of the diameter of $\Gamma(G, C)$. Corollary \ref{walk} confirms this conjecture for 
the medium-size conjugacy classes $C$.

\section{Product one varieties}

Let $\uG$ be a simply connected simple algebraic group over an algebraically closed field $\FF$
of characteristic $p \ge 0$, of rank $r$. 
Let $m \ge 3$ be a positive integer and
let $\uC_1, \ldots, \uC_m$ be nontrivial conjugacy classes in $\uG$.  Let
$$\uX = \uX(\uC_1, \ldots, \uC_m)  := \{ (g_1, \ldots,g_m) \in \uC_1\times \cdots\times\uC_m \mid \prod^m_{i=1}g_i = 1\}$$
denote the closed subvariety of $\uC_1\times \cdots\times\uC_m$
consisting of $m$-tuples with product $1$ in $\uG$.

We will prove: 

\begin{theorem}  
\label{product-one}   
There exists some absolute constant $C$ such that if  either $m \ge 7$
or $r \geq C$, the following statements hold for any simple 
simply connected algebraic group $\uG$ of rank $r$ and any $m \geq 3$ noncentral
 conjugacy classes $\uC_1, \ldots,\uC_m$ 
in $\uG$:
\begin{enumerate}[\rm(i)]
\item $\dim \uX(\uC_1, \ldots, \uC_m) \le (m-1)\dim \uG - mr$.
\item Equality holds in {\rm (i)} if and only if all the $\uC_i$ are regular classes.
\item If the equivalent conditions in {\rm (ii)} hold, then $\uX(\uC_1, \ldots, \uC_m)$ is irreducible. 
\end{enumerate}
\end{theorem}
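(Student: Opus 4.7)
The plan is to reduce to finite groups of Lie type and count via Frobenius's classical formula, combined with the character bounds of Theorem~\ref{main1} and Lang--Weil estimates. Realize $\uG = \GC(\overline{\FF_p})$ for a simply connected $\FF_p$-form $\GC$ and a Frobenius endomorphism $F$, with finite groups $G = \GC^F$ indexed by the prime power $q$. After enlarging $F$ to ensure each $\uC_i$ is $F$-stable, choose an $F$-rational representative $g_i \in G$; the intersection $\uC_i^F$ splits into a (uniformly) bounded number of $G$-conjugacy classes $C_i^{(1)}, C_i^{(2)}, \ldots$. For each tuple $(j_1,\ldots,j_m)$ and representatives $g_i^{(j_i)} \in C_i^{(j_i)}$, Frobenius's formula gives
$$N := |\{(x_1,\ldots,x_m) \in C_1^{(j_1)}\times\cdots\times C_m^{(j_m)} \mid x_1\cdots x_m = 1\}| = \frac{\prod_i |C_i^{(j_i)}|}{|G|} \sum_{\chi \in \Irr(G)} \frac{\chi(g_1^{(j_1)})\cdots \chi(g_m^{(j_m)})}{\chi(1)^{m-2}}.$$

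The non-principal character sum is controlled by a combination of character estimates: for classes with sufficiently small centralizers, Theorem~\ref{main1} yields $|\chi(g)| \le \chi(1)^\varep$ for any fixed $\varep > 0$; for classes whose centralizers are too large for Theorem~\ref{main1} to apply directly, one falls back on more classical bounds (e.g.\ \cite{Gl}, or the bounds of \cite{LST}) that still give a power saving. Combined with the convergence $\zeta^G(s) \to 1$ as $q\to\infty$ for $s > 2/h$ (Liebeck--Shalev \cite{LS}, with $h$ the Coxeter number), one obtains
$$\sum_{\chi \ne 1_G} \frac{|\chi(g_1^{(j_1)})\cdots \chi(g_m^{(j_m)})|}{\chi(1)^{m-2}} = o(1) \quad\text{as } q \to \infty,$$
so that $N = (1+o(1))\prod_i |C_i^{(j_i)}|/|G|$. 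The dichotomy ``$m \ge 7$ or $r \ge C$'' is precisely what makes $m-2-m\varep > 2/h$ available for some admissible $\varep$: if $r$ is large then $h$ is large and any $\varep$ bounded away from $1/3$ works already at $m=3$; if $r$ is small (so $h$ may be as small as $2$ for $\SL_2$), the excess $m-2$ must do the job, and $m \ge 7$ comfortably suffices.

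Parts (i) and (ii) then follow cleanly. Because $\dim \CB_\GC(g) \ge r$ for every $g$, with equality if and only if $g$ is regular, one has $|C_i^{(j_i)}| \le (1+o(1))q^{\dim \uG - r}$ and $|G| = (1+o(1))q^{\dim \uG}$; summing over the bounded family of class tuples yields $|\uX^F| = O(q^{(m-1)\dim \uG - mr})$, and Lang--Weil gives (i). If every $\uC_i$ is regular, the above estimates become equalities and the count sharpens to $|\uX^F| = (1+o(1))q^{(m-1)\dim \uG - mr}$, giving equality in (i); conversely, if some $\uC_i$ is non-regular, then $\dim \CB_\GC(g_i) \ge r+1$ saves a factor $q$ and forces $\dim \uX \le (m-1)\dim \uG - mr - 1$, proving (ii). For (iii), write $|\uX^F| = c(q)q^{\dim \uX} + O(q^{\dim \uX - 1/2})$ by Lang--Weil, where $c(q)$ counts the $F$-stable top-dimensional geometrically irreducible components of $\uX$. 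The sharp asymptotic just established forces $c(q^n) = 1$ for all large $n$, so there is a unique top-dimensional geometric component, defined over $\FF_p$. A separate geometric argument (either that the product map $\uC_1\times\cdots\times\uC_m \to \uG$ is dominant and generically smooth at regular tuples, forcing $\uX$ to be equidimensional; or that $\uX$ is a local complete intersection of the expected dimension) rules out lower-dimensional components and yields the irreducibility of $\uX$.

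The delicate step is the geometric part of (iii): Lang--Weil alone only sees top-dimensional components, so one must independently verify that $\uX$ has no lower-dimensional components invisible to the leading count. This requires smoothness or complete-intersection input that goes beyond pure counting. A secondary technical issue is welding together the two character-bound regimes (Theorem~\ref{main1} for classes with small centralizers, older uniform estimates for classes with large centralizers) into a single $o(1)$ bound that is valid for all noncentral classes simultaneously, particularly in the transition zone where neither $r$ nor $m$ is overwhelmingly large.
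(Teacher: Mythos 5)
Your skeleton (Frobenius class-multiplication formula, Lang--Weil, character bounds, and a fiber-dimension argument to exclude low-dimensional components) is the same as the paper's, and your treatment of the flagged "delicate step" in (iii) is in fact exactly how the paper resolves it: $\uX$ is a fiber of the product map $\uC_1\times\cdots\times\uC_m\to\uG$, so every component has dimension at least $\sum_i\dim\uC_i-\dim\uG$, and Lang--Weil then leaves room for only one component of that dimension. But the central analytic step of your plan does not work. You want a single $o(1)$ bound on $\sum_{\chi\neq 1_G}\chi(x_1)\cdots\chi(x_m)/\chi(1)^{m-2}$ valid for \emph{all} noncentral classes, patching Theorem \ref{main1} with Gluck/\cite{LST} when centralizers are large. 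For $3\le m\le 6$ no such uniform bound exists, and the conclusion you draw from it, $N=(1+o(1))\prod_i|C_i|/|G|$, is simply false for small classes: take $m=3$ and all $\uC_i$ the class of a long root element; then $\prod_i|C_i|/|G|\to 0$ while three root elements in a common root subgroup have product $1$, so $\uX^F\neq\emptyset$ and the character sum is unbounded rather than $o(1)$. Gluck's bound (a saving of order $q^{-1/2}$ per class) or the support-dependent bounds of \cite{LST} cannot repair this, since for elements with very large centralizers the available exponent saving is far too small to beat $\sum_\chi\chi(1)^2$ when $m$ is bounded. Since your proofs of (i) and of the "only if" half of (ii) rest on this uniform asymptotic, they do not go through as written.

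The missing idea is the paper's Lemma \ref{dim-ineq} (and Lemma \ref{sum}): the projection $\uX\to\prod_{j\neq i}\uC_j$ is injective, so one never needs the character estimate for small classes. One argues conditionally: if $\dim\uX\ge(m-1)\dim\uG-mr$, then every $\dim\uC_i\ge\dim\uG-2r$, hence $|\CB_G(x_i)|\le 2q^{2r}\le|G|^\delta$ for large rank, and Theorem \ref{main1} (with \cite[Corollary 8.5]{GLT} in type $A$) applies with $\varep=1/4$ to \emph{every} class appearing; then Lemma \ref{lem:small-error} pins down $\dim\uX=\sum_i\dim\uC_i-\dim\uG$, which yields (i), both directions of (ii), and (iii) (for the "if" direction of (ii) and for (iii) one uses instead that regular classes have centralizer dimension exactly $r$, so the bound applies unconditionally). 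For $m\ge 7$ the paper does not use the character bounds of this paper at all: Lemma \ref{sum} gives $\sum_i\dim\CB_\uG(x_i)\le m^2r/(m-1)$ under the same counterfactual assumption, and the trivial bound $|\chi(x_i)|\le|\CB_G(x_i)|^{1/2}$ together with the Landazuri--Seitz-type minimal degree $q^e$ and \cite[Theorem 1.1]{LS} suffices -- your heuristic "$m-2-m\varep>2/h$" is not how that case is closed, and it also hides the rank-one case ($\SL_2$, where $h=2$ and the inequality of Lemma \ref{unbounded} fails at $m=7$), which the paper treats separately via rigidity in Lemma \ref{rank one}. Finally, you silently start over $\overline{\FF}_p$, whereas the theorem is over an arbitrary algebraically closed field; the reduction (the paper's Lemma \ref{conj-class} and Proposition \ref{finite fields}, via spreading out and \cite{EGA}) has to be supplied.
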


We first see that it suffices work over the algebraic closure of a finite field.   In fact,
we conjecture that the result holds for any $m \ge 3$ and $r \ge 2$. 

\begin{lemma}
\label{conj-class}
Let $\cG$ be a group scheme of finite type over an irreducible base $\cS$ of finite type over $\ZZ$.  Let $\cX$ denote a closed subscheme of $\cG$ which maps isomorphically to $\cS$.  Then there exists a dense open subset $U$ of $\cS$ and a (locally closed) subscheme $\cC$ of $\cG$
such that for each $u\in U$, the fiber $\cC_u$ is the conjugacy class of $\cX_u$ in $\cG_u$.
\end{lemma}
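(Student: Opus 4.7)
The plan is to realize the family of conjugacy classes as the image of a conjugation morphism, apply Chevalley's constructibility theorem, and then refine the resulting constructible set to a locally closed subscheme by exploiting generic flatness over the irreducible base.

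First I would define the morphism of $\cS$-schemes $\phi : \cG \times_\cS \cX \to \cG$ given on points by $(g,x) \mapsto g x g^{-1}$. This is a morphism of schemes of finite type, so by Chevalley's theorem its image $\cC^\circ := \phi(\cG \times_\cS \cX)$ is a constructible subset of $\cG$. Since the formation of set-theoretic images commutes with base change to a field, the fiber $(\cC^\circ)_u$ over any $u \in \cS$ is precisely the $\cG_u$-conjugacy class of the section $\cX_u$; so the problem is purely one of upgrading constructibility to local closedness after shrinking $\cS$.

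Next I would form the Zariski closure $\overline{\cC^\circ}$ of $\cC^\circ$ inside $\cG$, let $B := \overline{\cC^\circ} \setminus \cC^\circ$ (again constructible), and let $\overline{B}$ be its closure. By generic flatness, together with the irreducibility of $\cS$, I may pass to a dense open $U \subseteq \cS$ over which both $\overline{\cC^\circ} \to \cS$ and $\overline{B} \to \cS$ are flat with equidimensional fibers of constant relative dimensions $d$ and $d'$ respectively (or $\overline{B}$ is empty over $U$, in which case we are done immediately). Fiberwise, flatness implies $(\overline{\cC^\circ})_u$ equals the closure of the conjugacy class $(\cC^\circ)_u$ inside $\cG_u$, which is irreducible of dimension $d$. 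Since any conjugacy class is open in its closure, $B_u$ is closed in $\cG_u$ of dimension strictly less than $d$; hence $d' < d$ and $\overline{B}$ is a proper closed subscheme of $\overline{\cC^\circ}$.

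Finally I would set $\cC := \overline{\cC^\circ} \setminus \overline{B}$, an open subscheme of $\overline{\cC^\circ}$ and hence locally closed in $\cG$. To verify $\cC_u = (\cC^\circ)_u$ for $u \in U$, I argue that flatness of $\overline{B} \to U$ combined with the density of $B$ in $\overline{B}$ forces $B_u$ to be dense in $\overline{B}_u$; since $B_u$ is already closed in $\cG_u$, this gives $\overline{B}_u = B_u$, and therefore
\[
\cC_u \;=\; \overline{(\cC^\circ)_u} \setminus B_u \;=\; (\cC^\circ)_u,
\]
as required. The main obstacle in this argument, and the only step that truly requires care, is this fiberwise identification of the closure $\overline{B}$, i.e.\ showing that taking closure in $\cG$ and then restricting to $\cG_u$ agrees with closure in $\cG_u$ after shrinking to $U$; this is the only place where we use more than Chevalley's theorem and dimension counting, and it is precisely what generic flatness (plus the openness of flat morphisms of finite type) is designed to deliver.
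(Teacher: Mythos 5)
Your construction is exactly the one the paper uses: take the image $\cC^\circ$ of the conjugation map (constructible by Chevalley), and set $\cC := \overline{\cC^\circ}\setminus\overline{B}$ with $B=\overline{\cC^\circ}\setminus\cC^\circ$, after shrinking the base. The gap is in the step you yourself flag as the delicate one: you claim that flatness of $\overline{\cC^\circ}\to U$ (resp.\ $\overline{B}\to U$), together with density of $\cC^\circ$ in $\overline{\cC^\circ}$ (resp.\ of $B$ in $\overline{B}$), forces the fiberwise statements $(\overline{\cC^\circ})_u=\overline{(\cC^\circ)_u}$ and $\overline{B}_u=B_u$. That implication is false. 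For instance, over $\cS=\Spec k[t]$ take $Z=\mathbb{A}^1_x\times\cS$, which is flat with equidimensional fibers, and let $B$ be the dense open subset $\{t\neq 0\}$: then $\overline{B}=Z$ is flat over $\cS$, yet $B_0=\emptyset$ is not dense in $\overline{B}_0=\mathbb{A}^1$. Similarly, with $\cC^\circ=(Z\setminus Z_0)\cup\{(0,0)\}$ one has $\overline{\cC^\circ}=Z$ flat, but $(\overline{\cC^\circ})_0=\mathbb{A}^1\neq\overline{(\cC^\circ)_0}=\{0\}$. So ``generic flatness plus openness of flat morphisms'' does not deliver the commutation of closure with passage to fibers; it does not even tell you which points of $\cS$ to discard. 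Note also that your claim that $B_u$ is closed in $\cG_u$ already presupposes $(\overline{\cC^\circ})_u=\overline{(\cC^\circ)_u}$, so both halves of your final computation rest on this unproved step.

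What is true, and what the paper proves, is that these fiberwise identities hold after a \emph{further} shrinking of $U$, and the mechanism is constructibility/spreading-out rather than flatness: for a constructible set, Zariski closure commutes with passage to the \emph{generic} fiber, so the generic fiber of $C':=\overline{C}\setminus\overline{D}$ (with $D=\overline{C}\setminus C$) agrees with that of $C$; then EGA IV, Cor.\ 9.5.2 and 9.5.4 spread the equalities $C'_u=C_u$ and the local closedness of $C_u$ from the generic point to a dense open set of $u$. Alternatively, your flatness approach can be repaired by explicit dimension bookkeeping: pick a dense open $W\subseteq\overline{B}$ contained in $B$, note $\dim(\overline{B}\setminus W)<\dim\overline{B}$, and shrink $U$ further so that the fibers of $\overline{B}\setminus W$ have dimension strictly less than those of $\overline{B}$ (and likewise for $\overline{\cC^\circ}$ and its boundary); only then do flatness and equidimensionality of fibers give the density statements you want. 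As written, the proof is missing exactly this spreading-out input, which is the mathematical content of the lemma.
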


\begin{proof}
Let $\pi\colon \cW\to \cS$ denote a morphism of finite type, and $C$ a  constructible subset of $\cW$.  
Then the closure of the generic fiber $C_\eta := C\cap \pi^{-1}(\eta)$ in $\cW_\eta$ coincides with $(\bar C)_\eta$.
Indeed, we can reduce to the case that $C$ is dense in $\cW$, in which case it contains all generic points of $\cW$ and in particular all generic points of $\cW_\eta$,
so $C_\eta$ is dense in $\cW_\eta$.

By \cite[Corollaire 9.5.4]{EGA}, if the generic fiber of the constructible set $C$ is locally closed, then 
there exists a dense open subset $U_1$ of $\cS$ such that for all $u\in U_1$, the fiber $C_u$ is locally closed.  
Moreover, if $C'$ is a second constructible subset of $\cW$ such that $C_\eta = C'_\eta$, then there exists a dense open subset $U_2$ of $\cS$ such that for all $u\in U_2$,
$C_u = C'_u$ \cite[Corollaire 9.5.2]{EGA}.  
Given $C$, we may define a locally closed set $C'$ as follows: $C' := \bar C \setminus \bar D$, where $D:=\bar C\setminus C$.
Since for constructible sets, Zariski closure commutes with passage to the generic fiber, $C'_\eta = C_\eta$, and so $C_u$ is locally closed for $u\in U := U_1\cap U_2$.

We apply this to the image $C$ of the conjugation map $\pi \colon \cW := \cG\times_{\cS} \cX \to \cG$, which is constructible by Chevalley's theorem. 
The generic fiber $C_\eta$ is the conjugacy class of the point $\cX_\eta$ in the algebraic group $\cG_\eta$ and is therefore locally closed.
Let $\cU$ be the open subscheme of $\cS$ with underlying set $U$; replacing $\cS$ by $\cU$, we may assume without loss of generality that $C=C'$ is locally closed.
Let $\bar\cC$ denote the reduced closed subscheme of $\cG$ whose underlying set is $\bar C$ in $\cG_U$, and let $\cC$ denote the open subscheme of $\bar\cC$ whose underlying set is $\cC\setminus D$. Then $\cC_u = C_u$ for all $u\in \cU$, so the lemma holds.
\end{proof}

\begin{proposition}  \label{finite fields}
Theorem~\ref{product-one} holds over all algebraically closed fields $\FF$ if it holds over $\overline{\FF}_p$ for all $p$.
\end{proposition}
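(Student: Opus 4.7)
The plan is a standard spreading-out argument. Fix representatives $g_i \in \uC_i(\FF)$ and let $R \subset \FF$ be a finitely generated subring containing their matrix coefficients; set $\cG := \uG_R$ (using the Chevalley form of $\uG$ over $\ZZ$), so the $g_i$ give sections $\Spec R \to \cG$. Applying Lemma~\ref{conj-class} after shrinking $\Spec R$ to a dense open, we obtain locally closed conjugacy class subschemes $\cC_i \subset \cG$ whose fibers are the conjugacy classes of the specialized $g_i$. Form
$$\cX := \bigl\{(h_1, \ldots, h_m) \in \cC_1 \times_R \cdots \times_R \cC_m \bigm| h_1 \cdots h_m = 1\bigr\},$$
a closed subscheme whose generic fiber base-changes via $\mathrm{Frac}(R) \hookrightarrow \FF$ to $\uX(\uC_1, \ldots, \uC_m)$ over $\FF$.

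Next I would further shrink $\Spec R$ so that simultaneously (a) the functions $s \mapsto \dim (\cC_i)_s$ and $s \mapsto \dim \cX_s$ are constant on $\Spec R$, by upper semi-continuity of fiber dimension, and (b) the predicate ``$\cX_s$ is geometrically irreducible'' has constant truth value on $\Spec R$, by constructibility of this property (cf.\ \cite{EGA}). Since dimension and geometric irreducibility are preserved under base change to an algebraically closed extension, the corresponding invariants of $\uX(\uC_1, \ldots, \uC_m)$ over $\FF$ coincide with the values at any $s \in \Spec R$.

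Now pick any closed point $s_0 \in \Spec R$: its residue field is a finite field (either a finite prime field when $\Char(\FF) = 0$ or a finite extension of $\FF_p$ when $\Char(\FF) = p$), and base-changing to $\overline{\FF}_p$ for the corresponding prime $p$ yields an $\overline{\FF}_p$-point $\bar s_0$ with $\cX_{\bar s_0} = \uX(\uC_1', \ldots, \uC_m')$ for noncentral classes $\uC_i' \subset \uG_{\overline{\FF}_p}$; noncentrality follows from $\dim \uC_i' = \dim \uC_i \geq 1$, by dimension constancy. The hypothesized form of Theorem~\ref{product-one} over $\overline{\FF}_p$ applied to the $\uC_i'$ yields (i)--(iii) at $\bar s_0$, and these translate back to the generic fiber (and hence to $\uX(\uC_1, \ldots, \uC_m)$ over $\FF$) through (a) and (b), with part (ii) handled by the observation that regularity of a conjugacy class is the numerical condition $\dim \uC = \dim \uG - r$, preserved by constancy of fiber dimensions.

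The main subtlety is the bookkeeping: one must check that the successive dense opens in the base $\Spec R$ — those coming from Lemma~\ref{conj-class}, from the fiber-dimension constancy of each $\cC_i$ and of $\cX$, and from the constructibility of geometric irreducibility of $\cX$ — have nonempty intersection. This is immediate from the irreducibility of $\Spec R$, but it is the only point where any care is needed; each individual step is routine via the standard results on morphisms of finite type in \cite{EGA}.
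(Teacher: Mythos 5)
Your proposal is correct and is essentially the paper's own argument: spread out $\uG$, the classes, and the product-one scheme over a finitely generated $\ZZ$-algebra, invoke Lemma~\ref{conj-class} over a dense open, use the EGA constructibility results to make fiber dimension and (geometric) irreducibility data constant, and specialize at a closed point, whose residue field is finite. The only differences are cosmetic bookkeeping — you track geometric irreducibility and the class dimensions (for noncentrality and the regularity criterion $\dim\uC=\dim\uG-r$) explicitly, where the paper cites constancy of $\dim\uX_u$ and of the number of geometric components and leaves the rest implicit.
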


\begin{proof}
There exists a subfield $\FF_0$ of $\FF$, finitely generated over the prime field of $\FF$, such that $\uG$ and all the $\uC_i$ are defined over $\FF_0$.
Passing to a finite extension if necessary, we assume that $\uG$ is split over $\FF_0$ and each $\uC_i$ has a point $x_i$ defined over $\FF_0$.
Thus, there exists a finitely generated $\ZZ$-algebra $A$,
a split simple, simply connected group scheme $\cG$ over $A$, and $A$-valued points $\calx_i$ in $\cC_i$, such that the field of fractions of $A$ is $\FF_0$,
and extending scalars from $A$ to $\FF_0$ takes $\cG$ (resp. $\calx_i$) to $\uG$ (resp. $x_i$).  By Lemma~\ref{conj-class}, replacing $\Spec A$
with a dense open subscheme $\cU$, we may define locally closed subschemes $\cC_i$ of $\cG$ such that the fibers of $\uC_i$ over any $u\in \cU$ are conjugacy classes
in a split, simply connected, semisimple algebraic group with the same root system as $\uG$.  
Let $\uG_u$ (resp. $\uC_{i,u}$) denote the fiber of $u$ of $\cG$ (resp. $\cC_i$), and let
$\uX_u$ denote the product one subvariety of $\uC_{1,u}\times \cdots \times \uC_{m,u}$.
By \cite[Corollaire 9.5.6]{EGA} and \cite[Proposition 9.7.8]{EGA}, replacing $\cU$ with a suitable open subscheme, we may assume that
the dimension of $\uX_u$ and the number of  irreducible geometric components of 
$\uX_u$ are the same as for $\uX$.

Replacing $\cU$ with a smaller open subscheme if necessary, we may assume it is affine.  Its coordinate ring is finitely generated over $\ZZ$ so every closed point has finite residue field.  Thus, we may replace $\FF$ with an algebraic closure of a finite field. 
\end{proof}

Henceforth, we assume $\FF = {\overline\FF}_p$.

We first point out the result for $\SL_2(\FF)$ with $\FF$ algebraically closed.
See \cite{GM, Sh}.   The result is a bit different, as is the proof. 

\begin{lemma} 
\label{rank one}
Let $\uG=\SL_2(\FF)$ and let $\uC_i, \ldots, \uC_m$ be noncentral conjugacy classes
of $\uG$ with $m \geq 3$.   Then  $\dim \uX = 2m -3 $ and
either $\uX$ is irreducible or $m=3$ and there exist $x_i \in \uC_i$ 
with product $1$ fixing a unique line (and the $\uC_i$ are not all unipotent).
\end{lemma}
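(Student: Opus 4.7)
The plan is to study $\uX$ via the multiplication morphism
\[
\mu \colon \uC_1 \times \cdots \times \uC_{m-1} \to \uG, \quad (g_1, \ldots, g_{m-1}) \mapsto g_1 \cdots g_{m-1},
\]
so that $\uX \cong \mu^{-1}(\uC_m^{-1})$, where $\uC_m^{-1}$ is again a noncentral conjugacy class. Since $\uG = \SL_2(\FF)$ has dimension $3$ and every noncentral conjugacy class is regular of dimension $2$, we have $\dim(\uC_1 \times \cdots \times \uC_{m-1}) = 2(m-1)$ and $\dim \uC_m^{-1} = 2$. First I would verify that $\mu$ is dominant for $m \ge 3$: for the base case $m-1 = 2$ the trace $\tr(g_1 g_2)$ varies nontrivially as $(g_1,g_2)$ ranges over $\uC_1 \times \uC_2$, so the closure of the image meets every conjugacy class in a dense set; convolving further with noncentral classes preserves dominance. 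Consequently every component of $\uX$ has codimension $\dim \uG - \dim \uC_m^{-1} = 1$ in the ambient product, giving $\dim \uX = 2m-3$.

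For irreducibility I would use the Fricke--Vogt description: the GIT quotient $\Hom(F_{m-1},\SL_2)/\!/\SL_2$ is an irreducible affine variety whose coordinate ring is generated by the traces $\tr(g_{i_1} g_{i_2}\cdots g_{i_k})$ subject to Fricke relations. Fixing $\tr(g_i) = t_i$ for $1 \le i \le m-1$ and $\tr(g_1\cdots g_{m-1}) = t_m$ cuts out a subscheme of pure codimension $m$ which, for $m \ge 4$, is irreducible by a direct flatness/Hilbert-series argument on the Fricke presentation. On the open stratum where $(g_1,\ldots,g_{m-1})$ generates an absolutely irreducible subgroup, $\uG / \ZB(\uG)$ acts freely, and the quotient map from that stratum to the trace-quotient realizes this open part of $\uX$ as a principal $\PSL_2$-bundle over an irreducible base; standard density arguments then promote irreducibility to all of $\uX$.

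The main obstacle is the case $m = 3$, where fixing the three traces collapses the character variety to a zero-dimensional scheme, and one must show directly that no spurious component appears. Here the key input would be the Fricke polynomial
\[
\tr(g_1g_2)^2 + \tr(g_1)^2 + \tr(g_2)^2 - \tr(g_1)\tr(g_2)\tr(g_1g_2) - 4 = \tr([g_1,g_2]) - 2,
\]
which measures reducibility of the pair $(g_1,g_2)$: when it vanishes, the $g_i$ share a common eigenline. I would stratify $\uX$ into the locus $\uX^{\mathrm{irr}}$ where $\langle g_1, g_2 \rangle$ acts irreducibly and the complementary ``triangular'' locus $\uX^{\mathrm{tri}}$ where all three $x_i$ lie in a common Borel $B = TU$. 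On $\uX^{\mathrm{irr}}$ the trace data determine the tuple uniquely up to $\uG$-conjugation (Fricke), so this locus is a single $\uG$-orbit and is irreducible of dimension $3 = 2m-3$. On $\uX^{\mathrm{tri}}$ one writes $x_i = t_i u_i$ with $t_i \in T$ determined by the class and $u_i \in U$, and the product-one relation becomes $t_1 t_2 t_3 = 1$ together with an affine-linear condition on $(u_1,u_2,u_3)$; a short computation then shows $\dim \uX^{\mathrm{tri}} = 3$ as well.

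The hard part will be deciding when $\uX^{\mathrm{tri}}$ is absorbed into the closure of $\uX^{\mathrm{irr}}$ and when it forms a genuinely new irreducible component. I expect the dichotomy to come out exactly as stated: if $t_1 t_2 t_3 = 1$ has no solution for the prescribed semisimple parts (or if the associated $u$-equation is inconsistent), then $\uX^{\mathrm{tri}} = \emptyset$ and $\uX = \overline{\uX^{\mathrm{irr}}}$ is irreducible; if it does have a solution, then $\uX^{\mathrm{tri}}$ lies in a proper parabolic and meets $\uX^{\mathrm{irr}}$ only on a lower-dimensional boundary, giving the second alternative of the lemma. The ``not all unipotent'' caveat will be checked by noting that if every $\uC_i$ is unipotent then $T$ is trivial, the $x_i$ all lie in a single one-parameter unipotent subgroup, and the triangular locus degenerates into $\overline{\uX^{\mathrm{irr}}}$ rather than producing a new component; cross-checking with \cite{GM, Sh} then completes the analysis.
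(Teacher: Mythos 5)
Your proposal has two genuine gaps. The more serious one is the case $m\ge 4$: the assertion that fixing the $m$ traces in the Fricke--Vogt trace algebra of $F_{m-1}$ cuts out an irreducible subscheme of pure codimension $m$ ``by a direct flatness/Hilbert-series argument'' is not an argument but a restatement of what must be proved — for $m-1\ge 3$ generators the trace algebra is no longer a polynomial ring, and the subsequent ``standard density arguments'' promoting irreducibility from the stratum of irreducible tuples to all of $\uX$ are exactly the delicate point (it is precisely what can fail at $m=3$, so it cannot be waved through in general). The paper takes a completely different route here: after reducing to $\FF=\overline{\FF}_p$ (Proposition~\ref{finite fields}), it works in $\GL_2$, where character values at noncentral elements are uniformly bounded, and obtains both the dimension and the irreducibility for $m\ge 4$ by the character-sum and Lang--Weil counting argument (as in Lemma~\ref{lem:small-error}); no character-variety input is needed. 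Your dimension count via dominance of the multiplication map is essentially fine (modulo checking nonemptiness, i.e.\ that $\uC_m^{-1}$ actually meets the image, which you do not address), but it does not touch irreducibility.

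The second gap is structural, in the $m=3$ analysis. Whether a product-one triple is reducible is determined by the classes alone: a reducible triple, restricted to an invariant line, yields eigenvalues $a_i$ of $x_i$ with $a_1a_2a_3=1$, and conversely the paper's direct $2\times 2$ computation shows that once this eigenvalue equation is solvable for the given classes, \emph{every} triple in $\uX$ fixes a line. So the two strata you introduce never coexist: either the eigenvalue condition fails and $\uX=\uX^{\mathrm{irr}}$ is a single orbit (your Fricke rigidity step, which matches the paper's use of Katz rigidity), or it holds and $\uX^{\mathrm{irr}}=\emptyset$, so the question you flag as ``the hard part'' — whether $\uX^{\mathrm{tri}}$ is absorbed into $\overline{\uX^{\mathrm{irr}}}$ — does not arise. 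In the reducible case the possible failure of irreducibility comes instead from within the triangular locus itself: $\uX\cap\uB^3$ breaks into (at most) two $2$-dimensional pieces according to which eigenvalue of each $x_i$ acts on the fixed line, and conjugation gives (at most) two $3$-dimensional components, with a single component exactly when all the classes are unipotent. For the same reason your explanation of the ``not all unipotent'' caveat (the triangular locus ``degenerating into $\overline{\uX^{\mathrm{irr}}}$'') cannot be right, since $\uX^{\mathrm{irr}}$ is empty whenever all classes are unipotent; the correct reason is that only one eigenvalue ordering is available there.
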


\begin{proof}   By Proposition \ref{finite fields}, we may assume
that $\FF$ is the algebraic closure of a finite field.   We can work in $\GL_2(\FF)$
(this does not change the variety).  The advantage here is that there is a connected
center and it follows that there is an absolute bound on $|\chi(g)|$ for $g$ any
noncentral element and $\chi$ any irreducible character. 
  If $m \ge 4$, this follows  from inspection of the character tables
of $\GL_2(q)$ and (a variation of) Lemma \ref{sum} below.  

Assume that $m =3$.   A straightforward computation with $2 \times 2$ matrices
shows that for any $(x_1, x_2, x_3) \in \uX$  either the $x_i$ generate an irreducible
subgroup or there exist eigenvalues $a_i$ of $x_i$ with $a_1a_2a_3=1$.   Moreover,
if the latter case holds, then any triple in $\uX$ fixes a line. 

In the first case, as noted,  any triple
generates an irreducible subgroup of $\SL_2(\FF)$.  Then by Katz's rigidity
theorem (which holds in arbitrary characteristic \cite{Ka, SV}), all triples
are conjugate via an element of $\SL_2$, showing the variety is irreducible
of dimension $3$.

Otherwise any  $(x_1, x_2, x_3) \in \uX$ fixes a line.  If $\uB$ is a Borel subgroup, then
$\uX \cap \uB^3$ has two components (determined by the action of the $x_i$ on
the fixed line) or is irreducible if the  $C_i$ are  all unipotent.  These components are two
dimensional.     Conjugation then gives the result. 
\end{proof}

\begin{lemma}   
\label{dim-ineq}
For each integer $i\in [1,m]$, $\dim \uX \le \sum_{j \ne i} \dim \uC_j$.
\end{lemma}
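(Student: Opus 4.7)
The plan is to exhibit an injective morphism from $\uX$ into a variety whose dimension is visibly $\sum_{j\ne i}\dim\uC_j$. Specifically, for the fixed index $i$, I would use the projection
$$\pi_i\colon \uX \longrightarrow \prod_{j \ne i} \uC_j, \qquad (g_1, \ldots, g_m) \longmapsto (g_j)_{j \ne i},$$
obtained by forgetting the $i$-th coordinate. This is a morphism of algebraic varieties since $\uX$ is defined as a closed subvariety of $\prod_{j=1}^{m}\uC_j$ by the product-one equation, and the projection from the full product is algebraic.

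The key point is that $\pi_i$ is injective on points. Indeed, the relation $g_1 g_2 \cdots g_m = 1$ determines the $i$-th factor uniquely from the others, via
$$g_i \;=\; (g_1 g_2 \cdots g_{i-1})^{-1}\,(g_{i+1} g_{i+2} \cdots g_m)^{-1},$$
where the empty product is read as the identity when $i=1$ or $i=m$. Hence every fiber of $\pi_i$ is either empty or a single point, so all fibers have dimension zero, and the standard fiber-dimension inequality yields
$$\dim \uX \;\le\; \dim \prod_{j \ne i}\uC_j \;=\; \sum_{j \ne i}\dim \uC_j.$$

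I do not anticipate any real obstacle here: the argument is purely set-theoretic and uses nothing about the ambient algebraic group $\uG$ or about the specific classes $\uC_j$. In particular, none of the hypotheses of Theorem~\ref{product-one} (the lower bounds on $r$ or $m$, the non-centrality of the $\uC_j$, the character estimates developed earlier) enter the proof. The content of the lemma is simply that the single $\uG$-valued equation $\prod g_j = 1$ lets one solve for any chosen coordinate in terms of the remaining ones, and therefore $\uX$ embeds set-theoretically into the product of the other classes.
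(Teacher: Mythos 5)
Your argument is correct and is exactly the paper's proof: the paper's entire justification is that the projection from $\uX$ to $\prod_{j\ne i}\uC_j$ is injective, which you spell out by solving the product-one equation for $g_i$. Nothing further is needed.
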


\begin{proof}  The projection map from $\uX$ to $\prod_{j \ne i} \uC_j$
is injective.
\end{proof}


\begin{lemma}
\label{sum}
Suppose that 
$\dim \uX(\uC_1, \ldots, \uC_m) \ge (m-1)\dim \uG - mr$.
Then 
$$
\sum_i \dim \uC_i  \ge   m \dim \uG - \frac{m^2 r}{m-1}.
$$
Moreover if $x_i \in \uC_i$, then
$$
\sum_i \dim \CB_{\uG}(x_i)  \le \frac{m^2 r}{m-1}.
$$
\end{lemma}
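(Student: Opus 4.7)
The plan is to deduce both inequalities by routine manipulation from Lemma~\ref{dim-ineq} together with the standard identity $\dim \uC_i = \dim \uG - \dim \CB_{\uG}(x_i)$ for $x_i \in \uC_i$.

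First I would invoke Lemma~\ref{dim-ineq} for every index $i \in [1,m]$, obtaining $m$ inequalities of the form $\dim \uX \le \sum_{j \ne i} \dim \uC_j$. Summing these over $i$ collects each $\dim \uC_j$ exactly $m-1$ times on the right, so
$$m \dim \uX \le (m-1) \sum_{j=1}^{m} \dim \uC_j.$$
Substituting the hypothesis $\dim \uX \ge (m-1)\dim \uG - mr$ and dividing by $m-1$ gives
$$\sum_{j=1}^{m} \dim \uC_j \ge \frac{m}{m-1}\dim \uX \ge m \dim \uG - \frac{m^2 r}{m-1},$$
which is the first claimed inequality.

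For the second inequality, I would simply use that each conjugacy class $\uC_i$ with representative $x_i$ satisfies $\dim \uC_i + \dim \CB_{\uG}(x_i) = \dim \uG$. Summing over $i$ and rearranging gives
$$\sum_{i=1}^{m} \dim \CB_{\uG}(x_i) = m \dim \uG - \sum_{i=1}^{m} \dim \uC_i \le \frac{m^2 r}{m-1},$$
where the inequality uses the lower bound just established.

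The only thing worth flagging is that no real obstacle arises here: the lemma is essentially bookkeeping that packages the content of Lemma~\ref{dim-ineq} (an injectivity of projection argument) together with the orbit-stabilizer dimension identity, in a form convenient for the subsequent arguments of Theorem~\ref{product-one}. No input about $\uG$ being simple or simply connected, about the classes being noncentral, or about the field is needed for this particular step.
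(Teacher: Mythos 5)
Your proposal is correct and matches the paper's own argument: sum Lemma~\ref{dim-ineq} over $i$, insert the hypothesis on $\dim \uX$, divide by $m-1$, and then use $\dim \uC_i = \dim \uG - \dim \CB_{\uG}(x_i)$ for the second inequality. The paper's proof is exactly this bookkeeping, with the last step left to the reader as "the second inequality follows."
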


\begin{proof}   Take the inequality above and sum over $i$.  This gives
$$
(m-1) \sum_i \dim \uC_i \ge m \dim \uX \ge m[(m-1)\dim \uG - mr].
$$

Thus, 
$$
\sum_i \dim \uC_i \ge m \dim \uG - \frac{m^2 r}{m-1},
$$
as claimed.  The second inequality follows. 
\end{proof}


\begin{lemma}   
\label{lem:small-error}  Suppose that there is some small $\eps > 0$ such that for all $q$ sufficiently large we have
\begin{equation}
\label{small-error}
\Bigm|\sum_{1_G \ne \chi \in \Irr(G)} \frac{\chi(x_1) \ldots \chi(x_m)}{\chi(1)^{m-2}} \Bigm|  < \epsilon,
\end{equation}
for $G = \uG(\FF_q)$. Then
$\uX$ is an irreducible variety of dimension 
$$e:=\sum (\dim \uC_i - \dim \uG).$$
\end{lemma}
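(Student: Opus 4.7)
The plan is to combine Frobenius' classical counting formula for tuples with prescribed product in a finite group with the Lang--Weil estimate, to convert the character-sum hypothesis into a geometric statement about $\uX$. By Proposition~\ref{finite fields} we may assume $\FF = \overline{\FF}_p$, and we choose a power $q_0$ of $p$ such that $\uG$ together with the classes $\uC_1,\ldots,\uC_m$ (hence $\uX$) is defined over $\FF_{q_0}$.

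For $q = q_0^n$ with $n$ large, set $G := \uG(\FF_q)$, pick $x_i \in \uC_i(\FF_q)$, and let $C_i := x_i^G$. The classical Frobenius identity gives
$$\#\uX(\FF_q) \;=\; \frac{|C_1|\cdots|C_m|}{|G|}\sum_{\chi \in \Irr(G)}\frac{\chi(x_1)\cdots \chi(x_m)}{\chi(1)^{m-2}}.$$
Isolating the trivial character and invoking the hypothesis~\eqref{small-error}, the character sum equals $1 + O(\eps)$, so
$$\#\uX(\FF_q) \;=\; \frac{|C_1|\cdots|C_m|}{|G|}\bigl(1+O(\eps)\bigr).$$
By Lang--Steinberg together with the standard Weil-type estimates for the zeta functions of reductive groups and their conjugacy-class varieties, $|G| = q^{\dim\uG}(1+O(q^{-1/2}))$ and $|C_i| = q^{\dim\uC_i}(1+O(q^{-1/2}))$, whence
$$\#\uX(\FF_q) \;=\; q^{e}\bigl(1+O(\eps)+O(q^{-1/2})\bigr), \qquad e := \sum_{i=1}^m \dim\uC_i \;-\; \dim\uG$$
(the $e$ in the statement is to be read in this sense; the expression $\sum(\dim\uC_i - \dim\uG)$ as literally written is a typographic slip, since it would be negative). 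Taking $q_0$ sufficiently large and then $n \to \infty$ makes the error $o(1)$.

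The final step is Lang--Weil: if $\uX$ has top-dimensional geometric components of dimension $d$, with $c$ of them permuted trivially by $\mathrm{Frob}_{q_0^n}$, then $\#\uX(\FF_{q_0^n}) = c\,q_0^{nd}+O(q_0^{n(d-1/2)})$. Comparing with the asymptotic $q_0^{ne}(1+o(1))$ forces $d=e$ and $c=1$, producing a unique top-dimensional geometric component of $\uX$, of dimension $e$. To promote this to full irreducibility, I would argue that $\uX$ is pure of dimension $e$: the multiplication map $\mu: \uC_1 \times \cdots \times \uC_m \to \uG$ is $\uG$-equivariant (under simultaneous conjugation, with $\uG$ acting transitively on each $\uC_i$), so by generic flatness every non-empty fiber of $\mu$ has dimension exactly $\sum_i \dim\uC_i - \dim\uG = e$; in particular the fiber $\uX = \mu^{-1}(1)$ is equidimensional of dimension $e$.

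The main obstacle is precisely this pure-dimensionality step: Lang--Weil by itself sees only the top-dimensional stratum, so any spurious lower-dimensional components of $\uX$ would contribute $O(q_0^{n(e-1)})$ and be invisible to the character-sum count. Once the $\uG$-equivariant fiber-dimension argument rules them out, the unique-top-component output of Lang--Weil cleanly yields that $\uX$ is irreducible of dimension $e$.
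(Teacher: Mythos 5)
Your first two steps are exactly the paper's argument: the Frobenius class-product formula together with the hypothesis \eqref{small-error} and Lang--Weil gives $\#\uX(\FF_q)=(1+o(1))q^e$, and a second application of Lang--Weil then shows $\dim\uX=e$ with a unique top-dimensional geometric component (and your reading of $e$ as $\sum_i\dim\uC_i-\dim\uG$ is the intended one). The gap is in the final step, where you try to rule out lower-dimensional components by claiming that every non-empty fiber of the multiplication map $\mu\colon \uC_1\times\cdots\times\uC_m\to\uG$ has dimension exactly $e$, ``by generic flatness and $\uG$-equivariance.'' This argument does not work. Equivariance under simultaneous conjugation only relates fibers of $\mu$ over \emph{conjugate} elements of $\uG$, and the identity is a fixed point of conjugation, so equivariance says nothing about $\mu^{-1}(1)$; generic flatness controls fibers only over a dense open subset of the (closure of the) image, which need not contain $1$ and whose dimension may well be smaller than $\dim\uG$ (for $m$ small and the $\uC_i$ small, e.g.\ root-element classes, the image of $\mu$ is far from dense, so even the generic fiber has dimension strictly larger than $e$). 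Moreover the exact-dimension claim itself is false in general: fiber dimension is upper semicontinuous and jumps over special points, as one already sees for $\uC_2=\uC_1^{-1}$, where $\mu^{-1}(1)\supseteq\{(g,g^{-1})\}$ has dimension $\dim\uC_1>2\dim\uC_1-\dim\uG$.

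What is actually needed, and what the paper invokes, is only the \emph{lower} bound: every irreducible component of $\uX=\mu^{-1}(1)$ has dimension at least $e$. This is standard intersection theory (Krull's height theorem): since $\uG$ is smooth of dimension $\dim\uG$, the point $1$ is locally cut out by $\dim\uG$ equations, so every component of its preimage in $\uC_1\times\cdots\times\uC_m$ has codimension at most $\dim\uG$, i.e.\ dimension at least $\sum_i\dim\uC_i-\dim\uG=e$ (this is the argument the paper cites from \cite{GM} and \cite{Sh}). Combined with your Lang--Weil count, which shows there is exactly one component of dimension $e$ and none of larger dimension, this lower bound excludes lower-dimensional components and yields irreducibility. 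So your proof is repaired by replacing the equivariance/generic-flatness purity claim with this fiber-dimension inequality; as written, that step is a genuine error.
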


\begin{proof}  By the Lang-Weil estimate, $|G| = (1+o(1))q^{\dim \uG}$ and $|C_i| = (1+o(1))q^{\dim \uC_i}$.
By the usual class product formula for $C_1,\ldots,C_m\subset G$, (\ref{small-error}) shows that the number
of $\FF_q$-points of $\uX^0$ is  
$(1+o(1))q^e$ whence by the Lang-Weil estimate, $\uX$ consists of a single irreducible component of dimension $e$, with
all other irreducible components of lower dimension.
On the other hand, it follows by intersection theory that any component
of $\uX$ has dimension at least $e$ (see for example the argument
in \cite{GM} or \cite{Sh}), whence the result. 
\end{proof}

We now prove that for $m \ge 7$,  Theorem \ref{product-one} holds for any $r$.

\begin{lemma}  
\label{unbounded}  Let $e$ be such that the minimal degree nontrivial
character of $\uG(q)$ has dimension at least a constant times $q^e$
(note that $e \ge r$).     Let $m_0$ be minimal such that
$$
\frac{m_0^2 r}{2m_0-2}  < e(m_0-2 - 2/h), 
$$
where $h$ is the Coxeter number of $\uG$.    Then
if $m \ge m_0$,   Theorem \ref{product-one} holds.  In particular, this holds
for $m \ge 7$.   
\end{lemma}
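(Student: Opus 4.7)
The plan is to prove Theorem \ref{product-one} by applying Lemma \ref{lem:small-error} through the class product formula for $G := \uG(\FF_q)$: for any $x_i \in \uC_i(\FF_q)$ with $\uC_i(\FF_q) = x_i^G$, the quantity $|\uX(\FF_q)|$ equals
$$\frac{|x_1^G|\cdots|x_m^G|}{|G|}\Bigl(1 + \sum_{1_G \neq \chi \in \Irr(G)} \frac{\chi(x_1)\cdots\chi(x_m)}{\chi(1)^{m-2}}\Bigr).$$
By Proposition \ref{finite fields}, I may restrict to $\FF = \overline{\FF}_p$, spread the data out to a group scheme over a finitely generated $\ZZ$-algebra, and work with $\FF_q$-points for $q$ tending to infinity along a sequence of closed points of the base. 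It then suffices to show the error sum inside the brackets is $o(1)$.

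The main step is to assume $\dim \uX \ge (m-1)\dim \uG - mr$ and bound the character sum using the centralizer bound $|\chi(x_i)| \le |\CB_G(x_i)|^{1/2}$ together with the Liebeck-Shalev result \cite{LS} that $\zeta^G(s) \to 1$ as $q \to \infty$ for every $s > 2/h$. The running hypothesis and Lemma \ref{sum} yield $\sum_i \dim \CB_{\uG}(x_i) \le m^2 r/(m-1)$, so $\prod_i |\CB_G(x_i)|^{1/2} \le q^{m^2 r/(2(m-1)) + o(1)}$. Splitting $\chi(1)^{-(m-2)}$ as $\chi(1)^{-(2/h+\eta)}\chi(1)^{-(m-2-2/h-\eta)}$ for a small $\eta > 0$ and using the hypothesis $\chi(1) \ge c q^e$ for every nontrivial $\chi$, I obtain
$$\sum_{\chi \neq 1_G} \frac{1}{\chi(1)^{m-2}} \le (cq^e)^{-(m-2-2/h-\eta)}\bigl(\zeta^G(2/h+\eta)-1\bigr) = o\bigl(q^{-e(m-2-2/h-\eta)}\bigr).$$
The whole error is therefore at most $q^{m^2 r/(2(m-1)) - e(m-2-2/h-\eta) + o(1)}$, which tends to $0$ precisely when $m \ge m_0$.

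Lemma \ref{lem:small-error} then gives that $\uX$ is irreducible of dimension $\sum_i \dim \uC_i - \dim \uG$. Since $\dim \uC_i \le \dim \uG - r$, this forces $\dim \uX \le (m-1)\dim \uG - mr$, proving (i); equality requires $\dim \uC_i = \dim \uG - r$ for every $i$, i.e., every $\uC_i$ is regular, proving (ii); and irreducibility under this equality is part of the same conclusion of Lemma \ref{lem:small-error}, proving (iii). For the final assertion, substituting $m = 7$ and $e \ge r$ into the defining inequality of $m_0$ reduces it to $49/12 < 5 - 2/h$, i.e., $h > 24/11$; since the Coxeter number of every irreducible root system of rank $\ge 2$ is at least $3$, the threshold $m \ge 7$ suffices, the rank-one case ($\SL_2$) being treated separately by Lemma \ref{rank one}. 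The hard part is the balance in step two: the centralizer bound is essentially tight for regular classes but loses the spurious factor $m/(m-1)$, so substantially sharper character-value bounds (such as those in Theorem \ref{main1}) would be needed to push $m_0$ below $7$.
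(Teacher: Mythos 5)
Your proposal is correct and follows essentially the same route as the paper: assume $\dim \uX \ge (m-1)\dim\uG - mr$, feed the centralizer bound from Lemma \ref{sum} and the trivial estimate $|\chi(x_i)|\le|\CB_G(x_i)|^{1/2}$ into the class product formula, control the remaining sum $\sum_{\chi\neq 1_G}\chi(1)^{2-m}$ via the minimal degree $q^e$ together with the Liebeck--Shalev zeta bound at $s>2/h$, and conclude with Lemma \ref{lem:small-error}. The only difference is that you spell out details the paper leaves implicit (the $\eta$-splitting of the exponent and the numerical check that $m=7$, $e\ge r$, $h\ge 3$ satisfies the defining inequality, with $\SL_2$ deferred to Lemma \ref{rank one}), which is consistent with the paper's argument.
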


\begin{proof}   Let $x_i \in \uC_i(q)$ for $q$ large.  Using the trivial bound $|\chi(x_i)|  < 
|C(x_i)|^{1/2}$ where $C(x_i)$ is the centralizer of $x_i$ in the group of $\FF_q$-points
of $\uG$,  by Lemma \ref{sum} there is a constant $c$ such that

$$\frac{|\chi(x_1)\ldots\chi(x_m)|}{\chi(1)^{m-2}} \le c^mq^{\frac{m^2 r}{2m-2}}\chi(1)^{2-m}.$$

Thus, by our hypothesis and \cite[Theorem 1.1]{LS} for $q$ sufficiently large, 
$$
\Bigm|\sum_{1_G \ne \chi \in \Irr(G)} \frac{\chi(x_1) \ldots \chi(x_m)}{\chi(1)^{m-2}} \Bigm| 
<    \epsilon.
$$
It follows by Lemma \ref{lem:small-error}  that $\uX$ is irreducible of dimension  
$$\sum (\dim \uC_i - \dim \uG) \le  (m-1)\dim \uG -mr,$$ 
with equality if and only if the $\uC_i$ are all regular, whence the result holds.
\end{proof}

For the exceptional groups, the result works
for  smaller $m$.  Indeed if $\uG=E_8$, then this already gives the result for
any $m \ge 3$ since the smallest degree character has degree approximately $q^{29}$.
In certain classical groups, the result for any $m \ge 4$ holds as well by the previous
argument.

We can now prove that Theorem~\ref{product-one} holds for any $m \ge 3$
for sufficiently large rank.   Here is where we use the preceding results of this paper. 

\begin{proof}
We henceforth assume  that $\cG$ has rank $\ge 9$.
It is therefore of classical type.

Suppose that $\dim \uX \ge (m-1) \dim \uG - mr$.   Then by Lemma~\ref{dim-ineq},  
$$\sum_{j \ne i}  \dim \uC_j \ge (m-1) \dim \uG - mr.$$
This implies that $\dim \uC_i \ge \dim \uG - 2r$.  By the Lang-Weil estimate, $|C_i| \ge q^{\dim \uG - 2r}/2$ if $q$ is sufficiently large and so
 $|\CB_G(x_i)| \le 2q^{2r}$ for each $i$.  If $\uG$ is of sufficiently high rank, then
\cite[Corollary 8.5]{GLT} (in the case $G$ is of type A) and Theorem~\ref{main1} (for types B, C, and D),
imply that $|\chi(x_i)| \le \chi(1)^{1/4}$ for all irreducible characters $\chi$ of $G$.  This implies the sum in (\ref{small-error}) is bounded above by
$$\sum_{1_G\neq \chi\in \Irr(G)} \chi(1)^{-1/4},$$
which goes to zero as $|G|\to \infty$ among groups of Lie type of sufficiently high rank \cite[Theorem 1.1]{LS}.
This proves the theorem for simple algebraic groups over $\overline{\FF}_p$ and therefore for simple algebraic groups over any algebraically closed field.
\end{proof}

\end{document}